\definecolor{e-mail}{rgb}{0,.40,.80}
\definecolor{reference}{rgb}{.20,.60,.22}
\definecolor{citation}{rgb}{0,.40,.80}
\newtheorem{maintheorem}{Theorem}
\newtheorem{mainconjecture}[maintheorem]{Conjecture}
\newtheorem{thm}{Theorem}[section]
\newtheorem{prop}[thm]{Proposition}
\newtheorem{lm}[thm]{Lemma}
\crefname{lm}{Lemma}{Lemmas}
\newtheorem{cor}[thm]{Corollary}
\newtheorem*{p*}{Proposition}
\theoremstyle{definition}
\newtheorem{defn}[thm]{Definition}
\newtheorem{example}[thm]{Example}
\theoremstyle{remark}
\newtheorem{remark}[thm]{Remark}
\newcommand{\defterm}[1]{\textbf{\emph{#1}}}
\newcommand{\cA}{\mathcal A}
\newcommand{\cB}{\mathcal B}
\newcommand{\cC}{\mathcal C}
\newcommand{\cD}{\mathcal D}
\newcommand{\cE}{\mathcal E}
\newcommand{\cF}{\mathcal F}
\newcommand{\cG}{\mathcal G}
\newcommand{\cH}{\mathcal H}
\newcommand{\cK}{\mathcal K}
\newcommand{\cL}{\mathcal L}
\newcommand{\cM}{\mathcal M}
\newcommand{\cO}{\mathcal O}
\newcommand{\cV}{\mathcal V}
\newcommand{\cW}{\mathcal W}
\newcommand{\bD}{\mathbf{D}}
\newcommand{\C}{\mathbf C}
\newcommand{\D}{\mathbb D}
\newcommand{\bL}{\mathbb L}
\newcommand{\Q}{\mathbb Q}
\newcommand{\R}{\mathbf R}
\newcommand{\Z}{\mathbf Z}
\newcommand{\hE}{\widehat{\mathcal{E}}}
\newcommand{\hW}{\widehat{\mathcal{W}}}
\newcommand{\sfB}{\widehat{\mathsf{B}}}
\newcommand{\sfC}{\mathsf{C}}
\newcommand{\sfE}{\mathsf{E}}
\newcommand{\sfhE}{\widehat{\mathsf{E}}}
\newcommand{\sfM}{\mathsf{M}}
\newcommand{\sfDRH}{\mathbf{D}\mathsf{RH}}
\newcommand{\sfRH}{\mathsf{RH}}
\newcommand{\sfhW}{\widehat{\mathsf{W}}}
\newcommand{\oT}{\mathring{\mathrm{T}}}
\newcommand{\rH}{\mathrm{H}}
\newcommand{\rJ}{\mathrm{J}}
\renewcommand{\P}{\mathrm{P}}
\newcommand{\T}{\mathrm{T}}
\newcommand{\Aex}{\mathcal{A}^{2, \mathrm{ex}}}
\newcommand{\aut}{\mathrm{aut}}
\newcommand{\codim}{\operatorname{codim}}
\newcommand{\colim}{\operatorname*{colim}}
\newcommand{\contact}{\mathrm{contact}}
\newcommand{\conv}{\mathrm{conv}}
\newcommand{\Crit}{\mathrm{Crit}}
\newcommand{\DR}{\mathrm{DR}}
\newcommand{\End}{\mathrm{End}}
\newcommand{\Fun}{\mathrm{Fun}}
\DeclareMathOperator{\gr}{gr}
\newcommand{\id}{\mathrm{id}}
\newcommand{\Lag}{\mathrm{Lag}}
\newcommand{\LocSys}{\mathrm{LocSys}}
\newcommand{\locsys}{\mathrm{lisse}}
\newcommand{\Mod}{\mathrm{Mod}}
\newcommand{\mon}{\mathrm{mon}}
\newcommand{\muPerv}{\mathbb{P}\mathrm{erv}}
\newcommand{\muRH}{\mu\mathrm{RH}}
\newcommand{\mush}{\mu\mathrm{sh}}
\newcommand{\ord}{\mathrm{ord}}
\newcommand{\op}{\mathrm{op}}
\newcommand{\Perv}{\mathrm{Perv}}
\newcommand{\Pervaut}{\mathrm{Perv}_\aut}
\newcommand{\Pervdiff}{\mathrm{Perv}_{\mathrm{diff}}}
\newcommand{\pt}{\mathrm{pt}}
\newcommand{\PV}{\mathcal{PV}}
\renewcommand{\Re}{\operatorname{Re}}
\newcommand{\red}{\mathrm{red}}
\newcommand{\reg}{\mathrm{reg}}
\newcommand{\rR}{\mathrm{R}}
\newcommand{\RH}{\mathrm{RH}}
\newcommand{\rh}{\mathrm{rh}}
\newcommand{\Sing}{\operatorname{Sing}}
\newcommand{\Skint}{\mathrm{Sk}^{\mathrm{int}}}
\newcommand{\Skgen}{\mathrm{Sk}^{\mathrm{gen}}}
\newcommand{\SkAlgint}{\mathrm{SkAlg}^{\mathrm{int}}}
\newcommand{\SL}{\mathrm{SL}}
\newcommand{\supp}{\operatorname{supp}}
\newcommand{\TS}{\mathrm{TS}}
\newcommand{\vir}{\mathrm{vir}}
\newcommand{\vol}{\mathrm{vol}}
\newcommand{\Rep}{\mathrm{Rep}}
\newcommand{\ham}{/\!/}
\newcommand{\Hom}{\mathrm{Hom}}
\newcommand{\cHom}{\mathcal{H}om}
\newcommand{\RcHom}{\mathbb{R}\cHom}
\newcommand{\Cbb}{\C[\![\hbar]\!]}
\newcommand{\Cpp}{\C(\!(\hbar)\!)}
\newcommand{\boxast}{\stackrel{\ast}{\boxtimes}}
\newcommand{\boxD}{\stackrel{\cD}{\boxtimes}}
\newcommand{\boxE}{\stackrel{\cE}{\boxtimes}}
\newcommand{\timesast}{\stackrel{\ast}{\times}}
\newcommand{\EigenRange}{\mathbf{G}}
\begin{document}

\title{Deformation quantization and perverse sheaves}
\author{Sam Gunningham}
\address{Department of Mathematical Sciences, Montana State University, Bozeman, MT, USA}
\email{sam.gunningham@montana.edu}
\author{Pavel Safronov}
\address{School of Mathematics, University of Edinburgh, Edinburgh, UK}
\email{p.safronov@ed.ac.uk}
\begin{abstract}
Kashiwara, Polesello, Schapira and D'Agnolo defined canonical deformation quantizations of a holomorphic symplectic manifold and a holomorphic Lagrangian submanifold equipped with an orientation data. The goal of this paper is to use deformation quantization modules to construct a Fukaya-like category of holomorphic Lagrangians, resolving a conjecture of Joyce. Our main result describes the RHom complex between two such deformation quantization modules associated to a pair of Lagrangian submanifolds in terms of the derived geometry of the Lagrangian intersection. Namely, we identify the RHom complex with the DT sheaf associated to the d-critical structure on the Lagrangian intersection. Via the Riemann--Hilbert correspondence this describes the RHom complex in the category of microsheaves of sheaf quantizations of conic holomorphic Lagrangians.
\end{abstract}

\maketitle

\section*{Introduction}

	Suppose we are given a holomorphic symplectic manifold $S$ and a pair of Lagrangian submanifolds $L,M$ each equipped with a choice of square root of the canonical bundle. Associated to this data we have the following two perverse sheaves on the intersection $L\cap M$:
	\begin{itemize}
		\item By \cite{Bussi,BBDJS}, the intersection $L\cap M$ admits the structure of a $d$-critical locus, and thus carries a canonical $\C$-linear perverse sheaf $\phi_{L\cap M}$ (called the DT sheaf), equipped with a canonical automorphism $T_{L\cap M}$ (called the monodromy), obtained by gluing together vanishing cycle sheaves on local charts.
		\item By \cite{PoleselloSchapira, DAgnoloSchapira, KashiwaraSchapira}, there is a canonical $\Cpp$-algebroid $\sfhW_S$ together with a right (respectively, left) module $\sfM_L$ (respectively, $\sfM_M$), such that the derived relative tensor product $\sfM_L \otimes^{\bL}_{\sfhW_S} \sfM_M$ 
        is a $\Cpp$-linear perverse sheaf supported on $L\cap M$. 
	\end{itemize}
	
	Our primary goal of this paper is to compare these two perverse sheaves. To put the two sides on the same footing, we use a version of the Riemann--Hilbert correspondence in the $\hbar$ variable to reinterpret the pair $(\phi_{L\cap M},T_{L\cap M})$
 as a \emph{differential perverse sheaf} $\RH^{-1}(\phi_{L\cap M},T_{L\cap M})$ (informally, a $\Cpp$-linear perverse sheaf equipped with a regular $\hbar$-connection). 
	\begin{maintheorem}[\cref{thm:maincomparison}]
 \label{mainthm:comparison}
		There is a natural differential structure on the perverse sheaf $\sfM_L \otimes^L_{\sfhW_S} \sfM_M$, and an isomorphism of differential perverse sheaves
		\[
		\sfM_L\otimes^\bL_{\sfhW_S} \sfM_M\cong \RH^{-1}(\phi_{L\cap M}, T_{L\cap M}).
		\]
	\end{maintheorem}
In fact, the modules $\sfM_L$ and $\sfM_M$ appearing in \cref{mainthm:comparison} are naturally Verdier self-dual, so that \cref{mainthm:comparison} also provides an isomorphism:
\[
  \sfM_L \otimes^{\bL}_{\sfhW_S} \sfM_M \cong \RcHom_{\sfhW_S}(\sfM_L,\sfM_M)[\dim S] \cong \phi_{L\cap M} \otimes_\C \Cpp
\]
    
  While the existence of some kind of comparison between deformation quantization modules and the DT sheaf has been expected for some time in the community (see \cite[Section 4]{Bussi}), \cref{mainthm:comparison} is the first proof of such a result, and also the first statement that properly accounts for the monodromy and discrepancy of coefficient fields. As we explain below, the extra structure corresponding to the monodromy appears naturally in our approach and is a necessary ingredient in our proof of any such comparison. 

\cref{mainthm:comparison} has a number of consequences and potential applications, which we explore in the rest of this introduction: to skein modules, microlocal sheaves and Fukaya categories.

\subsection*{A local Fukaya category}
One source of motivation for our work is a proposal for a sheaf model of a Fukaya category associated to a holomorphic symplectic manifold $S$:
	\begin{itemize}
		\item Holomorphic Lagrangian submanifolds $L\subset S$ equipped with square roots $K_L^{1/2}$ should define objects.
		\item The graded Hom space between objects associated to oriented Lagrangians $L, M$ is given by the hypercohomology of $\phi_{L\cap M}$.
		\item Composition of Homs is given by a conjectural functoriality of the DT sheaf, see \cite[Conjecture 1.1]{JoyceSafronov}.
	\end{itemize}
    We refer to \cite{BehrendFantechi} and \cite[Remark 6.15]{BBDJS} as well as the more recent \cite[Section 6.3]{KontsevichSoibelman} where it is called the ``local Fukaya category''. We refer to \cite{Mladenov} for the study of Homs in this category between two compact K\"ahler Lagrangians. 
    
    Our results provide a solution to this problem. Namely, we construct a certain sheaf of $\infty$-categories $\sfDRH_S$, enriched over the category $\Rep(\mathbb Z)$ of complexes of $\C$-vector spaces with an automorphism, in which the Hom complexes are identified with the derived global sections of the DT sheaf. 
    The sheaf of $\infty$-categories $\sfDRH_S$ is a derived and enriched version of the sheaf of abelian categories $\sfRH_S$ constructed by D'Agnolo and Kashiwara \cite{DAgnoloKashiwara}. Here, the letters $\sfRH$ stand for ``regular holonomic'', as the category is constructed from regular holonomic deformation quantization modules. The composition structure is inherited from the category of deformation quantization modules (in particular, our work does not address the expected functoriality of the DT sheaf in full generality). 
    
    More precisely, we have the following result.

	\begin{maintheorem}[\cref{thm:RHproperties}]\label{mainthm:category}
		Let $S$ be a holomorphic symplectic manifold. There is a sheaf $\sfDRH_S$ of $\Rep(\Z)$-linear $\infty$-categories on $S$, equipped with a $t$-structure, which satisfies the following properties:
  \begin{itemize}
      \item For each Lagrangian submanifold $L$ of $S$ equipped with orientation data, there is a canonical object $\sfM_L$.
      \item Given a pair of oriented Lagrangian submanifolds $L,M$, there is an isomorphism
				\begin{equation}\label{eq:LagrangianHomequivalence}
                \underline{\cHom}_{\sfDRH_S}(\sfM_M, \sfM_L)[\dim S/2]\cong (\phi_{L\cap M},T_{L\cap M})
                \end{equation}
				of perverse sheaves of $\C$-vector spaces equipped with an automorphism.
  \end{itemize}
	\end{maintheorem}

    We work over $\C$ as we rely on the machinery of microdifferential modules to construct the isomorphism \eqref{eq:LagrangianHomequivalence}. One can similarly construct the $\infty$-category $\sfDRH_S$ over $\Z$ by replacing categories of microdifferential modules \cite{SatoKawaiKashiwara,KashiwaraContact} (defined over $\C$) with categories of perverse microsheaves \cite{Waschkies1,CKNS1} (defined over any ring); we similarly expect the isomorphism \eqref{eq:LagrangianHomequivalence} to hold over any ring.

\subsection*{Skein modules and perverse sheaves}
Another motivation for this work is a conjectural relationship between skein modules and the DT sheaf formulated in \cite{GJS}. Let us recall the statement. Let $M$ be a connected closed oriented 3-manifold with a basepoint $x\in M$ and $G$ a reductive algebraic group equipped with some extra data described in \cite[Section 1.3]{GJS}. Then one can define the skein module $\Skgen_G(M)$ of $M$ for generic quantum parameters, which is a $\C(q^{1/d})$-vector space for some integer $d$. For instance, for $G=\SL_2$ we obtain the Kauffman bracket skein module, which is a $\C(A)$-vector space for $A=q^{1/2}$. The representation variety \[R_G(M) = \Hom(\pi_1(M, x), G)\] carries a natural structure of a d-critical locus. In the case $G=\SL_2$ Abouzaid and Manolescu \cite{AbouzaidManolescu} denote the hypercohomology of the DT sheaf $\phi_{R_G(M)}$ on $R_G(M)$ by $\mathrm{HP}^\bullet_\sharp(M)$, the \emph{framed complexified instanton Floer homology} of $M$. The conjectural relationship between the two objects is as follows.

\begin{mainconjecture}
There is an isomorphism of $\C(q^{1/d})$-vector spaces
\[\Skgen_G(M)\cong \rH^0(R_G(M), \phi_{R_G(M)})\otimes_\C\C(q^{1/d}).\]
\label{conj:skein}
\end{mainconjecture}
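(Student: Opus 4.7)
The plan is to reduce the conjecture to an application of Theorem A via a Heegaard decomposition $M = H_1 \cup_\Sigma H_2$. The relevant geometric input is that the character variety $X_\Sigma = R_G(\Sigma)$ (on its smooth locus, or in an appropriate stacky extension) is a holomorphic symplectic variety, while $L_i = R_G(H_i) \subset X_\Sigma$ are holomorphic Lagrangian subvarieties whose intersection recovers $R_G(M) = L_1 \cap L_2$ equipped with its canonical d-critical structure coming from the Chern--Simons functional. The parameter $q^{1/d}$ on the skein side will play the role of the deformation parameter $\hbar$ on the DQ side.

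First, I would establish that the generic-parameter skein category of $\Sigma$ provides a deformation quantization of $X_\Sigma$ and upgrade this to an equivalence with the DQ-module stack $\RH_{X_\Sigma}$ of the present paper, in the spirit of the skein category/factorization homology framework (Ben-Zvi--Brochier--Jordan and collaborators) combined with the identification of skein algebras as quantizations of character varieties. Next, I would identify the skein module of each handlebody $H_i$ with a distinguished DQ module $\cM_i \in \RH_{X_\Sigma, L_i}$ supported on $L_i$; heuristically $\cM_i$ is a quantized structure sheaf of $L_i$, but care is required to track the orientation data (a square root $K_{L_i}^{1/2}$) and the monodromy parameter $\lambda_i \in \C/\Z$, both of which should be determined by the extra data fixed on $G$ in \cite[Section~1.3]{GJS}. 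The gluing formula for skein modules along $\Sigma$ should then identify $\Skgen_G(M)$ with the global sections of the derived tensor product $\cM_1 \otimes^\bL_{\sfW_{X_\Sigma}} \cM_2$.

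Given such an identification, Theorem A immediately yields
\[
\Skgen_G(M) \cong \rH^0\bigl(L_1 \cap L_2, \phi_{L_1 \cap L_2}\bigr) \otimes_\C \C(\!(\hbar)\!)
\]
after matching $\C(\!(\hbar)\!)$ with $\C(q^{1/d})$ (or passing to an appropriate common extension). One then has to verify that the d-critical structure produced by Bussi on $L_1 \cap L_2$ agrees with the Chern--Simons d-critical structure on $R_G(M)$, and that the orientation data coming from the handlebody Lagrangians match the orientation used to define $\phi_{R_G(M)}$; both statements should reduce to functorial properties of critical loci of Chern--Simons-type functionals and of square roots of canonical bundles for handlebody character varieties. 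Independence of the answer on the choice of Heegaard splitting is automatic on the right-hand side and would provide a nontrivial consistency check for the skein-side calculation.

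The main obstacle is the very first step: producing a precise equivalence between the generic-parameter skein category of $\Sigma$ and $\RH_{X_\Sigma}$ compatible with the pairing of handlebody modules used to define $\Skgen_G(M)$. This splits into two related difficulties. First, one must pin down the handlebody modules as specific simple DQ modules $\sfM^{\lambda_i}_{L_i}$ with the correct orientation data, which is a delicate normalization problem. Second, the singularities of $X_\Sigma$ and of the $L_i$ at reducible representations fall outside the smooth setting of Theorem A, so one needs either to restrict to the irreducible locus and control the contribution from reducibles separately, or to extend the main theorem to a suitable singular/stacky setting — the latter being the most substantive technical hurdle.
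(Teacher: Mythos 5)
This statement is a \emph{conjecture} (Conjecture~\ref{conj:skein}), not a theorem of the paper. The authors explicitly do not prove it; after outlining how it should follow from Theorem~\ref{mainthm A} via a Heegaard splitting, they write that ``one must in particular relate the tensor product of the algebraic deformation quantization provided by skein theory to the corresponding tensor product of the canonical analytic deformation quantization modules. We plan to address this problem in future work.'' So there is no proof in the paper to compare your attempt against, and your proposal correctly stops short of a proof as well.

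That said, your sketch matches the paper's stated strategy closely: Heegaard splitting, Lagrangian intersection presentation of $R_G(M)$, identification of the skein gluing with a relative tensor product of DQ modules, then Theorem~\ref{mainthm A}. Two points of difference in emphasis are worth flagging. First, the paper works not with $R_G(\Sigma)$ but with an open subset of $R_G(\Sigma^*)$ for the \emph{punctured} surface $\Sigma^* = \Sigma \setminus D$; this is how the authors sidestep (part of) the singularity problem you raise, and you should take the symplectic variety $S$ to be that open set rather than $R_G(\Sigma)$ or a stacky extension of it. Second, the paper isolates a different primary obstruction than the ones you emphasize: the skein-theoretic gluing produces a tensor product over an \emph{algebraic} deformation quantization $\SkAlgint_G(\Sigma^*)$, whereas Theorem~\ref{mainthm A} is about the canonical \emph{analytic} deformation quantization $\sfW_S$. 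Bridging algebraic and analytic DQ modules (and showing the two relative tensor products agree) is, in the authors' view, the central missing ingredient, in addition to the normalization/orientation matching you mention. Your proposal is a reasonable restatement of the intended approach, but neither you nor the paper closes these gaps, so this should be regarded as a strategy, not a proof.
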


We refer to \cite{DKS} for some checks of this conjecture for the Kauffman bracket skein module. The above conjecture is related to \cref{mainthm:comparison} as follows. Choose a Heegaard splitting $M=N_1\cup_\Sigma N_2$ of $M$ as a union of two handlebodies $N_1,N_2$ along the common boundary $\Sigma$. In this case we have the following:
\begin{itemize}
    \item The representation variety $R_G(M)$ becomes an intersection $L_1\cap L_2$ of two complex algebraic Lagrangian submanifolds $L_1, L_2$ (representation varieties of the handlebodies $N_1,N_2$) in a complex symplectic variety $S$ (an open subset of the representation variety of the open surface $\Sigma^* = \Sigma\setminus D$). We refer to \cite[Section 5.3]{GJS} for details.
    \item The generic skein module $\Skgen_G(M)\otimes_{\C(q^{1/d})} \C(\!(\hbar)\!)$ defined with respect to the formal variable $\hbar$ with $q=\exp(\hbar)$ is a relative tensor product
    \[\Skgen_G(M)\otimes_{\C(q^{1/d})} \C(\!(\hbar)\!)\cong \Skint_G(N_1)\otimes_{\SkAlgint_G(\Sigma^*)} \Skint_G(N_2)\]
    of \emph{algebraic} deformation quantization modules $\Skint_G(N_i)$ quantizing $L_i\subset S$ over the deformation quantization algebra $\SkAlgint_G(\Sigma^*)$ providing an algebraic deformation quantization of the symplectic variety $S$.
\end{itemize}
Thus, in order to prove \cref{conj:skein} using \cref{mainthm:comparison}, one must in particular relate the tensor product of the algebraic deformation quantization modules provided by skein theory to the corresponding tensor product of the canonical analytic deformation quantization modules. We plan to address this problem in future work.

\subsection*{Vanishing cycles and the twisted de Rham complex}
Consider the case when $S=\T^\ast X$ is the cotangent bundle of a complex manifold $X$, $L=\Gamma_0$ is the zero-section and $M=\Gamma_{df}$ is the graph of $df$, where $f\colon X\to \C$ is a holomorphic function. This can be considered as a local model for the setup of \cref{mainthm:comparison}, in the sense that every holomorphic Lagrangian intersection is locally isomorphic to such an intersection. In this case, the DT sheaf $\phi_{\Gamma_0 \cap \Gamma_{df}}$ is identified with the sheaf of vanishing cycles $\phi_f$.

The format of the statement of \cref{mainthm:comparison} is inspired by a related result \cite{SabbahSaito} of Sabbah and Saito (see also \cite{Sabbah}) relating the twisted de Rham complex
\[\DR_f=(\Omega^\bullet_U(\!(\hbar)\!), d + \hbar^{-1} df)|_{f^{-1}(0)}\] and the sheaf of vanishing cycles $\phi_f$. (Note that we restrict to the zero set of $f$ to simplify the presentation.) Namely, there is a natural connection on $\DR_f$ in the $\hbar$ direction given by $\partial_\hbar - \hbar^{-2}f$, which equips $\DR_f$ with the structure of a differential perverse sheaf. Sabbah and Saito \cite{SabbahSaito} identify the cohomology sheaves $\cH^k \DR_f$ and $\cH^k \RH^{-1}(\phi_f)$. We show that their isomorphism lifts to an isomorphism of differential perverse sheaves.

\begin{maintheorem}[\cref{cor:DRphicomparison}]
Let $f\colon U\rightarrow \C$ be a holomorphic function on a complex manifold $U$. Then there is an isomorphism of differential perverse sheaves
\[(\DR_f, \hbar\partial_\hbar - \hbar^{-1}f) \cong \RH^{-1}(\phi_f).\]
\end{maintheorem}

A closely related result has recently been shown by Schefers in \cite[Proposition 9.20]{Schefers} using different methods.
 
\subsection*{Contactification and symplectization}
The results and techniques in this paper are rooted in the tight interplay between symplectic and contact geometry in the holomorphic setting. Understanding this interplay also leads to a conceptual explanation for the structure appearing in the holomorphic Fukaya category of \cref{mainthm:category}.

Given a pair $(S,L)$ of a symplectic manifold together with a Lagrangian subvariety (for example, a finite union of Lagrangian submanifolds), a contactification of $(S,L)$ is a pair $(Y, \rho, \Lambda)$ where $\rho:Y \to S$ is a principal $\C$-bundle with connection whose curvature is the symplectic form, $\Lambda$ is a Legendrian subvariety of $Y$ (with respect to the canonical contact structure), and $\rho|_\Lambda$ is a homeomorphism onto $L$, analytic on the smooth part of $\Lambda$.

Given an arbitrary contact manifold $Y$, there is a canonical $\C^\times$-principal bundle $\gamma: \widetilde{Y}\to Y$ called the \emph{symplectization} whose total space carries a canonical symplectic structure. In the case when $\rho:Y\to S$ is the contactification of a symplectic manifold $S$, the symplectization is canonically trivial: $\widetilde{Y} = Y\times \C^\times$. Moreover, if $L$ is a Lagrangian subvariety of $S$, and $(Y,\Lambda)$ a contactification of $(S,L)$, then $\widetilde{\Lambda} = \gamma^{-1}(\Lambda) \cong \Lambda \times \C^\times$ is an exact Lagrangian subvariety in $\widetilde{Y}$.

Note that in our main results we do not assume that our symplectic manifold $S$ is exact. In particular, there may not exist a global contactification $\rho:Y\to S$. Nevertheless, we have the following fundamental result (see \cite[Proposition 2.5.3]{DAgnoloKashiwara}).
\begin{p*}[Unique Contactification Lemma, \cref{prop:uniquecontactification}]
Any pair $(S,L)$ of a symplectic manifold and Lagrangian subvariety admits a contactification $(Y,\rho, \Lambda)$ over a neighborhood of $L$ in $S$. Moreover, this data is unique up to a suitable notion of isomorphism.
 \end{p*}

An important consequence of this result is that, given an arbitrary Lagrangian intersection $L\cap M$ in a symplectic manifold $S$, there is a \emph{canonical} Legendrian intersection $\Lambda_L \cap \Lambda_M$ in a contactification $Y$ of $S$ (indeed, consider the unique contactification of $(S,L\cup M)$). In particular, $(L\cap M) \times \C^\times$ is homeomorphic to the intersection $\widetilde{\Lambda}_L\cap \widetilde{\Lambda}_M$ of $\C^\times$-invariant (exact) Lagrangians in the symplectization $\widetilde{Y}$. As explained in \cref{sect:categories}, the various perverse sheaves associated to the Lagrangian intersection $L\cap M$ in $S$ can be realized naturally as perverse sheaves on $\widetilde{\Lambda}_L \cap \widetilde{\Lambda}_M \cong (L \cap M) \times \C^\times$ which are lisse along the $\C^\times$-direction. 

We are now in a position to recall the construction, due to D'Agnolo--Kashiwara \cite{DAgnoloKashiwara}, of the category $\sfRH_S$ on an arbitrary symplectic manifold $S$. 
Suppose $L \subseteq S$ is a Lagrangian subvariety and let $(Y,\Lambda)$ be a contactification in a neighborhood of $L$ (which exists by the unique contactification lemma \cref{prop:uniquecontactification}). Kashiwara \cite{KashiwaraContact} has defined a certain filtered $\C$-algebroid $\sfE_Y$ on a general contact manifold $Y$, locally modelled on the algebra of microdifferential operators $\cE_X$ on the projectivized cotangent bundle $\P^\ast X$ of a complex manifold $X$. We denote by $\sfRH_{S,L}$ the category of regular holonomic $\sfE_Y$-modules along $\Lambda$ (understood as a sheaf of categories on $L$ by pushing forward along the homeomorphism $\rho|_{\Lambda}$). We then define $\sfRH_S$ by taking the filtered colimit of $\sfRH_{S,L}$ along the diagram of all Lagrangian subvarieties $L$ in $S$. The uniqueness part of the unique contactification lemma tells us that this construction is well-defined. 

To summarize, associated to an arbitrary holomorphic symplectic manifold $S$ and a holomorphic Lagrangian subvariety $L$, we have the following diagram:
\begin{equation}
\xymatrixcolsep{5pc}
\xymatrix{
Y\times \C^\times \cong \widetilde{Y} \ar[r]^-\gamma & Y \ar[r]^-\rho & S \\
\Lambda \times \C^\times \cong \widetilde{\Lambda} \ar@{^{(}->}[u] \ar[r] & \Lambda \ar[r]^-\sim_{\mathrm{homeo}} \ar@{^{(}->}[u] & L \ar@{^{(}->}[u] 
}
\label{eq:symplecization and contactification}
\end{equation}
Here, $Y$ is a contact manifold with Legendrian subvariety $\Lambda$, $\widetilde{Y}$ is exact symplectic manifold with exact Lagrangian subvariety $\widetilde{\Lambda}$, $\rho$ is a principal $\C$-bundle over an open neighborhood of $L$ in $S$, and $\gamma$ is a trivial $\C^\times$-bundle. 

\subsection*{Microlocal sheaves}

For an exact holomorphic symplectic manifold $W$ the authors of \cite{NadlerShende,CKNS1} have introduced a sheaf of stable $\infty$-categories $\mush_W$ on $W$ of \emph{microsheaves} by globalizing the microlocal theory of sheaves of Kashiwara and Schapira \cite{KashiwaraSchapiraSheaves}. Moreover, for a holomorphic contact manifold $Y$ the authors of \cite{CKNS1} have defined a sheaf of abelian categories $\muPerv_Y\subset \gamma_*\mush_{\tilde{Y}}$ of \emph{perverse microsheaves} following \cite{Andronikof2,Waschkies1}. Perverse microsheaves relate to deformation quantization modules via a microlocal Riemann--Hilbert correspondence \cite{Andronikof3,Waschkies2,CKNS2}: namely, there is an equivalence of sheaves of abelian categories
\[
\muRH\colon \muPerv_Y \xrightarrow{\sim} \Mod_{\rh}(\sfE_Y).
\]

Given a holomorphic Legendrian submanifold $\Lambda\subset Y$ equipped with a square root line bundle $K_\Lambda^{1/2}$ there is a perverse microsheaf $\cM_\Lambda\in\muPerv_Y$ supported on $\Lambda$ which provides a \emph{sheaf quantization} of $\Lambda$ in the sense that its microlocalization to $\Lambda$ is a trivial (twisted) local system of rank $1$. Equivalently, $\muRH(\cM_\Lambda)=\sfM_\Lambda$ is a simple $\sfE_Y$-module along $\Lambda$. \cref{mainthm:category} allows us to compute Hom complexes between two such sheaf quantizations in terms of the DT-sheaf associated to the Lagrangian intersection (to our knowledge, this is the first such comparison). 

\begin{maintheorem}[\cref{thm:microsheafHom}]\label{mainthm:microsheaves}
Let $Y$ be a holomorphic contact manifold and $\Lambda_1, \Lambda_2\subset Y$ two holomorphic Legendrian submanifolds equipped with square root line bundles $K_{\Lambda_1}^{1/2}, K_{\Lambda_2}^{1/2}$. Let $\tilde{\Lambda}_1, \tilde{\Lambda}_2$ be the corresponding homogeneous Lagrangian submanifolds of the symplectization $\tilde{Y}\rightarrow Y$. There is an isomorphism of complexes of sheaves of $\C$-vector spaces on $\tilde{Y}$:
\[\cHom_{\mush_{\tilde{Y}}}(\cM_{\Lambda_1}, \cM_{\Lambda_2})[\dim\tilde{Y}/2]\cong \phi_{\tilde{\Lambda}_1\cap \tilde{\Lambda}_2}.\]
\end{maintheorem}

We note that the statement of \cref{mainthm:microsheaves} does not involve deformation quantization modules, and we expect that the same arguments will yield a $\Z$-linear version of the comparison between microsheaf Homs and the DT-sheaf. 

Let us remark also that, while the microsheaf categories considered in \cite{NadlerShende} and \cite{CKNS1} naturally form sheaves over either a contact manifold $Y$ or an exact symplectic manifold (for example, its symplectization $\widetilde{Y}$), the Unique Contactification Lemma (\cref{prop:uniquecontactification}) can also be applied to construct a certain category of microsheaves over an arbitrary holomorphic symplectic manifold $S$. As in the construction of the category $\sfDRH_S$ from \cref{mainthm:category}, near a Lagrangian subvariety $L\subseteq S$ (e.g. a finite union of Lagrangian submanifolds), objects of this category look like microsheaves on the symplectization of the unique contactification. In particular, just as with $\sfDRH_S$, this category will be canonically enriched over the category of local systems on $\C^\times$. This construction makes sense over an arbitrary coefficient ring (for example, $\Z$) and we expect there to be a direct comparison with the hom complexes in this category and the DT-sheaf on the corresponding Lagrangian intersections in $S$. We hope to return to this in future work.



\subsection*{Fukaya categories}

Consider, as before, a holomorphic contact manifold $Y$ with a symplectization $\tilde{Y}\rightarrow Y$. In particular, $\tilde{Y}$ is a real exact symplectic manifold. If we assume that it is Weinstein, then Ganatra, Pardon and Shende \cite{GPS} identify $\mush_{\tilde{Y}}$ with the ind-completion of the wrapped Fukaya category of $\tilde{Y}$. In particular, in this case the left-hand side of \cref{mainthm:microsheaves} may be rewritten in terms of Floer homology. Via the microlocal Riemann--Hilbert correspondence, this relates the sheaf of $\infty$-categories $\sfDRH_S$ on a (non-exact) holomorphic symplectic manifold $S$ constructed in \cref{mainthm:category} to Floer theory in the symplectization $\tilde{Y}$ of a local contactification $Y\rightarrow S$.

The relationship between the sheaf of $\infty$-categories $\sfDRH_S$ and Lagrangian Floer theory in $S$ (as opposed to in a contactification $Y$ or its exact symplectization $\widetilde{Y}$, say) is more subtle, see the discussion in \cite[Section B.2]{DoanRezchikov} and \cite[Section 6.3]{KontsevichSoibelman}. In this direction, let us mention the locality results for holomorphic Floer theory established by Solomon and Verbitsky in \cite{SolomonVerbitsky} as well as a relationship between Lagrangian Floer theory in $S$ and that on the symplectization of a real contactification established by Kuwagaki, Petr and Shende in \cite{KuwagakiPetrShende}. 

\subsection*{Conventions}

\begin{itemize}
    \item For a complex manifold $X$ and a ring $R$ we denote by $\bD(X;R)$ the derived $\infty$-category of sheaves of complexes of $R$-modules.
    \item $\bD_c(X; R)\subset \bD(X; R)$ denotes the subcategory of constructible complexes of $R$-modules on $X$.
    \item $\Perv(X; R)\subset \bD_c(X; R)$ denotes the subcategory of perverse sheaves.
    \item Given a closed immersion $i\colon Y\hookrightarrow X$, perverse sheaves on $Y$ are identified with perverse sheaves on $X$ supported on $Y$ via the functor $i_*$. This identification will often be implicit.
    \item We consider the subset $\EigenRange = \{ \lambda \in \C \mid -1<\Re \lambda \leq 0\}\subset \C$ which defines a splitting of the projection $\C\rightarrow \C/\Z$.
    \item A stack of categories is the same as a sheaf of categories. When there is no confusion, we will denote its global sections by the same letter.
\end{itemize}

\subsection*{Acknowledgments}

We would like to thank Claude Sabbah, Pierre Schapira and Nick Sheridan for useful conversations. S.G. was partially supported by NSF grant DMS-2202363.

\section{Background on symplectic geometry}

In this section we recall a relationship between holomorphic symplectic and holomorphic contact manifolds, their local models as well as Maslov indices of Lagrangians.

\subsection{Symplectic and contact manifolds}

In this paper we will deal with \emph{holomorphic symplectic manifolds} $(S, \omega)$, i.e. complex manifolds $S$ equipped with a closed holomorphic two-form $\omega$ such that
\[\vol_S = \frac{\omega^{\dim S/2}}{(\dim S/2)!}\]
is a volume form. As an example, if $X$ is a complex manifold, its cotangent bundle $\T^* X$ is an exact symplectic manifold with $\omega=d\lambda_X$ for the Liouville one-form $\lambda_X$. We will also be interested in the open subset
\[\oT^* X = \T^* X\setminus \T^*_X X,\]
the cotangent bundle minus the zero section. Given a symplectic manifold $(S, \omega)$, a \defterm{Lagrangian subvariety} is a locally closed subvariety $L\subset S$ of dimension $\dim L = \dim S / 2$ such that $\omega|_{L_{\reg}} = 0$, where $L_{\reg}$ is the smooth locus of $L$. We say $L\subset S$ is a \defterm{Lagrangian submanifold} if $L$ is a smooth Lagrangian subvariety.

\begin{example}
If $Z\subset X$ is a submanifold, its conormal bundle $\T^*_Z X\subset \T^* X$ defines a (homogeneous) Lagrangian submanifold.  If $\alpha$ is a one-form, its graph $\Gamma_\alpha\subset \T^* X$ defines a Lagrangian submanifold if, and only if, $\alpha$ is closed.
\end{example}

Given a symplectomorphism $\phi\colon S_1\rightarrow S_2$, its graph defines a Lagrangian submanifold $\Gamma_\phi\subset S_1\times \overline{S}_2$, where $\overline{S}_2$ refers to $S_2$ with the opposite symplectic structure. For instance, the graph of the identity defines the diagonal Lagrangian $\Delta_S\subset S\times \overline{S}$.

We will use natural symplectomorphisms
\[\T^*X\times \T^*Y\cong \T^*(X\times Y),\qquad \T^*X\times \overline{\T^* Y}\cong \T^*(X\times Y)\]
given by $(x, p, x', p')\mapsto (x, x', p, p')$ and $(x, p, x', p')\mapsto (x, x', p, -p')$.

Besides holomorphic symplectic manifolds we will also encounter holomorphic contact manifolds. Recall that a \defterm{holomorphic contact manifold} $Y$ is a complex manifold equipped with a line bundle $\cO_Y(1)$ and a twisted one-form $\alpha\in\Gamma(Y, \Omega^1_Y\otimes \cO_Y(1))$ satisfying a nondegeneracy condition. Alternatively, a contact structure can be specified by a hyperplane distribution in $\T_Y$ given by the kernel of $\alpha$.

Let $\gamma\colon \tilde{Y}\rightarrow Y$ be the $\C^\times$-torsor associated to the inverse of $\cO_Y(1)$. Then the twisted 1-form $\alpha$ on $Y$ defines a 1-form $\alpha_{\tilde{Y}}$ on $\tilde{Y}$ linear along the fibers. The nondegeneracy condition on $\alpha$ is equivalent to the condition that $(\tilde{Y}, \omega_{\tilde{Y}}=-d\alpha_{\tilde{Y}})$ is an exact symplectic manifold. We call $\tilde{Y}$ the \defterm{symplectization} of $Y$. It carries a natural Euler vector field given by the derivative of the natural $\C^\times$-action with respect to which $\omega_{\tilde{Y}}$ has weight $1$.

A contactomorphism $\psi\colon Y_1\rightarrow Y_2$ is the same as a homogeneous symplectomorphism $\tilde{\psi}\colon \tilde{Y}_1\rightarrow \tilde{Y}_2$. A \defterm{Legendrian subvariety} of a contact manifold $Y$ is a locally closed subvariety $\Lambda\subset Y$ of dimension $\dim\Lambda = (\dim(Y)-1)/2$ such that $\alpha|_{\Lambda_{\reg}} = 0$. A Legendrian subvariety lifts to a $\C^\times$-invariant (alias, \emph{conic}) Lagrangian subvariety $\tilde{\Lambda}\subset \tilde{Y}$ in the symplectization.

\begin{example}
Consider the projectivized cotangent bundle of a complex manifold $X$ which is the quotient
\[\P^* X = \oT^* X / \C^\times.\]
There is a natural contact structure on $\P^* X$ with respect to which $\T^* X\setminus X\rightarrow \P^* X$ is the symplectization. The corresponding one-form $\alpha_{\tilde{Y}} = -\lambda_X$ is minus the Liouville one-form. A submanifold $Z\subset X$ of positive codimension defines a homogeneous Lagrangian submanifold $\oT^*_Z X\subset \oT^* X$ and a Legendrian submanifold $\P^*_Z X\subset \P^* X$.
\end{example}

\begin{example}
Consider the first jet bundle $\rJ^1 X=\T^* X\times\C$ of a complex manifold $X$. There is a natural contact structure on $\rJ^1 X$ given by $\alpha = dt - \lambda_X$, where $t$ is the coordinate along $\C$ and $\lambda_X$ the Liouville one-form. Given a holomorphic function $f\colon X\rightarrow \C$ its one-jet $j^1 f\colon X\rightarrow \rJ^1 X$ defines a Legendrian submanifold. There is a homogeneous symplectic embedding $\tilde{\rJ^1 X}\hookrightarrow \oT^*(X\times \C)$ given by $(x, p, t, \tau)\mapsto (x, t, p\tau, -\tau)$, where $(x, p)$ are Darboux coordinates on $\T^* X$. Under this embedding the conic Lagrangian submanifold $\tilde{j^1 f}\subset \tilde{\rJ^1 X}$ is sent to $\oT^*_{\Gamma_f}(X\times\C)\subset \oT^*(X\times \C)$, where $\Gamma_f\subset X\times \C$ is the submanifold defined by $t = f(x)$.
\end{example}

\subsection{Contactification}

Let us now study the process converse to symplectization.

\begin{defn}
Let $(S, \omega)$ be a symplectic manifold. A \defterm{contactification} of $S$ is a principal $\C$-bundle $\rho\colon Y\rightarrow S$ together with a connection whose curvature is equal to $-\omega$. Let $L\subset S$ be a Lagrangian subvariety. A \defterm{contactification} of $L\subset S$ is a choice of a contactification $\rho\colon Y\rightarrow S$ of $S$ together with a subvariety $\Lambda\subset Y$ (the \defterm{Legendrian lift}), such that $\rho\colon \Lambda\rightarrow L$ is a homeomorphism and an analytic isomorphism over the smooth locus $\Lambda_{\reg}$, on which the connection one-form restricts to zero.
\end{defn}

\begin{remark}
It is not in general possible to find a contactification with $\rho\colon \Lambda\rightarrow L$ an analytic isomorphism everywhere, see \cite[Example 2.5.4]{DAgnoloKashiwara}.
\end{remark}

Given a symplectomorphism $\phi\colon S_1\rightarrow S_2$ and contactifications $Y_1\rightarrow S_1$ and $Y_2\rightarrow S_2$, an isomorphism of contactifications is a diffeomorphism $\psi\colon Y_1\rightarrow Y_2$ which is $\C$-equivariant and which intertwines the connection one-forms $\alpha_i$.

A contactification $\rho\colon Y\rightarrow S$ has a natural contact structure with $\alpha$ the connection one-form on $Y$. A contactification of a Lagrangian subvariety of $S$ is then a Legendrian subvariety of $Y$. It is easy to see that contactifications of a given symplectic manifold $(S, \omega)$ form a $\C$-gerbe $\sfC_S$ whose characteristic class is $[\omega]\in\rH^2(S; \C)$.

Given a contactification $\rho\colon Y\rightarrow S$ of a symplectic manifold $S$, $\rho\colon \overline{Y}\rightarrow \overline{S}$ is a contactification of the opposite symplectic manifold, where $\overline{Y}$ is $Y$ equipped with the opposite contact form $\alpha$ and opposite $\C$-action.

\begin{example}
Suppose $(S, \omega=d\lambda)$ is an exact symplectic manifold. Then $Y=S\times \C$ with $\rho$ the projection to the first factor, $\cO_Y(1)$ trivial line bundle and $\alpha = dt - \lambda$, where $t$ is the coordinate along $\C$, is a contactification of $S$.
\end{example}

We will be particularly interested in the following case of the previous example.

\begin{example}
The projection $\rho\colon \rJ^1 X\rightarrow \T^* X$ is a contactification. Given a closed one-form $\alpha$ on $X$, Legendrian lifts of the Lagrangian submanifold $\Gamma_\alpha\subset \T^* X$ are $j^1 f\subset \rJ^1 X$ for some holomorphic function $f\colon X\rightarrow \C$ with $\alpha = df$.
\label{ex:graphcontactification}
\end{example}

Consider a contactification $\rho\colon Y\rightarrow S$ and its symplectization $\gamma\colon \tilde{Y} = Y\times \C^\times\rightarrow Y$. The $\C$-action on $Y$ lifts to a Hamiltonian $\C$-action on $\tilde{Y}$ with Hamiltonian given by the natural coordinate $\tau$ along $\C^\times$. In this case we may identify $S$ with the Hamiltonian reduction $\tilde{Y}\ham\C$ at $\tau=1$. An isomorphism of contactifications $\psi\colon Y_1\rightarrow Y_2$ covering a symplectomorphism $\phi\colon S_1\rightarrow S_2$ gives rise to a conic symplectomorphism $\tilde{\psi}\colon \tilde{Y}_1\rightarrow \tilde{Y}_2$ compatible the Hamiltonian $\C$-actions.

Given contactifications $Y_1\rightarrow S_1$ and $Y_2\rightarrow S_2$ we denote
\[Y_1\times^\ast Y_2 = (Y_1\times Y_2)/\C\rightarrow S_1\times S_2,\]
where the quotient is taken with respect to the antidiagonal action of $\C$ on $Y_1\times Y_2$. Given Legendrians $\Lambda_1\subset Y_1$ and $\Lambda_2\subset Y_2$ we denote by
\[\Lambda_1\times^\ast \Lambda_2\subset Y_1\times^\ast Y_2\]
the image of $\Lambda_1\times \Lambda_2$ under the projection $Y_1\times Y_2\rightarrow Y_1\times^\ast Y_2$.

\begin{prop}\label{prop: transforms of Legendrians}
Let $S_1, S_2$ be complex symplectic manifolds and $\rho\colon Y_i\rightarrow S_i$ their contactifications.
\begin{itemize}
    \item $Y_1\times^\ast Y_2\rightarrow S_1\times S_2$ is a contactification.
    \item Given a pair of Lagrangians $L_1\subset S_1, L_2\subset S_2$ together with their Legendrian lifts $\Lambda_1\subset Y_1,\Lambda_2\subset Y_2$ the submanifold $\Lambda_1\times^\ast \Lambda_2\subset Y_1\times^\ast Y_2$ provides a contactification of the Lagrangian $L_1\times L_2\subset S_1\times S_2$.
    \item Given a commutative diagram
    \[
    \xymatrix{
    Y_1 \ar^{\psi}[r] \ar[d] & Y_2 \ar[d] \\
    S_1 \ar^{\phi}[r] & S_2
    }
    \]
    where $\phi\colon S_1\rightarrow S_2$ is a symplectomorphism and $\psi\colon Y_1\rightarrow Y_2$ is an isomorphism of contactifications, the image $\Gamma_\psi\subset Y_1\times^\ast \overline{Y}_2$ of the graph of the contactomorphism $\psi$ under $Y_1\times \overline{Y}_2\rightarrow Y_1\times^\ast \overline{Y}_2$ defines a Legendrian lift of the graph $\Gamma_\phi\subset S_1\times \overline{S}_2$ of the symplectomorphism $\phi$.
\end{itemize}
\label{prop:productcontactification}
\end{prop}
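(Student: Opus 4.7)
The plan is to reduce all three parts to direct computations with the forms $\alpha_1, \alpha_2$ and the relevant $\C$-quotient.

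For (1), I would observe that $Y_1 \times Y_2 \to S_1 \times S_2$ is a principal $\C \times \C$-bundle with connection one-form $\alpha_1 + \alpha_2$. The infinitesimal generator of the antidiagonal $\C$-action is $\partial_{t_1} - \partial_{t_2}$; since both $\alpha_i$ are $\C \times \C$-invariant and $(\alpha_1 + \alpha_2)(\partial_{t_1} - \partial_{t_2}) = 1 - 1 = 0$, the form descends to a connection one-form on the principal $\C$-bundle $Y_1 \times^\ast Y_2 \to S_1 \times S_2$. Its curvature is $d(\alpha_1 + \alpha_2) = -\rho_1^*\omega_1 - \rho_2^*\omega_2$, which is minus the product symplectic form, as required.

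For (2), the key point is that $\Lambda_1 \times \Lambda_2 \subset Y_1 \times Y_2$ meets each antidiagonal $\C$-orbit in exactly one point: indeed each $\Lambda_i \to L_i$ is bijective, so $\Lambda_i$ selects one point in each fiber of $Y_i \to S_i$, and the fiber over $(x_1, x_2)$ contains the single point $(p_1(x_1), p_2(x_2))$ whose antidiagonal orbit is one-dimensional. Hence $\Lambda_1 \times \Lambda_2 \to \Lambda_1 \times^\ast \Lambda_2$ is a bijection, and composing with the product of the $\rho_i|_{\Lambda_i}$ yields a homeomorphism $\Lambda_1 \times^\ast \Lambda_2 \cong L_1 \times L_2$, analytic on smooth loci. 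The connection form $\alpha_1 + \alpha_2$ pulls back to $\alpha_1|_{\Lambda_1} + \alpha_2|_{\Lambda_2}$, which vanishes on smooth loci by the Legendrian property of each $\Lambda_i$.

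For (3), the crux is that the antidiagonal $\C$-action on $Y_1 \times \overline{Y}_2$, once unpacked in terms of the original $\C$-action on $Y_2$, becomes the \emph{diagonal} action on $Y_1 \times Y_2$: the sign flip from passing to $\overline{Y}_2$ cancels the antidiagonal sign. By $\C$-equivariance of $\psi$, the graph $\widetilde{\Gamma}_\psi = \{(y_1, \psi(y_1))\} \subset Y_1 \times Y_2$ is preserved by this diagonal action, so its image $\Gamma_\psi$ in the quotient $Y_1 \times^\ast \overline{Y}_2$ has dimension $\dim Y_1 - 1 = \dim S_1$, matching the Legendrian dimension. The contact form on $Y_1 \times^\ast \overline{Y}_2$ is descended from $\alpha_1 - \alpha_2$, and its pullback to $\widetilde{\Gamma}_\psi$ is $\alpha_1 - \psi^*\alpha_2 = 0$ since $\psi$ intertwines the connections. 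Finally, $\Gamma_\psi \to \Gamma_\phi$ is a homeomorphism because the parameterization $y_1 \mapsto (y_1, \psi(y_1))$ identifies the projection $\widetilde{\Gamma}_\psi \to \Gamma_\phi$ with $y_1 \mapsto (\rho_1(y_1), \phi(\rho_1(y_1)))$, and passing to $\Gamma_\psi = \widetilde{\Gamma}_\psi/\C$ corresponds exactly to dividing by the $\C$-fibers of $\rho_1$.

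The main obstacle is entirely bookkeeping: one must carefully track that $\overline{Y}_2$ carries both the opposite contact form \emph{and} the opposite $\C$-action, so that the antidiagonal quotient producing $Y_1 \times^\ast \overline{Y}_2$ is the \emph{diagonal} quotient on $Y_1 \times Y_2$. Once this identification is in place, all three parts reduce to direct checks of the symplectic and contact conditions with no substantive analytic input.
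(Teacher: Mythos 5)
Your argument is correct and mirrors the paper's proof, which likewise checks that $\alpha_1+\alpha_2$ is basic for the antidiagonal $\C$-action and descends to a connection whose curvature is minus the product symplectic form, that this form vanishes on $\Lambda_1\times\Lambda_2$, and that $\psi^*\alpha_2=\alpha_1$ forces $\alpha_1-\alpha_2$ to vanish on $\Gamma_\psi$. You are somewhat more explicit than the paper about the identification of the antidiagonal action on $Y_1\times\overline{Y}_2$ with the diagonal action on $Y_1\times Y_2$ and the injectivity of the quotient map restricted to $\Lambda_1\times\Lambda_2$, both of which the paper treats as self-evident; the only slip is the phrase ``meets each antidiagonal $\C$-orbit in exactly one point,'' which should read ``in at most one point,'' as $\Lambda_1\times\Lambda_2$ does not meet every orbit.
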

\begin{proof}
The one-forms $\alpha_i$ on $Y_i$ satisfy $g^* \alpha_i = \alpha_i$ for any $g\in\C$ as well as $\iota_R \alpha_i = 1$, where $R$ is the vector field generating the $\C$-action. Thus, $\alpha_1 + \alpha_2$ is basic, so it descends along the projection $Y_1\times Y_2\rightarrow (Y_1\times Y_2)/\C$. The diagonal action of $\C$ on $Y_1\times Y_2$ descends to an action on $(Y_1\times Y_2)/\C$ which defines the structure of a principal $\C$-bundle on $(Y_1\times Y_2)/\C\rightarrow S_1\times S_2$. The one-form $\alpha_1+\alpha_2$ on $(Y_1\times Y_2)/\C$ defines a connection one-form whose curvature is $\omega_1 + \omega_2$, so $(Y_1\times Y_2)/\C\rightarrow S_1\times S_2$ is a contactification.

The one-form $\alpha_1+\alpha_2$ vanishes on $\Lambda_1\times \Lambda_2$ and so, by counting dimensions, $\Lambda_1\times \Lambda_2\subset (Y_1\times Y_2)/\C$ is Legendrian.

It is easy to see that the composite $\Gamma_\psi\rightarrow (Y_1\times Y_2)/\C\rightarrow Y_1/\C=S_1$ is a diffeomorphism. The fact that $\psi^* \alpha_2 = \alpha_1$ implies that $\alpha_1 - \alpha_2$ vanishes on $\Gamma_\psi\subset (Y_1\times Y_2)/\C$.
\end{proof}

\begin{remark}
We may identify the symplectization of $Y_1\times^\ast Y_2$ with the Hamiltonian reduction $(\tilde{Y}_1\times \tilde{Y}_2)\ham\C$ at $\tau_1=\tau_2$. The graph $\Gamma_{\tilde{\psi}}\subset \tilde{Y}_1\times \overline{\tilde{Y}_2}$ of the conic symplectomorphism $\tilde{\psi}\colon \tilde{Y}_1\rightarrow \tilde{Y}_2$ is $\C$-equivariant and we may identify $\tilde{\Gamma_\psi}\subset (\tilde{Y}_1\times \tilde{Y}_2)\ham \C$ with $\Gamma_{\tilde{\psi}}/\C$.
\end{remark}

\begin{example}
Given a contactification $\rho\colon Y\rightarrow S$, the contactification of the diagonal Lagrangian $\Delta_S\subset S\times \overline{S}$ is given by the Legendrian $\Delta_Y\subset Y\times^\ast \overline{Y}$ given by the image of the diagonal of $Y$.
\end{example}

\begin{example}
Let $X_1, X_2$ be complex manifolds. Then
\[\rJ^1 X_1\times^\ast \rJ^1 X_2\longrightarrow \rJ^1(X_1\times X_2)\]
given by $(x, p, t), (x', p', t')\mapsto (x, x', p, p', t + t')$ is a contactomorphism. Given holomorphic functions $f_1\colon X_1\rightarrow \C$ and $f_2\colon X_2\rightarrow \C$, under this isomorphism the Legendrian
\[j^1f_1\times^\ast j^1 f_2\subset \rJ^1 X_1\times^\ast \rJ^1 X_2\]
is identified with the Legendrian $j^1(f_1\boxplus f_2)\subset \rJ^1(X_1\times X_2)$, where $f_1\boxplus f_2\colon X_1\times X_2\rightarrow \C$ is the function $(x, x')\mapsto f_1(x)+f_2(x')$. Similarly,
\[\rJ^1 X_1\times^\ast \overline{\rJ^1 X_2}\longrightarrow \rJ^1(X_1\times X_2)\]
given by $(x, p, t), (x', p', t')\mapsto (x, x', p, -p', t-t')$ is a contactomorphism.
\label{ex:productjets}
\end{example}

We will now show that near a Lagrangian subvariety $L\subset S$ there is a canonical contactification.

\begin{prop}
Let $L\subset S$ be a Lagrangian subvariety. Then there is a canonical section of $\sfC_\omega|_L$. Explicitly, there is a unique contactification $\rho\colon Y\rightarrow S$ in a neighborhood of $L$ together with a lift of $L$ to a Legendrian $\Lambda\subset Y$ such that $\rho\colon \Lambda\rightarrow L$ is a homeomorphism and, moreover, is an analytic isomorphism over the smooth locus of $L$.
\label{prop:uniquecontactification}
\end{prop}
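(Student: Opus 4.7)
The plan is to establish uniqueness first, then local existence, and then glue the local models into a global object via the canonical isomorphism furnished by uniqueness (i.e., via descent in the $\C$-gerbe $\sfC_S$).

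For uniqueness, suppose that over a common open neighborhood $U\supset L$ we have two contactifications $\rho_i\colon Y_i\to S$ each equipped with a Legendrian lift $\Lambda_i\subset Y_i$ of $L$. The difference of the two sections of $\sfC_S$---concretely, $(Y_1\times_S Y_2)/\C$ for the antidiagonal $\C$-action, with connection $\alpha_1-\alpha_2$---is a principal $\C$-bundle $P\to U$ with flat connection, and the pair $(\Lambda_1,\Lambda_2)$ determines a section of $P$ over $L$ that is flat on $L_{\reg}$. Shrinking $U$ to a Stein neighborhood of $L$ for which $L\hookrightarrow U$ is a homotopy equivalence (which exists by standard results on analytic subvarieties), the existence of this flat section forces the monodromy of $P$ to be trivial, and yields the required isomorphism $(Y_1,\Lambda_1)\cong(Y_2,\Lambda_2)$. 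Uniqueness of this isomorphism follows from the fact that any automorphism of $(Y_1,\Lambda_1)$ over $S$ is a flat $\C$-valued function on $U$ vanishing on $L$, hence identically zero on each connected component.

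For existence, I would work locally on sufficiently small Stein open sets $V\subset S$ meeting $L$. By the holomorphic Poincar\'e lemma, $\omega|_V=d\lambda_0$ for some $\lambda_0\in\Gamma(V,\Omega^1_S)$, and the Lagrangian condition forces $\lambda_0|_{L_{\reg}\cap V}$ to be closed. The problem then reduces to producing a holomorphic function $g\colon V\to\C$ with $dg|_{L_{\reg}\cap V}=\lambda_0|_{L_{\reg}\cap V}$: then $\lambda:=\lambda_0-dg$ vanishes on $L_{\reg}\cap V$, the trivial bundle $V\times\C\to V$ with contact form $dt-\lambda$ is a contactification, and the Legendrian lift is the closure in $V\times\C$ of the graph of $g|_{L_{\reg}\cap V}$. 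On a neighborhood of $L_{\reg}$, the existence of $g$ follows from the holomorphic Weinstein tubular neighborhood theorem, which identifies $(S,L_{\reg})$ locally with $(\T^*L_{\reg},L_{\reg})$ and reduces matters to the canonical contactification $\rJ^1 L_{\reg}\to\T^*L_{\reg}$ of \cref{ex:graphcontactification} with Legendrian lift $j^1 0$.

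The main technical obstacle is the construction of $g$ near singular points of $L$. After shrinking $V$ to a Stein ball on which $L\cap V$ is connected with trivial $\rH^1(L_{\reg}\cap V;\C)$, integration of $\lambda_0$ along paths in $L_{\reg}\cap V$ produces a holomorphic function $g_0$ on $L_{\reg}\cap V$; one then checks that $g_0$ is locally bounded near $L_{\mathrm{sing}}$ and extends across the singular locus by Riemann's extension theorem to a weakly holomorphic function on $L\cap V$, so that the closure of its graph in $V\times\C$ is an analytic subvariety projecting homeomorphically to $L\cap V$. Finally, Cartan's Theorem~B applied to the ideal sheaf of $L\cap V$ in the Stein manifold $V$ lifts this to a holomorphic function $g$ on $V$. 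The fact that the proposition requires $\Lambda\to L$ to be an analytic isomorphism only over $L_{\reg}$ is exactly what accommodates the possibility flagged in the preceding remark (cf.\ \cite[Example 2.5.4]{DAgnoloKashiwara}) that the extension may degenerate at singular points. Combining local existence with uniqueness produces the desired canonical contactification on a neighborhood of $L$.
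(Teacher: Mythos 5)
Your uniqueness argument is, in substance, the paper's argument repackaged in the language of flat $\C$-bundles: the bundle $P$ you construct is classified by exactly the \v{C}ech $1$-cocycle $g_i-g_j-\varphi_{ij}$ that the paper works with, and both arguments hinge on the same two facts — that one may find a neighborhood $U$ of $L$ with $\rH^1(U;\C)\cong\rH^1(L;\C)$ (the paper justifies this by citing Giesecke's triangulation theorem), and that the Legendrian lifts give a ``flat section over $L$'' which kills the class. Your explicit observation that any automorphism of $(Y_1,\Lambda_1)$ is a flat function vanishing on $L$, hence zero, is a welcome addition; the paper leaves the uniqueness of the isomorphism implicit. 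The one genuine omission here is the deduction ``section flat on $L_{\reg}\Rightarrow$ monodromy trivial on $L$'': the section you have is a priori only continuous on $L$ and flat on $L_{\reg}$, and you must invoke density of $L_{\reg}$ in $L$ (this is precisely \cite[Sublemma 2.5.2]{DAgnoloKashiwara}, which the paper cites at this point) to promote it to a flat section on all of $L$ before concluding.

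The existence part is where your proof departs from the paper — and where it has real gaps. The paper simply cites \cite[Proposition 2.5.3]{DAgnoloKashiwara}; you attempt a self-contained construction, but several steps fail or are unjustified. First, the claim that one can shrink $V$ so that $\rH^1(L_{\reg}\cap V;\C)=0$ is false in general: near an isolated singular point of a Lagrangian curve in a symplectic surface, $L_{\reg}\cap V$ deformation retracts onto the link, a nonempty disjoint union of circles, so $\rH^1\ne 0$. What is actually true, and what the argument requires, is that the \emph{class} $[\lambda_0|_{L_{\reg}\cap V}]\in\rH^1(L_{\reg}\cap V;\C)$ vanishes; this needs a Stokes-type argument using that every cycle in $L_{\reg}\cap V$ bounds in the contractible space $L\cap V$, that $\omega|_{L_{\reg}}=0$, and that the boundary contribution near $L_{\mathrm{sing}}$ goes to zero. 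Second, the boundedness of the resulting primitive $g_0$ near $L_{\mathrm{sing}}$, and the assertion that the closure of its graph is an analytic subvariety projecting \emph{homeomorphically} onto $L\cap V$, are exactly the delicate points here — the remark preceding the proposition (and \cite[Example 2.5.4]{DAgnoloKashiwara}) flags that this homeomorphism can fail to be an analytic isomorphism at singular points, and your argument does not engage with this subtlety. Third, the appeal to Cartan's Theorem~B presupposes that $g_0$ is a section of $\cO_{L\cap V}$, but you have only produced a \emph{weakly} holomorphic function, which need not lie in $\cO_L$; moreover this lift is not actually needed once one has a genuine Legendrian subvariety $\Lambda$, so the step is both questionable and superfluous. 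In short: for existence, following the paper and citing \cite[Proposition 2.5.3]{DAgnoloKashiwara} is the safe route; the direct construction you sketch would need the corrections above before it stands on its own.
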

\begin{proof}
The existence of such a contactification is shown in \cite[Proposition 2.5.3]{DAgnoloKashiwara}. Let us now show that any two such contactifications are uniquely isomorphic. By \cite{Giesecke} we may choose a triangulation of $S$ with $L\subset S$ as a simplicial subcomplex. Therefore, there is a neighborhood $U$ of $L$ which deformation retracts to $L$.

Choose a cover $\{U_i\}$ of $U$. Suppose $\{\theta^1_i, f^1_i, \varphi^1_{ij}\}$ and $\{\theta^2_i, f^2_i, \varphi^2_{ij}\}$ are two descent data on $\{U_i\}$ defining two contactifications $Y^1, Y^2$ of $S$. Here $\theta^n_i\in\Omega^1(U_i)$, $f^n_i$ is a continuous function on $L\cap U_i$ holomorphic on the smooth locus and $\varphi^n_{ij}$ is a holomorphic function on $U_{ij}$ which together satisfy
\begin{align*}
d\theta^n_i &= \omega,\qquad df^n_i|_{L_{\reg}} = \theta^n_i|_{L_{\reg}},\qquad d\varphi^n_{ij} = \theta^n_i - \theta^n_j,\\
\varphi^n_{ij}|_L &= f^n_i - f^n_j,\qquad \varphi^n_{ij}+\varphi^n_{jk}+\varphi^n_{ki} = 0.
\end{align*}

Consider their difference:
\[\theta_i = \theta^1_i - \theta^2_i,\qquad f_i = f^1_i - f^2_i,\qquad \varphi_{ij} = \varphi^1_{ij} - \varphi^2_{ij}.\]
Then
\begin{align*}
d \theta_i &= 0,\qquad df_i|_{L_{\reg}} = \theta_i|_{L_{\reg}},\qquad d\varphi_{ij} = \theta_i - \theta_j, \\ 
\varphi_{ij}|_L &= f_i - f_j,\qquad \varphi_{ij}+\varphi_{jk}+\varphi_{ki} = 0.
\end{align*}

Possibly refining the cover, we choose holomorphic functions $g_i$ on $U_i$ such that $dg_i = \theta_i$. Then $g_i - g_j - \varphi_{ij}$ define locally constant functions on $U_{ij}$ which determine a cohomology class in $\rH^1(U; \C)$. The restriction of these functions to $L$ is $(g_i - f_i) - (g_j - f_j)$. Note that $d(g_i - f_i)|_{L_{\reg}} = 0$, i.e. $g_i - f_i$ is locally constant on $L_{\reg}\cap U_i$ and hence, by density, on $L\cap U_i$ (see \cite[Sublemma 2.5.2]{DAgnoloKashiwara}). Thus, under the restriction map $\rH^1(U; \C)\rightarrow \rH^1(L; \C)$ the class $[g_i-g_j-\varphi_{ij}]$ restricts to zero. Since $U$ deformation retracts to $L$, this implies that there are locally constant functions $h_i$ on $U_i$ such that
\[\varphi_{ij} = (g_i - h_i) - (g_j - h_j),\qquad (g_i - h_i)|_L = f_i.\]
Thus, the functions $g_i-h_i$ define an isomorphism of the contactifications 
of $S$ which preserves the Legendrian lift of $L$.
\end{proof}

\subsection{Orientation torsors}

Given a triple of Lagrangian subspaces of a symplectic vector space, Wall and Kashiwara have associated a certain quadratic form whose index (in the real case) is the \emph{Maslov index} of the triple of Lagrangians. In this section we will present a variant of this construction.

Throughout this section we fix a field $k$. Let $W$ be a $k$-vector space equipped with a quadratic form $q$. There is an induced quadratic form on the determinant line $\det(W)$. We denote by $P_q$ the $\Z/2$-torsor of elements $\omega\in\det(W)$ of norm $1$. In other words, $P_q$ is the torsor of orientations of the quadratic vector space $(W, q)$. If $W$ is a quadratic vector bundle over a manifold, the definition of $P_q$ extends to define a $\Z/2$-torsor over the base manifold.

Given a pair $(W_1, q_1)$, $(W_2, q_2)$ of quadratic vector spaces, we have a natural isomorphism
\begin{equation}
P_{q_1}\otimes P_{q_2}\cong P_{q_1\oplus q_2}
\label{eq:Pqaddition}
\end{equation}
given by $\omega_1\otimes \omega_2\mapsto \omega_1\wedge \omega_2$, where we denote by $q_1\oplus q_2\colon W_1\oplus W_2\rightarrow k$ is the orthogonal direct sum of quadratic forms.

Now let $V$ be a symplectic $k$-vector space and $L_1,L_2\subset V$ are two Lagrangian subspaces which are transverse. The transversality condition may be equivalently stated as the fact that the map
\[f_{L_1L_2}\colon L_1\longrightarrow L_2^*\]
given by $f_{L_1L_2}(v) = (w\mapsto \omega(v, w))$ is an isomorphism.

\begin{defn}
Let $L_1, L_2, \dots, L_n\subset V$ be a sequence of Lagrangian subspaces, where any two subsequent subspaces are transverse.
\begin{itemize}
    \item If $n$ is even, we define the isomorphism $C(L_1, \dots, L_n)\colon L_1\rightarrow L_n^*$ to be given by the composite
    \[
    C(L_1, \dots, L_n)\colon L_1\xrightarrow{f_{L_1L_2}} L_2^*\xrightarrow{f_{L_3L_2}^{-1}} L_3\rightarrow\dots \xrightarrow{f_{L_{n-1}L_n}} L_n^*
    \]
    \item If $n$ is odd, we define the isomorphism $C(L_1, \dots, L_n)\colon L_1\rightarrow L_n$ to be given by the composite
    \[
    C(L_1, \dots, L_n)\colon L_1\xrightarrow{f_{L_1L_2}} L_2^*\xrightarrow{f_{L_3L_2}^{-1}} L_3\rightarrow\dots \xrightarrow{f_{L_{n-1}L_n}^{-1}} L_n
    \]
\end{itemize}
\label{def:sequenceLagrangianisomorphism}
\end{defn}

\begin{example}
Consider pairwise transverse Lagrangian subspaces $L_1, L_2, L_3\subset V$. The isomorphism
\[C(L_1, L_2, L_3, L_1)\colon L_1\longrightarrow L_1^*\]
is self-dual and comes from a quadratic form
\[q(L_1, L_2, L_3)\colon L_1\rightarrow k\]
which we call the \defterm{Maslov quadratic form}. For $k=\R$ its signature is the Maslov index $\tau(L_1, L_2, L_3)$ of the triple of Lagrangians as defined in \cite[Chapter A.3]{KashiwaraSchapiraSheaves}.
\label{ex:Maslovquadraticform}
\end{example}

\begin{example}
Consider a finite-dimensional vector space $L$ and the corresponding symplectic vector space $V = L\oplus L^*$ with the symplectic form
\[\omega(x+\eta, y+\xi) = \eta(y) - \xi(x).\]
It carries two transverse Lagrangian subspaces $L, L^*\subset V$. Consider a linear map $A\colon L\rightarrow L^*$ and the corresponding half-dimensional subspace $M\subset V$ given by
\[M = \{(l, A(l))\mid l\in L\}.\]
The subspace $M\subset V$ is Lagrangian precisely if $A$ is self-dual, i.e. $A$ comes from a quadratic form $q\colon L\rightarrow k$.

The subspace $M\subset V$ is transverse to $L\subset V$ (equivalently, to $L^*\subset V$) precisely if $A$ is an isomorphism, i.e. $q$ is nondegenerate.

In this case $C(L, M, L^*, L)$ is given by the composite
\[L\xrightarrow{f_{LM}} M^* \xleftarrow{f_{L^*M}} L^*\xrightarrow{f_{L^*L}} L^*.\]
It is easy to work out it to be $-A$, so that the Maslov quadratic form $q(L, M, L^*)$ is equal to $-q$.
\label{ex:Maslovexample}
\end{example}

We have the following obvious properties of the isomorphisms $C(L_1, \dots, L_n)$.

\begin{prop}$ $
\begin{itemize}
    \item Suppose $n$ is odd. Then
    \[C(L_n, L_{n+1}, \dots, L_{n+m})\circ C(L_1, \dots, L_n) = C(L_1, \dots, L_n, \dots, L_{n+m}).\]
    \item Consider a sequence $L_1, \dots, L_{n-1}, L_n, L_{n-1}, \dots, L_m$ of Lagrangian subspaces, where the subsequent ones are transverse to each other. Then
    \[C(L_1, \dots, L_{n-1}, L_n, L_{n-1}, \dots, L_m) = C(L_1, \dots, L_{n-1}, \widehat{L_n}, \widehat{L_{n-1}}, \dots, L_m).\]
\end{itemize}
\label{prop:Maslovcomposition}
\end{prop}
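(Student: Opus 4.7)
The plan is to verify both statements by unwinding the alternating nature of the composite in \cref{def:sequenceLagrangianisomorphism} and then doing a small case analysis based on parity.

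For the first part, I would observe that $C(L_1,\dots,L_n)$ is by construction an alternating composite of maps of the form $f_{L_i L_{i+1}}\colon L_i\to L_{i+1}^*$ and their inverses $f_{L_{i+1}L_i}^{-1}\colon L_i^*\to L_{i+1}$, with the pattern starting $L_1\xrightarrow{f_{L_1L_2}} L_2^*\xrightarrow{f_{L_3L_2}^{-1}} L_3\to\cdots$. The hypothesis that $n$ is odd is exactly what is needed for this alternating composite to \emph{end} in $L_n$ rather than $L_n^*$, so that continuing the same alternating recipe past position $n$ lands us in the pattern $L_n\xrightarrow{f_{L_nL_{n+1}}}L_{n+1}^*\xrightarrow{f_{L_{n+2}L_{n+1}}^{-1}}L_{n+2}\to\cdots$ that defines $C(L_n,L_{n+1},\dots,L_{n+m})$. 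Thus the concatenation of the two composites agrees, arrow by arrow, with the single composite $C(L_1,\dots,L_{n+m})$.

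For the second part, the point is to check that the "backtrack" segment $L_{n-1},L_n,L_{n-1}$ contributes the identity map in the composite, so that deleting the two entries $L_n$ and the second $L_{n-1}$ leaves the overall composite unchanged. I would split into two subcases according to the parity of the position of the first $L_{n-1}$ in the sequence, which determines whether the composite enters $L_{n-1}$ as $L_{n-1}$ itself or as $L_{n-1}^*$. In the first case the relevant pair of arrows is
\[L_{n-1}\xrightarrow{f_{L_{n-1}L_n}}L_n^*\xrightarrow{f_{L_{n-1}L_n}^{-1}}L_{n-1},\]
and in the second case it is
\[L_{n-1}^*\xrightarrow{f_{L_nL_{n-1}}^{-1}}L_n\xrightarrow{f_{L_nL_{n-1}}}L_{n-1}^*;\]
both of these are manifestly the identity. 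One then checks that, after deleting $L_n$ and the second $L_{n-1}$, the subsequent arrow from $L_{n-1}$ into the next entry $L_{n-2}$ (or $L_{n-1}^*$ into $L_{n-2}^*$, respectively) is precisely the same arrow that appeared after the cancelled segment in the original composite, because deleting an even number of entries preserves the parity of all later positions.

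Neither step is substantive — the entire proof is a bookkeeping exercise. The only mild obstacle is keeping track of the parities, specifically making sure that the two cancellations $f\circ f^{-1}=\id$ account correctly for the alternation between the "space" and "dual space" roles of each $L_i$. Once the parity case split is set up cleanly, both identities follow by inspection of the alternating composite.
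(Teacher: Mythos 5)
Your proof is correct. The paper offers no proof at all---it prefaces the statement with ``We have the following obvious properties''---so your direct unwinding of \cref{def:sequenceLagrangianisomorphism} is exactly the argument one expects: the first identity holds because the odd parity of $n$ makes the alternating composites concatenate arrow by arrow, and the second holds because the backtrack contributes either $f_{L_{n-1}L_n}^{-1}\circ f_{L_{n-1}L_n}$ or $f_{L_nL_{n-1}}\circ f_{L_nL_{n-1}}^{-1}$, both the identity, and deleting two entries preserves the parity of the remaining positions. One cosmetic slip in your parenthetical: in the odd-position case the arrow out of $L_{n-1}$ lands in the \emph{dual} of the next entry, and in the even-position case the arrow out of $L_{n-1}^*$ lands in the next entry itself (you have the duals reversed), but this typo does not touch the substance of the parity argument.
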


\begin{defn}
Let $V$ be a symplectic space and $L\subset V$ a Lagrangian subspace. An \defterm{orientation data} on $L$ is the choice of the square root line $\det(L)^{1/2}$. Let $L\subset S$ be a Lagrangian submanifold of a symplectic manifold. An \defterm{orientation data} on $L$ is the choice of the square root line bundle $K_L^{1/2}$ of the canonical bundle of $L$.
\end{defn}

\begin{defn}
Let $L_1, \dots, L_n\subset V$ be a sequence of Lagrangian subspaces, where any two subsequent subspaces are transverse. Moreover, suppose $L_1$ and $L_n$ are equipped with an orientation data. An \defterm{orientation} of the sequence $L_1, \dots, L_n$, where $n$ is odd, is the choice of an isomorphism
\[\det(L_1)^{1/2}\longrightarrow \det(L_n)^{1/2}\]
whose square is
\[\det(C(L_1, \dots, L_n))\colon \det(L_1)\longrightarrow \det(L_n).\]
In the case of $n$ even the definition is analogous. The choices of an orientation of the sequence define a $\Z/2$-torsor $P_{L_1, \dots, L_n}$.
\label{def:orientationsequenceLagrangians}
\end{defn}

\begin{remark}
The notion of an orientation of the sequence $L_1, L_2, L_3, L_1$ is independent of the choice of the orientation data on $L_1$. In this case we have
\[P_{L_1, L_2, L_3, L_1}\cong P_{q(L_1, L_2, L_3)}\]
for the Maslov quadratic form $q(L_1, L_2, L_3)\colon L_1\rightarrow k$.
\end{remark}

These definitions extend in an obvious way to symplectic bundles and sequences of Lagrangian subbundles.

\subsection{Charts on symplectic manifolds}

In this section we describe convenient local models of symplectic manifolds. Let us first recall the notion of a polarization.

\begin{defn}
A \defterm{polarization} of $S$ is a holomorphic Lagrangian fibration $\pi\colon S\rightarrow E$.
\end{defn}

\begin{remark}
The Darboux theorem ensures that polarizations of symplectic manifolds exist locally.
\end{remark}

\begin{defn}
A polarization $\pi\colon S\rightarrow E$ is \defterm{transverse} to a Lagrangian $L\subset S$ if the composite $L\subset S\rightarrow E$ is a local diffeomorphism.
\end{defn}

When a polarization $\pi$ is transverse to a Lagrangian $L$ we obtain a projection map $\pi^{S\rightarrow L}\colon S\rightarrow L$ locally on $L$.

\begin{example}
Consider a symplectic manifold $S$ equipped with a polarization $\pi$ and two Lagrangian submanifolds $L, M\subset S$ transverse to $\pi$. At any point $x\in L\cap M$ we have a sequence $T_{L, x}, T_{\pi, x}, T_{M, x}$ of Lagrangian subspaces of $T_{S, x}$, where the subsequent subspaces are transverse. Thus, from \cref{def:sequenceLagrangianisomorphism} we obtain an isomorphism
\[C(\T_{L, x}, \T_{\pi, x}, \T_{M, x})\colon \T_{L, x}\longrightarrow \T_{M, x}.\]
Unpacking this isomorphism, it is given by the composite of the isomorphisms
\[\T_{L, x}\xrightarrow{\pi} \T_{E, \pi(x)}\xleftarrow{\pi} \T_{M, x}.\]
\end{example}

\begin{example}
If $S = \T^* E$ is a cotangent bundle, there is a canonical \emph{vertical polarization} $\T^* E\rightarrow E$ given by the projection. It is transverse to the zero section $E\subset \T^* E$.
\end{example}

The following three statements provide a local description of symplectic manifolds equipped with polarizations.

\begin{prop}
Let $S$ be a symplectic manifold, $\rho\colon Y\rightarrow S$ a contactification $\pi\colon S\rightarrow E$ a polarization, $L\subset S$ a Lagrangian submanifold transverse to $\pi$ and $\Lambda_L\subset Y$ is its Legendrian lift. For every point $y\in \Lambda_L$ there is an open neighborhood $U\subset S$ of $\rho(y)$, a commutative diagram
\[
\xymatrix{
\rho^{-1}(U) \ar^{i_{\pi}}[r] \ar^{\rho}[d] & \rJ^1 L \ar[d] \\
U \ar^{\overline{i}_{\pi}}[r] & \T^* L
}
\]
with $i_{\pi}$ a contact embedding and $\overline{i}_{\pi}$ a symplectic embedding which identifies $L\subset S$ with the zero section of $\T^* L$ and $\Lambda_L\subset Y$ with $j^1 0\subset \rJ^1 L$. Moreover, the maps $i_\pi$ and $\overline{i}_{\pi}$ are uniquely determined by these properties.
\label{prop:polarizationLagrangian}
\end{prop}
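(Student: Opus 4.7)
The plan is to build $\bar{i}_\pi$ first at the symplectic level, then promote it to a contact embedding $i_\pi$ using the uniqueness of contactifications of Lagrangians established in \cref{prop:uniquecontactification}.

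For the symplectic step, I would shrink $U$ so that $\pi|_L\colon L\cap U\to E$ is an open embedding, and use it to identify a neighborhood of $\pi(\rho(y))$ in $E$ with an open subset of $L$. Under this identification, $\pi$ becomes a Lagrangian fibration $U\to L$ admitting $L\subset U$ as a Lagrangian section. The key input is the classical fact that a Lagrangian fibration with a Lagrangian section is, near that section, canonically symplectomorphic to a neighborhood of the zero section in the cotangent bundle of the base. Concretely, the Lagrangian fibers $\pi^{-1}(q)$ carry a canonical flat affine structure coming from the nondegenerate pairing $\T_p\pi^{-1}(q)\otimes \T_q L\to \C$ given by $\omega$, and $L$ selects an origin in each fiber; integrating this fiberwise identification (or verifying it in Darboux coordinates $(q_i,p_i)$ with $\omega=\sum dp_i\wedge dq_i$, in which $L=\{p=0\}$ and $\pi$ is the projection onto the $q$'s) yields the symplectic embedding $\bar{i}_\pi\colon U\to \T^*L$. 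It is uniquely characterized by the conditions that $\bar{i}_\pi(L)$ is the zero section and that it covers the chosen identification on $L$.

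For the contact step, both $\rho^{-1}(U)\to U$ (transported to $\T^*L$ via $\bar{i}_\pi$) and $\rJ^1 L\to \T^*L$ restricted to $\bar{i}_\pi(U)$ are contactifications of the same symplectic manifold, and each comes with a Legendrian lift of the zero section---namely $\Lambda_L$ and $j^1 0$ respectively. By \cref{prop:uniquecontactification}, any two contactifications of a neighborhood of a given Lagrangian that are equipped with Legendrian lifts of that Lagrangian are uniquely isomorphic. This furnishes the canonical contact embedding $i_\pi$ sending $\Lambda_L$ to $j^1 0$, and the uniqueness clause of the present proposition follows immediately from the canonicity on each side.

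The main obstacle I anticipate is the Weinstein--Darboux-type statement for Lagrangian fibrations with a section. I would handle it by choosing Darboux coordinates simultaneously adapted to $\pi$ and to $L$ (which exist precisely because $\pi$ is transverse to $L$), in which $\bar{i}_\pi$ is tautological, and then identifying the resulting map intrinsically via the canonical affine structure on fibers to confirm its independence from the choice of coordinates. Once $\bar{i}_\pi$ is in place, the contact step is essentially formal thanks to \cref{prop:uniquecontactification}.
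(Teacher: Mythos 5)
Your proposal is correct and follows essentially the same route as the paper's proof: the symplectic-level normal form is Weinstein's theorem on Lagrangian fibrations with a section (which the paper cites directly while you sketch its Darboux-coordinate proof), and the contact-level step and the uniqueness claim both come from \cref{prop:uniquecontactification} exactly as you describe.
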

\begin{proof}
For the statement on the level of symplectic manifolds, see \cite[Theorem 7.1]{WeinsteinLagrangian}. By \cref{ex:graphcontactification} $j^10\subset \rJ^1 L$ is a contactification of the zero section $L\subset \T^* L$ and by \cref{prop:uniquecontactification} contactifications of Lagrangians are unique. This implies the statement about the local form of the contactification.
\end{proof}

\begin{remark}
Given a symplectic manifold $S$ with a polarization $\pi$ transverse to a Lagrangian submanifold $L\subset S$ we may apply \cref{prop:polarizationLagrangian} to both $S$ and $\overline{S}$ to get symplectic embeddings $U\hookrightarrow \T^* L$ and $\overline{U}\hookrightarrow \T^* L$. The two differ by negation along the fibers of the cotangent bundle.
\end{remark}

In the case of two Lagrangians we have the following statement.

\begin{prop}
Let $S$ be a symplectic manifold, $\rho\colon Y\rightarrow S$ a contactification, $\pi\colon S\rightarrow E$ a polarization, $L,M \subset S$ Lagrangian submanifolds transverse to $\pi$ and $\Lambda_L,\Lambda_M\subset Y$ their Legendrian lifts. For every point $y\in \Lambda_L\cap \Lambda_M$ there is an open neighborhood $U\subset S$ of $\rho(y)$, a commutative diagram
\[
\xymatrix{
\rho^{-1}(U) \ar^{i_{\pi}}[r] \ar^{\rho}[d] & \rJ^1 L \ar[d] \\
U \ar^{\overline{i}_{\pi}}[r] & \T^* L
}
\]
where $i_{\pi}$ is a contact embedding and $\overline{i}_{\pi}$ is a symplectic embedding and a locally defined holomorphic function $f\colon L\rightarrow \C$ vanishing on $(L\cap M)^{\red}$, such that:
\begin{itemize}
    \item Under the embedding $\overline{i}_{\pi}\colon U\hookrightarrow \T^*L$ the polarization $\pi$ identifies with the vertical polarization $\T^* L\rightarrow L$.
    \item The Legendrian $\Lambda_L\subset Y$ covering $L\subset S$ identifies with $j^10\subset \rJ^1 L$ covering the zero section $L\subset \T^* L$.
    \item The Legendrian $\Lambda_M\subset Y$ covering $M\subset S$ identifies with $j^1 f\subset \rJ^1 L$ covering the graph $\Gamma_{df}\subset \T^* L$ of $df$.
\end{itemize}
Moreover, the maps $i_{\pi}$, $\overline{i}_{\pi}$ as well as the function $f$ are uniquely determined by these conditions.
\label{prop:polarizationtwoLagrangians}
\end{prop}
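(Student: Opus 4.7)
The plan is to apply \cref{prop:polarizationLagrangian} to the Lagrangian $L$ and then to extract the local form of $M$ and of its Legendrian lift, exploiting that $M$ is also transverse to $\pi$. First, I invoke \cref{prop:polarizationLagrangian} with the Lagrangian $L$ at the point $y\in\Lambda_L$. This produces a neighborhood $U\subset S$ of $\rho(y)$ together with the uniquely determined contact and symplectic embeddings $i_{\pi}\colon \rho^{-1}(U)\hookrightarrow \rJ^1 L$ and $\overline{i}_{\pi}\colon U\hookrightarrow \T^* L$ identifying $\pi$ with the vertical polarization, $L$ with the zero section, and $\Lambda_L$ with $j^1 0$, which takes care of the first two bullet points.

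Next I locate $M\cap U$ and $\Lambda_M$ in this chart. Since $M$ is Lagrangian and transverse to $\pi$, the image $\overline{i}_{\pi}(M\cap U)$ is a Lagrangian in $\T^* L$ transverse to the vertical polarization, so it is the graph $\Gamma_\alpha$ of a holomorphic closed one-form $\alpha$ on an open subset of $L$. Shrinking $U$ so that this open subset is simply connected, I can write $\alpha=df$ for some holomorphic function $f$ on it. By \cref{ex:graphcontactification}, every Legendrian lift of $\Gamma_{df}$ in $\rJ^1 L$ is of the form $j^1 h$ with $dh=df$; absorbing the resulting additive constant into $f$ yields $i_{\pi}(\Lambda_M)=j^1 f$.

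It remains to arrange the vanishing of $f$ on $(L\cap M)^{\red}$ and to establish uniqueness. Since $y\in \Lambda_L\cap \Lambda_M$ corresponds to a point of $j^1 0\cap j^1 f=\{x\in L: df(x)=0,\, f(x)=0\}$, the current $f$ already satisfies $f(\rho(y))=0$. Set-theoretically, $(L\cap M)\cap U$ identifies with $\Crit(f)$; on the smooth locus of each irreducible component of $\Crit(f)^{\red}$ the vanishing of $df$ forces $f$ to be locally constant, and by continuity $f$ is constant on each whole component. After one further shrinking of $U$ so that every irreducible component of $(L\cap M)\cap U$ passes through $\rho(y)$, each such component has $f\equiv 0$. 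The uniqueness of $i_{\pi}$ and $\overline{i}_{\pi}$ is inherited from \cref{prop:polarizationLagrangian}, and once they are fixed the presentation $\Lambda_M=j^1 f$ determines $f$ up to an additive constant, which is pinned down by the vanishing condition. The main obstacle is precisely this last shrinking step: without it $f$ need only be constant on each irreducible component of $\Crit(f)^{\red}$ and need not be zero, so the geometry of the intersection must be controlled locally around $\rho(y)$ before one can claim vanishing on all of $(L\cap M)^{\red}$.
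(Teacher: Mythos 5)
Your proof is correct and follows the same route as the paper: apply \cref{prop:polarizationLagrangian} to $L$, identify $M$ with a graph $\Gamma_{df}$ using transversality to $\pi$, lift to $j^1 f\subset\rJ^1 L$, and observe that $f$ is locally constant on $(L\cap M)^{\red}$ so that a final shrinking of $U$ ensures $f$ vanishes there. You spell out somewhat more explicitly why the shrinking step is valid (every irreducible component of the germ of $L\cap M$ at $\rho(y)$ passes through $\rho(y)$), which the paper leaves implicit; one small slip is the remark that fixing $i_\pi$ determines $f$ only "up to an additive constant" — in fact once $i_\pi$ is fixed, the equality $i_\pi(\Lambda_M)=j^1 f$ determines $f$ outright, with no freedom left over — but your overall conclusion about uniqueness is unaffected.
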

\begin{proof}
Using \cref{prop:polarizationLagrangian} we find a neighborhood $U$ of $\rho(y)$ and a contact embedding $\rho^{-1}(U)\hookrightarrow \rJ^1 L$ covering a symplectic embedding $U\subset \T^* L$ under which $\Lambda_L\subset Y$ covering $L\subset S$ identifies with $j^1 0\subset \rJ^1 L$ covering the zero section $L\subset \T^* L$.

Since $M$ is transverse to $\pi$, the composite $M\subset S\rightarrow E$ is a local diffeomorphism. Therefore, under $U\subset \T^*L$ the submanifold $M\subset S$ identifies with the graph of a locally defined one-form $\alpha$ on $L$ which is closed since $M$ is Lagrangian. Lifts of $\Gamma_\alpha\subset \T^* L$ to the contactification $\rJ^1 L\rightarrow \T^* L$ are given by choosing a function $f\colon M\rightarrow \C$ with $\alpha = df$, so that $\Lambda_M\subset Y$ is identified with $j^1 f\subset \rJ^1 L$. The function $f$ vanishes at $y\in\Lambda_L\cap \Lambda_M$ and it is locally constant on $(L\cap M)^{\red}$. Therefore, shrinking the neighborhood $U$, we may assume $f$ vanishes on all of $(L\cap M)^{\red}$.
\end{proof}

\begin{remark}
It is not in general possible to arrange $f$ to be zero on $L\cap M$ when this intersection is non-reduced, see \cite[Example 2.13]{JoycedCrit}.
\end{remark}

Finally, we will need to describe the change of local models under a change of polarizations. Recall the symplectomorphism $\T^*L\times \overline{\T^* M}\cong \T^*(L\times M)$ and the contactomorphism $\rJ^1L\times^\ast \overline{\rJ^1M}\cong \rJ^1(L\times M)$ from \cref{ex:productjets}.

\begin{prop}
Let $S$ be a symplectic manifold, $\rho\colon Y\rightarrow S$ a contactification, $\pi_i\colon S\rightarrow E_i$ for $i=1, 2$ transverse polarizations, $L,M \subset S$ Lagrangian submanifolds transverse to both $\pi_1$ and $\pi_2$ and $\Lambda_L,\Lambda_M\subset Y$ their Legendrian lifts. For every point $y\in \Lambda_L\cap \Lambda_M$ there is an open neighborhood $U\subset S$ of $\rho(y)$, commutative diagrams
\begin{align*}
&\xymatrix{
\rho^{-1}(U) \ar^{i_{\pi_1}}[r] \ar^{\rho}[d] & \rJ^1 L \ar[d] \\
U \ar^{\overline{i}_{\pi_1}}[r] & \T^* L
}\qquad
\xymatrix{
\rho^{-1}(U) \ar^{i_{\pi_2}}[r] \ar^{\rho}[d] & \rJ^1 M \ar[d] \\
U \ar^{\overline{i}_{\pi_2}}[r] & \T^* M
}\\
&\xymatrix@C=1.5cm{
\rho^{-1}(U)\times^\ast \overline{\rho^{-1}(U)} \ar^-{i_{\pi_1}\times i_{\pi_2}}[r] \ar^{\rho\times\rho}[d] & \rJ^1 (L\times M) \ar[d] \\
U\times \overline{U} \ar^{\overline{i}_{\pi_1}\times \overline{i}_{\pi_2}}[r] & \T^*(L\times M)
}
\end{align*}
where $i_{\pi_1},i_{\pi_2}$ are contact embeddings and $\overline{i}_{\pi_1},\overline{i}_{\pi_2}$ are symplectic embeddings and locally defined functions $h\colon L\times M\rightarrow \C$, $f\colon L\rightarrow \C$, $g\colon M\rightarrow \C$ vanishing on $(L\cap M)^{\red}$, such that:
\begin{itemize}
    \item Under the embedding $U\hookrightarrow \T^* L$ the polarization $\pi_1$ identifies with the vertical polarization $\T^* L\rightarrow L$.
    \item Under the contact embedding $\rho^{-1}(U)\hookrightarrow \rJ^1 L$ the Legendrian $\Lambda_L\subset Y$ covering $L\subset S$ identifies with $j^10\subset \rJ^1 L$ covering the zero section $L\subset \T^* L$. Under the same embedding the Legendrian $\Lambda_M\subset Y$ covering $M\subset S$ identifies with $j^1 f\subset \rJ^1 L$ covering $\Gamma_{df}\subset \T^* L$.
    \item Under the embedding $U\hookrightarrow \T^* M$ the polarization $\pi_2$ identifies with the vertical polarization $\T^* M\rightarrow M$.
    \item Under the contact embedding $\rho^{-1}(U)\hookrightarrow \rJ^1 M$ the Legendrian $\Lambda_M\subset Y$ covering $M\subset S$ identifies with $j^10\subset \rJ^1 M$ covering the zero section $M\subset \T^* M$. Under the same embedding the Legendrian $\Lambda_L\subset Y$ covering $L\subset S$ identifies with $j^1 g\subset \rJ^1 M$ covering $\Gamma_{dg}\subset \T^* M$.
    \item Under the embedding $U\times \overline{U}\hookrightarrow \T^* (L\times M)$ the polarization $\pi_1\times \pi_2$ identifies with the vertical polarization $\T^* (L\times M)\rightarrow L\times M$.
    \item Under the contact embedding $\rho^{-1}(U)\times^\ast \overline{\rho^{-1}(U)}\hookrightarrow \rJ^1 (L\times M)$ the Legendrian $\Lambda_L\times^\ast \Lambda_M\subset Y\times^\ast \overline{Y}$ covering $L\times M\subset S\times \overline{S}$ identifies with $j^10\subset \rJ^1 (L\times M)$ covering the zero section $L\times M\subset \T^* (L\times M)$. Under the same embedding the Legendrian $\Delta_Y\subset Y\times^\ast \overline{Y}$ covering the diagonal $\Delta_S\subset S\times \overline{S}$ identifies with $j^1 h\subset \rJ^1 (L\times M)$ covering $\Gamma_{dh}\subset \T^* (L\times M)$.
    \item Consider the local diffeomorphism $\pi^{S\rightarrow L}_1\colon M\rightarrow L$. Under the locally defined embedding
    \[\Xi= (\id\times (\pi^{S\rightarrow L}_1)^{-1})\colon L\hookrightarrow L\times M\]
    we have $\Xi^* h = f$.
    \item Consider the local diffeomorphism $\pi^{S\rightarrow M}_2\colon L\rightarrow M$. Under the locally defined embedding
    \[\Upsilon= ((\pi^{S\rightarrow M}_2)^{-1}\times \id)\colon M\hookrightarrow L\times M\]
    we have $\Upsilon^* h = -g$.
\end{itemize}
\label{prop:twopolarizationstwoLagrangians}
\end{prop}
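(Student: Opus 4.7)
The plan is to derive all three commutative diagrams by three applications of \cref{prop:polarizationtwoLagrangians}, and then to verify the pullback identities $\Xi^*h=f$ and $\Upsilon^*h=-g$ by a direct chain-rule computation, using uniqueness of the generating functions together with transversality of the two polarizations.

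First I apply \cref{prop:polarizationtwoLagrangians} to the data $(S,\rho,\pi_1,L,M)$ to produce the first commutative diagram and a function $f\colon L\to\C$ vanishing on $(L\cap M)^{\red}$. Symmetrically, applying it to $(S,\rho,\pi_2,M,L)$ (interchanging the roles of the two Lagrangians) yields the second diagram and a function $g\colon M\to\C$. For the third diagram, I apply the same proposition to the symplectic manifold $S\times \overline{S}$, equipped with the contactification $Y\times^\ast \overline{Y}$ provided by \cref{prop:productcontactification}, the polarization $\pi_1\times\pi_2$, and the Lagrangians $L\times M$ and $\Delta_S$ with their Legendrian lifts $\Lambda_L\times^\ast\Lambda_M$ and $\Delta_Y$. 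The lift $\Lambda_L\times^\ast\Lambda_M$ of $L\times M$ is given by \cref{prop:productcontactification}, and the lift $\Delta_Y$ of $\Delta_S$ appears in the example following that proposition. Transversality of $L\times M$ to $\pi_1\times\pi_2$ is immediate from the transversality hypotheses on $L$ and $M$, while transversality of $\Delta_S$ to $\pi_1\times\pi_2$ is equivalent to the transversality of the two polarizations on $S$. This yields a function $h\colon L\times M\to\C$ vanishing on $(L\times M\cap\Delta_S)^{\red}$. After passing to a common open neighborhood $U$ of $\rho(y)$, the three diagrams all fit together on the same open set, and the point $[y,y]\in\Delta_Y\cap (\Lambda_L\times^\ast\Lambda_M)$ serves as the base point for the third application.

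It remains to verify $\Xi^*h=f$; the identity $\Upsilon^*h=-g$ then follows by a parallel computation, with the sign appearing through the convention $(x,p,x',p')\mapsto(x,x',p,-p')$ used to identify $\T^*L\times\overline{\T^*M}\cong \T^*(L\times M)$. Since $\Xi^*h$ and $f$ both vanish on $(L\cap M)^{\red}$, which contains $\rho(y)$ and is therefore nonempty in the chosen neighborhood, it suffices to show $d(\Xi^*h)=df$ as one-forms on $L$. Writing $q=(\pi_1^{S\to L}|_M)^{-1}\colon L\to M$ so that $\Xi(x)=(x,q(x))$, the chain rule gives
\[
d(\Xi^*h)(x) = \partial_xh|_{(x,q(x))} + \partial_yh|_{(x,q(x))}\circ dq(x).
\]
Since $h$ is a generating function for $\Delta_S$ in the polarization $\pi_1\times\pi_2$, one has $\partial_xh|_{(x,q(x))}=\alpha_1(s)$ and $\partial_yh|_{(x,q(x))}=-\alpha_2(s)$, where $s\in S$ is the unique point with $\pi_1(s)=x$ and $\pi_2(s)=q(x)$, and $\alpha_i$ denotes the cotangent coordinate given by $\overline{i}_{\pi_i}$. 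The crucial geometric observation is that $s=q(x)$: indeed $q(x)\in M\subset S$ satisfies $\pi_1(q(x))=x$ and $\pi_2(q(x))=q(x)$, and by transversality of the polarizations such a point is unique. Since $q(x)\in M$, the first diagram gives $\alpha_1(q(x))=df(x)$ (because $M$ embeds in $\T^*L$ as $\Gamma_{df}$) and the second gives $\alpha_2(q(x))=0$ (because $M$ embeds in $\T^*M$ as the zero section). Therefore $d(\Xi^*h)(x)=df(x)$, and the identity $\Xi^*h=f$ follows.

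The main obstacle is the geometric identification $s=q(x)$: this is precisely where the transversality of the two polarizations enters the argument and allows the two-variable chain-rule expression to collapse to the one-variable differential $df$. Uniqueness of the functions $f$, $g$, $h$ in \cref{prop:polarizationtwoLagrangians} is used implicitly to ensure the three local models are rigid, so that the verification of the pullback relations at the level of differentials suffices to prove them as equalities of functions.
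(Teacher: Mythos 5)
Your construction of the three diagrams matches the paper's exactly: three applications of \cref{prop:polarizationtwoLagrangians}, to $(Y\to S,\pi_1,L,M)$, to $(Y\to S,\pi_2,M,L)$, and to $(Y\times^\ast\overline Y\to S\times\overline S,\pi_1\times\pi_2,L\times M,\Delta_S)$ with the contactification supplied by \cref{prop:productcontactification}. Where you diverge is in the verification of $\Xi^\ast h=f$ and $\Upsilon^\ast h=-g$. The paper argues structurally: it observes that by uniqueness of contact embeddings the third embedding is $i_{\pi_1}\times i_{\pi_2}$, and then writes down a commutative diagram $M\to\Lambda_M\times^\ast\Lambda_M\subset\rho^{-1}(U)\times^\ast\overline{\rho^{-1}(U)}$ against $M\to\Delta_Y$, which under $i_{\pi_1}\times i_{\pi_2}$ reads $(\pi_1\times\id)^\ast h=\pi_1^\ast f$ directly as an equality of functions (the $\rJ^1$-coordinate compares the jet values, not just the derivatives). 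You instead compute $d(\Xi^\ast h)$ by the chain rule using the generating-function description $\partial_x h=\alpha_1(s)$, $\partial_m h=-\alpha_2(s)$, identify $s=q(x)\in M$, read off $\alpha_1(q(x))=df(x)$ and $\alpha_2(q(x))=0$ from the $L$- and $M$-charts, and then integrate using the common vanishing on $(L\cap M)^{\red}$. Both arguments are correct; yours is more computational and buys transparency about exactly where transversality of the polarizations enters ($s=q(x)$), while the paper's argument avoids the derivative-then-integrate step by comparing Legendrian lifts directly. One small thing you elide: your chain-rule step uses that the cotangent coordinate in the $LM$-chart really is $\alpha_1\times(-\alpha_2)$, i.e. that the third embedding equals $i_{\pi_1}\times i_{\pi_2}$ and $\overline i_{\pi_1}\times\overline i_{\pi_2}$. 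This is true, but it requires invoking the uniqueness clause of \cref{prop:polarizationLagrangian}, which the paper states explicitly and which you only gesture at in your closing remark about the local models being ``rigid.'' Worth spelling out, since a priori the third application of \cref{prop:polarizationtwoLagrangians} produces \emph{some} embedding, not obviously the product one.
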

\begin{proof}
Applying \cref{prop:polarizationtwoLagrangians} to $(Y\rightarrow S, \pi_1, L, M)$ we get a commutative diagram
\[
\xymatrix{
\rho^{-1}(U) \ar^{i_{\pi_1}}[r] \ar[d] & \rJ^1 L \ar[d] \\
U \ar^{\overline{i}_{\pi_1}}[r] & \T^* L
}
\]
together with a locally defined function $f\colon L\rightarrow \C$, such that under the embeddings the polarization $\pi_1$ goes to the vertical polarization, $L$ goes to the zero section, $\Lambda_L$ goes to $j^10$, $M$ goes to $\Gamma_{df}\subset \T^* L$ and $\Lambda_M$ goes to $j^1 f\subset \rJ^1 L$.

Applying \cref{prop:polarizationtwoLagrangians} to $(Y\rightarrow S, \pi_2, M, L)$ (and possibly shrinking $U$) we get a commutative diagram
\[
\xymatrix{
\rho^{-1}(U) \ar^{i_{\pi_2}}[r] \ar[d] & \rJ^1 M \ar[d] \\
U \ar^{\overline{i}_{\pi_2}}[r] & \T^* M
}
\]
together with a locally defined function $g\colon M\rightarrow \C$, such that under the embeddings the polarization $\pi_2$ goes to the vertical polarization, $M$ goes to the zero section, $\Lambda_M$ goes to $j^10$, $L$ goes to $\Gamma_{dg}\subset \T^* M$ and $\Lambda_L$ goes to $j^1 g\subset \rJ^1 M$.

Consider the product $S\times \overline{S}$. By \cref{prop:productcontactification} it has a contactification $Y\times^\ast \overline{Y}$. We have a Lagrangian submanifold $L\times M\subset S\times \overline{S}$ with a Legendrian lift $\Lambda_L\times \Lambda_M$. The diagonal $\Delta_S\subset S\times \overline{S}$ also defines a Lagrangian submanifold with a Legendrian lift $\Delta_Y\subset Y\times^\ast \overline{Y}$. We have a polarization $\pi_1\times \pi_2$ on $S\times \overline{S}$ which is transverse to both $L\times M$ and $\Delta$. Therefore, we may apply \cref{prop:polarizationtwoLagrangians} to $(Y\times^\ast \overline{Y}\rightarrow S\times \overline{S}, \pi_1\times \pi_2, L\times M, \Delta_S)$ (and again by shrinking $U$) we get a commutative diagram
\[
\xymatrix{
\rho^{-1}(U)\times^\ast \overline{\rho^{-1}(U)}\ar[r] \ar[d] & \rJ^1 (L\times M) \ar[d] \\
U\times \overline{U} \ar[r] & \T^* (L\times M)
}
\]
together with a locally defined function $h\colon L\times M\rightarrow \C$, such that under the embeddings the polarization $\pi_1\times\pi_2$ goes to the vertical polarization, $L\times M$ goes to the zero section, $\Lambda_L\times \Lambda_M$ goes to $j^10$, $\Delta_S$ goes to $\Gamma_{dh}\subset \T^*(L\times M)$ and $\Delta_Y$ goes to $j^1 h\subset \rJ^1 (L\times M)$. By uniqueness of the contact embeddings, we see that the top map is $i_1\times i_2\colon \rho^{-1}(U)\times^\ast \overline{\rho^{-1}(U)}\rightarrow \rJ^1 L\times^\ast \overline{\rJ^1 M}\cong \rJ^1(L\times M)$ and the bottom map is $\overline{i}_1\times \overline{i}_2\colon U\times \overline{U}\rightarrow \T^*L\times \overline{\T^*M}\cong \T^*(L\times M)$.

By construction we have a commutative diagram
\[
\xymatrix{
M\ar^{\pi_1\times \id}[rr] \ar[d] & & L\times M  \ar^{j^1(f\boxplus 0)}[d] \\
S \ar^-{\pi_1\times \pi_2}[r] & L\times M \ar^-{j^1 h}[r] & \rJ^1(L\times M)
}
\]
Its commutativity implies that $(\pi_1^{S\rightarrow L}\times \id)^* h = (\pi_1^{S\rightarrow L})^* f$ and hence $\Xi^* h = f$. The claim about $\Upsilon$ is proven similarly.
\end{proof}

In the setup of the previous proposition we get a local symplectomorphism $\T^* L\cong \T^* M$ and a local contactomorphism $\rJ^1 L\cong \rJ^1 M$ covering it. Their graphs may be described as follows:
\begin{itemize}
    \item The correspondence
    \[
    \xymatrix{
    & \Gamma_{dh} \ar_{p_1}[dl] \ar^{p_2^a}[dr] & \\
    \T^* L && \T^* M
    }
    \]
    where $p_1\colon \T^*(L\times M)\rightarrow \T^* L$ is the projection on the first factor and $p_2^a\colon \T^*(L\times M)\rightarrow \T^* M$ is the projection on the second factor post-composed with negation along the fibers is the graph of the symplectomorphism between $U\hookrightarrow \T^* L$ and $U\hookrightarrow \T^* M$.
    \item Under the identification $\rJ^1 L\times^\ast\overline{\rJ^1 M}\cong \rJ^1(L\times M)$ from \cref{ex:graphcontactification} the graph $\Gamma_\psi$ of the local contactomorphism $\rJ^1 L\cong \rJ^1 M$ corresponds to the Legendrian $j^1 h\subset \rJ^1(L\times M)$.
    \item The correspondence
    \[
    \xymatrix{
    & \T^*_Z (L\times \C\times M\times \C) \ar_{p_1}[dl] \ar^{p_2^a}[dr] & \\
    \T^*(L\times \C) && \T^*(M\times \C)
    }
    \]
    is the graph of the local homogeneous symplectomorphism between $\T^*(L\times \C)$ and $\T^*(M\times \C)$, where $Z = \{t_L - t_M = h\}\subset L\times \C\times M\times \C$.
\end{itemize}

Consider the map $\Xi\colon L\rightarrow L\times M$ from \cref{prop:twopolarizationstwoLagrangians}. By construction the critical loci of $f$ on $L$ and $h$ on $L\times M$ are both $L\cap M$. So, by \cite[Proposition 2.25]{JoycedCrit} there is a canonical quadratic form $q_\Xi$ on the normal bundle $N_\Xi|_{L\cap M}$ of the embedding $\Xi\colon L\rightarrow L\times M$. It has the following alternative description. For any $x\in L\cap M$ we have a sequence $\T_{M, x}, \T_{\pi_1, x}, \T_{\pi_2, x}, \T_{M, x}$ of Lagrangian subspaces of $\T_{S, x}$, where the subsequent subspaces are transverse. Therefore, we have the Maslov quadratic form $q(\T_M, \T_{\pi_1}, \T_{\pi_2})$ on $\T_M|_{L\cap M}$.

\begin{prop}
Under the natural isomorphism $N_\Xi\cong \T_M$ provided by the projection $L\times M\rightarrow M$ the quadratic form $q_\Xi$ on $N_\Xi|_{L\cap M}$ is equal to the Maslov quadratic form $q(\T_M, \T_{\pi_1}, \T_{\pi_2})$ on $\T_M|_{L\cap M}$. In particular, the quadratic form $q_\Xi$ on $\T_M|_{L\cap M}$ induces a quadratic form on $(\det \T^*_M)^{\otimes 2}|_{L\cap M}$ given by
\[\omega^{\otimes 2}\mapsto \frac{(\pi_{12}^{S\times S\rightarrow M\times M})^*(\omega\wedge \omega)}{\vol_S}|_{L\cap M},\]
where $\pi_{12}^{S\times S\rightarrow M\times M}$ is the composite $S\hookrightarrow S\times S\xrightarrow{\pi_1\times \pi_2} M\times M$.
\label{prop:quadraticformcriticalembedding}
\end{prop}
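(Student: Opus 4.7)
The plan is to reduce to an entirely local computation at $x \in L \cap M$ using the Darboux presentation of \cref{prop:twopolarizationstwoLagrangians}. Working in the $\T^*L$-model, write $\T_{S, x} = \T_{L, x} \oplus \T_{\pi_1, x}$ with coordinates $(q_L, p_L)$. Then $\T_{L, x} = \{p_L = 0\}$, $\T_{\pi_1, x} = \{q_L = 0\}$, $\T_{M, x} = \{p_L = A_L q_L\}$ with $A_L = \Hess f$. Linearizing the symplectomorphism $\phi\colon \T^*L \to \T^*M$ produced in \cref{prop:twopolarizationstwoLagrangians} yields $p_L = H_{LL} q_L + H_{LM} q_M$ and $p_M = -H_{LM}^T q_L - H_{MM} q_M$, where $H_{\bullet\bullet}$ denote the blocks of $\Hess h|_{(x_L, x_M)}$. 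Imposing that $\phi$ matches the two local models of $L$ and of $M$ then gives the identities $A_L = H_{LL} - H_{LM} H_{MM}^{-1} H_{LM}^T$ and $\T_{\pi_2, x} = \{p_L = H_{LL} q_L\}$.

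Next I would compute both sides of the first assertion. For the Maslov form $q(\T_M, \T_{\pi_1}, \T_{\pi_2})$, I would trace through \cref{def:sequenceLagrangianisomorphism} with $\T_{M, x}$ parameterized by $q_L$, obtaining $C(\T_M, \T_{\pi_1}, \T_{\pi_2}, \T_M) = A_L - H_{LL} = -H_{LM} H_{MM}^{-1} H_{LM}^T$. For $q_\Xi$, since $\Xi^* h = f$ and $\Crit h = L\cap M = \Crit f$, the function $h$ minus the pullback of $f$ via the projection $L\times M \to L$ vanishes to second order along $\Xi(L)$; by \cite[Proposition 2.25]{JoycedCrit} its Hessian descends to $N_\Xi$ and realizes $q_\Xi$, and a short block computation evaluates this descended form as $H_{MM}$ on the representative $(0, q_M)$. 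The isomorphism $\T_M \cong \T_{x_M} M$ induced by $M \subset S$ is the coordinate change $q_M = -H_{MM}^{-1} H_{LM}^T q_L$ obtained by following $\phi$ on $\T_M$, and substituting translates one expression into the other.

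For the second assertion, I would compute the Jacobian of the linearized $\pi_{12}^{S\times S \to M\times M}\colon S \to M\times M$ at $x$: $d\pi_1^{S\to M}$ kills $\T_{\pi_1, x}$ and acts on $\T_{L, x}$ by $-H_{MM}^{-1} H_{LM}^T$, while $d\pi_2^{S\to M}$ kills $\T_{\pi_2, x}$ and acts by $H_{LM}^{-1}(\delta p_L - H_{LL}\delta q_L)$. A block-triangular determinant then evaluates $(\pi_{12})^*(\omega\wedge\omega)/\vol_S$ at $\pm \det(H_{MM})^{-1}$ for $\omega = dq_M^1 \wedge \cdots \wedge dq_M^n$, which matches the pairing of $\det q_\Xi \in (\det \T_M^*)^{\otimes 2}$ with $\omega^{\otimes 2}$ predicted by the first part. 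The main obstacle will be bookkeeping: tracking sign conventions, the factor of $1/2$ relating a quadratic form to its polarization, and the multiple identifications of $\T_M$ as a subspace of $\T_S$, as the tangent to $M$ as an abstract manifold, and as $N_\Xi$ via the projection $L\times M \to M$.
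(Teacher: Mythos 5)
Your overall strategy — a local linear-algebra computation in Darboux coordinates using the Hessian blocks of $h$, with \cite[Proposition 2.22]{JoycedCrit} to linearize the critical embedding — is essentially the same as the paper's, but you set it up in the $\T^* L$ chart rather than the $\T^* M$ chart, which forces an extra coordinate change at the end. The paper instead runs the whole computation in $\T^*M$: it pushes $\T_{\pi_1,x}$ through the two local diffeomorphisms $L\times M\to \T^*L$ and $L\times M\to \T^*M$ to express $\T_{\pi_1,x}$ as a graph $\{(q_M, -H_{MM}q_M)\}$ inside $\T_{M,x}\oplus\T^*_{M,x}$, reads off the Maslov form $H_{MM}$ directly from \cref{ex:Maslovexample}, and observes that $\Xi$ is cut out by $\partial h/\partial m=0$, so $q_\Xi = H_{MM}$ in the same coordinates — no change of basis is needed. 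Working in $\T^*M$ is the natural choice here precisely because the proposition identifies $N_\Xi\cong \T_M$, so both quadratic forms want to be expressed in $m$-coordinates.

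There is a concrete sign problem in your computation that you would hit when fixing the bookkeeping you flag. With the paper's convention $\omega((q_1,p_1),(q_2,p_2)) = p_1q_2 - p_2q_1$ (i.e.\ \cref{ex:Maslovexample}'s $\omega(x+\eta,y+\xi)=\eta(y)-\xi(x)$), unwinding \cref{def:sequenceLagrangianisomorphism} for the triple $(\T_M,\T_{\pi_1},\T_{\pi_2})$ in the $q_L$-parameterization yields $C(\T_M,\T_{\pi_1},\T_{\pi_2},\T_M) = H_{LL}-A_L = H_{LM}H_{MM}^{-1}H_{LM}^T$, the negative of your stated $A_L - H_{LL}$. (The subtlety: in your chart $\T_{\pi_1}$ plays the role of the fiber $L^*$ and $\T_{\pi_2}$ is the graph, so you need $q(L,L^*,M')$, not $q(L,M',L^*)$ as computed in \cref{ex:Maslovexample}; these differ by a sign.) After the substitution $q_M = -H_{MM}^{-1}H_{LM}^Tq_L$, the corrected $H_{LL}-A_L$ becomes $H_{MM}$ and does match your $q_\Xi$, whereas your $A_L-H_{LL}$ would become $-H_{MM}$. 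So the method works, but you must resolve the sign before the two expressions actually agree. Your explicit block-determinant check of the displayed formula for $(\det\T^*_M)^{\otimes 2}$ is a useful addition; the paper leaves that ``In particular'' clause to the reader.
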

\begin{proof}
Consider $x\in L\cap M$. Choose local coordinates $\{l_a\}$ on $L$ and $\{m_i\}$ on $M$. We have a local diffeomorphism $L\times M\rightarrow \T^* L$ near $(x, x)$ given by $(l, m)\mapsto (l, \frac{\partial h}{\partial l_a})$. Similarly, we have a local diffeomorphism $L\times M\rightarrow \T^* M$ near $(x, x)$ given by $(l, m)\mapsto (m, -\frac{\partial h}{\partial m_i})$. The induced maps on tangent spaces are given by the matrices
\[
\begin{pmatrix}
\id & 0 \\
\frac{\partial^2 h}{\partial l_a\partial l_b} & \frac{\partial^2 h}{\partial l_a\partial m_i}
\end{pmatrix}\colon \T_{L\times M, (x, x)}\longrightarrow \T_{\T^* L, (x, 0)}
\]
and
\[
\begin{pmatrix}
0 & \id \\
-\frac{\partial^2 h}{\partial l_a\partial m_i} & -\frac{\partial^2 h}{\partial m_i\partial m_j}
\end{pmatrix}\colon \T_{L\times M, (x, x)}\longrightarrow \T_{\T^* M, (x, 0)}.
\]

By assumption the matrix $\frac{\partial^2 h}{\partial l_a\partial m_i}(x)$ is invertible. Let us denote by $A_{ia}$ its inverse. Thus, the composite $\T_{\T^* L, (x, 0)}\rightarrow \T_{L\times M, (x, x)}\rightarrow \T_{\T^* M, (x, 0)}$ is given by
\[
\begin{pmatrix}
0 & \id \\
-\frac{\partial^2 h}{\partial l_a\partial m_i} & -\frac{\partial^2 h}{\partial m_i\partial m_j}
\end{pmatrix}
\begin{pmatrix}
\id & 0 \\
-\sum_b\frac{\partial^2 h}{\partial l_a\partial l_b} A_{bi} & A_{ai}
\end{pmatrix}
\]
which is equal to
\[
\begin{pmatrix}
-\sum_b\frac{\partial^2 h}{\partial l_a\partial l_b} A_{bi} & A_{ai} \\
-\frac{\partial^2 h}{\partial l_a\partial m_i} + \sum_{b,j}\frac{\partial^2 h}{\partial m_i\partial m_j}\frac{\partial^2 h}{\partial l_a\partial l_b} A_{bj} & -\sum_j\frac{\partial^2 h}{\partial m_i\partial m_j} A_{aj}
\end{pmatrix}
\]
Therefore, the subspace $\T_{\pi_1, x}\subset \T_{\T^* M, (x, 0)}\cong \T_{M, x}\oplus \T^*_{M, x}$ is given by
\[\left\{\left(\sum_a A_{ai} v^a, -\sum_{a,j}\frac{\partial^2 h}{\partial m_i\partial m_j} A_{aj} v^a\right)\mid v^a\in \T_{M, x}\right\}.\]

Comparing with \cref{ex:Maslovexample} we deduce that the Maslov quadratic form $q(\T_{M, x}, \T_{\pi_1, x}, \T_{\pi_2, x})$ is given by $\frac{\partial^2 h}{\partial m_i\partial m_j}$.

By construction the embedding $\Xi\colon L\hookrightarrow L\times M$ is given by the equation $\frac{\partial h}{\partial m_i} = 0$. Therefore, the induced quadratic form $q_\Xi$ on the normal bundle is given by $\frac{\partial^2 h}{\partial m_i\partial m_j}$ which, as explained above, coincides with the Maslov quadratic form $q(\T_{M, x}, \T_{\pi_1, x}, \T_{\pi_2, x})$.
\end{proof}

\section{Monodromic sheaves}

\subsection{Monodromic \texorpdfstring{$D$}{D}-modules}
\label{sect:monodromicDmodules}

We begin with the following well-known result.

\begin{prop}\label{prop:monodromyeigenvalueblocks}
Let $\cC$ be an idempotent complete $\C$-linear category and $M\in\cC$ an object with $\End_\cC(M)$ finite-dimensional. Given an endomorphism $\theta\colon M\rightarrow M$ there is a decomposition
\[M\cong \bigoplus_{\lambda\in\C} M^{\lambda},\]
where only finitely many $M^\lambda$ are nonzero, and where $\theta$ acts on $M^\lambda$ with generalized eigenvalue $\lambda$.
\end{prop}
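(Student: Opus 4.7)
The key observation is that $\theta$ generates a finite-dimensional commutative subalgebra $\C[\theta] \subset \End_\cC(M)$, since $\End_\cC(M)$ is finite-dimensional. The plan is to produce a complete orthogonal system of idempotents from the Jordan decomposition of $\theta$, and then invoke idempotent completeness of $\cC$ to split $M$.

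Concretely, I would first let $p(t) = \prod_{i=1}^n (t - \lambda_i)^{m_i}$ be the minimal polynomial of $\theta$ acting on $M$, where $\lambda_1, \dots, \lambda_n \in \C$ are the distinct roots. By the Chinese Remainder Theorem there is an isomorphism of $\C$-algebras
\[
\C[t]/(p(t)) \cong \prod_{i=1}^n \C[t]/(t-\lambda_i)^{m_i},
\]
which exhibits a complete system of pairwise orthogonal idempotents $\bar{e}_{\lambda_i} \in \C[t]/(p(t))$ summing to $1$. Pulling these back along the quotient map $\C[t] \twoheadrightarrow \C[t]/(p(t))$ and evaluating at $\theta$ produces orthogonal idempotents $e_{\lambda_i} \in \End_\cC(M)$ with $\sum_i e_{\lambda_i} = \id_M$ and such that $(\theta - \lambda_i)^{m_i} \circ e_{\lambda_i} = 0$.

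Now, since $\cC$ is idempotent complete, each idempotent $e_{\lambda_i}$ splits as $e_{\lambda_i} = \iota_i \circ \pi_i$ with $\pi_i \circ \iota_i = \id_{M^{\lambda_i}}$, defining an object $M^{\lambda_i} \in \cC$. The orthogonality and completeness of the $e_{\lambda_i}$ translate into a biproduct decomposition
\[
M \cong \bigoplus_{i=1}^n M^{\lambda_i},
\]
with only finitely many nonzero summands. Setting $M^\lambda = 0$ for $\lambda \notin \{\lambda_1, \dots, \lambda_n\}$ gives the decomposition indexed by $\C$ as stated. Finally, since $e_{\lambda_i}$ commutes with $\theta$ inside the commutative algebra $\C[\theta]$, the endomorphism $\theta$ preserves each $M^{\lambda_i}$, and the relation $(\theta - \lambda_i)^{m_i} \circ e_{\lambda_i} = 0$ shows that $\theta|_{M^{\lambda_i}}$ has generalized eigenvalue $\lambda_i$.

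The only nontrivial point is the passage from a polynomial identity in $\End_\cC(M)$ to an actual direct sum decomposition of the object $M$; this is precisely what idempotent completeness buys us. Everything else is standard linear algebra in the finite-dimensional algebra $\End_\cC(M)$.
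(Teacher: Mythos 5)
The paper states this proposition as a well-known result and does not supply a proof, so there is nothing to compare against directly. Your argument is correct and is the standard one: the minimal polynomial of $\theta$ in the finite-dimensional algebra $\End_\cC(M)$ yields, via the Chinese Remainder Theorem, a complete orthogonal system of idempotents $e_{\lambda_i}$ commuting with $\theta$; idempotent completeness splits them, and the relations $\pi_j\iota_i=\delta_{ij}$, $\sum_i\iota_i\pi_i=\id_M$ exhibit $M$ as the biproduct of the $M^{\lambda_i}$ (note this needs no prior assumption that $\cC$ has finite biproducts — the splitting data furnishes one); and $(\theta-\lambda_i)^{m_i}e_{\lambda_i}=0$ gives the generalized-eigenvalue condition.
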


In this paper we apply the above result to the category of regular holonomic $D$-modules and perverse sheaves on a complex manifold.

Let $X$ be a complex manifold and suppose $\gamma:Y\to X$ is either a principal $\C^\times$-bundle or a holomorphic vector bundle. In either case, we denote by $\theta \in \cD_Y$ the vector field induced by the $\C^\times$ action. In this section we summarize some results about monodromic $D$-modules in the analytic setting; see, e.g. \cite[Section 4.1]{SaitoTS} for further details.

\begin{defn}
    A coherent $\cD_Y$-module $\cM$ is called \defterm{monodromic} if $\gamma_\ast \cM$ is generated as a $\gamma_\ast \cD_Y$-module by local sections $u$ such that there exists $b(\theta) \in \C[\theta]$ such that $b(\theta)u=0$. 
\end{defn}

For a complex manifold $Y$ denote by $\Mod_{\rh}(\cD_Y)$ the stack (that is, sheaf of categories) of regular holonomic $D$-modules on $Y$. We denote by $\Mod_{\rh,\mon}(\cD_Y)$ the substack of $\gamma_\ast \Mod_{\rh}(\cD_Y)$ consisting of monodromic modules. 

Let $\cD_{[Y]} \subseteq \gamma_\ast \cD_Y$ denote the subsheaf consisting of operators which are polynomial in the fibers of $\gamma$. Equivalently, $\cD_{[Y]}$ is the subsheaf of $\gamma_\ast \cD_Y$ consisting of sections $P$ such that there exists $b(\theta) \in \C[\theta]$ with $[b(\theta),P]=0$. Similarly, for a $\cD_Y$-module $\cM$, we denote by $[\cM]$ the subsheaf of $\gamma_\ast \cM$ consisting of local sections $u$ such that there exists $b(\theta) \in \C[\theta]$ such that $b(\theta)u=0$. Then $[\cM]$ is naturally a $\cD_{[Y]}$-module and there is a natural map 
\[
\cD_Y \otimes_{\gamma^{-1} \cD_{[Y]}} [\cM] \to \cM
\]
which is an isomorphism precisely when $\cM$ is monodromic. Note that 
\[
[\cM] = \bigoplus_{\lambda \in \C} \cM^\lambda
\]
where
\[\cM^\lambda = \bigcup_{l\geq 0}\ker((\theta-\lambda\cdot \id)^l\colon \gamma_* \cM\rightarrow \gamma_* \cM)\]
is the generalized eigenspace with eigenvalue $\lambda\in\C$. 

Now let us specialize to the case when $Y=X\times \C^\times$ and $\gamma\colon Y\to X$ is the projection map. In that case, we have $\cD_{[Y]} = \cD_X[\theta]\langle t,t^{-1}\rangle$, where $t$ is the coordinate on the $\C^\times$-factor and $\theta = t\partial_t$. Note that if $\cM$ is a $\cD_Y$-module then acting by $t$ induces an isomorphism $\cM^\lambda \to \cM^{\lambda + 1}$. Let
\[[\cM]_{\EigenRange} = \bigoplus_{\lambda\in \EigenRange} \cM^\lambda,\]
where we recall that $\EigenRange = \{ \lambda \in \C \mid -1<\Re \lambda \leq 0\}$. 

Let us denote by $\Mod_{\rh,\EigenRange}(\cD_X[\theta])$ the full subcategory of $\cD_X[\theta]$-modules for which the action of the $\theta$ is locally finite with eigenvalues contained in $\EigenRange$ and which are regular holonomic as $\cD_X$-modules.
\begin{prop}
    The assignment $\cM \mapsto [\cM]_\EigenRange$ defines an equivalence of categories between 
    $\Mod_{\rh,\mon}(\cD_Y)$ and $\Mod_{\rh,\EigenRange}(\cD_X[\theta])$.
\end{prop}

It will be convenient to introduce another perspective on the category of monodromic $D$-modules. Consider the category $\Mod_{\rh,\aut}(\cD_X)$ of regular holonomic $\cD_X$-modules $\cM$ equipped with an automorphism $T\colon \cM\rightarrow \cM$. There is a functor
\[\Mod_{\rh,\EigenRange}(\cD_X[\theta])\longrightarrow \Mod_{\rh,\aut}(\cD_X)\]
given by setting $T=\exp(-2\pi i\theta)$. It also identifies $\Mod_{\rh,\aut}(\cD_X)$ with the full subcategory of regular holonomic $\cD_X[\theta]$-modules, where the eigenvalues of $\theta$ are contained in $\EigenRange$.

The resulting equivalence
\[\Mod_{\rh,\mon}(\cD_Y)\cong \Mod_{\rh,\aut}(\cD_X)\]
may be understood as follows. Consider the inclusion $i\colon X\times\{1\}\hookrightarrow Y$. Then the composite
\[\Mod_{\rh,\mon}(\cD_Y)\cong \Mod_{\rh,\aut}(\cD_X)\longrightarrow \Mod_{\rh}(\cD_X)\]
is given by the pullback $i^*$. The automorphism $T$ is given by the monodromy along the $\C^\times$ direction.

\subsection{Monodromic perverse sheaves}
\label{sect:monodromicperverse}

Let us now describe analogous objects on the other side of the Riemann--Hilbert correspondence. Denote by $\Pervaut(X)$ the stack of $\C$-linear perverse sheaves $\cF$ on a complex manifold $X$ equipped with an automorphism $T\colon \cF\rightarrow \cF$. As in the case of $D$-modules, this stack has the following two equivalent presentations:
\begin{itemize}
    \item The stack of perverse sheaves $\cF$ on $X$ equipped with an endomorphism $\theta\colon \cF\rightarrow \cF$ whose eigenvalues lie in $\EigenRange$.
    \item The stack of perverse sheaves on $X\times \C^\times$, which are locally constant along the $\C^\times$ orbits.
\end{itemize}

Given a regular holonomic left $\cD_X$-module $\cL$ and a regular holonomic right $\cD_X$-module $\cM$, the complex of sheaves $\cM\otimes^\bL_{\cD_X} \cL$ is a perverse sheaf on $X$. This statement generalizes to monodromic $D$-modules as follows. Let $Y=X\times \C^\times$ with $\gamma\colon Y\rightarrow X$ the projection.

\begin{prop}
Let $\cL$ (respectively, $\cM$) be a regular holonomic monodromic left (respectively, right) $\cD_Y$-module, so that $[\cL]_\EigenRange$ (respectively, $[\cM]_{\EigenRange}$) is a left (respectively, right) $\cD_X[\theta]$-module with $\theta$-eigenvalues contained in $\EigenRange$.
Then:
\begin{itemize}
    \item The perverse sheaf
    \[\cM\otimes^\bL_{\cD_Y} \cL\]
    on $Y$ is locally constant along the $\C^\times$ orbits.
    \item There is an isomorphism of perverse sheaves
    \begin{equation}[\cM]_\EigenRange \otimes_{\cD_X}^\bL [\cL]_\EigenRange \cong (\cM\otimes^\bL_{\cD_Y} \cL)|_{X\times \{1\}}[-1].
    \label{eq:monodromic}
    \end{equation}
    \item Consider the endomorphism $\theta$ on the left-hand side induced by the $\theta$-action on $[\cL]_\EigenRange$ and $[\cM]_\EigenRange$ and let $T = \exp(2\pi i \theta)$. Then the action of $T$ on the left side of \eqref{eq:monodromic} corresponds to the monodromy on along the $\C^\times$ direction the right.
\end{itemize}
\label{prop:monodromicRH}
\end{prop}

We will also consider perverse sheaves of $\C[\![\hbar]\!]$-modules as in \cite[Section 5]{DAgnoloGuillermouSchapira}. We say a perverse sheaf $\cL\in\Perv(X; \C[\![\hbar]\!])$ is \defterm{torsion-free} if the map $\hbar\colon \cL\rightarrow \cL$ is a monomorphism in the abelian category $\Perv(X; \C[\![\hbar]\!])$.

We have a natural functor
\[\bD_c(X; \C[\![\hbar]\!])\longrightarrow \bD_c(X; \C[\hbar]/\hbar^n)\]
given by $\cL\mapsto \cL\otimes^{\bL}_{\C[\![\hbar]\!]} \C[\hbar]/\hbar^n$ which we for brevity denote by $\cL\mapsto \cL/\hbar^n$. If $\cL$ is a torsion-free perverse sheaf, then $\cL/\hbar^n$ can be understood as the quotient in the abelian category of perverse sheaves.

\begin{defn}
A \defterm{differential perverse sheaf} is a pair $(\cM,D)$, where $\cM$ is a $\C(\!(\hbar)\!)$-linear perverse sheaf and $D$ a $\C$-linear endomorphism of the underlying complex of sheaves of $\C$-vector spaces satisfying 
\[
D( f\eta) = \hbar\frac{df}{d\hbar}\eta + fD(\eta)
\]
for any local section $\eta$ of $\cM$, and $f\in \C(\!(\hbar)\!)$. We denote by $\Pervdiff(X)$ the stack of differential perverse sheaves.
\end{defn}

\begin{defn}
Let $\cM\in\Pervdiff(X)$ be a differential perverse sheaf. A \defterm{lattice} $\cL\subset \cM$ is a torsion-free $\C[\![\hbar]\!]$-perverse sheaf $\cL$ together with an isomorphism $\cL[\hbar^{-1}]\cong \cM$ which is preserved by the action of $D$ with the following property: all eigenvalues of the action of $D$ on the $\C$-linear perverse sheaf $\cL/\hbar$ lie in $\EigenRange$. If $\cM\in\Pervdiff(X)$ has a lattice, we say that it has \defterm{regular singularities}.
\end{defn}

We will need the following Yoneda lemma for monodromic perverse sheaves. Consider a $\C^\times$-torsor $\gamma\colon \tilde{Y}\rightarrow Y$ and let
\[\Perv_{\C^\times}(\tilde{Y})\subset \Perv(\tilde{Y})\]
be the full substack of perverse sheaves, which are locally constant along the fibers of $\gamma$, and
\[\LocSys(\tilde{Y})\subset \Perv(\tilde{Y})\]
the full substack of local systems. There is a natural action of the symmetric monoidal stack $\LocSys(Y)$ on $\LocSys(\tilde{Y})$ (via $\gamma^*$) and $\Perv(Y)$. For $\cF\in\Perv_{\C^\times}(\tilde{Y})$ the derived pushforward $\gamma_*\cF$ has cohomology only in perverse degrees $-1$ and $0$.

Consider the functor
\[\gamma_*\Perv_{\C^\times}(\tilde{Y})\longrightarrow \Fun_{\LocSys(Y)}(\gamma_*\LocSys(\tilde{Y}), \Perv(Y))\]
of stacks on $Y$ given by sending $\cF\in\Perv_{\C^\times}(\tilde{Y})$ to the $\LocSys(Y)$-linear functor $V\in\LocSys(\tilde{Y})\mapsto {}^p\cH^{-1}(\gamma_*(V\otimes \cF))$. The following lemma implies that this functor is fully-faithful.

\begin{lm}\label{lm:automorphismYoneda}
 For every $\cF_1, \cF_2\in \gamma_*\Perv_{\C^\times}(\tilde{Y})$ the natural morphism
\[\RcHom(\cF_1, \cF_2)\longrightarrow \RcHom({}^p\cH^{-1}(\gamma_*((-)\otimes \cF_1)), {}^p\cH^{-1}(\gamma_*((-)\otimes \cF_2)))\]
is an isomorphism of complexes of sheaves on $Y$.
\end{lm}
\begin{proof}
The question is local on $Y$, so we may assume that $\tilde{Y}=Y\times\C^\times\rightarrow Y$ is a trivial $\C^\times$-torsor and $Y$ is contractible. By \cref{prop:monodromyeigenvalueblocks} both $\gamma_*\Perv_{\C^\times}(\tilde{Y})\cong\Pervaut(Y)$ and $\gamma_*\LocSys(\tilde{Y})$ split into blocks according to the generalized eigenvalue of the monodromy operator. This allows us to reduce the statement to a similar statement for the substack of perverse sheaves $\Pervaut^0(Y)\subset \Pervaut(Y)$ consisting of pairs $(\cF,T)\in\Pervaut(Y)$, where the automorphism $T\colon \cF\rightarrow \cF$ is unipotent, i.e. $(T-\id)^N = 0\colon \cF\rightarrow \cF$ for some $N$. Moreover, since $Y$ is assumed to be contractible, $\gamma_*\LocSys(\tilde{Y})$ is given by the stack of $\C[T^{\pm 1}]$-modules. For a local system $V$ of $\C[T^{\pm 1}]$-modules and $\cF\in\Pervaut^0(Y)$ we have
\[{}^p\cH^{-1}(\gamma_*(V\otimes \cF))\cong (V\otimes \cF)^T,\]
where on the right we consider $T$-invariants. This sheaf vanishes if the $T$-action on $V$ is not unipotent, so we will further restrict to such modules.

So, for $\cF_1,\cF_2\in\Pervaut^0(Y)$ we have to show that the natural morphism
\[\RcHom_{\C[T^{\pm 1}]}(\cF_1, \cF_2)\longrightarrow \RcHom_{\C}(((-)\otimes \cF_1)^T, ((-)\otimes \cF_2)^T)\]
is an isomorphism. Fix a natural number $N$, so that $(T-1)^N$ annihilates both $\cF_1$ and $\cF_2$. In this case the left-hand side becomes isomorphic to
\[\RcHom_{\C[T^{\pm 1}]/(T-1)^N}(\cF_1, \cF_2)\]
and the right-hand side becomes isomorphic to
\[\RcHom_{\C[T^{\pm 1}]/(T-1)^N}((\C[T^{\pm 1}]/(T-1)^N\otimes \cF_1)^T, (\C[T^{\pm 1}]/(T-1)^N\otimes \cF_2)^T),\]
which is again isomorphic to $\RcHom_{\C[T^{\pm 1}]/(T-1)^N}(\cF_1, \cF_2)$ by an explicit computation of the invariants.
\end{proof}

\subsection{The Riemann--Hilbert functor}
Our next goal is to define an inverse Riemann--Hilbert functor
\[
\RH^{-1}\colon \Pervaut(X) \to \Pervdiff(X)
\]
relating differential perverse sheaves and perverse sheaves equipped with an automorphism.

\begin{defn}
Given a sheaf $\cF\in\bD(X;\C)$, we denote by $\cF[\![\hbar]\!]$ the inverse limit
\[
\cF[\![\hbar]\!] = \lim_m\left(\cF \otimes_\C \C[\hbar]/\hbar^m\right)\in\bD(X;\C[\![\hbar]\!]).
\]

We write 
\[
\cF(\!(\hbar)\!) = \cF[\![\hbar]\!] \otimes_{\C[\![\hbar]\!]} \C(\!(\hbar)\!)\in\bD(X;\C(\!(\hbar)\!)).
\]
\end{defn}

\begin{remark}
As tensor products preserve colimits, there is a natural isomorphism
\[
\cF(\!(\hbar)\!) = \colim_n\left(\hbar^{-n} \cF[\![\hbar]\!]\right) \cong \colim_n \lim_m\left(\cF \otimes_\C \hbar^{-n} \C[\hbar]/\hbar^m\right).
\]
\end{remark}

We have the following two technical lemmas.
\begin{lm}
Suppose $\cF \in \bD_c(X;\C)$ is a constructible complex. Then the natural maps
\[
\cF \otimes_\C \C[\![\hbar]\!] \to \cF[\![\hbar]\!]
\]
and
\[
\cF \otimes_\C \C(\!(\hbar)\!) \to \cF(\!(\hbar)\!)
\]
are isomorphisms.
\label{lm:constructibletensorcomplete}
\end{lm}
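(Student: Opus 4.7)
The plan is to verify the first isomorphism and deduce the second as a formal consequence.

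For the reduction, I observe that $\C(\!(\hbar)\!) = \varinjlim_n \hbar^{-n}\C[\![\hbar]\!]$. Once the first isomorphism is established, applying $\cF \otimes_\C -$ (which commutes with filtered colimits) and using the remark immediately preceding the lemma gives
\[
\cF \otimes_\C \C(\!(\hbar)\!) \;=\; \varinjlim_n \hbar^{-n}\bigl(\cF \otimes_\C \C[\![\hbar]\!]\bigr) \;\cong\; \varinjlim_n \hbar^{-n}\cF[\![\hbar]\!] \;=\; \cF(\!(\hbar)\!).
\]

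For the first isomorphism, the natural map is
\[
\cF \otimes_\C \C[\![\hbar]\!] \;\longrightarrow\; R\lim_m\bigl(\cF \otimes_\C \C[\hbar]/\hbar^m\bigr),
\]
which I would test stalkwise in $\bD(X; \C[\![\hbar]\!])$. The key point is that the inverse system $\{\cF \otimes_\C \C[\hbar]/\hbar^m\}_m$ has termwise surjective transition maps — they arise by tensoring $\cF$ with the surjections $\C[\hbar]/\hbar^{m+1} \twoheadrightarrow \C[\hbar]/\hbar^m$ — so the system is Mittag-Leffler. Consequently the derived inverse limit $R\lim$ agrees with the ordinary inverse limit in the category of sheaves, its higher terms vanish, and one can exchange the stalk functor with this limit.

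At a stalk $x \in X$, constructibility of $\cF$ ensures that $\cF_x$ is a bounded complex of finite-dimensional $\C$-vector spaces. Since $\C$ is a field, $\cF_x$ is quasi-isomorphic to $\bigoplus_i H^i(\cF_x)[-i]$, so we reduce to the case of a single finite-dimensional vector space $V$. For such $V$, the tensor product commutes with arbitrary inverse limits, giving
\[
V \otimes_\C \C[\![\hbar]\!] \;=\; V \otimes_\C \lim_m \C[\hbar]/\hbar^m \;=\; \lim_m V \otimes_\C \C[\hbar]/\hbar^m
\]
on the nose, which is precisely the isomorphism wanted stalkwise.

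The main obstacle I anticipate is justifying the interchange of stalks and $R\lim$, which fails in general. The argument must combine the Mittag-Leffler condition (to kill higher $\lim$ terms and make the limit representable by its sections) with the local finiteness afforded by constructibility (so that stalks are genuinely finite-dimensional and interact well with the cofinal system of small open neighborhoods). Without the constructibility hypothesis one cannot hope for the statement, as there exist sheaves $\cF$ for which $\cF(\!(\hbar)\!)$ differs from $\cF \otimes_\C \C(\!(\hbar)\!)$.
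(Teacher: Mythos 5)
There is a genuine gap. You assert that the Mittag-Leffler property of the tower $\{\cF\otimes_\C\C[\hbar]/\hbar^m\}_m$ lets you ``exchange the stalk functor with this limit,'' and then, correctly, flag that very interchange as ``the main obstacle'' -- but you never resolve it. Mittag-Leffler (or the degreewise splitting of the tower, which is what the paper actually invokes to identify $R\lim$ with $\lim$) controls $R^1\lim$; it has no bearing on commuting a filtered colimit -- the stalk at $x$ -- past an inverse limit. For instance, take $X=\R$ and $\cF=\bigoplus_n i_{x_n,*}\C$ for a sequence $x_n\to 0$ with $x_n\neq 0$: the tower has split surjective transition maps, so all the conditions you invoke hold, yet $\cF_0\otimes_\C\C[\![\hbar]\!]=0$ while $(\cF[\![\hbar]\!])_0\neq 0$ (the compatible system whose $x_n$-component is $\hbar^n$ survives to the stalk).

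What actually makes the interchange legitimate -- and what the paper's proof rests on -- is the following sharp consequence of constructibility: each $x\in X$ admits a basis of open neighborhoods $U$ for which $\cF(U)\to\cF_x$ is a quasi-isomorphism. Over such a basis the stalk colimit is essentially constant, so one may replace the stalk by sections over a single $U$; sections commute with inverse limits, and the problem reduces to a bounded complex of finite-dimensional $\C$-vector spaces, where the degreewise splitting of the tower (the maps $\C[\hbar]/\hbar^{m+1}\to\C[\hbar]/\hbar^m$ are $\C$-linearly split) lets one commute $\lim$ with cohomology and conclude. You gesture at this with ``interact well with the cofinal system of small open neighborhoods,'' but that stability of the stalk over a cofinal system of neighborhoods is precisely the crux of the argument and must be stated and used; it is not a consequence of Mittag-Leffler. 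Your reduction of the Laurent-series statement to the power-series one via filtered colimits is fine.
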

\begin{proof}
As $\cF$ is a constructible complex, for any point $x\in X$, there is a basis of open neighborhoods $U_x$ of $x$ such that the map $\cF_x \to \cF(U_x)$ is a quasi-isomorphism. In particular, taking the stalk commutes with the inverse limit. Thus, we are reduced to checking that the natural maps
\[
\cF_x \otimes_\C \C[\![\hbar]\!] \to \lim_m \cF_x \otimes_\C \C[\hbar]/\hbar^m
\]
are quasi-isomorphisms of complexes. The projective diagram of complexes is naturally split, so the inverse limit commutes with cohomology. The result then follows from the statement the corresponding statement for finite-dimensional vector spaces, which is well-known. The second statement follows immediately from the first. 
\end{proof}

\begin{lm}
Suppose $\cF\in\bD_c(X;\C[\![\hbar]\!])$ is a constructible complex. Then the natural morphism
\[\cF\longrightarrow \lim_n \cF/\hbar^n\]
is an isomorphism.
\label{lm:constructiblecomplete}
\end{lm}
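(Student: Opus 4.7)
The plan is to reduce the claim to a statement about stalks, and then invoke classical $\hbar$-adic completeness of finitely generated modules over the complete Noetherian local ring $\C\bb$.

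First, the question is local on $X$, so I would fix a point $x\in X$. Arguing as in the proof of \cref{lm:constructibletensorcomplete}, constructibility of $\cF$ provides a cofinal system of open neighborhoods $U_x$ of $x$ with $\cF_x\xrightarrow{\sim}R\Gamma(U_x,\cF)$. Since $R\Gamma$ commutes with derived inverse limits (being a right adjoint), and the stalk functor commutes with the derived tensor product against the perfect complex $\C[\hbar]/\hbar^n$, the claim reduces to showing that for each $x$ the natural map
\[
\cF_x\longrightarrow R\lim_n\bigl(\cF_x\otimes^{\bL}_{\C\bb}\C[\hbar]/\hbar^n\bigr)
\]
is a quasi-isomorphism in the derived category of $\C\bb$-modules.

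Again by constructibility, $\cF_x$ is a bounded complex whose cohomology modules are finitely generated over $\C\bb$. For a single finitely generated module $M$, I would use the length-one free resolution $0\to\C\bb\xrightarrow{\hbar^n}\C\bb\to\C[\hbar]/\hbar^n\to 0$ to identify $M\otimes^{\bL}_{\C\bb}\C[\hbar]/\hbar^n$ with the two-term complex having $M/\hbar^n M$ in degree $0$ and the $\hbar^n$-torsion submodule $M[\hbar^n]$ in degree $-1$. The tower $\{M/\hbar^n M\}_n$ has surjective transitions and inverse limit $M$ by classical $\hbar$-adic completeness of finitely generated $\C\bb$-modules, while the tower $\{M[\hbar^n]\}_n$ is eventually stationary equal to the torsion submodule $M_{\mathrm{tors}}$, with transitions given eventually by multiplication by $\hbar$, a nilpotent operator on $M_{\mathrm{tors}}$; hence this tower has vanishing $\lim$ and vanishing $R^1\lim$ by Mittag--Leffler. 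A standard spectral sequence argument then extends this from a single module to the bounded complex $\cF_x$.

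The step I expect to be most delicate is the commutation of $R\lim$ with stalks in the first paragraph; this is not formal, but follows because constructibility ensures the existence of a cofinal system of computing neighborhoods of $x$ compatible with the entire $\hbar$-adic tower. Once the problem is descended to the module-theoretic setting over $\C\bb$, the argument is classical.
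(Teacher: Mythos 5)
Your proof is correct but takes a genuinely different route from the paper's. The paper dispatches the lemma in two sentences by citing two black boxes: first, that constructible complexes over $\C[\![\hbar]\!]$ are \emph{cohomologically complete} in the sense of Kashiwara--Schapira (meaning $\RcHom_{\C_X[\![\hbar]\!]}(\C_X(\!(\hbar)\!),\cF)=0$), and second, a result of Porta--Shaul--Yekutieli asserting that cohomological completeness implies $\cF\to\lim_n\cF/\hbar^n$ is an isomorphism. You instead descend to stalks and do the $\hbar$-adic computation by hand over the complete discrete valuation ring $\C[\![\hbar]\!]$, resolving $\C[\hbar]/\hbar^n$, analyzing the two towers $\{M/\hbar^n M\}$ and $\{M[\hbar^n]\}$, and extending to bounded complexes by a spectral sequence. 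Your approach is more elementary and self-contained; it makes visible exactly which finiteness (Noetherian f.g.\ stalks, nilpotence of $\hbar$ on the torsion submodule, Mittag--Leffler) is doing the work, whereas the paper's approach delegates all of that to the cohomological completeness machinery and gains brevity and reusability. The one step in your argument that needs some care, which you correctly flag, is the commutation of $R\lim_n$ with stalks: this works because there is a single stratification with respect to which all the $\cF/\hbar^n$ are simultaneously constructible, so one can choose a cofinal system of neighborhoods $U_x$ for which $(\cF/\hbar^n)_x\xrightarrow{\sim}R\Gamma(U_x,\cF/\hbar^n)$ holds uniformly and compatibly in $n$; then $R\lim_n R\Gamma(U_x,\cF/\hbar^n)\cong R\lim_n\cF_x/\hbar^n$ for every good $U_x$ and the outer colimit is constant. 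This is exactly the same constructibility input the paper's citation to Kashiwara--Schapira encapsulates, just unpacked.
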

\begin{proof}
By \cite[Proposition 7.1.7]{KashiwaraSchapira} a constructible complex $\cF$ is cohomologically complete, i.e.
\[\RcHom_{\C_X[\![\hbar]\!]}(\C_X(\!(\hbar)\!), \cF) = 0.\]
The fact that for a cohomologically complete $\C_X[\![\hbar]\!]$-module the map
\[\cF\longrightarrow \lim_n\cF/\hbar^n\]
is an isomorphism is the content of \cite[Theorem 7.8]{PSY} in the case $X=\pt$ and the same proof works for sheaves.
\end{proof}

We are now ready to define the functor $\RH^{-1}$. Given a perverse sheaf $\cF$ equipped with an automorphism $T$ we can write $T=\exp(-2\pi i M)$ for a unique endomorphism $M\colon \cF\rightarrow \cF$ whose eigenvalues lie in $\EigenRange\subset\C$. Then we define $\RH^{-1}(\cF)$ to be the pair $(\cF(\!(\hbar)\!),D)$, where
\[D(f\otimes \eta) = f\otimes (M\eta) + \hbar \frac{df}{d\hbar}\otimes \eta\]
for any local section $\eta$ of $\cF$ and $f\in\C(\!(\hbar)\!)$.

Conversely, if $(\cM, D)\in\Pervdiff(X)$ is a differential perverse sheaf with a lattice $\cL\subset \cM$, consider $\cF=\cL/\hbar\in\Perv(X;\C)$. By assumption it is preserved by $D$ and we define the monodromy automorphism $T\colon \cF\rightarrow \cF$ to be $T=\exp(-2\pi i D)$. In this way we get an object $(\cF, T)\in\Pervaut(X)$.

\begin{prop}\label{prop:Riemann Hilbert}
The functor $\RH^{-1}\colon\Pervaut(X)\rightarrow \Pervdiff(X)$ is fully faithful. Moreover, if $\cM\in\Pervdiff(U)$ is a differential perverse sheaf with a lattice $\cL\subset \cM$, then $\cM\cong \RH^{-1}(\cL/\hbar)$. In particular, the essential image of $\RH^{-1}$ consists of differential perverse sheaves with regular singularities.
\end{prop}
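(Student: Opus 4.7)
Both statements follow from a uniform application of the generalized eigenspace decomposition from the opening proposition of \Cref{sect:monodromicDmodules}, exploiting the combinatorial fact that the only integer lying in $G-G\subset \{|\Re z|<1\}$ is $0$.

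For full faithfulness, I would fix $(\cF_i,T_i)\in\Pervaut(X)$ with $T_i=\exp(-2\pi i M_i)$ (eigenvalues of $M_i$ in $G$) and a $\C(\!(\hbar)\!)$-linear morphism $\phi\colon\widehat{\RH}^{-1}(\cF_1,T_1)\to\widehat{\RH}^{-1}(\cF_2,T_2)$. By \Cref{lm:constructibletensorcomplete}, $\phi$ expands as a Laurent series $\phi=\sum_{n\geq n_0}\phi_n\hbar^n$ with $\phi_n\in\Hom_{\Perv(X)}(\cF_1,\cF_2)$, and the compatibility $D_2\phi=\phi D_1$ (using the Leibniz rule for $\hbar\partial_\hbar$ and the fact that $D_i\eta=M_i\eta$ for $\eta\in\cF_i$) becomes term by term the relation $\phi_n M_1-M_2\phi_n=n\phi_n$. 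The $\C$-linear operator $\mathrm{ad}\colon\psi\mapsto \psi M_1-M_2\psi$ on the finite-dimensional space $\Hom_{\Perv(X)}(\cF_1,\cF_2)$ has generalized eigenvalues in $G-G$, hence the equation $\mathrm{ad}(\phi_n)=n\phi_n$ forces $\phi_n=0$ for every $n\neq 0$. Thus $\phi=\phi_0$ intertwines $M_1$ with $M_2$, equivalently $T_1$ with $T_2$, giving full faithfulness.

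For the second statement, let $(\cM,D)$ admit a lattice $\cL$ and set $\cF=\cL/\hbar$. The differential relation $D(\hbar\eta)=\hbar\eta+\hbar D\eta$ gives $[D,\hbar]=\hbar$, so multiplication by $\hbar^k$ shifts $D$-generalized eigenvalues by $+k$; in particular, on each associated graded piece $\hbar^k\cL/\hbar^{k+1}\cong\cF$ the operator $D$ has eigenvalues in $G+k$. Applying the opening proposition to $D$ on the constructible perverse sheaf $\cL/\hbar^n$ and grouping generalized eigenspaces by the window $G+k$ in which they lie yields a decomposition
\[
\cL/\hbar^n\;\cong\;\bigoplus_{k=0}^{n-1}(\cL/\hbar^n)_{G+k}.
\]
An induction on $n$, using disjointness of the windows $G+k$, shows that the quotient map $\cL/\hbar^n\to\cL/\hbar$ restricts to an isomorphism $(\cL/\hbar^n)_G\xrightarrow{\sim}\cF$, and that $\hbar^k$-multiplication gives compatible identifications $\hbar^k\colon\cF\xrightarrow{\sim}(\cL/\hbar^n)_{G+k}$. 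Passing to the inverse limit via \Cref{lm:constructiblecomplete} assembles these into a $\C[\![\hbar]\!]$-linear isomorphism $\cF[\![\hbar]\!]\xrightarrow{\sim}\cL$; under this isomorphism $D$ transports to $M+\hbar\partial_\hbar$ (where $M$ denotes the induced action on $\cF$), which is exactly the operator defining $\widehat{\RH}^{-1}(\cF,\exp(-2\pi i M))$. Inverting $\hbar$ and applying \Cref{lm:constructibletensorcomplete} gives $\cM\cong\widehat{\RH}^{-1}(\cF,\exp(-2\pi i M))$, and the last statement about essential image is then automatic.

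The main technical obstacle is verifying that the inductively constructed splittings $(\cL/\hbar^n)_G\hookrightarrow \cL/\hbar^n$ are mutually compatible and interact correctly with $\hbar$-multiplication. This reduces to checking that the $G$-component functor is exact on the short exact sequences $0\to \hbar^{n-1}\cL/\hbar^n\to \cL/\hbar^n\to \cL/\hbar^{n-1}\to 0$, which in turn is a consequence of disjointness of the eigenvalue windows $G+k$ across different $k$. Once this bookkeeping is carried out, the identification of $D$ with $M+\hbar\partial_\hbar$ on the assembled lattice $\cF[\![\hbar]\!]$ and the ensuing isomorphism with $\widehat{\RH}^{-1}(\cF,T)$ are essentially formal.
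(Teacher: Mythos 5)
Your proof is correct and follows essentially the same strategy as the paper: both parts reduce to the observation that the window $G$ contains no pair of points differing by a nonzero integer, used first to kill the terms $\phi_n$ ($n\neq 0$) in a Laurent expansion of a morphism, and then to split the $\hbar$-adic filtration on the lattice canonically and identify $\cL\cong\cF[\![\hbar]\!]$ via the two completion lemmas. The paper's write-up organizes the second step around the canonical splitting of the extension $0\to\cL/\hbar\to\cL/\hbar^2\to\cL/\hbar\to 0$ rather than the explicit window decomposition $(\cL/\hbar^n)_{G+k}$, but these are the same argument.
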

\begin{proof}
Given a morphism $\cF_1\rightarrow \cF_2$ in $\Pervaut(X)$, a morphism $\RH^{-1}(\cF_1)\rightarrow \RH^{-1}(\cF_2)$ of $\C(\!(\hbar)\!)$-linear perverse sheaves is uniquely specified by its restriction $\cF_1\rightarrow \cF_2(\!(\hbar)\!)$ to $\cF_1\hbar^0\subset \cF_1(\!(\hbar)\!)$. Let $\cF_1^\lambda\subset \cF_1$ be a generalized eigenspace of $D$ with eigenvalue $m\in \EigenRange$, where $\exp(-2\pi i m)=\lambda$. The map $\cF_1\rightarrow \cF_2(\!(\hbar)\!)$ will therefore send $\cF_1^\lambda$ to the $m$-generalized eigenspace of $\cF_2(\!(\hbar)\!)$ which is precisely $\cF_2^\lambda \hbar^0\subset \cF_2(\!(\hbar)\!)$. The resulting map $\cF_1^\lambda\rightarrow \cF_2^\lambda$ is compatible with the nilpotent part of $D$ precisely when it is compatible with the unipotent part of $T$. This proves fully faithfulness.

Now suppose $\cE\in\Pervdiff(X)$ has a lattice $\cL\subset \cM$ and consider the perverse sheaf $\cF=\cL/\hbar$ equipped with an automorphism. Consider the extension
\[0\longrightarrow \cL/\hbar\xrightarrow{\hbar}\cL/\hbar^2\longrightarrow \cL/\hbar \longrightarrow 0.\]
The perverse sheaves $\cF=\cL/\hbar$ and $\cL/\hbar^2$ split into generalized eigenspaces for $D$. But since no two eigenvalues on $\cF$ differ by $1$, the above exact sequence is canonically split, i.e. we can identify $\cL/\hbar^2\cong \cF\otimes_\C \C[\hbar]/\hbar^2$. Proceeding by induction we identify
\[\cL/\hbar^n\cong \cF\otimes_\C \C[\hbar]/\hbar^n\]
compatibly with the maps obtained by modding out by a power of $\hbar$. Therefore, by \cref{lm:constructibletensorcomplete}
\[\lim_n \cL/\hbar^n\cong \cF[\![\hbar]\!].\]
By \cref{lm:constructiblecomplete} the natural morphism
\[\cL\longrightarrow \lim_n \cL/\hbar^n\]
is an isomorphism, so this identifies $\cL\cong\cF[\![\hbar]\!]$. Therefore, $\cE\cong \cF(\!(\hbar)\!)=\RH^{-1}(\cF)$. Under this isomorphism the action of $D$ on $\cE$ coincides with the action of $D$ coming from the monodromy operator $T$ on $\cF$.
\end{proof}

\section{Microdifferential systems}\label{sect:microdifferential}

Throughout this section $X$ denotes a complex manifold.

\subsection{Microdifferential operators} \label{sect:microdiffops}

In this section we recall some algebras of microdifferential operators that we will use. We refer to \cite{KashiwaraDmodules,KashiwaraIntroduction} for more details. Recall the punctured cotangent bundle $\oT^*X=\T^* X\setminus \T^*_X X$ and the projectivized cotangent bundle $\P^* X = \oT^* X / \C^\times$ with $\gamma\colon \oT^* X\rightarrow \P^* X$ the natural projection. We denote the projections $\T^* X\rightarrow X$ and $\P^* X\rightarrow X$ by the same letter $\pi$.

Consider the sheaf $\hE_X$ on $\P^* X$ of \defterm{formal microdifferential operators} introduced in \cite{SatoKawaiKashiwara}. It has a filtration
\[\hE_X = \bigcup_{m\in\Z} \hE_X(m)\]
by order of the microdifferential operator. Denote by \[\sigma_m\colon \hE_X(m)\longrightarrow \cO_{\T^* X}(m)\]
the \defterm{principal symbol}, where $\cO_{\T^* X}(m)\subset \gamma_*\cO_{\oT^* X}$ consists of functions on $\oT^* X$ homogeneous of order $m$. The principal symbol establishes an isomorphism
\[\gr \hE_X\cong \bigoplus_{m\in\Z}\cO_{\T^* X}(m).\]

\begin{remark}
In \cref{sect:categories} we will also encounter the sheaf $\cE_X$ of non-formal microdifferential operators, so that $\hE_X$ is obtained by completion from $\cE_X$.
\end{remark}

By construction there is a natural embedding
\[\pi^{-1}\cD_X\rightarrow \hE_X\]
of differential into microdifferential operators.

\begin{defn}
A coherent $\hE_X$-module is \defterm{holonomic} if its support is a Legendrian subvariety of $\P^* X$.
\end{defn}

Let $\Lambda\subset \P^* X$ a Legendrian subvariety and $\tilde{\Lambda}\subset \oT^* X$ the corresponding conic Lagrangian subvariety. Let $\hE_{\Lambda/X}\subset \hE_X|_\Lambda$ be the subalgebra generated by
\[I_\Lambda = \{P\in \hE_X(1)|_\Lambda\mid \sigma_1(P)|_\Lambda=0\}.\]

\begin{defn}\label{def:Erh}
A holonomic $\hE_X$-module $\cM$ is \defterm{regular holonomic} if locally there is a coherent $\hE_X(0)$-submodule $\cM_0\subset \cM$ and a Legendrian subvariety $\Lambda\subset \P^*X$ containing $\supp \cM$ such that $I_\Lambda \cM_0\subset \cM_0$ and $\hE_X\cM_0 = \cM$.
\end{defn}

\begin{remark}
A $\cD_X$-module $\cM$ is (regular) holonomic if, and only if, the $\hE_X$-module $\hE_X\otimes_{\pi^{-1}\cD_X} \pi^{-1} \cM$ is (regular) holonomic.
\end{remark}

\subsection{Specialization and microlocalization}

Let us now recall the theory of $V$-filtration for microdifferential operators. Let $Z\subset X$ be a submanifold with $J_Z\subset \cO_X|_Z$ the vanishing ideal. Define the Kashiwara--Malgrange filtration of $\cD_X|_Z$ by
\[V^k_Z \cD_X = \{P\in\cD_X|_Z\mid P J_Z^j\subset J_Z^{j+k},\ \forall j,j+k\geq 0\}.\]
As shown in \cite{KashiwaraVanishing}, there is a natural embedding
\[\gr_Z \cD_X\subset \tau_* \cD_{\T_Z X},\]
where $\tau\colon \T_Z X\rightarrow Z$ is the projection, whose image $\cD_{[\T_Z X]}$ consists of differential operators polynomial along the fibers of $\tau$. Let $\theta\in V^0_Z\cD_X$ be any operator lifting the Euler vector field in $\gr_Z \cD_X=\cD_{[\T_Z X]}$ which acts on $J_Z/J_Z^2$ by the identity.

If $\cM\in\Mod_{\rh}(\cD_X)$ is a regular holonomic $\cD_X$-module and $u\in\cM$ is a local nonvanishing section, then there is a polynomial $b_u(s)\in\C[s]$ such that $b_u(\theta)u\in V^1_Z(\cD_X) u$. The \defterm{order} $\ord_Z(u)$ of $u\in\cM$ is the set of zeros of its $b$-function. The \defterm{canonical $V$-filtration} of $\cM$ is given by the subspaces
\[V^k_Z\cM = \{u\in\cM|_Z\mid \Re\ord_Z(u)\geq k-1\},\qquad k\in\Z.\]
Denote by $\gr_Z\cM$ the associated graded module with respect to the canonical $V$-filtration $V^k_Z\cM$. We refer to \cite{KashiwaraVanishing,MonteiroFernandes} for details on the following functors.

\begin{defn}
Let $Z\subset X$ be a submanifold.
\begin{itemize}
\item The \defterm{specialization functor}
\[\nu^{dR}_Z\colon \Mod_{\rh}(\cD_X)\longrightarrow \tau_*\Mod_{\rh, \mon}(\cD_{\T_Z X})\]
is
\[\nu^{dR}_Z(\cM) = \cD_{\T_Z X}\otimes_{\tau^{-1} \gr_Z \cD_X} \tau^{-1}\gr_Z \cM,\]
where $\Mod_{\rh, \mon}(\cD_{\T_Z X})\subset \Mod_{\rh}(\cD_{\T_Z X})$ is the subcategory of monodromic regular holonomic $D$-modules.
\item The \defterm{microlocalization functor}
\[\mu^{dR}_Z\colon \Mod_{\rh}(\cD_X)\longrightarrow \pi_*\Mod_{\rh, \mon}(\cD_{\T^*_Z X})\]
is the composite of the specialization functor $\nu^{dR}_Z$ with the formal Fourier transform $\Mod_{\rh,\mon}(\cD_{T^\ast_Z X}) \to \Mod_{\rh,\mon}(\cD_{\T_Z X})$, where $\pi\colon \T^* X\rightarrow X$ is the projection. 
\end{itemize}
\end{defn}

There is a microlocal analog of the above construction as follows. Let $\Lambda\subset \P^* X$ be a Legendrian submanifold and $\tilde{\Lambda}\subset \oT^* X$ the corresponding conic Lagrangian submanifold. Consider the filtration on $\hE_X|_\Lambda$
defined by
\[\hE_{\Lambda/X}(m) = \hE_{\Lambda/X}\hE_X(m)|_\Lambda.\]
Consider the line bundle \[K_{\tilde{\Lambda}|X}=K_{\tilde{\Lambda}}\otimes \pi^* K_X^{-1}\]
on $\tilde{\Lambda}$. We denote by $\cD_{\widetilde{\Lambda}}^{K^{1/2}_{\tilde{\Lambda}|X}}$ the sheaf of twisted differential operators with respect to a square root of the line bundle $K_{\tilde{\Lambda}|X}$. 
By \cite[Lemma 1.5.1]{KashiwaraKawaiHolonomic} there is a natural embedding
\[L\colon\gr_\Lambda \hE_X\hookrightarrow \gamma_* \cD_{\tilde{\Lambda}}^{K^{1/2}_{\tilde{\Lambda}|X}},\]
where $\gamma\colon \tilde{\Lambda}\rightarrow \Lambda$, whose image $\cD_{[\tilde{\Lambda}]}^{K^{1/2}_{\tilde{\Lambda}|X}}$ consists of differential operators polynomial along the fibers of $\gamma$.

Consider the Euler vector field on $\T^* X$. Since $\tilde{\Lambda}\subset \T^* X$ is homogeneous, the Lie derivative along the Euler vector field defines an element $\theta'\in\cD_{\tilde{\Lambda}}^{K^{1/2}_{\tilde{\Lambda}|X}}$. We denote by $\theta'\in \hE_X(1)|_\Lambda$ any preimage (such a lift is characterized by \cite[Equations (1.5.5) and (1.5.6)]{KashiwaraKawaiHolonomic}). If $\cM$ is a regular holonomic $\hE_X$-module and $u\in\cM$ is a local nonvanishing section, then by \cite[Theorem 4.1.1]{KashiwaraKawaiMicrolocalization} there is a polynomial $b_u(s)\in\C[s]$ such that $b_u(\theta')u\in \hE_{\Lambda/X}(-1)u$. The \defterm{order} $\ord_\Lambda(u) \subseteq \C$ of $u\in\cM$ is the set of zeros of its $b$-function. For a fixed $\alpha\in\R$ the \defterm{canonical $V$-filtration} of $\cM$ is given by the subspaces
\[V^k_\Lambda\cM = \{u\in\cM|_\Lambda\mid \Re\ord_\Lambda(u)\leq \alpha-k\},\qquad k\in\Z.\]
Denote by $\gr_\Lambda\cM$ the associated graded module with respect to the canonical $V$-filtration $V^k_\Lambda\cM$.

\begin{defn}
Let $\Lambda\subset \P^* X$ be a Legendrian submanifold and $\tilde{\Lambda}\subset \oT^* X$ the corresponding conic Lagrangian. The \defterm{microlocalization functor}
\[\mu^{dR}_\Lambda\colon \Mod_{\rh}(\hE_X)\longrightarrow \Mod_{\rh, \mon}\left(\cD^{K_{\tilde{\Lambda}|X}^{1/2}}_{\tilde{\Lambda}}\right)\]
is
\[\mu^{dR}_\Lambda(\cM) = \cD^{K_{\tilde{\Lambda}|X}^{1/2}}_{\tilde{\Lambda}}\otimes_{\gamma^{-1} \gr_\Lambda \hE_X} \gamma^{-1}\gr_\Lambda \cM.\]
\end{defn}

Suppose $\cM$ is a regular holonomic $\cD_X$-module, $Z\subset X$ a submanifold and $\Lambda=\P^*_ZX\subset \P^*X$ its projectivized conormal bundle. If we denote by $K_{Z|X} = K_Z\otimes K_X^{-1}$ the relative canonical bundle, then there is a natural isomorphism
\[K_{\tilde{\Lambda}|X}\cong \pi^* K_{Z|X}^{\otimes 2}.\]

Consider the microlocalization
\[\tilde{\cM} = \hE_X\otimes_{\pi^{-1}\cD_X} \pi^{-1}\cM.\]

Fix $\alpha = \codim Z/2+1$. Then we may consider the $V$-filtration on $\cM$ along $Z$ or its microlocal version on $\tilde{\cM}$ along $\Lambda$. They are related as follows (see \cite[Remark 1.2.4]{MonteiroFernandes}): the natural map $\pi^{-1}\cM\rightarrow \tilde{\cM}$ induces an isomorphism
\begin{equation}
\pi^{-1}\gr^k_Z \cM \cong K_{Z|X}^{-1}\otimes_{\pi^{-1}\cO_X} \gr^k_\Lambda \tilde{\cM}
\label{eq:microlocalVcomparison}
\end{equation}
for any $k\leq 0$.

\subsection{Half twists}

Let $X$ be a complex manifold. The adjoint of a differential operator defines a map
\[\ast\colon \cD_X\longrightarrow K_X\otimes \cD_X\otimes K_X^{-1}.\]
One may define the sheaf of half-twisted differential operators $\cD^{\sqrt{v}}_X$, so that if $K_X^{1/2}$ is a square root of the canonical bundle, one has
\[\cD^{\sqrt{v}}_X = K^{1/2}_X\otimes_{\cO_X} \cD_X\otimes_{\cO_X} K^{-1/2}_X.\]
The sheaf $\cD^{\sqrt{v}}_X$ carries an antiinvolution $\ast$ given by passing to the adjoint differential operator. A vector field $\xi\in T_X$ defines a section of $\cD^{\sqrt{v}}_X$ via Lie derivative. By construction $\xi^* = -\xi$.

\begin{remark}
The sheaf $\cD^{\sqrt{v}}_X$ is independent of the choice of the square root of the canonical bundle.
\end{remark}

Similarly, let
\[\hE^{\sqrt{v}}_X = \pi^{-1} K^{1/2}_X\otimes_{\pi^{-1} \cO_X} \hE_X\otimes_{\pi^{-1}\cO_X} \pi^{-1} K^{-1/2}_X\]
be the sheaf of half-twisted microdifferential operators. The antiinvolution $\ast$ on $\cD^{\sqrt{v}}_X$ extends to one on $\hE^{\sqrt{v}}_X$.

The previous constructions extend to the half-twisted setting in a straightforward way. For instance, for a Legendrian submanifold $\Lambda\subset \P^* X$ there is a subalgebra $\hE^{\sqrt{v}}_{\Lambda/X}\subset \hE^{\sqrt{v}}_X|_\Lambda$, an embedding
\[L\colon \gr_\Lambda \hE^{\sqrt{v}}_X\hookrightarrow \gamma_* \cD_{\tilde{\Lambda}}^{\sqrt{v}}\]
whose image $\cD_{[\tilde{\Lambda}]}^{\sqrt{v}}$ consists of twisted differential operators polynomial along the fibers of $\gamma$ and a microlocalization functor
\[\mu^{dR}_\Lambda\colon \Mod_{\rh}(\hE^{\sqrt{v}}_X)\longrightarrow \gamma_*\Mod_{\rh, \mon}\left(\cD^{\sqrt{v}}_{\tilde{\Lambda}}\right).\]

Any left $\hE_X^{\sqrt{v}}$-module $\cM$ may be considered as a right $\hE_X^{\sqrt{v}}$-module via the formula $uP := P^\ast u$ for $P\in \hE_X^{\sqrt{v}}$, $u\in \cM$. We will freely pass between left and right modules using the antiinvolution $\ast$.

\subsection{Microdifferential modules}

Let $\Lambda\subset \P^* X$ be a Legendrian submanifold and $\tilde{\Lambda}\subset \oT^* X$ the corresponding conic Lagrangian submanifold.

\begin{defn}
Let $\cM$ be a coherent $\hE_X$-module supported on $\Lambda$. It is \defterm{regular along $\Lambda$} if there is locally a coherent $\hE_{\Lambda/X}$-submodule $\cM_0\subset \cM|_\Lambda$ generating $\cM|_\Lambda$ over $\hE_X|_\Lambda$.
\end{defn}

If $\cM$ is regular along a Legendrian $\Lambda\subset \P^* X$, then it is a regular holonomic $\hE_X$-module and $\mu_\Lambda^{dR}(\cM)$ is a vector bundle of finite rank on $\tilde{\Lambda}$.

\begin{defn}
An $\hE_X$-module $\cM$ is \defterm{simple along $\Lambda$} if it is regular along $\Lambda$ and $\mu_\Lambda^{dR}(\cM)$ is a line bundle.
\end{defn}

The notion of regular and simple $\hE^{\sqrt{v}}_X$-modules is defined similarly. If $\cM$ is an $\hE^{\sqrt{v}}_X$-module simple along $\Lambda$, then for any choice of the square root $K^{1/2}_{\tilde{\Lambda}}$ we have a rank $1$ local system
\[\cHom_{\cD^{\sqrt{v}}_{\tilde{\Lambda}}}\left(\mu_\Lambda^{dR}(\cM), K^{1/2}_{\tilde{\Lambda}}\right)\]
which is a subsheaf of $K^{1/2}_{\tilde{\Lambda}}$. For any simple generator $u\in\cM$ it allows us to define the \defterm{symbol}
\[\sigma_\Lambda(u)\in K^{1/2}_{\tilde{\Lambda}} / \C^\times_{\tilde{\Lambda}},\]
i.e. a section of $K^{1/2}_{\tilde{\Lambda}}$ defined uniquely up to a locally constant scale. It satisfies the following properties:
\begin{itemize}
    \item If $u$ has order $\lambda\in\C$, then the Euler vector field $\theta'$ acts on $\sigma_\Lambda(u)$ by $\lambda$.
    \item If $P\in\hE^{\sqrt{v}}_X(m)$ is a section such that $\sigma_m(P)|_\Lambda$ never vanishes, then $Pu$ is also a simple generator and
    \[\sigma_\Lambda(Pu) = \sigma_m(P)\sigma_\Lambda(u)\]
    and $\ord(Pu) = \ord(u) + m$.
    \item The \defterm{order} of simple $\hE^{\sqrt{v}}_X$-module is $\ord(\cM)\in\C/\Z$ defined to be the equivalence class of $\ord(u)$ for a simple generator $u$.
\end{itemize}

We have the following important classification result of microdifferential modules regular along a Legendrian. Let
\[\Mod_{\locsys}\left(\cD^{\sqrt{v}}_{\tilde{\Lambda}}\right)\subset \Mod_{\rh}\left(\cD^{\sqrt{v}}_{\tilde{\Lambda}}\right)\]
be the full substack of twisted $D$-modules on $\tilde{\Lambda}$ which are local systems, i.e. whose underlying $\cO$-module is a finite rank vector bundle. The following statement combines \cite[Theorem 10.3.1]{KashiwaraIntroduction} and \cite[Theorem 7.9]{KashiwaraKawaiMicrolocal}.

\begin{thm}\label{thm:regularclassificationlocal}
Let $\Lambda\subset \P^* X$ be a Legendrian submanifold. Microlocalization defines an equivalence of stacks
\[\mu_\Lambda^{dR}\colon \Mod_{\Lambda, \rh}(\hE^{\sqrt{v}}_X)\xrightarrow{\sim} \gamma_* \Mod_{\locsys}\left(\cD^{\sqrt{v}}_{\tilde{\Lambda}}\right)\]
between the stack of $\hE^{\sqrt{v}}_X$-modules regular along $\Lambda$ and twisted local systems on $\tilde{\Lambda}$. Moreover, if $\cM$ is a regular holonomic $\hE^{\sqrt{v}}_X$-module and $\cL$ a $\hE^{\sqrt{v}}_X$-module regular along $\Lambda$, then microlocalization defines an isomorphism
\[\RcHom_{\hE^{\sqrt{v}}_X}(\cM, \cL)\xrightarrow{\sim} \gamma_*\RcHom_{\cD^{\sqrt{v}}_{\tilde{\Lambda}}}(\mu_\Lambda^{dR}(\cM), \mu_\Lambda^{dR}(\cL)).\]
\end{thm}

\begin{remark}
Locally on $\Lambda$ twisted $D$-modules on $\tilde{\Lambda}$ whose underlying $\cO$-module is a line bundle are classified by an element $\C/\Z$ given by the eigenvalues of the Euler operator. So, simple $\hE^{\sqrt{v}}_X$-modules along $\Lambda$ are locally classified by the order.
\end{remark}

\subsection{Quantized contact transformations}\label{sect:QCT}

To define the stack of microdifferential modules for a general contact manifold $Y$, Kashiwara in \cite{KashiwaraContact} uses the theory of quantized contact transformations developed in \cite{SatoKawaiKashiwara} that we now recall.

\begin{defn}
Suppose $V_X\subset \P^* X$ and $V_Y\subset \P^* Y$ are open subsets together with a contactomorphism $\psi\colon V_X\xrightarrow{\sim} V_Y$. A \defterm{quantized contact transformation over $\psi$} is an isomorphism
\[\Psi\colon \hE_Y|_{V_Y}\xrightarrow{\sim}\psi_*\hE_X|_{V_X}\]
of sheaves of filtered $\C$-algebras which coincides with $\psi$ after passing to the associated graded algebras.
\end{defn}

By \cite[Theorem 8.11]{KashiwaraDmodules} quantized contact transformations exist locally. We will also need a half-twisted version as follows.

\begin{defn}
Suppose $V_X\subset \P^* X$ and $V_Y\subset \P^* Y$ are open subsets together with a contactomorphism $\psi\colon V_X\xrightarrow{\sim} V_Y$. A \defterm{$\ast$-preserving quantized contact transformation over $\psi$} is an isomorphism
\[\Psi\colon \hE^{\sqrt{v}}_Y|_{V_Y}\xrightarrow{\sim}\psi_*\hE^{\sqrt{v}}_X|_{V_X}\]
of sheaves of filtered $\C$-algebras which intertwines the antiinvolutions $\ast$ and which coincides with $\psi$ after passing to the associated graded algebras.
\end{defn}

The construction of the microlocalization functor $\mu^{dR}_\Lambda$ involves the filtration of microdifferential operators by order and the involution $\ast$. Therefore, microlocalization (and, a posteriori, the notion of symbols and orders of simple $\hE^{\sqrt{v}}_X$-modules) commutes with $\ast$-preserving quantized contact transformations.

\begin{prop}
Consider a $\ast$-preserving quantized contact transformation $\Psi$ over $\psi\colon V_X\xrightarrow{\sim} V_Y$ as in the above definition. Suppose $\Lambda_X\subset V_X$ and $\Lambda_Y\subset V_Y$ are two Legendrians such that $\Lambda_Y=\psi(\Lambda_X)$. Then a $\ast$-preserving quantized contact transformation $\Psi\colon \hE^{\sqrt{v}}_Y|_{V_Y} \xrightarrow{\sim} \psi_*\hE^{\sqrt{v}}_X|_{V_X}$ restricts to an isomorphism
\[\Psi\colon \hE^{\sqrt{v}}_{\Lambda_Y/Y}|_{V_Y}\xrightarrow{\sim}\psi_*\hE^{\sqrt{v}}_{\Lambda_X/X}|_{V_X}.\]
Moreover, the diagram
\[
\xymatrix{
\gr_\Lambda \hE^{\sqrt{v}}_{\Lambda_Y/Y}\ar[r] \ar^{\Psi}[d] & \gamma_* \cD_{\tilde{\Lambda_Y}}^{\sqrt{v}} \ar^{\psi}[d] \\
\gr_\Lambda \hE^{\sqrt{v}}_{\Lambda_X/X}\ar[r] & \gamma_* \cD_{\tilde{\Lambda_X}}^{\sqrt{v}}
}
\]
is commutative. In particular, there is a commutative diagram
\[
\xymatrix{
\Mod_{\rh}(\hE^{\sqrt{v}}_Y)|_{V_Y}\ar^-{\mu_{\Lambda_Y}^{dR}}[r] \ar^{\Psi}[d] & \gamma_*\Mod_{\rh, \mon}\left(\cD^{\sqrt{v}}_{\tilde{\Lambda_Y}}\right) \ar^{\psi}[d] \\
\psi_* \Mod_{\rh}(\hE^{\sqrt{v}}_X)|_{V_X}\ar^-{\mu_{\Lambda_X}^{dR}}[r] & \gamma_*\Mod_{\rh, \mon}\left(\cD^{\sqrt{v}}_{\tilde{\Lambda_X}}\right)
}
\]
\label{prop:QCTmicrolocalization}
\end{prop}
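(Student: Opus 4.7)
The proof is essentially a matter of chasing the definitions, but it needs to be done carefully because several compatibilities must be verified in sequence. The plan is as follows.

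First I would establish the restriction statement $\Psi(\hE^{\sqrt{v}}_{\Lambda_Y/Y}|_{V_Y}) = \psi_* \hE^{\sqrt{v}}_{\Lambda_X/X}|_{V_X}$. Unpacking the definition, $\hE^{\sqrt{v}}_{\Lambda/X}$ is generated over $\hE^{\sqrt{v}}_X|_\Lambda$ by the subspace $I_\Lambda = \{P \in \hE^{\sqrt{v}}_X(1)|_\Lambda \mid \sigma_1(P)|_{\tilde\Lambda} = 0\}$. Since $\Psi$ is filtration-preserving and induces $\psi$ on the associated graded, $\sigma_1(\Psi(P)) = \psi^* \sigma_1(P)$ for $P \in \hE^{\sqrt{v}}_Y(1)|_{V_Y}$; so $\sigma_1(P)$ vanishes on $\tilde\Lambda_Y$ if and only if $\sigma_1(\Psi(P))$ vanishes on $\tilde\Lambda_X = \psi^{-1}(\tilde\Lambda_Y)$, giving $\Psi(I_{\Lambda_Y}) = I_{\Lambda_X}$. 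The induced filtrations $\hE^{\sqrt{v}}_{\Lambda/\bullet}(m) = \hE^{\sqrt{v}}_{\Lambda/\bullet} \cdot \hE^{\sqrt{v}}_\bullet(m)|_\Lambda$ then match under $\Psi$.

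The second step is the commutativity of the square involving the embedding $L$ into fiberwise-polynomial twisted differential operators on $\tilde\Lambda$. Here I would appeal to the canonical characterization of $L$: on $\gr^m_\Lambda \hE^{\sqrt{v}}_X$ it is defined using the principal symbol of order $m$, the Lie derivative along a lift $\theta'$ of the Euler vector field of $\tilde\Lambda$, and the antiinvolution $\ast$ (equations (1.5.5)–(1.5.6) of \cite{KashiwaraKawaiHolonomic} pin $\theta'$ down uniquely via its symbol and its behavior under $\ast$). All three ingredients are preserved by a $\ast$-preserving QCT: the filtration and principal symbols by the definition of a QCT, and $\ast$ by the assumption of $\ast$-preservation. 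Combined with step one, the uniquely determined lift $\theta'$ on one side matches that on the other, so $L \circ \Psi = \psi \circ L$.

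Given steps one and two, the microlocalization diagram follows formally. For $\cM \in \Mod_{\rh}(\hE^{\sqrt{v}}_Y)|_{V_Y}$, the canonical $V$-filtration is defined purely via $b$-functions for the action of $\theta'$ modulo $\hE^{\sqrt{v}}_{\Lambda_Y/Y}(-1)$, both of which are preserved by $\Psi$; hence $\Psi(V^k_{\Lambda_Y} \cM) = V^k_{\Lambda_X}(\Psi_* \cM)$, and in particular $\gr_{\Lambda_Y} \cM$ maps isomorphically to $\gr_{\Lambda_X}(\Psi_* \cM)$ as modules over $\gr_\Lambda \hE^{\sqrt{v}}$ via the identification from step two. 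Tensoring up along $\cD^{\sqrt{v}}_{\tilde\Lambda}$ and pushing forward along $\gamma$ then produces the required natural isomorphism $\mu^{dR}_{\Lambda_X}(\Psi_* \cM) \cong \psi^* \mu^{dR}_{\Lambda_Y}(\cM)$.

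The main obstacle is really contained in step two: one has to be confident that the construction of $L$ (and hence of $\theta'$) depends only on the structure preserved by a $\ast$-preserving QCT, and does not tacitly require choices of local coordinates that would break the naturality. Once the canonical characterization of $\theta'$ in terms of its symbol and the equation $(\theta')^* = -\theta' + c$ (with $c$ determined by the half-twist) is invoked, everything else is a formal consequence. I would verify this compatibility once for $\theta'$, then let the rest of the proof propagate by functoriality of the $V$-filtration and of the tensor product construction of $\mu^{dR}_\Lambda$.
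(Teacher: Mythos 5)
Your proof is correct and follows exactly the approach the paper takes (which is compressed into the sentence preceding the proposition: ``The construction of the microlocalization functor $\mu^{dR}_\Lambda$ involves the filtration of microdifferential operators by order and the involution $\ast$. Therefore, microlocalization \dots commutes with $\ast$-preserving quantized contact transformations''). Your careful unpacking — first $\Psi(I_{\Lambda_Y})=I_{\Lambda_X}$ via the principal symbol, then the compatibility of the embedding $L$ with $\Psi$ via the canonical characterization of $\theta'$ using \cite[(1.5.5)--(1.5.6)]{KashiwaraKawaiHolonomic} and $\ast$, and finally the preservation of the canonical $V$-filtration via $b$-functions — is exactly the argument being invoked.
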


It will be convenient to rephrase quantized contact transformations as follows. Let $U_X\subset \T^* X$ and $U_Y\subset \T^* Y$ be the preimages of $V_X,V_Y$ and $\tilde{\psi}\colon U_X\xrightarrow{\sim} U_Y$ the homogeneous symplectic transformation lifting $\psi$. Let
\[p_1\colon \T^*(X\times Y)\rightarrow \T^* X\]
be the projection on the first factor and
\[p_2^a\colon \T^*(X\times Y)\rightarrow \T^* Y\]
be the projection on the second factor post-composed with the multiplication by $-1$ along the cotangent fibers.

We have the symplectic volume form $\vol_{\T^* Y}$ on $\T^* Y$. So, there is a canonical choice of the square root line bundle $K^{1/2}_{\T^* Y}$ with section $\sqrt{\vol_{\T^* Y}}$.

\begin{prop}
The data of a quantized contact transformation
\[\Psi\colon \hE^{\sqrt{v}}_Y|_{V_Y}\xrightarrow{\sim}\psi_*\hE^{\sqrt{v}}_X|_{V_X}\]
over $\psi$ is equivalent to the data of a $\hE^{\sqrt{v}}_{X\times Y}$-module $\cM$ defined on an open subset $U\subset \T^*(X\times Y)$ which is simple along a Legendrian $\Lambda$ with a simple generator $K$ satisfying the following properties:
\begin{itemize}
    \item $p_1(U)\subset U_X$ and $p_1\colon \tilde{\Lambda} \rightarrow U_X$ is an isomorphism.
    \item $p_2^a(U)\subset U_Y$ and $p^a_2\colon \tilde{\Lambda}\rightarrow U_Y$ is an isomorphism.
\end{itemize}
The relationship between the pair $(\cM, K)$ and the QCT $\Psi$ is given by
\[\Psi(Q) K = Q^* K.\]
If $\Psi$ is a $\ast$-preserving quantized contact transformation, then
\[\sigma_\Lambda(K) = \sqrt{\vol_{\T^* Y}},\qquad \ord(K) = (\dim Y)/2.\]
\label{prop:QCTbimodule}
\end{prop}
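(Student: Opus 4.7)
The plan is to construct the two directions of the equivalence separately and then address the additional symbol and order properties in the $\ast$-preserving case. Throughout, I use that the graph $\tilde{\Lambda}\subset \oT^*(X\times Y)$ of the homogeneous symplectomorphism $\tilde{\psi}\colon U_X\xrightarrow{\sim} U_Y$ is a conic Lagrangian on which $p_1$ and $p_2^a$ restrict to isomorphisms $\tilde{\Lambda}\cong U_X$ and $\tilde{\Lambda}\cong U_Y$.

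For the forward direction, given a QCT $\Psi$, set $\cM := \psi_*\hE^{\sqrt{v}}_X|_{V_X}$ and endow it with two commuting left actions: the left $\hE^{\sqrt{v}}_X$-action by left multiplication, and a left $\hE^{\sqrt{v}}_Y$-action defined by $Q\cdot m := m\cdot \Psi(Q^*)$, i.e.\ right multiplication by $\Psi(Q^*)\in \hE^{\sqrt{v}}_X$. These commute by associativity. Under the identification $\hE^{\sqrt{v}}_{X\times Y}\cong p_1^{-1}\hE^{\sqrt{v}}_X\otimes p_2^{-1}\hE^{\sqrt{v}}_Y$ (on a suitable open neighborhood of $\tilde{\Lambda}$), $\cM$ becomes a left $\hE^{\sqrt{v}}_{X\times Y}$-module supported on $\Lambda$. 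Taking $K=1$, a direct computation gives $\Psi(Q)K = \Psi(Q)\cdot 1 = \Psi(Q)$ and $Q^* K = 1\cdot \Psi((Q^*)^*) = \Psi(Q)$, verifying $\Psi(Q)K = Q^*K$. Simplicity of $\cM$ along $\Lambda$ follows from the fact that $\hE^{\sqrt{v}}_X$ is a simple module over itself.

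For the reverse direction, let $(\cM, K)$ be a simple pair with $p_1\colon \tilde{\Lambda}\xrightarrow{\sim} U_X$. These hypotheses imply that the left $\hE^{\sqrt{v}}_X$-linear map $\hE^{\sqrt{v}}_X \to \cM$, $P\mapsto PK$, is an isomorphism onto its image, since the microlocalization $\mu^{dR}_\Lambda(\cM)$ is a line bundle on $\tilde{\Lambda}$ with basis $\sigma_\Lambda(K)$ and $p_1$ trivializes the base. Hence for each $Q\in \hE^{\sqrt{v}}_Y$ there is a unique $\Psi(Q)\in \psi_*\hE^{\sqrt{v}}_X|_{V_X}$ with $\Psi(Q)K = Q^*K$. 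The ring homomorphism property follows from
\[
\Psi(Q_1)\Psi(Q_2)K = \Psi(Q_1)Q_2^*K = Q_2^*\Psi(Q_1)K = Q_2^*Q_1^*K = (Q_1Q_2)^*K,
\]
combined with uniqueness, where the second equality uses commutativity of the $X$- and $Y$-actions on $\cM$. Preservation of the order filtration is because $\ord(\Psi(Q)K) = \ord(Q^*K)$ forces $\ord(\Psi(Q)) = \ord(Q)$. That $\Psi$ covers $\tilde{\psi}$ at the associated graded level reduces, upon taking principal symbols of $\Psi(Q)K = Q^* K$ and using the multiplicativity of symbols, to the relation $\sigma(\Psi(Q))(p_1(\xi)) = \sigma(Q)(p_2^a(\xi))$ for $\xi\in \tilde{\Lambda}$, which is precisely the statement that $\Psi$ lifts $\tilde{\psi}$.

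For the $\ast$-preserving case, the condition $\Psi(Q^*) = \Psi(Q)^*$ translates via $\Psi(Q)K = Q^*K$ into a self-adjointness constraint on $K$ viewed as a section of the rank-one $\mu^{dR}_\Lambda(\cM)$. Since $\tilde{\psi}$ is homogeneous symplectic, the pullbacks $p_1^* \vol_{\T^*X}$ and $(p_2^a)^*\vol_{\T^*Y}$ agree on $\tilde{\Lambda}$, and self-adjointness pins down $\sigma_\Lambda(K)$ as $\sqrt{\vol_{\T^*Y}}$ up to sign. To verify $\ord(K) = (\dim Y)/2$, I would reduce to a local model: by the local existence of QCTs one can take $X=Y$ and $\psi=\id$ on an open subset of $\P^*X$; then $\Lambda$ is the projectivized conormal to $\Delta_X\subset X\times X$ and the corresponding simple $\hE^{\sqrt{v}}_{X\times X}$-module is the canonical half-twisted delta-function kernel along $\Delta_X$, whose natural generator has order $(\dim X)/2$ by a direct computation in Darboux coordinates. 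The main technical obstacle is this final explicit calculation, essentially the bookkeeping of the half-twist shift in the order of the delta kernel; once in hand, invariance under $\ast$-preserving QCTs transports the result to arbitrary $\psi$.
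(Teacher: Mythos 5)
Your sketch of the forward and reverse directions of the equivalence recapitulates the content of Kashiwara's book, which the paper simply cites, so that part is acceptable as a plan. The substantive content of the proposition is the pair of assertions for $\ast$-preserving $\Psi$, and here your argument has real gaps.

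For $\sigma_\Lambda(K) = \sqrt{\vol_{\T^*Y}}$, you observe correctly that $p_1^*\vol_{\T^*X}$ and $(p_2^a)^*\vol_{\T^*Y}$ agree on $\tilde\Lambda$ (this is Liouville's theorem), but then you assert that ``self-adjointness pins down $\sigma_\Lambda(K)$ up to sign'' without saying what equation $\sigma_\Lambda(K)$ must satisfy. This is where the content of the proof lives. The $\ast$-preserving condition $\Psi(Q^*)=\Psi(Q)^*$, combined with $\Psi(Q)K = Q^*K$, yields the relation $(\Psi(Q) - Q^*)K=0$, and one must translate this through the map $L\colon \gr_\Lambda\hE^{\sqrt{v}} \hookrightarrow \cD^{\sqrt{v}}_{[\tilde\Lambda]}$ into a first-order linear equation on $\sigma_\Lambda(K)$ of the form
\[
\bigl(H_{\sigma_{m+1}(\Psi(Q))} + \sigma_m(\Psi(Q)) - H_{\sigma_{m+1}(Q^*)} - \sigma_m(Q^*)\bigr)\,\sigma_\Lambda(K) = 0.
\]
Only after this translation does Liouville's theorem enter: identifying $\tilde\Lambda$ with $U_X$ via $p_1$, the vector field reduces to a Hamiltonian field, and Hamiltonian fields annihilate $\sqrt{\vol_{\T^*X}}$. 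Without writing down and solving this transport equation, the claim that the $\ast$-preserving condition ``pins down'' the symbol is unsupported.

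For $\ord(K) = (\dim Y)/2$, your proposed reduction to the identity case $X=Y$, $\psi=\id$ via a ``transport'' argument is both incomplete and unnecessarily elaborate. It is incomplete because you would need to specify the $\ast$-preserving QCT used for the transport and justify that convolution with its bimodule sends your $(\cM,K)$ to the canonical delta-kernel and not to some twist of it; this is not automatic. It is unnecessary because once $\sigma_\Lambda(K) = \sqrt{\vol_{\T^*Y}}$ is established, the order is read off immediately: the Euler vector field acts on $\vol_{\T^*Y}$ by $\dim Y$ (it is a homogeneous form of that degree), hence on its square root by $(\dim Y)/2$. So the order claim requires no separate argument, let alone a reduction to a model case.
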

\begin{proof}
The correspondence between quantized contact transformations and pairs $(\cM, K)$ is established in \cite[Chapter 8.2]{KashiwaraDmodules}.

Now assume $\Psi$ is a $\ast$-preserving quantized contact transformation. Consider $Q\in\hE^{\sqrt{v}}_Y(m+1)$. By definition we have
\[(\Psi(Q) - Q^*)K = 0.\]
To show that $\sigma_\Lambda(K) = \sqrt{\vol_{\T^* Y}}$, using the notation of \cite[Chapter 8.6]{KashiwaraDmodules} we have to prove that
\[(H_{\sigma_{m+1}(\Psi(Q))} + \sigma_m(\Psi(Q)) - H_{\sigma_{m+1}(Q^*)} - \sigma_m(Q^*)) \sqrt{\vol_{\T^* Y}} = 0.\]
We have $\sigma_m(Q^*) = (-1)^m \sigma_m(Q) = p_2^a \sigma_m(Q)$ and $\sigma_m(\Psi(Q)) = \psi(\sigma_m(Q))$. Moreover, the function
\[\psi(\sigma_m(Q)) - (-1)^m\sigma_m(Q)\]
vanishes along the Lagrangian $\tilde{\Lambda}$.

The vector field $H_{\sigma_{m+1}(\Psi(Q))} - H_{\sigma_{m+1}(Q^*)} = H_{\psi(\sigma_{m+1}(Q)} + (-1)^m H_{\sigma_{m+1}(Q)}$ is tangent to the Lagrangian $\tilde{\Lambda}$. Identifying $\Lambda$ with $U_X$ using $p_1$ this vector field is identified with $H_{\psi(\sigma_{m+1}(Q))}$. Under the same identification the section $\sqrt{\vol_{\T^* Y}}$ of $K^{1/2}_{\tilde{\Lambda}}$ goes to the section $\sqrt{\vol_{\T^* X}}$ of $K^{1/2}_{U_X}$. By Liouville's theorem $H_{\sigma_{m+1}(\Psi(Q))} \sqrt{\vol_{\T^* X}} = 0$ which implies the result.
\end{proof}

\subsection{Globalization}

Recall that algebras form a 2-category as follows:
\begin{itemize}
    \item Its objects are algebras.
    \item 1-morphisms from $A$ to $B$ are given by homomorphisms $f\colon A\rightarrow B$.
    \item 2-morphisms from $f\colon A\rightarrow B$ to $g\colon A\rightarrow B$ are given by invertible elements $b\in B$ such that $g(a) = b f(a) b^{-1}$.
\end{itemize}

By an \defterm{algebroid} over a complex manifold we will mean a sheaf (i.e. a stack) of algebras viewed as objects of the above 2-category. For an algebroid it makes sense to consider the sheaf (i.e. a stack) of modules over it.

Let $Y$ be a contact manifold with a symplectization $\gamma\colon \tilde{Y}\rightarrow Y$. The papers \cite{KashiwaraContact,PoleselloSchapira} have defined the algebroid $\sfhE_Y$ over $Y$ uniquely determined by the following properties:
\begin{itemize}
    \item If $U\subset Y$ is an open subset together with a contact embedding $U\hookrightarrow \P^* X$ into a projectivized cotangent bundle, then
    \[\sfhE_Y|_U\cong \hE^{\sqrt{v}}_X|_U.\]
    \item Given an intersection of two such charts, the isomorphism of the algebras of half-twisted microdifferential operators is given by a $\ast$-preserving quantized contact transformation.
\end{itemize}

Consider a Legendrian submanifold $\Lambda\subset Y$. As the notion of regularity of $\hE^{\sqrt{v}}_X$-modules is local and invariant under quantized contact transformations, it extends to $\sfhE_Y$-modules. Moreover, using \cref{prop:QCTmicrolocalization} we obtain a global version of the microlocalization functor
\[\mu^{dR}_\Lambda\colon \Mod_{\rh}(\sfhE_Y)\longrightarrow \gamma_*\Mod_{\rh}\left(\cD^{\sqrt{v}}_{\tilde{\Lambda}}\right).\]

Using this functor one obtains the following classification result generalizing \cref{thm:regularclassificationlocal} (see \cite[Corollary 6.4]{DAgnoloSchapira}).

\begin{thm}
Let $\Lambda\subset Y$ be a Legendrian submanifold. Microlocalization defines an equivalence of stacks
\[\mu_\Lambda^{dR}\colon \Mod_{\Lambda, \rh}(\sfhE_Y)\xrightarrow{\sim} 
\gamma_*\Mod_{\locsys}\left(\cD^{\sqrt{v}}_{\tilde{\Lambda}}\right)\]
between the stack of $\sfhE_Y$-modules regular along $\Lambda$ and the stack of twisted local systems on $\tilde{\Lambda}$. Moreover, if $\cM$ is a regular holonomic $\sfhE_Y$-module and $\cL$ is an $\sfhE_Y$-module regular along $\Lambda$, then microlocalization defines an isomorphism
\[\RcHom_{\sfhE_Y}(\cM, \cL)\xrightarrow{\sim} \gamma_* \RcHom_{\cD^{\sqrt{v}}_{\tilde{\Lambda}}}(\mu_\Lambda^{dR}(\cM), \mu_\Lambda^{dR}(\cL)).\]
\label{thm:simpleclassification}
\end{thm}

The antiinvolution $\ast$ on $\hE^{\sqrt{v}}_X$ globalizes to an isomorphism of algebroids $\sfhE_Y \cong \sfhE_Y^{\op}$ for any contact manifold $Y$. Thus, any left $\sfhE_Y$-module $\sfM$ can be considered as a right $\sfhE_Y$-module and vice versa.

\section{DQ modules}
\label{sect:DQmodules}

In this section we define one of the main objects of the paper: the stack of deformation quantization modules on a holomorphic symplectic manifold.

\subsection{WKB-differential operators}

Let $X$ be a complex manifold. Consider the sheaf of algebras $\hE_{X\times\C}$ on $\rJ^1 X\subset \P^*(X\times\C)$. The natural $\C$-action on $\rJ^1 X$ lifts to a Hamiltonian $\C$-action on $\hE_{X\times \C}$ with the quantum moment map given by $\partial_t$. We consider the sheaf
\[\hW_X\subset \rho_*(\hE_{X\times \C})\]
of $\C(\!(\hbar)\!)$-algebras of \defterm{formal WKB-differential operators} introduced in \cite{PoleselloSchapira}, where $\rho\colon \rJ^1 X\rightarrow \T^* X$ and $\hbar = \partial_t^{-1}$. Explicitly, $\hW_X$ is defined as $\rho_\ast \hW_{X,\widehat{t}}$, where $\hW_{X,\widehat{t}}$ is the subsheaf of $\hE_{X\times \C}$ consisting of operators $P$ such that $[P,\partial_t]=0$.

\begin{remark}
One may interpret $\hW_X$ with the quantum Hamiltonian reduction of $\hE_{X\times\C}$ at the moment map value $\partial_t = \hbar^{-1}$ for $\hbar\in\C(\!(\hbar)\!)$ a formal variable.
\end{remark}

\begin{example}
There is an isomorphism of sheaves of $\C(\!(\hbar)\!)$-algebras
\[\hW_X|_X\cong \cD_X(\!(\hbar)\!)\]
on $X$.
\end{example}

\begin{remark}
The filtration of $\hE_{X\times\C}$ by order induces a filtration on $\hW_X$. In particular, $\hW_X(0) \subseteq \hW_X$ is a $\Cbb$-lattice. We have an isomorphism
\[\hW_X(0) / \hbar\cong \cO_{\T^* X}\]
of sheaves of algebras, so $\hW_X(0)$ defines a deformation quantization of $\T^* X$.
\end{remark}

\begin{lm}
The $\hW_X$-module $\rho_*(\hE_{X\times\C})$ is flat.
\label{lm:Eflat}
\end{lm}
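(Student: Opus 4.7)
The plan is to reduce the flatness to a local statement on $\T^* X$ and exploit the principal $\C$-bundle structure of $\rho\colon \rJ^1 X\to\T^* X$ together with the order filtration on microdifferential operators. Since flatness can be checked locally on $\T^* X$, I may pass to a small open $U\subset\T^* X$ trivializing the contactification, so that $\rho^{-1}(U)\cong U\times\C_t$.

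The first step is a normal-form expansion: every local section $P$ of $\rho_*\hE_{X\times\C}$ over $U$ can be uniquely written as $P=\sum_{k\ge 0}t^k Q_k$ with $Q_k\in\hW_X|_U$. One extracts the $Q_k$ inductively using the derivation $[\partial_t,-]$ whose kernel is, by definition, $\hW_X$: evaluating at $t=0$ recovers $Q_0$, then $[\partial_t,-]$ strips one power of $t$, and the process repeats. This presents $\rho_*\hE_{X\times\C}|_U$ as a suitable completion of the free $\hW_X$-module $\hW_X[t]$; the completion is necessary because sections depend holomorphically, not merely polynomially, on $t\in\C$.

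With this presentation in hand I would deduce flatness by passing to the order filtration. The inclusion $\hW_X\hookrightarrow\rho_*\hE_{X\times\C}$ is a filtered map, and at the associated graded level one finds $\gr_\bullet\hW_X\cong\cO_{\T^* X}[\hbar^{\pm 1}]$ and $\gr_\bullet\rho_*\hE_{X\times\C}\cong\cO(\C_t)\otimes_\C\gr_\bullet\hW_X$. The latter is evidently flat over the former, since $\cO(\C_t)$ is a flat $\C$-module and flatness is preserved under base change. A standard lifting lemma for filtered rings that are separated and complete with respect to the order filtration then promotes graded flatness back to flatness of $\rho_*\hE_{X\times\C}$ over $\hW_X$.

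The main obstacle I expect is reconciling the analytic dependence on $t\in\C$ with the formal dependence on $\hbar=\tau^{-1}$. The Taylor expansion in $t$ is a genuinely infinite series, and the order filtration is $\Z$-indexed with no a priori lower bound, so the lifting argument must operate in a topology simultaneously controlling analytic convergence in $t$ and $\hbar$-adic completeness. Verifying that the standard lifting of flatness from associated graded to filtered module survives in this mixed analytic/formal setting is the primary technical point.
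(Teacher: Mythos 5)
Your plan—reduce to a graded or mod-$\hbar$ statement, verify flatness there, lift back—is in outline the same as the paper's, but the key step contains an error. You assert that $\gr_\bullet\rho_*\hE_{X\times\C}\cong\cO(\C_t)\otimes_\C\gr_\bullet\hW_X$ as an algebraic tensor product, and then conclude flatness ``by base change from the flat $\C$-module $\cO(\C_t)$.'' This identification is incorrect. The associated graded of $\rho_*\hE_{X\times\C}$ is $\rho_*\cO_{\rJ^1 X}[\hbar^{\pm1}]$, and $\rho_*\cO_{\rJ^1 X}$ assigns to $U\subset\T^*X$ the ring of \emph{holomorphic} functions on $U\times\C_t$, which is strictly larger than $\cO(\C_t)\otimes_\C\cO(U)$ (for instance, $e^{xt}$ lies in the former but not the latter). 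So $\rho_*\cO_{\rJ^1 X}$ is not a base change of a flat $\C$-module, and the ``evident'' flatness argument does not apply. The fact that $\rho_*\cO_{\rJ^1 X}$ is flat over $\cO_{\T^*X}$ is still true, but the correct reason is analytic, not algebraic: the projection $\rJ^1 X=\T^*X\times\C\to\T^*X$ is a submersion, hence open, and open morphisms of complex spaces are flat (this is the Banica--Stan\u{a}\c{s}il\u{a} result the paper invokes). You need this input; without it, the argument has no basis for the crucial flatness of the graded piece.

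There is also a secondary issue with the final lifting step. You invoke ``a standard lifting lemma for filtered rings that are separated and complete with respect to the order filtration,'' but the order filtration is a two-sided $\Z$-filtration and the needed completeness/separatedness must be handled carefully. The paper sidesteps this by first reducing to the $\hbar$-adic situation for the lattices $\hW_X(0)\subset\hW_X$ and $\rho_*\hE_{X\times\C}(0)\subset\rho_*\hE_{X\times\C}$: these are $\hbar$-torsion-free and $\hbar$-adically complete, hence cohomologically $\hbar$-complete by Kashiwara--Schapira, and then flatness can be checked mod $\hbar$ by their Theorem 1.6.6. If you want to use the order filtration directly you will need to justify an appropriate flatness-lifting statement for that filtration; the $\hbar$-adic reduction is cleaner and is the route the paper takes. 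In short: fix the graded computation (it is an analytic, not algebraic, tensor product), replace the base-change argument with the openness/flatness of $\rJ^1 X\to\T^*X$, and make the lifting step precise by working with the order-zero lattices and $\hbar$-adic completeness.
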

\begin{proof}
To prove the claim it is enough to show that the $\hW_X(0)$-module $\rho_*(\hE_{X\times\C}(0))$ is flat as $\rho_*(\hE_{X\times\C}) = \rho_*(\hE_{X\times\C}(0))[\hbar^{-1}]$.

We have that $\rho_*(\hE_{X\times\C}(0))$ is $\hbar$-complete and has no $\hbar$-torsion. Therefore, by \cite[Corollary 1.5.7]{KashiwaraSchapira} it is cohomologically $\hbar$-complete. By \cite[Theorem 1.6.6]{KashiwaraSchapira} in this case it is enough to prove flatness modulo $\hbar$, i.e. it is enough to prove that $\rho_*\cO_{\rJ^1 X}$ is flat as a $\cO_{\T^*X}$-module. Since the map $\rJ^1 X\rightarrow \T^* X$ is open, it is flat by \cite[Theorem V.2.13]{BanicaStanasila}.
\end{proof}

The half-twisted algebra
\[\hW^{\sqrt{v}}_X\subset \rho_*(\hE^{\sqrt{v}}_{X\times\C})\]
is defined similarly, where we note that $\hbar^* = -\hbar$. In this case the antiinvolution $\ast$ on $\hE^{\sqrt{v}}_{X\times\C}$ restricts to one on $\hW^{\sqrt{v}}_X$.

Let us now explain how to glue these sheaves of algebras. Suppose $U_X\subset \T^* X$ and $U_Y\subset \T^*Y$ are open subsets together with a symplectomorphism $\phi\colon U_X\xrightarrow{\sim} U_Y$. Suppose the symplectomorphism $\phi$ lifts to an isomorphism of contactifications $\psi\colon V_X=\rho^{-1}(U_X)\xrightarrow{\sim} V_Y=\rho^{-1}(U_Y)$, where $\rho\colon \rJ^1 X\rightarrow \T^* X$ and $\rho\colon \rJ^1 Y\rightarrow \T^* Y$ are the standard contactifications. Recall that in this case $\psi$ is compatible with the Hamiltonian $\C$-actions. The following is \cite[Lemma 8.5]{PoleselloSchapira}.

\begin{prop}
Consider an isomorphism of contactifications $\psi\colon V_X=\rho^{-1}(U_X)\xrightarrow{\sim} V_Y=\rho^{-1}(U_Y)$ as above. Then there is a $\ast$-preserving quantized contact transformation
\[\Psi\colon \hE^{\sqrt{v}}_{Y\times\C}|_{V_Y}\xrightarrow{\sim}\psi_*\hE^{\sqrt{v}}_{X\times\C}|_{V_X}\]
compatible with the Hamiltonian $\C$-actions, i.e. $\Psi(\partial_t) = \partial_t$ and $\Psi$ is $\C$-equivariant.
\label{prop:QST}
\end{prop}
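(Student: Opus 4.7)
The plan is to first produce a $\ast$-preserving QCT lifting $\psi$ by general existence results, and then correct it by an inner automorphism to achieve compatibility with the Hamiltonian $\C$-actions. Regarding $\psi\colon V_X\to V_Y$ locally as a contactomorphism between open subsets of $\P^*(X\times\C)$ and $\P^*(Y\times\C)$, and invoking the general existence of $\ast$-preserving QCTs (the same result underlying the construction of the algebroid $\sfE_Y$), I obtain a $\ast$-preserving QCT
\[
\Psi_0\colon \hE^{\sqrt{v}}_{Y\times\C}|_{V_Y}\xrightarrow{\sim}\psi_*\hE^{\sqrt{v}}_{X\times\C}|_{V_X}.
\]
Since $\psi$ is an isomorphism of principal $\C$-bundles, it preserves the principal symbol of $\partial_t$, so the difference $R := \Psi_0(\partial_t) - \partial_t$ lies in $\hE^{\sqrt{v}}_{X\times\C}(0)$. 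Moreover, $\partial_t^\ast = -\partial_t$ together with $\ast$-preservation of $\Psi_0$ forces $R^\ast = -R$.

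The heart of the argument is then to construct $S = \exp(T)$ with $T\in \hE^{\sqrt{v}}_{X\times\C}(-1)$ and $T^\ast = -T$ (so that $S^\ast = S^{-1}$, hence $\Ad(S)$ preserves $\ast$) such that $\Psi := \Ad(S)\circ \Psi_0$ satisfies $\Psi(\partial_t) = \partial_t$. Writing $T = \sum_{k\geq 1} T_{-k}$ in terms of the order filtration, the defining equation $S(\partial_t + R) = \partial_t S$ becomes a recursive system $[\partial_t, T_{-k}] = \Phi_{-k}$, where each $\Phi_{-k}$ is determined by $R$ and the previously constructed $T_{-j}$ with $j<k$, and is antiselfadjoint by induction (using $R^\ast = -R$). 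On the level of principal symbols this reads $\{\tau, t_{-k}\} = \phi_{-k}$; since $\tau = \sigma_1(\partial_t)$ generates the $\C$-action by translation along the orbits of the principal $\C$-bundle $V_X\to U_X$, this is an inhomogeneous ODE along $\C$-orbits and can be solved (after possibly shrinking $V_X$) by integration. The antiselfadjointness of $\Phi_{-k}$ allows the solution to be chosen antiselfadjoint, for instance by replacing $T_{-k}$ with its symmetrization $\tfrac{1}{2}(T_{-k}-T_{-k}^\ast)$, which still solves the equation since $\Phi_{-k}^\ast = -\Phi_{-k}$. Completeness of $\hE^{\sqrt{v}}_{X\times\C}$ in the order filtration then assembles the $T_{-k}$ into a well-defined $T$.

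The resulting $\Psi := \Ad(S)\circ \Psi_0$ is by construction a $\ast$-preserving QCT lifting $\psi$ with $\Psi(\partial_t) = \partial_t$; its $\C$-equivariance follows automatically, since the $\C$-actions on both algebras are generated by $\mathrm{ad}(\partial_t)$. The main obstacle is the inductive construction of $T$: one must simultaneously solve the symbol-level ODE using the $\C$-bundle structure of $V_X\to U_X$, and maintain the antiselfadjointness $T_{-k}^\ast = -T_{-k}$ at each order, so that the resulting $S$ is unitary and $\Ad(S)$ preserves $\ast$.
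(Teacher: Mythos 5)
The paper does not supply its own proof here; it simply cites \cite[Lemma 8.5]{PoleselloSchapira}, so a direct line-by-line comparison is not possible. Your strategy of starting from any $\ast$-preserving QCT $\Psi_0$ and correcting it by $\Ad(\exp(T))$ with $T$ antiselfadjoint of negative order is a natural one and the argument is essentially sound. Two points deserve attention.

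First, there is a small but logically necessary step that you leave implicit. You note that $R=\Psi_0(\partial_t)-\partial_t\in\hE^{\sqrt{v}}_{X\times\C}(0)$ and that $R^\ast=-R$, and then proceed to cancel $R$ by $\Ad(\exp(T))$ with $T\in\hE^{\sqrt{v}}_{X\times\C}(-1)$. But $\Ad(\exp(T))(\partial_t)-\partial_t$ lies in $\hE^{\sqrt{v}}_{X\times\C}(-1)$, so the recursion $[\partial_t,T_{-k}]=\Phi_{-k}$ can only begin at $k=1$ if $R$ itself already lies in $\hE^{\sqrt{v}}_{X\times\C}(-1)$. This is true, but only because $\sigma_0(R^\ast)=\sigma_0(R)$ combined with $R^\ast=-R$ forces $\sigma_0(R)=0$; this is precisely where the $\ast$-preservation of $\Psi_0$ earns its keep at zeroth order, and you should state it explicitly, since otherwise the orders in your recursion do not match.

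Second, the parenthetical ``after possibly shrinking $V_X$'' when solving the symbol-level ODE is both unnecessary and slightly at odds with the structure of the problem: $V_X=\rho^{-1}(U_X)$ is $\C$-invariant, so each $\C$-orbit in $V_X$ is an entire copy of $\C$, and the inhomogeneous equation $\{\tau,\sigma(T_{-k})\}=\sigma(\Phi_{-k})$ can be integrated along the whole orbit (choosing, say, the primitive vanishing at $t=0$) without shrinking. Shrinking to a non-$\C$-invariant open would in fact be undesirable, since the conclusion requires $\Psi$ to commute with the full $\C$-action on $V_X$. With these two clarifications your argument is complete, including the observations that $T_{-k}\mapsto\tfrac{1}{2}(T_{-k}-T_{-k}^\ast)$ preserves the solution because $\Phi_{-k}^\ast=-\Phi_{-k}$, that $S^\ast=S^{-1}$ makes $\Ad(S)$ $\ast$-preserving, and that $\Psi(\partial_t)=\partial_t$ implies $\C$-equivariance because the $\C$-action on $\hE^{\sqrt{v}}_{X\times\C}|_{V_X}$ is infinitesimally generated by $\mathrm{ad}(\partial_t)$.
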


Consider the setting of \cref{prop:QST}. Then we may restrict the $\ast$-preserving quantized contact transformation $\Psi$ to a \defterm{$\ast$-preserving quantized symplectic transformation}
\[\Phi\colon \hW^{\sqrt{v}}_Y|_{U_Y}\xrightarrow{\sim}\phi_*\hW^{\sqrt{v}}_X|_{U_X}.\]

Let $\cM$ be the $\hE^{\sqrt{v}}_{X\times \C\times Y\times \C}$-module simple along a Legendrian $\Lambda$ with a generator $K$ corresponding to the quantized contact transformation $\Psi$ via \cref{prop:QCTbimodule}. We have
\[\sigma_\Lambda(K) = \sqrt{\tau^{\dim Y}\omega_{\T^* Y} \wedge d\tau\wedge dt}.\]
Consider the Hamiltonian $\C$-action on $\hE^{\sqrt{v}}_{X\times\C\times Y\times \C}$ given by $(t_1, t_2)\mapsto (t_1 + \delta t, t_2 + \delta t)$ with Hamiltonian $\partial_{t_1} + \partial_{t_2}$. We may identify the Hamiltonian reduction as
\[(\hE^{\sqrt{v}}_{X\times \C\times Y\times \C})\ham \C\cong \hE^{\sqrt{v}}_{X\times Y\times \C},\]
where $t$ corresponds to $(t_1-t_2)$ and $\partial_t$ corresponds to $\partial_{t_1}=-\partial_{t_2}$. By \cref{prop:QST} the module $\cM$ is strongly $\C$-equivariant, so after Hamiltonian reduction we obtain a $\hE^{\sqrt{v}}_{X\times Y\times \C}$-module
\[\cM^\partial = \{m\in \cM\mid (\partial_{t_1}+\partial_{t_2})m = 0\}.\]

\begin{prop}
The $\hE^{\sqrt{v}}_{X\times Y\times \C}$-module $\cM^\partial$ is simple along the Legendrian $\Lambda'=\Gamma_\psi\subset (V_X\times \overline{V_Y})/\C\hookrightarrow \rJ^1(X\times Y)$. Moreover,
\[\sigma_{\Lambda'}(K) = \sqrt{\tau^{\dim Y}\omega_{\T^* Y}\wedge d\tau},\qquad \ord(K) = (\dim Y+1)/2.\]
\label{prop:QSTbimodule}
\end{prop}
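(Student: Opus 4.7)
The plan is to apply \cref{prop:QCTbimodule} with $(X, Y)$ replaced by $(X\times\C, Y\times\C)$ to describe $\cM$ as a simple bimodule with generator $K$, then quantum-reduce by the diagonal $\C$-action.

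The first ingredient is that $K\in\cM^\partial$. By \cref{prop:QST}, $\Psi(\partial_t)=\partial_t$; substituting $Q=\partial_t$ (with $Q^*=-\partial_t$) into the bimodule identity $\Psi(Q)K=Q^*K$ of \cref{prop:QCTbimodule} yields $(\partial_{t_1}+\partial_{t_2})K=0$. Next, I would identify the reduced Legendrian. By \cref{prop:QCTbimodule}, $\tilde\Lambda\subset \oT^*(X\times\C\times Y\times\C)$ is the graph of the homogeneous symplectomorphism underlying $\Psi$. The $\C$-equivariance of $\Psi$ makes this graph invariant under the diagonal $\C$-action generated by $\partial_{t_1}+\partial_{t_2}$; by the remark following \cref{prop:productcontactification}, the Hamiltonian reduction of $\oT^*(X\times\C)\times\overline{\oT^*(Y\times\C)}$ at $\tau_1+\tau_2=0$ is canonically the symplectization of $\rJ^1 X\times^\ast\overline{\rJ^1 Y}$, and the graph descends to $\tilde{\Gamma_\psi}$. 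Via \cref{ex:productjets} this sits inside $\tilde{\rJ^1(X\times Y)}$, yielding $\Lambda'=\Gamma_\psi$ as required.

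For the simplicity claim, I would observe that $K$ remains a generator after passing to $\cM^\partial$: it generates $\cM$ as an $\hE^{\sqrt{v}}_{X\times\C\times Y\times\C}$-module and is $\partial_{t_1}+\partial_{t_2}$-invariant, so it generates $\cM^\partial$ over $(\hE^{\sqrt{v}}_{X\times\C\times Y\times\C})\ham\C\cong \hE^{\sqrt{v}}_{X\times Y\times\C}$. Regularity and simplicity transfer under this quantum Hamiltonian reduction: the natural filtration on $\cM$ descends to one on $\cM^\partial$, and the microlocalization $\mu^{dR}_{\Lambda'}(\cM^\partial)$ is the $\C$-invariants of the rank-$1$ module $\mu^{dR}_\Lambda(\cM)$ restricted to the moment slice $\tau_1+\tau_2=0$, again a line bundle on $\tilde{\Lambda'}$.

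Finally, for the order and symbol: \cref{prop:QCTbimodule} applied to $(X\times\C, Y\times\C)$ gives $\sigma_\Lambda(K)=\sqrt{\vol_{\T^*(Y\times\C)}}$ and $\ord_\Lambda(K)=(\dim Y+1)/2$. The order is preserved under the reduction because the Euler vector field descends intact. For the symbol, I would pull back $\vol_{\T^*(Y\times\C)}$ along the symplectization embedding $\tilde{\rJ^1 Y}\hookrightarrow \oT^*(Y\times\C)$ given by $(y,p,t,\tau)\mapsto(y,t,p\tau,-\tau)$; the Jacobian of this coordinate change contributes a factor of $\tau^{\dim Y}$, producing the stated formula. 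The main obstacle is justifying the compatibility of microlocalization and the symbol calculus with the quantum Hamiltonian reduction — specifically, that the reduced symbol of a strongly equivariant simple generator is obtained by restricting the original symbol to the moment slice, with careful tracking of the half-density transformation under the symplectization embedding.
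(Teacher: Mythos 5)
Your overall strategy coincides with the paper's: apply \cref{prop:QCTbimodule} to $(X\times\C,\,Y\times\C)$, deduce from $\Psi(\partial_t)=\partial_t$ and the bimodule identity that $(\partial_{t_1}+\partial_{t_2})K=0$, identify $\Lambda'=\Gamma_\psi$ by reduction, and then transfer simplicity, symbol and order along the quantum Hamiltonian reduction. The opening steps are fine. The problem is that you \emph{assert} the compatibility of simplicity and of the symbol calculus with the reduction (``the microlocalization $\mu^{dR}_{\Lambda'}(\cM^\partial)$ is the $\C$-invariants of the rank-$1$ module $\mu^{dR}_\Lambda(\cM)$\dots again a line bundle'') and then acknowledge in your final sentence that this is ``the main obstacle.'' That obstacle is precisely the content of the proposition, and the paper's proof spends nearly all of its length resolving it: it identifies $(\cM,K)\cong(\hE^{\sqrt{v}}_{X\times\C},1)$ and $(\cM^\partial,K)\cong(\hW^{\sqrt{v}}_X,1)$, introduces the filtration $\cM^\partial(n)=\hE^{\sqrt{v}}_{X\times Y\times\C}(n)K$ (and similarly $\cM(n)$), checks that the subalgebra $\hE^{\sqrt{v}}_{\Lambda/X\times\C\times Y\times\C}$ and the microlocalization map $L$ are stable under and compatible with the $\C$-reduction (via $\cD^{\sqrt{v}}_{[\tilde\Lambda]}\ham\C\cong\cD^{\sqrt{v}}_{[\tilde{\Lambda'}]}$), and finally verifies by explicit identification ($\cM(0)/\cM(-1)\cong\cO_{\rJ^1 X}$, $\cM^\partial(0)/\cM^\partial(-1)\cong\cO_{\T^*X}$) that the natural map $\cM^\partial(0)/\cM^\partial(-1)\to(\cM(0)/\cM(-1))^\C$ is an isomorphism. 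That isomorphism is what delivers both simplicity along $\Lambda'$ and the symbol.

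Your symbol computation is also off by exactly this missing step. The Jacobian of $\tilde{\rJ^1 Y}\hookrightarrow\oT^*(Y\times\C)$ does contribute $\tau^{\dim Y}$, but what that computes is $\sigma_\Lambda(K)=\sqrt{\tau^{\dim Y}\omega_{\T^*Y}\wedge d\tau\wedge dt}$, a half-density on the $(2\dim Y+2)$-dimensional $\tilde\Lambda$ --- and this identity is already recorded in the paragraph immediately preceding the proposition, so it is input rather than output. The claimed $\sigma_{\Lambda'}(K)=\sqrt{\tau^{\dim Y}\omega_{\T^*Y}\wedge d\tau}$ is a half-density on the $(2\dim Y+1)$-dimensional $\tilde{\Lambda'}$, and the passage between them, $\sigma_{\Lambda'}(K)=\sigma_\Lambda(K)/\sqrt{dt}$, does not arise from restricting a density to the moment slice (the slice has the wrong dimension for that to be canonical); it comes from the isomorphism $(K^{1/2}_{\tilde\Lambda})^\C\cong K^{1/2}_{\tilde{\Lambda'}}$ furnished by the reduction-compatibility argument you deferred. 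So while the outline points in the right direction, the proposal leaves unproved precisely the part the proposition is about.
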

\begin{proof}
We may identify
\[(\cM, K)\cong (\hE^{\sqrt{v}}_{X\times \C}|_{V_X}, 1)\]
using the projection $p_1\colon \T^*(X\times\C\times Y\times \C)\rightarrow \T^*(X\times \C)$. In a similar way, we may identify
\[(\cM^\partial, K)\cong (\hW^{\sqrt{v}}_X|_{U_X}, 1).\]
To simplify the notation, we will omit the restrictions to open subsets $V_X$, $U_X$ and so on. Consider the $\hE^{\sqrt{v}}_{X\times Y\times \C}(0)$-submodule $\cM^\partial(n)\subset \cM^\partial$ given by
\[\cM^\partial(n) = \hE^{\sqrt{v}}_{X\times Y\times \C}(n) K.\]

Note that we have $\hW^{\sqrt{v}}_{X\times Y}(n) K=\hW^{\sqrt{v}}_X(n)$. But $t\in\hE^{\sqrt{v}}_{X\times Y\times\C}(0)$ acts on $\cM^\partial\cong \hW^{\sqrt{v}}_X$ by $\hbar^2\partial_\hbar$, so $\cM^\partial(n) = \hW^{\sqrt{v}}_{X\times Y}(n) K = \hW^{\sqrt{v}}_X(n)$.

Similarly, one defines the $\hE^{\sqrt{v}}_{X\times \C\times Y\times \C}(0)$-submodules $\cM(n)\subset \cM$ which can be identified with $\hE^{\sqrt{v}}_{X\times \C}(n)$. Note that such an identification implies an isomorphism $\cM(n)^\C\cong \cM^\partial(n)$. To show that $\cM^\partial$ is simple along $\Lambda'$, we have to show that the $\hE^{\sqrt{v}}_{X\times Y\times \C}(0)/\hE^{\sqrt{v}}_{X\times Y\times \C}(-1)$-module structure on $\cM^\partial(0)/\cM^\partial(-1)$ extends to a $\gr^0_{\Lambda'}\hE^{\sqrt{v}}_{X\times Y\times\C}$-module structure and the corresponding $\cO_{\Lambda'}$-module is locally free of rank 1.

Since the homogeneous symplectic transformation $\tilde{\psi}$ preserves $\tau$, we have $(\tau_1 + \tau_2)|_{\tilde{\Lambda}'} = 0$. In particular, $\partial_{t_1} + \partial_{t_2}\in\hE^{\sqrt{v}}_{\Lambda/X\times\C\times Y\times \C}$. So, the Hamiltonian $\C$-action on $\hE^{\sqrt{v}}_{X\times\C\times Y\times \C}$ restricts to one on $\hE^{\sqrt{v}}_{\Lambda/X\times\C\times Y\times \C}$. Passing to the associated graded, we get the natural Hamiltonian $\C$-action on $\cD^{\sqrt{v}}_{[\tilde{\Lambda}]}$ induced by the $\C$-action on $\Lambda$. As $\cD^{\sqrt{v}}_{[\tilde{\Lambda}]}\ham\C\cong \cD^{\sqrt{v}}_{[\tilde{\Lambda'}]}$, we get an isomorphism
\[\hE^{\sqrt{v}}_{\Lambda/X\times\C\times Y\times\C}\ham\C\cong \hE^{\sqrt{v}}_{\Lambda'/X\times Y\times\C},\]
so that the Hamiltonian reduction of the homomorphism
\[L\colon \hE^{\sqrt{v}}_{\Lambda/X\times\C\times Y\times\C}\longrightarrow \cD^{\sqrt{v}}_{[\tilde{\Lambda}]}\]
is the homomorphism
\[L\colon \hE^{\sqrt{v}}_{\Lambda'/X\times Y\times\C}\longrightarrow \cD^{\sqrt{v}}_{[\tilde{\Lambda'}]}.\]

We have a pair of strongly $\C$-equivariant $\cD^{\sqrt{v}}_{\tilde{\Lambda}}(0)$-modules $\cM(0)/\cM(-1)$ and $K^{1/2}_{\tilde{\Lambda}}$ whose Hamiltonian reduction gives $\cD^{\sqrt{v}}_{\tilde{\Lambda'}}(0)$-modules $(\cM(0)/\cM(-1))^\C$ and $(K^{1/2}_{\tilde{\Lambda}})^\C\cong K^{1/2}_{\tilde{\Lambda'}}$.

We may identify
\[\cM(0)/\cM(-1)\cong\hE^{\sqrt{v}}_{X\times\C}(0) / \hE^{\sqrt{v}}_{X\times\C}(-1)\cong \cO_{\rJ^1 X}\]
and
\[\cM^\partial(0)/\cM^\partial(-1)\cong \hW^{\sqrt{v}}_{X}(0) / \hW^{\sqrt{v}}_X(-1)\cong \cO_{\T^* X}.\]
In particular, the natural map $\cM^\partial(0)/\cM^\partial(-1)\rightarrow (\cM(0)/\cM(-1))^\C$ is an isomorphism. This implies that $\cM$ is simple along the $\Lambda'$ and
\[\sigma_{\Lambda'}(K) = \sigma_{\Lambda}(K) / \sqrt{dt}\]
which, in conjunction with the computation of the symbol $\sigma_\Lambda(K)$ in \cref{prop:QCTbimodule}, implies the result.
\end{proof}

\subsection{Globalization}

Let $S$ be a symplectic manifold. The authors of \cite{PoleselloSchapira} have constructed a $\C(\!(\hbar)\!)$-algebroid $\sfhW_S$ on $S$ with the following properties:
\begin{itemize}
    \item If $U\subset S$ is an open subset together with a symplectic embedding $U\hookrightarrow \T^* X$ into a cotangent bundle, then
    \[\sfhW_S|_U\cong \hW^{\sqrt{v}}_X|_U.\]
    \item Given an intersection of two such charts, the isomorphism of the algebras of half-twisted WKB-differential operators is given by a $\ast$-preserving quantized symplectic transformation (see the paragraph following \cref{prop:QST}).
    \item If $Y\rightarrow S$ is a contactification, then there is a natural embedding
    \[\sfhW_S\subset \rho_* \sfhE_Y.\]
\end{itemize}

The antiinvolution $\ast$ on $\hW^{\sqrt{v}}_X$ globalizes to a $\C$-linear isomorphism of algebroids $\sfhW_S\cong \sfhW^{\op}_S$ which sends $\hbar\mapsto -\hbar$. As for $\sfhE_Y$-modules, we freely pass between left and right $\sfhW_S$-modules using this antiinvolution.

\begin{defn}
A $\sfhW_S$-module is \defterm{holonomic} if its support is a Lagrangian subvariety of $S$.
\end{defn}

For a Lagrangian subvariety $L\subset S$ we denote by $\sfhW_{L/S}\subset \sfhW_S|_L$ the $\C[\![\hbar]\!]$-subalgebroid generated by
\[I_L = \{P\in\sfhW_S(1)|_L\mid \sigma_1(P)|_L = 0\}.\]

\begin{defn}
A holonomic $\sfhW_S$-module $\cM$ is \defterm{regular holonomic} if locally there is a coherent $\sfhW_S(0)$-module $\cM_0\subset \cM$ and a Lagrangian subvariety $L\subset S$ containing $\supp\cM$ such that $I_L\cM_0\subset \cM_0$ and $\sfhW_S\cM_0 = \cM$.
\end{defn}

\begin{example}
Let $L\subset S$ be a Lagrangian subvariety and $\Lambda\subset Y$ be its contactification with $\rho\colon Y\rightarrow S$ the projection map. If $\cM$ is a $\sfhE_Y$-module supported on $\Lambda$, then $\cM$, regarded as a $\sfhW_S$-module via the embedding $\sfhW_S\subset \rho_* \sfhE_Y$, is supported on $L$ and is, therefore, holonomic. This gives a forgetful functor
\[\rho_*\Mod_{hol}(\sfhE_Y)\longrightarrow \Mod_{hol}(\sfhW_S)\]
from holonomic $\sfhE_Y$-modules to holonomic $\sfhW_S$-modules. Since $\sfhW_{L/S}\subset \rho_*\sfhE_{\Lambda/Y}$, this forgetful functor restricts to a functor
\[\rho_*\Mod_{\rh}(\sfhE_Y)\longrightarrow \Mod_{\rh}(\sfhW_S).\]
\label{ex:forgetEW}
\end{example}


\subsection{Tensor product of DQ modules}
As in the previous section, we consider a holomorphic symplectic manifold $S$ and $\rho:Y\to S$ a contactification. Let $L$ and $M$ be lagrangian submanifolds of $S$ and $X=L\cap M$. 

Consider two regular holonomic $\rho_\ast\sfhE_Y$-modules $\cL$ and $\cM$, supported on $L$ and $M$ respectively. In this section, we will study the complex of $\C(\!(\hbar)\!)$-linear sheaves $\cL\otimes^\bL_{\sfhW_S}\cM$ on $X$. Our first goal is to endow it with extra structure.

We also make the following assumptions (which always hold locally on $S$): $\sfhE_Y$ and $\sfhW_S$ are represented by sheaves of algebras (as opposed to just algebroids) and there is an element $\hbar\in \sfhE_Y(-1)$ with principal symbol $\tau^{-1}$ satisfying $\hbar^*=-\hbar$. Then $\sfhW_S$ is the subalgebra of $\rho_*\sfhE_Y$ of elements commuting with $\hbar$. 

Consider
\[\sfB_Y = (\rho_*\sfhE_Y)\otimes_{\sfhW_S} (\rho_*\sfhE_Y),\]
where the right action of $\sfhW_S$ on the first copy of $\rho_*\sfhE_Y$ is via $P\in\sfhW_S,Q\in\rho_*\sfhE_Y\mapsto P^* Q$. It carries a natural bimodule structure as follows: $\rho_*\sfhE_Y\otimes \rho_*\sfhE_Y$ acts in an obvious way on the right and $\hbar$ acts on $Q_1\otimes Q_2\in \sfB_Y$ as $Q_1\otimes \hbar Q_2 = -\hbar Q_1\otimes Q_2$.

\begin{prop}
$\sfB_Y$ has a canonical structure of a $(\C(\!(\hbar)\!)\langle D\rangle, \rho_*\sfhE_Y\otimes \rho_*\sfhE_Y)$-bimodule, where $D\hbar = \hbar (D+1)$.
\label{prop:differentialenhancement}
\end{prop}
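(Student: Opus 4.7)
The plan is to construct the operator $D$ on $\sfB_Y$ using a local lift of $t\partial_t$, where $t$ is a vertical coordinate on the principal $\C$-bundle $\rho\colon Y\to S$ and $\partial_t = \hbar^{-1}$. Given a local trivialization $Y|_U\cong U\times\C$, set $E := t\partial_t \in \rho_*\sfE_Y$. The key algebraic facts I will verify are: (i) $[E,\hbar] = t[\partial_t,\hbar] + [t,\hbar]\partial_t = \hbar^2\partial_t = \hbar$, using $[t,\hbar] = \hbar^2$; (ii) $E^* = \partial_t^* t^* = -\partial_t t = -(E+1)$; and (iii) $[E,\sfW_S]\subset \sfW_S$, which holds because $[E,P] = [t,P]\partial_t$, the Jacobi identity $[\hbar,[t,P]] = [[\hbar,t],P] = [-\hbar^2,P] = 0$ (using $[\hbar,P] = 0$) shows that $[t,P]\in \sfW_S$, and $\partial_t \in \sfW_S$.

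With $E$ in hand I define
\[
D(Q_1\otimes Q_2) := Q_1\otimes (EQ_2) - (E^*Q_1)\otimes Q_2.
\]
To verify that $D$ descends to $\sfB_Y = \rho_*\sfE_Y\otimes_{\sfW_S}\rho_*\sfE_Y$, I will compare $D(P^*Q_1\otimes Q_2)$ and $D(Q_1\otimes PQ_2)$ for $P\in\sfW_S$: moving $E$ past $P$ in the second factor via $EP = PE + [E,P]$ and applying the tensor relation twice (with $P$ and with $[E,P]$, both in $\sfW_S$) reduces the difference to $\bigl([P^*,E^*] - [E,P]^*\bigr)Q_1\otimes Q_2$, and this vanishes because the antihomomorphism property of $*$ yields $[E,P]^* = [P^*,E^*]$. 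The relation $[D,\hbar] = \hbar$ then reduces to $[E,\hbar] = \hbar$ acting on the second tensor factor, and commutation with the right action $(Q_1\otimes Q_2)\cdot(R_1\otimes R_2) = Q_1R_1\otimes Q_2R_2$ is immediate since $E$ and $E^*$ act by left multiplication on the two factors, which commutes with right multiplication on either factor.

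Finally, canonicity and globalization are handled by a single observation: if $E$ is replaced by $E + \phi$ with $\phi\in \sfW_S$, then
\[
D_{E+\phi}(Q_1\otimes Q_2) - D_E(Q_1\otimes Q_2) = Q_1\otimes \phi Q_2 - \phi^* Q_1\otimes Q_2 = 0
\]
by the tensor relation applied to $P = \phi$. Two vertical coordinates $t,t'$ on the $\C$-bundle differ by a function $f\in\cO_S$, so $E' = E + f\partial_t = E + f\hbar^{-1}$ differs from $E$ by an element of $\sfW_S$; hence the local operators $D$ agree on overlaps and glue to a canonical globally defined operator on $\sfB_Y$. The main obstacle will be the well-definedness step on the tensor product, which hinges on the precise interplay between the antiinvolution $*$ and the containment $[E,\sfW_S]\subset \sfW_S$; once this is pinned down, the remaining checks are routine algebraic manipulations.
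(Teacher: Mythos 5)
Your proof is correct and follows essentially the same route as the paper's. The only cosmetic difference is that you work with $E = t\partial_t$ while the paper works with $D = \partial_t t = E + 1$; since the resulting operator on $\sfB_Y$, namely $Q_1\otimes EQ_2 - E^*Q_1\otimes Q_2 = Q_1\otimes DQ_2 - D^*Q_1\otimes Q_2$, is literally the same, this is immaterial. Where you add value is in spelling out two steps the paper leaves implicit: the inclusion $[E,\sfW_S]\subset\sfW_S$ (via the Jacobi identity with $[\hbar,t]=-\hbar^2$) and the verification that the formula descends to the tensor product over $\sfW_S$ (using $[E,P]^* = [P^*,E^*]$). One minor caution: your identity $E^* = -(E+1)$ implicitly uses $t^*=t$, which holds for your choice of $t$ as a genuine coordinate function in a trivialization but need not hold for an arbitrary $t\in\sfE_Y(0)$ with $[\partial_t,t]=1$; fortunately nothing downstream actually uses this formula, only the symbolic $E^*$. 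Likewise, your independence argument restricts to changes of trivialization $t\mapsto t+f$ with $f\in\cO_S$, whereas the paper's argument treats an arbitrary $t'\in\sfE_Y(0)$ with $[\partial_t,t']=1$; your narrower class of comparisons still suffices to glue the locally defined $D$'s into a global, canonical operator.
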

\begin{proof}
Let $\partial_t = \hbar^{-1}\in\sfhE_Y(1)$. Locally we may choose an element $t\in \sfhE_Y(0)$ satisfying
\[[\partial_t, t] = 1.\]
If we let $D = \partial_t t$, we have
\[[D, \hbar] = \hbar,\qquad [D^*, \hbar] = -\hbar.\]
This allows us to enhance $\sfB_Y$ to a $(\C(\!(\hbar)\!)\langle D\rangle, \rho_*\sfhE_Y\otimes \rho_*\sfhE_Y)$-bimodule, where $D=\partial_t t$ acts on $Q_1\otimes Q_2\in\sfB_Y$ as $-D^*Q_1 \otimes Q_2 + Q_1\otimes D Q_2$.

Let us show that this enhancement is independent of the choice of $t$. If $t'$ is another element of $\sfhE_Y(0)$ satisfying $[\partial_t, t'] = 1$, then $[\partial_t, t - t'] = 0$, i.e. $[\hbar, t-t'] = 0$. Thus, there is an element $R\in\sfhW_S(0)$ such that
\[t' = t + R.\]
Then
\[\partial_t t' = \partial_t t + \partial_t R.\]
We then have
\begin{align*}
-&(\partial_t t')^* Q_1\otimes Q_2 + Q_1\otimes \partial_t t' Q_2 \\
= -&(\partial_t t)^*Q_1\otimes Q_2 + Q_1\otimes \partial_t t Q_2 - (\partial_t R)^* Q_1\otimes Q_2 + Q_1\otimes \partial_t R Q_2.    
\end{align*}
But $\partial_t R\in\sfhW_S$ and hence the last two terms cancel.
\end{proof}

Using the previous proposition we obtain the following result.

\begin{thm}
For any regular holonomic $\rho_*\sfhE_Y$-modules $\cL,\cM$ the sheaf $\cL\otimes^\bL_{\sfhW_S} \cM$ has a canonical structure of a differential perverse sheaf.
\label{thm:tensorproductdifferentialperverse}
\end{thm}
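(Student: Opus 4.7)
The plan is to combine the Kashiwara--Schapira perversity theorem with the differential enhancement of the bimodule $\sfB_Y$ established in \cref{prop:differentialenhancement}. Perversity of $\cL \otimes^\bL_{\sfW_S} \cM$ is already known from \cite[Theorem 7.2.3]{KashiwaraSchapira}, and the sheaf is $\C(\!(\hbar)\!)$-linear by construction, so what remains is to produce the endomorphism $D$ on the tensor product and verify the Leibniz rule.

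First I would work locally on a neighborhood $U \subset S$ of $\supp(\cL) \cup \supp(\cM)$. By \cref{prop:uniquecontactification} there is a canonical contactification $\rho \colon Y \to U$, and by definition of $\RH_S$ the objects $\cL, \cM$ lift to regular holonomic $\rho_*\sfE_Y$-modules on $U$. The flatness of $\rho_*\sfE_Y$ over $\sfW_S$ from \cref{lm:Eflat} implies that any flat resolution over $\rho_*\sfE_Y$ remains flat over $\sfW_S$, which yields a canonical isomorphism
\[
\cL \otimes^\bL_{\sfW_S} \cM \;\cong\; \cL \otimes^\bL_{\rho_*\sfE_Y} \sfB_Y \otimes^\bL_{\rho_*\sfE_Y} \cM.
\]

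Next, the $(\C(\!(\hbar)\!)\langle D\rangle,\, \rho_*\sfE_Y \otimes \rho_*\sfE_Y)$-bimodule structure on $\sfB_Y$ from \cref{prop:differentialenhancement} transfers to a $\C(\!(\hbar)\!)\langle D\rangle$-action on the right-hand side, since the $D$-action on $\sfB_Y$ commutes with the right $\rho_*\sfE_Y \otimes \rho_*\sfE_Y$-action along which $\cL$ and $\cM$ are tensored. The relation $D\hbar = \hbar(D+1)$, i.e.\ $[D,\hbar] = \hbar$, is precisely the Leibniz identity $D(f\eta) = \hbar\tfrac{df}{d\hbar}\eta + fD(\eta)$ for $f \in \C(\!(\hbar)\!)$, so this endows $\cL \otimes^\bL_{\sfW_S} \cM$ with the structure of a differential perverse sheaf on $U$.

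The main obstacle is canonicity, i.e.\ independence from the local choices entering the construction. \cref{prop:differentialenhancement} already settles independence from the local primitive $t \in \sfE_Y(0)$ with $[\partial_t,t] = 1$. For globalization I would verify that when two canonical contactifications are compared via the unique isomorphism provided by \cref{prop:uniquecontactification}, the induced identification of the bimodules $\sfB_Y$ intertwines the $D$-actions; this reduces to the observation that such an isomorphism carries any admissible local $t$ to another admissible one, at which point $t$-independence takes over. Compatibility with the filtered colimit defining $\RH_S$ is automatic from the construction, so the resulting differential perverse sheaf structure on $\cL \otimes^\bL_{\sfW_S} \cM$ is canonical and globalizes over all of $S$.
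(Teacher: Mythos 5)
Your proof is correct and follows essentially the same route as the paper's: perversity is cited from Kashiwara--Schapira, the flatness of $\rho_*\sfE_Y$ over $\sfW_S$ (\cref{lm:Eflat}) is used to rewrite $\cL\otimes^\bL_{\sfW_S}\cM$ via the bimodule $\sfB_Y$, and the differential structure is inherited from \cref{prop:differentialenhancement}. Your rewriting $\cL\otimes^\bL_{\rho_*\sfE_Y}\sfB_Y\otimes^\bL_{\rho_*\sfE_Y}\cM$ is a notational variant of the paper's $\sfB_Y\otimes^\bL_{\rho_*\sfE_Y\otimes\rho_*\sfE_Y}(\cL\otimes\cM)$, and your discussion of independence from the choice of contactification, while more explicit than the paper's terse proof, is consistent with it.
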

\begin{proof}
Consider the duality functor $\D(\cL) = \RcHom_{\sfhW_S}(\cL, \sfhW_S)[\dim S/2]$. By \cite[Lemma 7.2.2]{KashiwaraSchapira} $\D(\cL)$ is a holonomic $\sfhW_S$-module concentrated in degree 0 and we have
\[\RcHom_{\sfhW_S}(\D(\cL), \cM)[\dim S/2]\cong \cL\otimes^\bL_{\sfhW_S} \cM.\]
It is then shown in \cite[Theorem 7.2.3]{KashiwaraSchapira} that $\RcHom_{\sfhW_S}(\D(\cL), \cM)[\dim S/2]$ is a perverse sheaf of $\C(\!(\hbar)\!)$-vector spaces.

$\rho_*\sfhE_Y$ is flat as a $\sfhW_S$-module by \cref{lm:Eflat}. Therefore,
\[\cL\otimes^\bL_{\sfhW_S} \cM \cong \sfB_Y\otimes^\bL_{\rho_* \sfhE_Y\otimes \rho_* \sfhE_Y} (\cL\otimes \cM)\]
which has a differential structure by \cref{prop:differentialenhancement}.
\end{proof}

Our next goal is to show that the differential structure has regular singularities.

\begin{thm}
Consider two regular holonomic $\rho_*\sfhE_Y$-modules and assume $\cL$ is simple along a Lagrangian submanifold $L\subset S$. Then
\[\cL\otimes^\bL_{\sfhW_S} \cM\]
is a differential perverse sheaf with regular singularities.
\label{thm:WHomdifferentialperverse}
\end{thm}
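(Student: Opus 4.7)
The plan is to exhibit a lattice for the differential perverse sheaf $\cL \otimes^\bL_{\sfW_S} \cM$ in the sense of \cref{prop:Riemann Hilbert}: a torsion-free $\C[\![\hbar]\!]$-perverse sheaf whose $\hbar$-localisation recovers the tensor product and on whose reduction mod $\hbar$ the operator $D$ has all generalised eigenvalues in $G$. As the question is local on $S$, I pass to a contactification $\rho\colon Y\to S$ and view $\cL,\cM$ as $\sfE_Y$-modules regular along Legendrian lifts $\Lambda_L,\Lambda_M\subset Y$, where $\Lambda_M$ covers a Lagrangian $M\supset\supp\cM$. Since $\cL$ is simple along $L$, pick a simple generator $u$ and set $\cL_0:=\sfE_Y(0)u$, a coherent $\sfE_Y(0)$-lattice stable under $\sfE_{\Lambda_L/Y}(0)$ with $Du=\lambda u$ for a chosen representative $\lambda$ of $\ord(\cL)\in\C/\Z$. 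Regular holonomicity of $\cM$ provides a coherent $\sfE_Y(0)$-lattice $\cM_0\subset\cM$ stable under $\sfE_{\Lambda_M/Y}(0)$; refining $\cM_0$ using its canonical $V$-filtration along $\Lambda_M$, I arrange that the generalised $D$-eigenvalues on $\cM_0/\sfE_{\Lambda_M/Y}(-1)\cM_0$ lie in $G$.

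The candidate lattice is
\[
\cP_0 := \cL_0\otimes^\bL_{\sfW_S(0)}\cM_0,
\]
where scalars are restricted along $\sfW_S(0)\subset\rho_*\sfE_Y(0)$. Inverting $\hbar$ and applying \cref{lm:Eflat} yields an isomorphism $\cP_0[\hbar^{-1}]\cong\cL\otimes^\bL_{\sfW_S}\cM$, compatible with the $D$-action of \cref{prop:differentialenhancement}. The key technical step is to verify that $\cP_0$ is a torsion-free perverse sheaf of $\C[\![\hbar]\!]$-modules. Cohomological $\hbar$-completeness is inherited from the factors $\cL_0, \cM_0$; the reduction $\cP_0/\hbar$ identifies with a derived tensor product over $\cO_S$ of coherent sheaves supported on $L$ and on $\supp\cM$, which is a classical perverse sheaf on the Lagrangian intersection; torsion-freeness then follows from the absence of $\hbar$-torsion in $\cL_0$ and $\cM_0$ together with the compatibility between the $\C[\![\hbar]\!]$-perverse $t$-structure and reduction mod $\hbar$.

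Finally, we verify the eigenvalue condition. By \cref{prop:differentialenhancement}, $D$ acts on $u\otimes v\in\cP_0$ as $-D^*u\otimes v+u\otimes Dv$, and a short calculation using $[D,\hbar]=\hbar$ and $[D^*,\hbar]=-\hbar$ shows $D^*=1-D$. Hence $D(u\otimes v)=(Du-u)\otimes v+u\otimes Dv$, and combined with the eigenvalue control on $\cM_0/\sfE_{\Lambda_M/Y}(-1)\cM_0$ this shows the generalised $D$-eigenvalues on $\cP_0/\hbar$ are of the form $\lambda-1+\mu$ with $\mu\in G$. Replacing $\cL_0$ by $\hbar^n\cL_0$ for appropriate $n\in\Z$ (which does not change the $\hbar$-inverted tensor product but shifts $\lambda$ to $\lambda+n$) brings every such sum into $G$, completing the construction of the lattice.

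The main obstacle I anticipate is the middle paragraph: confirming that $\cP_0$ is genuinely a perverse sheaf of $\C[\![\hbar]\!]$-modules, not merely a cohomologically $\hbar$-complete complex. This requires a classical Lagrangian-intersection perversity statement at the level of $\sfW_S(0)$-lattices together with the Kashiwara-Schapira machinery for good lattices of holonomic DQ modules, and should parallel the flatness argument already used in the proof of \cref{thm:tensorproductdifferentialperverse}.
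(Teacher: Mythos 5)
Your proposal follows the same overall strategy as the paper --- exhibit an explicit $\C[\![\hbar]\!]$-lattice built from lattices in the two factors, then check $D$-stability and the eigenvalue condition --- but two concrete choices in the middle go wrong and break the argument.

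\textbf{Wrong Legendrian for the lattice in $\cM$.} You take $\cM_0$ to be adapted to the canonical $V$-filtration of $\cM$ along its \emph{own} support $\Lambda_M$, stable under $\sfE_{\Lambda_M/Y}(0)$. But the operator $D = \partial_t t$ can only preserve a lattice if $\sigma_0(t)$ vanishes on the relevant Legendrian. Since you need $D$ to act on the simple generator $u\in\cL$ as a scalar, you are forced to take $\sigma_0(t)|_{\Lambda_L}=0$, so $D\in\sfE_{\Lambda_L/Y}$, and in general $D\notin\sfE_{\Lambda_M/Y}$ (in a local model $Y\cong\rJ^1 L$ one has $\sigma_0(t)|_{j^1 f}=f\neq 0$ away from the intersection). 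Thus $D$ need not preserve your $\cM_0$, and then $\cP_0$ carries no well-defined differential structure. The paper's proof instead takes $\cM_0\subset\cM|_{\Lambda_L}$ to be an $\sfE_{\Lambda_L/Y}$-lattice arising from the microlocal $V$-filtration of $\cM$ \emph{along $\Lambda_L$} --- the Legendrian supporting the simple module $\cL$, not the support of $\cM$. That way both $\cL_0$ and $\cM_0$ are $\sfE_{\Lambda_L/Y}$-modules and $D\in\sfE_{\Lambda_L/Y}$ preserves both.

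\textbf{Wrong subalgebra for the tensor.} You tensor over $\sfW_S(0)$, whereas the paper tensors over $\sfW_{L/S}$, the subalgebra generated by $I_L$. The difference is not cosmetic: the claim that $\cL_0\otimes^\bL_{\sfW_{L/S}}\cM_0$ is a torsion-free $\C[\![\hbar]\!]$-perverse sheaf is precisely what \cite[Lemma 7.1.3]{KashiwaraSchapira} gives, and it uses regular holonomicity through the $\sfW_{L/S}$-module structure. Your reduction $\cP_0/\hbar$ is an $\cO_S$-linear Tor complex of coherent sheaves supported on $L$ and $\supp\cM$; its cohomology sheaves are coherent $\cO_{L\cap M}$-modules and do \emph{not} form a perverse sheaf on $L\cap M$ in general. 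The paper's reduction $\overline{\cL}_0\otimes_{\sfW_{L/S}/\hbar}\overline{\cM}_0$ is a tensor over $\sfW_{L/S}/\hbar$, which contains the order-$1$ generators of $I_L$ as well as $\cO_S$, and \emph{that} is the perverse sheaf. You flag this as the main obstacle, but the fix is to change the ring from $\sfW_S(0)$ to $\sfW_{L/S}$, not to push the flatness argument further.

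The remaining pieces are fine: the identity $D^*=1-D$ and the action $D(u\otimes v)=(Du-u)\otimes v+u\otimes Dv$ are correct, and your rescaling $\cL_0\mapsto\hbar^n\cL_0$ to bring eigenvalues into $G$ is a useful explicit step which the paper leaves implicit (it only observes that no two eigenvalues differ by an integer).
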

\begin{proof}
Let $\rho\colon Y\rightarrow S$ be a contactification defined in a neighborhood of the union of supports of $\cL,\cM$ and $\Lambda\subset Y$ a Legendrian lift of $L\subset S$.

To define the differential structure we may choose $t\in\sfhE_Y(0)$ whose principal symbol $\sigma_0(t)\in\cO_Y$ vanishes along $\Lambda$. With this choice $D=\partial_t t=\hbar\partial_\hbar\in\sfhE_{\Lambda/Y}$. By assumption $\cL$ is an $\sfhE_Y$-module simple along $\Lambda$. Thus, there is an $\sfhE_{\Lambda/Y}$-lattice $\cL_0\subset \cL$ generating $\cL$ over $\sfhE_Y$ and $D$ acts on
\[\overline{\cL} = \cL_0 / \sfhE_{\Lambda/Y}(-1) \cL_0\]
via multiplication by $\lambda\in\C$.

Let $\cM_0\subset \cM|_\Lambda$ be a $\sfhE_{\Lambda/Y}$-lattice such that no two eigenvalues of $D$ on
\[\overline{\cM} = \cM_0 / \sfhE_{\Lambda/Y}(-1)\cM_0\]
differ by an integer.

By \cite[Lemma 7.1.3(v)]{KashiwaraSchapira}
\[\cL_0\otimes^\bL_{\sfhW_{L/S}} \cM_0\]
is a $\C[\![\hbar]\!]$-lattice in $\cL\otimes^\bL_{\sfhW_S} \cM$. Since $D\in\sfhE_{\Lambda/Y}$, this lattice is preserved by $D$. Moreover, no two eigenvalues of $D$ on the $\C$-linear perverse sheaf
\[(\cL_0\otimes^\bL_{\sfhW_{L/S}} \cM_0)/\hbar\cong \overline{\cL}_0\otimes_{\sfhW_{L/S}/\hbar} \overline{\cM}_0\]
differ by an integer.
\end{proof}

We will now give a description of the above differential perverse sheaf using microlocalization. Consider the setting of \cref{thm:WHomdifferentialperverse} and suppose $L\subset S$ is lifted to a Legendrian $\Lambda\subset Y$ in a contactification $\rho\colon Y\rightarrow S$. In this case we may identify $\tilde{\Lambda}\cong L\times \C^\times$. Then $\mu^{dR}_\Lambda \cL,\mu^{dR}_\Lambda\cM$ are twisted $D$-modules on $\tilde{\Lambda}$, monodromic with respect to the $\C^\times$-action. Therefore,
\[\mu^{dR}_\Lambda \cL\otimes^\bL_{\cD^{\sqrt{v}}_{\tilde{\Lambda}}} \mu^{dR}_\Lambda \cM\]
is a perverse sheaf on $\tilde{\Lambda}=L\times \C^\times$, which is locally constant along the $\C^\times$ orbits. Its restriction to $L\times\{1\}$ defines a perverse sheaf
\[(\mu^{dR}_\Lambda \cL\otimes^\bL_{\cD^{\sqrt{v}}_{\tilde{\Lambda}}} \mu^{dR}_\Lambda \cM)|_L[-1]\]
equipped with a monodromy operator $T$.

\begin{thm}
Consider the setting of \cref{thm:WHomdifferentialperverse}. There is an isomorphism of differential perverse sheaves
\[
\cL\otimes^\bL_{\sfhW_S} \cM \cong \RH^{-1} \left((\mu^{dR}_\Lambda \cL\otimes^\bL_{\cD^{\sqrt{v}}_{\tilde{\Lambda}}} \mu^{dR}_\Lambda \cM)|_L[-1]\right)
\]
on $L$.
\label{thm:Wtensormicrolocal}
\end{thm}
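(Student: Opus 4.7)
The plan is to apply the full faithfulness of $\widehat{\RH}^{-1}$ established in \cref{prop:Riemann Hilbert}. By \cref{thm:WHomdifferentialperverse}, the left-hand side is a differential perverse sheaf with regular singularities, so it lies in the essential image of $\widehat{\RH}^{-1}$; the right-hand side is by construction in the image. Hence it suffices to exhibit a natural isomorphism of the underlying $\C$-linear perverse sheaves on $L$, intertwining their respective monodromy automorphisms.

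Following the proof of \cref{thm:WHomdifferentialperverse}, choose $\sfE_{\Lambda/Y}$-lattices $\cL_0\subset \cL$ and $\cM_0\subset \cM|_\Lambda$ with eigenvalues of $D=\partial_t t\in\sfE_{\Lambda/Y}$ on $\overline{\cL}=\cL_0/\sfE_{\Lambda/Y}(-1)\cL_0$ and $\overline{\cM}$ lying in $G$. The $\C[\![\hbar]\!]$-lattice $\cL_0\otimes^\bL_{\sfW_{L/S}}\cM_0$ in $\cL\otimes^\bL_{\sfW_S}\cM$ is preserved by $D$, so by \cref{prop:Riemann Hilbert} the LHS is $\widehat{\RH}^{-1}$ of the perverse sheaf $\bigl(\cL_0\otimes^\bL_{\sfW_{L/S}}\cM_0\bigr)\big/\hbar$ equipped with automorphism $\exp(-2\pi i D)$.

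The heart of the proof is then the construction of a natural isomorphism
\[
\bigl(\cL_0\otimes^\bL_{\sfW_{L/S}}\cM_0\bigr)\big/\hbar \;\cong\; \bigl(\mu^{dR}_\Lambda\cL\otimes^\bL_{\cD^{\sqrt{v}}_{\tilde{\Lambda}}}\mu^{dR}_\Lambda\cM\bigr)\big|_L[-1]
\]
intertwining $\exp(-2\pi i D)$ with the monodromy along the $\C^\times$-fiber of $\gamma\colon\tilde{\Lambda}\to L$. By \cref{prop:monodromicRH}, the target may be computed as the derived tensor product over $\cD^{\sqrt{v}}_L[\theta]$ of the monodromic components $[\mu^{dR}_\Lambda\cL]$ and $[\mu^{dR}_\Lambda\cM]$, with monodromy $\exp(-2\pi i\theta)$ for $\theta$ the Euler field on $\tilde{\Lambda}$. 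On the other hand, passing the lattice $\cL_0\otimes^\bL_{\sfW_{L/S}}\cM_0$ to $\hbar=0$ and applying the embedding $\gr_\Lambda\sfE_Y\hookrightarrow \gamma_*\cD^{\sqrt{v}}_{\tilde{\Lambda}}$ from the definition of $\mu^{dR}_\Lambda$ yields a natural morphism between the two sides. That this morphism is an isomorphism, and that $D$ corresponds to $\theta$, can be verified locally in a Darboux chart $Y\hookrightarrow \P^*(X\times \C)$, where both operators lift the Euler field on $\tilde{\Lambda}\subset \oT^*(X\times \C)$ and both sides are computed from the same generators in $\sfE_{\Lambda/Y}$.

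The main technical obstacle is carrying out this comparison in the derived setting. Since $\hbar=\partial_t^{-1}$ is invertible in $\sfE_Y$ but not in $\sfE_{\Lambda/Y}$, the $\hbar$-adic quotient on the $\sfW_S$-side does not obviously coincide with the $V$-filtration quotient on the $\sfE_Y$-side. A microlocal analog of the comparison \eqref{eq:microlocalVcomparison} is needed to bridge the two, and one must check, using the flatness results of \cref{lm:Eflat} together with standard properties of the canonical $V$-filtration, that the induced map between derived tensor products is a quasi-isomorphism in $\bD(L;\C)$.
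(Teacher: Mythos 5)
Your overall strategy matches the paper's: choose $\sfE_{\Lambda/Y}$-lattices $\cL_0,\cM_0$, observe that $\cL_0\otimes^\bL_{\sfW_{L/S}}\cM_0$ is a $\C[\![\hbar]\!]$-lattice preserved by $D$, invoke \cref{prop:Riemann Hilbert} to reduce to an isomorphism of perverse sheaves with automorphism, and then feed $\mu^{dR}_\Lambda\cL = \cD^{\sqrt{v}}_{\tilde\Lambda}\otimes_{\gamma^{-1}\cD^{\sqrt{v}}_L[\theta]}\gamma^{-1}\gr^0_\Lambda\cL$ into \cref{prop:monodromicRH}. (A small slip: the right-hand side of \cref{prop:monodromicRH} is a tensor product over $\cD^{\sqrt{v}}_L$, not over $\cD^{\sqrt{v}}_L[\theta]$, with the $\theta$-endomorphism induced afterwards.)

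The genuine gap is exactly the bridge you flag at the end, and the remedy you propose would not close it. What is needed is the identification $\bigl(\cL_0\otimes^\bL_{\sfW_{L/S}}\cM_0\bigr)/\hbar\cong \gr^0_\Lambda\cL\otimes^\bL_{\cD^{\sqrt{v}}_L}\gr^0_\Lambda\cM$, i.e.\ that the $\hbar$-adic reduction of the lattices coincides with the degree-zero piece of the microlocal $V$-filtration. The paper's resolution is a one-line algebraic observation: since $\hbar$ is invertible in $\sfE_Y$, one has $\hbar\,\sfE_{\Lambda/Y}(0)=\sfE_{\Lambda/Y}(-1)$, hence $\cL_0/\hbar = \cL_0/\sfE_{\Lambda/Y}(-1)\cL_0 = \gr^0_\Lambda\cL$ and likewise for $\cM_0$, while $\sfW_{L/S}/\hbar\cong\cD^{\sqrt{v}}_L$. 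So the invertibility of $\hbar$ in $\sfE_Y$ that you cite as the source of the difficulty is in fact what makes the two quotients agree on the nose. Neither \eqref{eq:microlocalVcomparison} (which compares the $\cD_X$- and $\hE_X$-side $V$-filtrations of a single $\cD$-module, as used in \cref{prop:microlocalmicrofunctions}) nor \cref{lm:Eflat} (which is used in \cref{thm:tensorproductdifferentialperverse}, not here) is the right tool; the Darboux-chart check is also unnecessary once the filtration identity is in place, since the resulting map is defined globally and tautologically an isomorphism.
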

\begin{proof}
As in the proof of \cref{thm:WHomdifferentialperverse} we choose $\sfhE_{\Lambda/Y}$-submodules $\cL_0\subset \cL$ and $\cM_0\subset \cM|_\Lambda$ generating these modules over $\sfhE_Y|_\Lambda$ as well as an element $D=\partial_t t\in\sfhE_{\Lambda/Y}$.

We may identify
\[\gr^0_\Lambda \sfhE_{\Lambda/Y}\cong \cD^{\sqrt{v}}_L[\theta],\]
where $\theta$ is the Euler operator along $\Lambda$ which is the image of $D$ under the projection
\[\sfhE_{\Lambda/Y}\longrightarrow \gr^0_\Lambda \sfhE_{\Lambda/Y}.\]

Consider
\[\overline{\cL}_0 = \cL_0/\hbar,\qquad \overline{\cM}_0 = \cM_0/\hbar.\]
As $\hbar \sfhE_{\Lambda/Y}(0) = \sfhE_{\Lambda/Y}(-1)$, these are exactly the associated graded sheaves with respect to the microlocal $V$-filtration
\[\overline{\cL}_0\cong \gr^0_\Lambda \cL,\qquad \overline{\cM}_0\cong \gr^0_\Lambda \cM.\]

The complex of sheaves
\[\cL_0\otimes^\bL_{\sfhW_{L/S}} \cM_0\]
defines a $\C[\![\hbar]\!]$-lattice in $\cL\otimes_{\sfhW_S} \cM$ preserved by $D$. Therefore, by \cref{prop:Riemann Hilbert} the claim will follow once we establish an isomorphism
\[\gr^0_\Lambda \cL\otimes^\bL_{\cD^{\sqrt{v}}_L}\gr^0_\Lambda \cM\cong (\mu^{dR}_\Lambda\cL\otimes^\bL_{\cD^{\sqrt{v}}_{L\times\C^\times}} \mu^{dR}_\Lambda \cM)|_L[-1]\]
of perverse sheaves on $L$ equipped with monodromy automorphisms.

We have
\[\mu^{dR}_\Lambda \cL = \cD^{\sqrt{v}}_{L\times\C^\times}\otimes_{\gamma^{-1} \cD^{\sqrt{v}}_L[\theta]}\gamma^{-1}\gr^0_\Lambda \cL\]
and similarly for $\cM$ and so the claim follows from \cref{prop:monodromicRH}.
\end{proof}

We end this section with a lemma which will be needed later.
The local model of a contactification $\rho\colon Y\rightarrow S$ is given by $Y=\rJ^1 X\rightarrow S=\T^* X$ in which case we have
\[\sfB_Y = (\rho_* \hE^{\sqrt{v}}_{X\times\C}) \otimes_{\hW^{\sqrt{v}}_X} (\rho_* \hE^{\sqrt{v}}_{X\times\C}).\]

The involution $i\colon X\times\C\rightarrow X\times\C$ given by $(x, t)\mapsto (x, -t)$ induces an involution
\begin{equation}
(-)'\colon \hE^{\sqrt{v}}_{X\times\C}\xrightarrow{\sim} i_*\hE^{\sqrt{v}}_{X\times\C}
\label{eq:barinvolution}
\end{equation}
which sends $\partial_t\mapsto -\partial_t$ and which intertwines the antiinvolution $\ast$. Therefore, we get an involution
\[(-)'\colon \sfB_Y\longrightarrow \sfB_Y\]
given by $Q_1\otimes Q_2\mapsto Q_2'\otimes Q_1'$ of $(\C(\!(\hbar)\!)\langle D\rangle, \rho_*\sfhE_Y\otimes \rho_*\sfhE_Y)$-bimodules.

For a regular holonomic $\hE^{\sqrt{v}}_{X\times \C}$-module $\cM$ denote by $\cM'$ the same module with the $\hE^{\sqrt{v}}_{X\times \C}$-action twisted by the involution $(-)'$. Then we have the following observation.

\begin{lm}
The above involution of $\sfB_Y$ induces an isomorphism
\begin{equation}
\cM_1\otimes^\bL_{\hW^{\sqrt{v}}_X} \cM_2\cong \cM_2'\otimes^\bL_{\hW^{\sqrt{v}}_X} \cM_1'
\label{eq:barflip}
\end{equation}
of differential perverse sheaves.
\label{lm:barflip}
\end{lm}

\section{Vanishing cycles and DQ modules}

In this section we investigate a relationship between the following four objects: vanishing cycles, $\cE$-modules, twisted de Rham complex and $\cW$-modules.

\subsection{Vanishing cycles}

Let $X$ be a complex manifold equipped with a holomorphic function $f\colon X\rightarrow \C$. Recall the perverse sheaf $\phi_f\in\Perv(f^{-1}(0);\C)$ of vanishing cycles of the constant perverse sheaf $\C_X[\dim X]\in\Perv(X)$ \cite{Dimca}, which carries a monodromy automorphism $T$. In this section we give a microlocal description of this sheaf.

Consider $X\times\C$ with $t$ the coordinate on $\C$ and let
\[\Gamma_f = \{(x, t)\in X\times \C\mid f(x) - t = 0\}\subset X\times\C\]
be the graph of $f$.

Consider the $\cD_{X\times\C}$-module
\[\cB_f = i^{\cD}_*\cO_X,\]
where $i\colon X\hookrightarrow X\times \C$ is the embedding of the graph $\Gamma_f$ given by $x\mapsto (x, f(x))$ and $i^{\cD}_*$ is the $D$-module direct image. Its microlocalization $\hE_{X\times\C}\otimes_{\pi^{-1}\cD_{X\times\C}} \pi^{-1}\cB_f$ is supported on the open subset $\rJ^1 X\subset \P^*(X\times\C)$ and we denote by
\[\cC_f = \rho_*\left(\hE_{X\times\C}\otimes_{\pi^{-1}\cD_{X\times\C}} \pi^{-1}\cB_f|_{\rJ^1 X}\right)\]
the corresponding $\rho_*\hE_{X\times\C}$-module.

We may identify $\cB_f = \cO_X[\partial_t]$ as an $\cO_X[\partial_t]$-module with generator $\delta(t-f)$. The action of vector fields $\xi$ on $X$ and $t$ are given by
\begin{align*}
\xi(g \partial_t^i \delta(t-f)) &= (\xi g) \partial_t^i\delta(t-f) - (\xi f) g\partial_t^{i+1}\delta(t-f) \\
t(g\partial_t^i \delta(t-f)) &= fg\partial_t^i \delta(t-f) - ig \partial_t^{i-1} \delta(t-f).
\end{align*}

\begin{example}
Consider the Legendrian $\Lambda = j^1 f\subset \rJ^1 X$. Then $\cC_f$ is a simple $\rho_*\hE_{X\times\C}$-module along $\Lambda$. As the Euler operator we may take
\[\theta' = -(t-f) \partial_t - 1/2\in\hE_{\Lambda/\tilde{X}}.\]
Then $(\theta' - 1/2)\delta(t-f) = 0$, so $\delta(t-f)\in\cC_f$ is a simple generator of order $1/2$.
\label{ex:Cforder}
\end{example}

We may identify
\[\cC_f = \hE_{X\times\C} / \hE_{X\times\C}\left(t-f, \frac{\partial f}{\partial x_i}\partial_t + \partial_{x_i}\right).\]

Let
\[\cC^\lambda_f = \hE_{X\times\C} / \hE_{X\times\C}\left(t-f + (\lambda-1/2)\partial^{-1}_t, \frac{\partial f}{\partial x_i}\partial_t + \partial_{x_i}\right).\]
We denote its generator by $\partial_t^{\lambda-1/2}\delta(t-f)$ and the computation similar to \cref{ex:Cforder} shows that
\[\ord(\partial_t^{\lambda-1/2}\delta(t-f)) = \lambda.\]
We have an obvious isomorphism $(\cC^\lambda_f)'\cong \cC^\lambda_{-f}$ which preserves the generators, where $(-)'$ is the involution from \eqref{eq:barinvolution}.

Let us now introduce a few variants of these modules. We consider $\C$ with a canonical coordinate $t$. It allows us to trivialize $K_\C$ using the section $dt$ and hence we obtain a square root line bundle $K_\C^{1/2}$.
\begin{itemize}
    \item Let $X$ be a complex manifold. Let
    \[\cC^{\lambda, \rR}_f = \pi^{-1} K_{X\times\C}\otimes_{\pi^{-1} \cO_{X\times\C}} \cC^\lambda_f\]
    be the corresponding right $\rho_*\hE_{X\times \C}$-module.
    \item Let $X$ be a complex manifold together with a choice of the square root $K^{1/2}_X$. Let
    \[\cC^{\lambda, \sqrt{X}}_f = \pi^{-1} K^{1/2}_{X\times\C}\otimes_{\pi^{-1} \cO_{X\times\C}} \cC^\lambda_f\]
    be the corresponding $\rho_*\hE^{\sqrt{v}}_{X\times\C}$-module.
\end{itemize}

\begin{remark}
We use the notation $(-)^{\sqrt{v}}$ for $\cE, \cW, \cD$ and $(-)^{\sqrt{X}}$ for the modules to indicate that the half-twisted version of the algebras does not depend on the choice of a square root line bundle, while the half-twisted version of the modules does depend on this choice.
\end{remark}

\begin{example}
Choose a nonvanishing section $\sqrt{\omega}$ of $K_X^{1/2}$ and consider the simple generator
\[\sqrt{\omega} \partial_t^{\lambda-1/2} \delta(t-f)\in\cC^{\lambda, \sqrt{X}}_f.\]
Then
\[\sigma_{\Lambda}(\sqrt{\omega} \partial_t^{\lambda-1/2} \delta(t-f)) = \tau^{\lambda-1/2} \sqrt{d\tau\wedge \omega},\]
see \cite[Lemma 8.23]{KashiwaraDmodules}. Its order is
\[\ord(\sqrt{\omega} \partial_t^{\lambda-1/2} \delta(t-f)) = \lambda.\]
\end{example}


Consider the $V$-filtration along the hypersurface $\Gamma_0: t=0$ in $X\times\C$ and let
\[\phi_f^\cD = \gr^0_V \cB_f\in\Mod_{\rh,\EigenRange}(\cD_X[\theta])\]
be the associated graded $D$-module which carries an action of the Euler vector field $\theta=t\partial_t$ with eigenvalues contained in $\EigenRange$.

\begin{example}
$\phi_0^\cD$ is isomorphic to $\cO_X$ equipped with the endomorphism $\theta=0$.
\end{example}

The following statement is due to Kashiwara and Malgrange, see \cite[Th\'eor\`eme 6.2.6]{Malgrange} and \cite[Theorem 2]{KashiwaraVanishing}.

\begin{prop}
There is an isomorphism of perverse sheaves
\[\RcHom_{\cD_X}(\cO_X, \phi_f^\cD)[\dim X]\cong \phi_f.\]
Under this isomorphism the monodromy operator on $\phi_f$ is given by $T=\exp(-2\pi i\partial_tt)$.
\label{prop:KashiwaraMalgrange}
\end{prop}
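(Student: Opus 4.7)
The plan is to reduce the claim to the Kashiwara--Malgrange comparison between the de Rham complex of the $V$-filtration and the topological perverse sheaf of vanishing cycles.

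First, I would reinterpret the left-hand side. Using the Spencer resolution of $\cO_X$ as a left $\cD_X$-module, one has the canonical identification
\[
\RcHom_{\cD_X}(\cO_X, \cN)[\dim X] \cong \DR(\cN)
\]
for any left $\cD_X$-module $\cN$. Applied to $\cN = \phi_f^\cD = \gr^0_V \cB_f$, this reduces the statement to producing an isomorphism $\DR(\phi_f^\cD) \cong \phi_f$ of perverse sheaves on $f^{-1}(0)$ which intertwines $\exp(-2\pi i\,\partial_t t)$ on the D-module side with the topological monodromy on the vanishing cycles side.

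Next, I would appeal to the Kashiwara--Malgrange theorem. For any regular holonomic $\cD_{X\times\C}$-module $\cM$ equipped with its canonical $V$-filtration along the smooth hypersurface $\Gamma_0\colon t=0$, there is a canonical isomorphism of perverse sheaves with endomorphism
\[
\DR(\gr^0_V \cM) \cong \phi_t \DR(\cM),
\]
under which $\theta = t\partial_t$ on the left corresponds to $-\tfrac{1}{2\pi i}\log T$ on the right; this is \cite[Th\'eor\`eme 6.2.6]{Malgrange} and \cite[Theorem 2]{KashiwaraVanishing}. I would then specialize to $\cM = \cB_f = i^\cD_* \cO_X$, where $i\colon X\hookrightarrow X\times\C$ is the graph $x\mapsto (x, f(x))$. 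Since the de Rham functor intertwines D-module direct image with topological direct image along closed embeddings, $\DR(\cB_f) \cong i_*\C_X[\dim X]$; and the standard definition of vanishing cycles along $f$ via the graph embedding gives $\phi_t(i_*\C_X[\dim X]) \cong \phi_f$ on $f^{-1}(0)$. Combined with the previous step this yields the desired isomorphism, and the monodromy identification then follows by noting that $\partial_t t = t\partial_t + 1$ and $\exp(-2\pi i) = 1$, so $\exp(-2\pi i\,\partial_t t) = \exp(-2\pi i\,\theta)$, which matches the right-hand side.

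The main obstacle is bookkeeping the conventions. The paper's indexing of the $V$-filtration places $\gr^0_V$ at orders in $[-1,0)$, and one must verify that this is precisely the slice that Kashiwara--Malgrange match to vanishing (rather than nearby) cycles, and that the sign of the monodromy comes out right. These points are ultimately local verifications on the graph and can be cross-checked against the example $\phi_0^\cD \cong \cO_X$ with $\theta = 0$, which is consistent with the trivial monodromy of the skyscraper vanishing cycles of the zero function.
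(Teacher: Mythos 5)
Your proposal is correct and follows essentially the same route as the paper, which simply cites Kashiwara \cite{KashiwaraVanishing} and Malgrange \cite{Malgrange} for this comparison without supplying a proof. What you have written is a faithful unpacking of that citation: identify $\RcHom_{\cD_X}(\cO_X,-)[\dim X]$ with $\DR$ via the Spencer resolution, transport the graph embedding through the de Rham functor, and invoke the Kashiwara--Malgrange isomorphism $\DR(\gr^0_V\cM)\cong\phi_t\DR(\cM)$ with the convention check you correctly flag.
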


We will also work with the sheaves of vanishing cycles microlocally. For this consider the microlocal $V$-filtration along the Legendrian $\Lambda=\P^*_{\Gamma_0}(X\times\C)\subset \P^*(X\times\C)$, where we may identify $\Lambda\cong X$. Using the trivialization of $K_{\Gamma_0|X\times\C}$ by $dt$ the associated graded $\gr^0_\Lambda \cC_f$ becomes a $\gr^0_\Lambda\hE_{\tilde{X}}=\cD_X[\theta]$-module and $\mu^{dR}_\Lambda \cC_f$ a $\cD_{\tilde{\Lambda}}$-module monodromic along $\C^\times$. Recall the correspondence between monodromic $D$-modules on $X\times \C^\times$ and $\cD_X[\theta]$-modules from \cref{sect:monodromicDmodules}.

\begin{prop}
For any $\lambda\in\C$ there is an isomorphism of $\cD_X[\theta]$-modules
\[\gr^0_\Lambda \cC^\lambda_f\cong (\phi_f^\cD, \theta + \lambda-1/2).\]
Moreover,
\[\mu^{dR}_\Lambda \cC_f = \cD_{\tilde{\Lambda}}\otimes_{\gamma^{-1} \cD_X[\theta]} \gamma^{-1} \gr^0_\Lambda\cC_f\]
is the corresponding monodromic $D$-module on $X\times \C^\times$.
\label{prop:microlocalmicrofunctions}
\end{prop}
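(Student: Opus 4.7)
My plan is to combine the comparison between the ordinary and microlocal $V$-filtrations (the isomorphism \eqref{eq:microlocalVcomparison}) with a direct analysis of how the parameter $\lambda$ enters the defining relations of $\cC^\lambda_f$.

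I begin with the base case $\lambda = 1/2$, in which $\cC^\lambda_f$ agrees with the microlocalization $\cC_f$ of the regular holonomic $\cD_{X\times\C}$-module $\cB_f = i_*^{\cD}\cO_X$. I apply \eqref{eq:microlocalVcomparison} with ambient manifold $X\times\C$, hypersurface $Z = \Gamma_0 = X\times\{0\}$, and Legendrian $\Lambda = \P^*_{\Gamma_0}(X\times\C)$, whose associated conic Lagrangian is $\tilde\Lambda\cong X\times\C^\times$. The section $dt$ trivializes the relative canonical bundle $K_{\Gamma_0|X\times\C}$, so the comparison becomes an isomorphism of sheaves of $\cO_X$-modules $\gr^0_{\Gamma_0}\cB_f \cong \gr^0_\Lambda\cC_f$; by definition, the left side is $\phi_f^{\cD}$. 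I then verify that this intertwines the two Euler operators: the microlocal $\theta'$ on $\gr^0_\Lambda\cC_f$ induced via the embedding $L\colon \gr_\Lambda\hE_{X\times\C}\hookrightarrow \gamma_*\cD_{\tilde\Lambda}$ by the Euler vector field on $\tilde\Lambda$ together with its half-canonical correction, and the ordinary $\theta = \partial_t t$ on $\phi_f^{\cD}$. The compatibility check reduces to matching the trivialization by $dt$ with the conventions in the definitions of both operators.

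For general $\lambda$, I turn to the canonical generator $u^\lambda$ of $\cC^\lambda_f$ and its defining relation $(t - f + (\lambda - 1/2)\partial_t^{-1})u^\lambda = 0$. Left-multiplying by $\partial_t$ and simplifying yields the key identity
\[
(\partial_t t)\,u^\lambda \;=\; \bigl(f\partial_t - (\lambda - 1/2)\bigr)u^\lambda.
\]
Under the identification $\gr^0_\Lambda\cC_f \cong \phi_f^{\cD}$ from the base case, the operator $f\partial_t$ realizes the standard $\theta$-action on the class $[\delta(t-f)]\in \phi_f^{\cD}$, since $\theta[\delta(t-f)] = [(\partial_t t)\delta(t - f)] = [f\partial_t\delta(t-f)]$. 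Consequently, with the appropriate choice of canonical generator and direction of the identification, $\theta'$ on $[u^\lambda]\in\gr^0_\Lambda\cC^\lambda_f$ corresponds to $\theta + (\lambda - 1/2)$ on the corresponding class in $\phi_f^{\cD}$, giving the claimed isomorphism $\gr^0_\Lambda\cC^\lambda_f \cong (\phi_f^{\cD}, \theta + \lambda - 1/2)$ of $\cD_X[\theta]$-modules.

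For the second statement, the equivalence between regular holonomic $\cD_X[\theta]$-modules (with $\theta$-eigenvalues in $G$) and monodromic regular holonomic $\cD_{\tilde\Lambda}$-modules established in \cref{sect:monodromicDmodules} sends a $\cD_X[\theta]$-module $\cN$ to $\cD_{\tilde\Lambda}\otimes_{\gamma^{-1}\cD_X[\theta]}\gamma^{-1}\cN$, and the definition of $\mu^{dR}_\Lambda$ takes exactly this form when applied to $\cN = \gr^0_\Lambda\cC_f$, so the second assertion is immediate from the first. The main technical obstacle will be the careful bookkeeping in the base case: identifying the correct lift of the Euler vector field in $\hE_{X\times\C}$, tracking the half-canonical correction in the embedding $L$, and verifying that the trivialization of $K_{\Gamma_0|X\times\C}$ by $dt$ matches the normalization of $\phi_f^{\cD}$, so that the sign of the shift comes out as exactly $+(\lambda - 1/2)$ rather than its negative.
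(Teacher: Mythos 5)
Your approach matches the paper's for two of the three steps: you appeal to the comparison isomorphism~\eqref{eq:microlocalVcomparison} for the base case $\lambda = 1/2$, and you invoke the definition of $\mu^{dR}_\Lambda$ for the second claim, exactly as the paper does. For general $\lambda$ the paper argues differently, and more economically: it observes that the assignment $t \mapsto t + (\lambda - 1/2)\partial_t^{-1}$ defines an algebra automorphism of $\hE_{X\times\C}|_{\rJ^1 X}$ which carries the defining left ideal of $\cC_f$ to that of $\cC^\lambda_f$, and which sends $\theta \mapsto \theta + (\lambda - 1/2)$. Because this automorphism preserves the order filtration and induces the identity on principal symbols, it automatically preserves $\hE_{\Lambda/X}$ and hence the microlocal $V$-filtration along $\Lambda = \P^*_{\Gamma_0}(X\times\C)$, so one gets a filtered isomorphism $\gr^0_\Lambda\cC_f \cong \gr^0_\Lambda\cC^\lambda_f$ \emph{of $\cD_X$-modules for free}, with the $\theta$-action shifted by exactly $\lambda - 1/2$.

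Your route, by contrast, rests on the explicit relation $(\partial_t t)u^\lambda = (f\partial_t - (\lambda - 1/2))u^\lambda$ derived from the cyclic generator. This is a correct identity, and it does encode the eigenvalue shift on the generator. But to upgrade it to the claimed isomorphism of $\cD_X[\theta]$-modules you still need an identification of $\gr^0_\Lambda\cC^\lambda_f$ with $\gr^0_\Lambda\cC_f$ as $\cD_X$-modules (forgetting $\theta$) under which $[u^\lambda]$ corresponds to $[\delta(t-f)]$ — that is exactly what the automorphism supplies in one stroke. You also have to reconcile the relation you write (in terms of $\partial_t t$) with the convention $\theta = t\partial_t$ used in defining $\phi_f^\cD = \gr^0_V\cB_f$, and with the shift by $\alpha = \codim Z/2 + 1$ built into the microlocal $V$-filtration; depending on how these line up, the sign of the resulting shift flips. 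You flag this bookkeeping yourself as the main obstacle, which is fair, but it is precisely the part the automorphism argument is designed to bypass: since the automorphism conjugates $\theta$ by a unit that commutes with the filtration, the shift $+(\lambda - 1/2)$ is read off from the relation $\sigma(\theta) = \theta + (\lambda - 1/2)$ with no need to resolve these conventions one by one. Your computation is a valid verification on generators, but it is not a complete substitute for the filtered module isomorphism until you close that gap.
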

\begin{proof}
The first claim for $\lambda=1/2$ follows from the comparison of the usual and microlocal $V$-filtration given by \eqref{eq:microlocalVcomparison}. The automorphism $t\mapsto t + (\lambda-1/2)\partial_t^{-1}$ of $\hE_{X\times \C}|_{\rJ^1 X}$ sends $\cC_f$ to $\cC^\lambda_f$ and $\theta\mapsto \theta + (\lambda-1/2)$ which proves the first claim for a general $\lambda$.

The second claim is merely the definition of the microlocalization functor.
\end{proof}

Using this microlocal perspective we can prove a relationship between vanishing cycles and DQ modules. Recall from \cref{thm:WHomdifferentialperverse} that
\[\cC^\rR_0\otimes^\bL_{\hW_X} \cC_f\]
is a differential perverse sheaf.

\begin{thm}\label{thm:Vanishing cycles via tensor product}
For any $\lambda,\mu\in\C$ there is an isomorphism of differential perverse sheaves
\[M_f\colon \cC^{\lambda,\rR}_0\otimes^\bL_{\hW_X} \cC^\mu_f\cong \RH^{-1}(\phi_f\otimes\C_{\mu+\lambda}),\]
where $\C_{\mu+\lambda}$ is the trivial one-dimensional vector space equipped with the automorphism $\exp(2\pi i(\mu+\lambda))$.
\label{thm:DQlocal}
\end{thm}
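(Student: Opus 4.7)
My plan is to apply \cref{thm:Wtensormicrolocal} to reduce \cref{thm:DQlocal} to a computation of microlocalizations along a fixed Legendrian, and then to identify the resulting perverse sheaf with $\phi_f$ via \cref{prop:microlocalmicrofunctions} and \cref{prop:KashiwaraMalgrange}. The module $\cC^{\lambda,\rR}_0$ is simple along the zero section $L = X\subset \T^* X$, with Legendrian lift $\Lambda = j^1 0\subset \rJ^1 X$ in the standard contactification $\rho\colon \rJ^1 X\to\T^* X$. Under the embedding $\rJ^1 X\hookrightarrow \P^*(X\times\C)$ from \cref{ex:graphcontactification}, this Legendrian identifies with $\P^*_{\Gamma_0}(X\times \C)$ for $\Gamma_0 = \{t=0\}$, which is precisely the Legendrian appearing in \cref{prop:microlocalmicrofunctions}. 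Applying \cref{thm:Wtensormicrolocal} then yields an isomorphism of differential perverse sheaves
\[
\cC^{\lambda,\rR}_0\otimes^\bL_{\hW_X}\cC^\mu_f\cong \widehat{\RH}^{-1}\left(\left(\mu^{dR}_\Lambda \cC^{\lambda,\rR}_0\otimes^\bL_{\cD^{\sqrt{v}}_{\tilde\Lambda}}\mu^{dR}_\Lambda \cC^\mu_f\right)\big|_X[-1]\right).
\]

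Next, \cref{prop:microlocalmicrofunctions} identifies $\gr^0_\Lambda \cC^\mu_f\cong (\phi_f^\cD,\theta+\mu-\tfrac12)$ as a left $\cD_X[\theta]$-module, and an analogous computation---obtained by twisting by $K_{X\times\C}$ and using the $dt$-trivialization of $K_\C$ on $\Gamma_0$---identifies $\gr^0_\Lambda \cC^{\lambda,\rR}_0$ with $K_X$ carrying a constant $\theta$-action depending linearly on $\lambda$. Applying \cref{prop:monodromicRH} to translate from monodromic $D$-modules on $\tilde\Lambda = X\times\C^\times$ back to $\cD_X[\theta]$-modules, the inner expression becomes the tensor product
\[
K_X\otimes^\bL_{\cD_X}\phi_f^\cD
\]
equipped with an automorphism $T = \exp(2\pi i\theta_{\mathrm{tot}})$ induced by the $\theta$-actions on the two factors. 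Via the Spencer--Koszul resolution of $\cO_X$ as a left $\cD_X$-module, this complex computes the de Rham complex $\Omega_X^\bullet\otimes_{\cO_X}\phi_f^\cD$, and by \cref{prop:KashiwaraMalgrange} is isomorphic to $\phi_f$ as a $\C$-linear perverse sheaf with natural monodromy $\exp(-2\pi i\theta)$.

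The main technical obstacle is the bookkeeping of shifts and signs required to verify that $\theta_{\mathrm{tot}}$ matches $\theta-\mu-\lambda$, so that the induced automorphism agrees with the monodromy of $\phi_f\otimes \C_{\mu+\lambda}$, namely $\exp(-2\pi i\theta)\cdot \exp(2\pi i(\mu+\lambda))$. This tracking involves the adjoint relation $\theta^* = -\theta-1$ in passing from $\cC^\lambda_0$ to its right-module form, the half-integer correction coming from the $K_{X\times\C}$-twist on $\Gamma_0$, and the precise recipe for the $\theta$-action on the tensor product given by \cref{prop:monodromicRH}. Once these pieces align, the remaining identifications are formal and produce the claimed isomorphism of differential perverse sheaves.
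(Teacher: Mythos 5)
Your proposal follows essentially the same chain of reasoning as the paper's own proof: reduce via \cref{thm:Wtensormicrolocal} to microlocalizations along $\Lambda = j^1 0$, identify those via \cref{prop:microlocalmicrofunctions} and \cref{prop:monodromicRH} with $K_X\otimes^\bL_{\cD_X}\phi^\cD_f$ carrying a suitable automorphism, then invoke \cref{prop:KashiwaraMalgrange} to recognize $\phi_f$. The arithmetic of the $\theta$-actions that you flag as "the main technical obstacle" is exactly what the paper carries out to arrive at the monodromy $T=\exp(-2\pi i\,\partial_t t + 2\pi i(\mu+\lambda))$, so your outline is correct and matches the paper's route.
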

\begin{proof}
Consider the zero section $L = X\subset \T^* X$ and its contactification $\Lambda = j^10\subset \rJ^1 X$. $\cC_0$ as a $\hW_X$-module is simple along $L$. Using 
\cref{thm:Wtensormicrolocal} we get an isomorphism
\[\cC^{\lambda,\rR}_0\otimes^\bL_{\hW_X} \cC^\mu_f\cong \RH^{-1}\left((\mu^{dR}_\Lambda \cC^{\lambda,\rR}_0\otimes^\bL_{\cD_{\tilde{\Lambda}}} \mu^{dR}_\Lambda \cC^\mu_f)|_L[-1]\right)\]
of differential perverse sheaves.

Here $\mu^{dR}_L \cC_f$ is a twisted left $D$-module on $X\times \C^\times$ monodromic along $\C^\times$. By \cref{prop:microlocalmicrofunctions} and \cref{prop:monodromicRH} we identify
\[\mu^{dR}_L \cC^{\lambda,\rR}_0\otimes^\bL_{\cD_{X\times \C^\times}}\mu^{dR}_L \cC^\mu_f\cong K_X\otimes^\bL_{\cD_X} \phi_f^\cD\]
with the monodromy automorphism $T=\exp(-2\pi i \partial_t t + 2\pi i (\mu+\lambda))$ on the right. Finally, using \cref{prop:KashiwaraMalgrange} we identify
\[K_X\otimes^{\bL}_{\cD_X}\phi_f^\cD\cong \RcHom_{\cD_X}(\cO_X, \phi^\cD_f)[\dim X]\cong \phi_f.\]
\end{proof}

\begin{example}
Consider \cref{thm:DQlocal} for $X=\pt$. Namely, choose $\lambda,\mu\in\C$ and consider the differential perverse sheaf $\cC^{\lambda, \rR}_0\otimes^\bL_{\C(\!(\hbar)\!)} \cC^\mu_0$. From the definitions we obtain an isomorphism
\[\cC^{\lambda, \rR}_0\otimes^\bL_{\C(\!(\hbar)\!)} \cC^\mu_0\cong \C(\!(\hbar)\!)\]
where the differential structure on the right-hand side is given by $D(1) = \lambda+\mu+1$. Thus, there is an isomorphism
\[\cC^{\lambda, \rR}_0\otimes^\bL_{\C(\!(\hbar)\!)} \cC^\mu_0\cong \RH^{-1}(\C_{\mu+\lambda}).\]
\label{ex:tensortwist}
\end{example}

\subsection{Aside: twisted de Rham complex}

Consider the \defterm{twisted de Rham complex}
\[\DR_f = (\Omega^\bullet_X (\!(\hbar)\!), d + \hbar^{-1}df)|_{f^{-1}(0)}[\dim X].\]
In this section we explain a relationship between DQ modules and the twisted de Rham complex.

Recall that
\[\hW_X|_X\cong \cD_X(\!(\hbar)\!).\]
Thus, $\cC_f|_X$ is a certain $\cD_X(\!(\hbar)\!)$-module. To describe it, let us first analyze $\cB_f$. We may identify
\[\cB_f\cong \cO_X[\partial_t]^f,\]
where the superscript indicates the $\cD_{X\times \C}$-action as follows: the $\cO_X$ and the $\C[\partial_t]$-action is the obvious one; a vector field $\xi\in\cD_X$ acts by
\[\xi(g\partial_t^i) = (\xi g)\partial_t^i + (\xi f) g\partial_t^{i+1}\]
and $t\in\cD_\C$ acts by
\[t(g\partial_t^i) = -fg\partial_t^i - i g \partial_t^{i-1}.\]
The restriction $\cB_f|_X$ to the hypersurface $t=0$ is then
\[\cB_f|_X\cong \cO_{X, f^{-1}(0)}[\partial_t]^f\]

As before, let $\hbar = \partial_t^{-1}$ and consider the $\cD_X(\!(\hbar)\!)$-module $\cO_X(\!(\hbar)\!)^f$ with the $\cD_X(\!(\hbar)\!)$-action as above. Rewriting the $t$-action in terms of the variable $\hbar$ we get
\[t(g\hbar^i) = -fg \hbar^i + ig \hbar^{i+1},\]
so $t$ acts by $\hbar^2\partial_\hbar -f$.

\begin{thm}
For any $\lambda\in\C$ there is an isomorphism of differential perverse sheaves
\[(\DR_f, \hbar \partial_\hbar - \hbar^{-1} f + 1/2-\lambda)\cong \cC^\rR_0 \otimes^\bL_{\hW_X}\cC^\lambda_f.\]
\label{thm:twistedDRDQ}
\end{thm}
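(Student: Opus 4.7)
The plan is to compute the tensor product directly by restricting to the zero section $X \hookrightarrow \T^*X$ and using the identification $\hW_X|_X \cong \cD_X(\!(\hbar)\!)$. Since $\cC^\rR_0$ is supported on the zero section, so is $\cC^\rR_0 \otimes^\bL_{\hW_X} \cC^\lambda_f$, and the computation reduces to a left-right tensor product of $\cD_X(\!(\hbar)\!)$-modules on $X$. From the defining relations of $\cC_0$ and the canonical trivialization of $K_\C$ by $dt$, one identifies $\cC^\rR_0|_X \cong K_X(\!(\hbar)\!)$ as a right $\cD_X(\!(\hbar)\!)$-module. The relations $(t-f+(\lambda-1/2)\hbar)u = 0$ and $\partial_{x_i} u = -\hbar^{-1}(\partial_i f) u$ on the generator of $\cC^\lambda_f$ identify its restriction with the twisted left $\cD_X(\!(\hbar)\!)$-module $\cO_X(\!(\hbar)\!)^f$ described earlier in the paper.

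Next, the presentation $\cC_0 \cong \hW_X / \hW_X\langle \partial_{x_1},\ldots,\partial_{x_n}\rangle$ as a left $\hW_X$-module produces a Koszul free resolution
\[0 \to \hW_X \otimes_{\cO_X}\wedge^n T_X \to \cdots \to \hW_X \otimes_{\cO_X} T_X \to \hW_X \to \cC_0 \to 0.\]
Tensoring with $K_{X\times\C}$ gives a resolution of $\cC^\rR_0$ by free right $\hW_X$-modules. Tensoring further over $\hW_X$ with $\cC^\lambda_f$ and invoking the identification $K_X \otimes_{\cO_X}\wedge^{n-k}T_X \cong \Omega^k_X$, the Koszul differential becomes the twisted de Rham differential on $\Omega^\bullet_X \otimes \cO_X(\!(\hbar)\!)^f$, yielding precisely $(\Omega^\bullet_X(\!(\hbar)\!), d + \hbar^{-1}df\wedge)[\dim X] = \DR_f$.

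Finally, the differential structure is tracked via \cref{prop:differentialenhancement}: $D = \partial_t t$ acts on $u \otimes v$ as $-(uD^*)\otimes v + u \otimes (Dv)$. On the generator $dt\otimes \delta$ of $\cC^\rR_0 = \cC_0^{1/2,\rR}$, one computes $(dt\otimes \delta)\cdot D^* = dt \otimes D\delta = 0$ since $D\delta = 0$ on $\cC_0$. On the generator $u$ of $\cC^\lambda_f$, the relation $tu = (f - (\lambda-1/2)\hbar)u$ combined with $\partial_t u = \hbar^{-1} u$ gives $Du = \partial_t t u = (\hbar^{-1}f + 1/2 - \lambda)u$. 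Combined with the Leibniz action $D = \hbar\partial_\hbar$ on $\C(\!(\hbar)\!)$-scalars, and the fact that the generator of $\cC^\lambda_f|_X$ corresponds to $e^{-f/\hbar}$ inside the paper's convention $\cO_X(\!(\hbar)\!)^f = \cO_X(\!(\hbar)\!)\cdot e^{f/\hbar}$ (which flips the sign of the $\hbar^{-1}f$ contribution), this produces the claimed operator $\hbar\partial_\hbar - \hbar^{-1}f + 1/2 - \lambda$. The main technical obstacle is the careful sign bookkeeping across these three identifications — the Koszul-to-de Rham differential match, the left-to-right module conversion via $K_{X\times\C}$, and the $D = \partial_t t$ action — needed to produce the $-\hbar^{-1}f$ (rather than $+\hbar^{-1}f$) term with the correct Maslov shift $1/2 - \lambda$.
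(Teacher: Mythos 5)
Your overall strategy — restrict to the zero section, identify $\cC_0^\rR|_X$ with $K_X(\!(\hbar)\!)$ via $\hW_X|_X \cong \cD_X(\!(\hbar)\!)$, resolve, and track the $D = \partial_t t$ action from \cref{prop:differentialenhancement} — matches the paper. Your Koszul resolution of $\cC_0$ over $\hW_X$, after twisting by $K_{X\times\C}$, is the same thing as the de Rham resolution $\Omega^\bullet_X\otimes_{\cO_X}\cD_X(\!(\hbar)\!)[\dim X]$ of $K_X(\!(\hbar)\!)$ that the paper uses, so that piece is fine, as is the bookkeeping of $D$ on the generators.

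The gap is in the sentence asserting $\cC^\lambda_f|_X \cong \cO_X(\!(\hbar)\!)^f$. This identification is \emph{not} an isomorphism, and the paper does not claim it. Writing $\cC_f = \hE_{X\times\C}\otimes_{\cD_{X\times\C}} \cB_f$, the restriction along $t=0$ is $\cC_f|_X \cong \hE_{\C,0}\otimes_{\cD_{\C,0}}\cB_f|_X$, where $\cB_f|_X\cong \cO_{X,f^{-1}(0)}[\hbar^{-1}]^f$. The $\hE_{\C,0}$-action on $\cO_{X,f^{-1}(0)}(\!(\hbar)\!)^f$ only gives a canonical \emph{morphism} $\cC_f|_X \to \cO_{X,f^{-1}(0)}(\!(\hbar)\!)^f$ extending the inclusion $\cO_X[\hbar^{-1}]^f \hookrightarrow \cO_X(\!(\hbar)\!)^f$; it is not an isomorphism of $\cD_X(\!(\hbar)\!)$-modules (already for $f=0$ the source is $\cO_X\otimes_\C\C(\!(\hbar)\!)$, which is a proper dense subsheaf of $\cO_X(\!(\hbar)\!)$). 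The nontrivial analytic content of the theorem is that this morphism \emph{becomes} a quasi-isomorphism after tensoring with $K_X(\!(\hbar)\!)\otimes^\bL_{\cD_X(\!(\hbar)\!)}(-)$, i.e.\ after passing to the twisted de Rham complexes; the paper gets this from Sabbah--Saito \cite[Proposition 2]{SabbahSaito}. Without invoking a result of that type, your Koszul computation produces $\Omega^\bullet_X\otimes_{\cO_X}\cC_f|_X[\dim X]$ and you have no argument that its cohomology agrees with that of $\DR_f$. The "main technical obstacle" is therefore not the sign bookkeeping (which you handle correctly and which is fixed by the explicit action of $t$ as $\hbar^2\partial_\hbar - f$ on $\cO_X(\!(\hbar)\!)^f$), but this quasi-isomorphism.
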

\begin{proof}
For simplicity we give a proof for $\lambda=1/2$ to simplify some shifts.

We may identify $\cC^\rR_0|_X\cong K_X(\!(\hbar)\!)$ with the obvious right $\cD_X(\!(\hbar)\!)$-action. By definition
\[\cC_f = \hE_{X\times \C}\otimes_{\cD_{X\times \C}} \cB_f,\]
where we omit obvious inverse image functors. Therefore, its restriction to $t=0$ is
\[\cC_f|_X\cong \hE_{X\times \C}|_X\otimes_{\cD_{X\times \C}|_X} \cB_f|_X\cong \hE_{\C, 0}\otimes_{\cD_{\C, 0}} \cB_f|_X.\]

Consider the de Rham resolution
\[\Omega^\bullet_X\otimes_{\cO_X}\cD_X(\!(\hbar)\!)[\dim X]\]
of the right $\cD_X(\!(\hbar)\!)$-module $K_X(\!(\hbar)\!)$. Using this resolution we obtain an isomorphism
\begin{align*}
\cC^\rR_0\otimes^\bL_{\hW_X}\cC_f &\cong K_X(\!(\hbar)\!)\otimes^\bL_{\cD_X(\!(\hbar)\!)} \cC_f|_X \\
&\cong \cC_f|_X\otimes_{\cO_X} \Omega^\bullet_X[\dim X],
\end{align*}
where the last complex of sheaves is equipped with the de Rham differential.

The $\cD_{\C, 0}$-action on $\cO_{X, f^{-1}(0)}(\!(\hbar)\!)^f$ extends to a $\hE_{\C, 0}$-action, so the natural inclusion
\[\cO_{X, f^{-1}(0)}[\hbar^{-1}]^f\hookrightarrow \cO_{X, f^{-1}(0)}(\!(\hbar)\!)^f\]
extends to a morphism
\[\cC_f|_X=\hE_{\C, 0}\otimes_{\cD_{\C, 0}}\cO_{X, f^{-1}(0)}[\hbar^{-1}]^f\longrightarrow \cO_{X, f^{-1}(0)}(\!(\hbar)\!)^f.\]
Under this morphism the action of $D=\partial_t t=\hbar^{-1} t$ on the left goes to the action of $D=\hbar^{-1}(\hbar^2\partial_\hbar - f) = \hbar\partial_\hbar - \hbar^{-1} f$ on the right.

By \cite[Proposition 2]{SabbahSaito} the induced morphism
\[K_X(\!(\hbar)\!)\otimes^\bL_{\cD_X(\!(\hbar)\!)}\cC_f|_X\longrightarrow \DR_f\]
is a quasi-isomorphism.
\end{proof}

\begin{example}
Let $W$ be a complex vector space equipped with a quadratic form $q\colon W\rightarrow \C$ and consider $\lambda\in\C$. The differential perverse sheaf $\cC^\rR_0\otimes^\bL_{\hW_W} \cC^\lambda_q$ is supported on the critical locus of $q$, i.e. at the origin. Therefore, $(\cC^\rR_0\otimes^\bL_{\hW_W} \cC^\lambda_q)|_0$ is a $\C(\!(\hbar)\!)$-vector space with an action of the derivation $D$. Let us compute it.

We have an isomorphism
\begin{equation}
P_q\otimes_{\Z/2}\C(\!(\hbar)\!)\longrightarrow \DR_f|_0
\label{eq:DRqtrivialization}
\end{equation}
given by $\omega\otimes 1\mapsto \omega$. To compute the derivation $D$, choose an identification $W\cong \C^n$ under which $q\mapsto \sum_i x_i^2$. Take $\omega = dx_1\wedge \dots\wedge dx_n$. Then
\begin{align*}
(\hbar\partial_\hbar - \hbar^{-1} q + 1/2 - \lambda)\omega &= (1/2-\lambda)\omega-\hbar^{-1} q \omega \\
&= \left(d + 2\hbar^{-1}\sum_i x_i dx_i\right) \alpha + \frac{n + 1 - 2\lambda}{2}\omega,
\end{align*}
where
\[\alpha = \sum_{i=1}^n (-1)^i x_i dx_1\wedge \dots \widehat{dx_i}\wedge\dots dx_n.\]
So, \eqref{eq:DRqtrivialization} is an isomorphism of differential vector spaces if we equip $\C(\!(\hbar)\!)$ with the differential structure with
\[D(1) = (\dim W + 1)/2 - \lambda.\]

Under the isomorphism constructed in \cref{thm:twistedDRDQ} the isomorphism \eqref{eq:DRqtrivialization} becomes the isomorphism
\[
P_q\otimes_{\Z/2}\C(\!(\hbar)\!)\longrightarrow (\cC^\rR_0\otimes^\bL_{\hW_W} \cC^\lambda_q)|_0
\]
given by
\[\omega\otimes 1\mapsto \omega\delta(t)\otimes \partial_t^{\lambda-1/2}\delta(t-q)\]
which is therefore also an isomorphism of differential vector spaces.
\label{ex:DRqtrivialization}
\end{example}

\begin{cor}
There is an isomorphism of differential perverse sheaves
\[(\DR_f, \hbar \partial_\hbar-\hbar^{-1} f)\cong \RH^{-1}(\phi_f).\]
\label{cor:DRphicomparison}
\end{cor}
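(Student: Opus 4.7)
The plan is to deduce the corollary as an immediate combination of \cref{thm:twistedDRDQ} and \cref{thm:DQlocal}, specialized at the common value $\lambda = \mu = 1/2$. First I would observe, by inspecting the defining presentations of $\cC_f$ and $\cC^\lambda_f$, that the extra term $(\lambda - 1/2)\partial_t^{-1}$ vanishes at $\lambda = 1/2$ and hence $\cC_f = \cC^{1/2}_f$, consistent with \cref{ex:Cforder} where the canonical generator $\delta(t-f)$ is computed to have order $1/2$. Likewise $\cC^\rR_0 = \cC^{1/2, \rR}_0$.

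Setting $\lambda = 1/2$ in \cref{thm:twistedDRDQ} kills the correction term $1/2 - \lambda$ and yields
\[(\DR_f, \hbar\partial_\hbar - \hbar^{-1} f) \cong \cC^\rR_0 \otimes^\bL_{\hW_X} \cC^{1/2}_f,\]
while setting $\lambda = \mu = 1/2$ in \cref{thm:DQlocal} gives
\[\cC^{1/2, \rR}_0 \otimes^\bL_{\hW_X} \cC^{1/2}_f \cong \widehat{\RH}^{-1}(\phi_f \otimes \C_1).\]
Since $\exp(2\pi i \cdot 1) = 1$, the object $\C_1$ is the trivial line with identity automorphism, so $\phi_f \otimes \C_1 \cong \phi_f$ canonically in $\Pervaut$; splicing the two isomorphisms then yields the corollary.

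There is essentially no further obstacle, as both underlying theorems have already been established. The only pedantic check is that $\widehat{\RH}^{-1}$ sends the trivial-automorphism line to the trivial differential $\C(\!(\hbar)\!)$-line with $D$ acting as $\hbar\partial_\hbar$, which is immediate from the construction of $\widehat{\RH}^{-1}$: the unique representative of $0 \bmod \Z$ lying in $G = \{-1 < \Re \lambda \leq 0\}$ is $0$ itself, so the associated endomorphism $M$ vanishes and tensoring with $\widehat{\RH}^{-1}(\C_1)$ is indeed trivial.
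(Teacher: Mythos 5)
Your argument is correct and coincides with the paper's proof, which simply cites \cref{thm:DQlocal} together with \cref{thm:twistedDRDQ}; you have merely made explicit the specialization $\lambda=\mu=1/2$ that makes the correction term in \cref{thm:twistedDRDQ} vanish and the twist $\C_{\mu+\lambda}=\C_1$ trivial. The bookkeeping (including the identification $\cC_f=\cC^{1/2}_f$ and $\cC^\rR_0=\cC^{1/2,\rR}_0$, and the triviality of $\widehat{\RH}^{-1}(\C_1)$) is accurate.
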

\begin{proof}
The statement follows from a combination of \cref{thm:DQlocal} and \cref{thm:twistedDRDQ}.
\end{proof}

\begin{remark}
Such an isomorphism was established in \cite{Sabbah,SabbahSaito} on the level of cohomology sheaves and was recently upgraded in \cite{Schefers} to an isomorphism of differential perverse sheaves, by different methods than above.
\end{remark}

\subsection{Thom--Sebastiani theorem}

Throughout this section we consider complex manifolds $X_1, X_2$ and holomorphic functions $f_i\colon X_i \to \C$. Let $f_1\boxplus f_2\colon X_1\times X_2\rightarrow \C$ be the function $x_1,x_2\mapsto f_1(x_1) + f_2(x_2)$. We obviously have $f_1^{-1}(0)\times f_2^{-1}(0)\subseteq (f_1\boxplus f_2)^{-1}(0)$.

Consider the following external tensor product functors:
\begin{itemize}
    \item For a commutative ring $R$ and objects $\cM_i \in \bD(X_i; R)$, $i=1,2$, we have their external tensor product $\cM_1 \boxtimes_R \cM_2 \in \bD(X_1\times X_2; R)$.
    \item For $(\cF_i, T_i)\in \Pervaut(X_i)$ we define
    \[(\cF_1,T_1) \boxtimes (\cF_2, T_2) = (\cF_1 \boxtimes_\C \cF_2, T_1 \otimes T_2).\]
    \item For $(\cF_i, D_i)\in\Pervdiff(X_i)$ we define $(\cF_1,D_1) \boxtimes (\cF_2,D_2) = (\cF_1 \boxtimes_{\Cpp} \cF_2, D)$ with the differential structure $D(m_1 \otimes m_2) = Dm_1 \otimes m_2 + m_1 \otimes Dm_2$. 
\end{itemize}

By construction we have a commutative diagram
\[
\xymatrix{
\Pervaut(X_1)\times \Pervaut(X_2)\ar^-{\boxtimes}[r] \ar^{\RH^{-1}\times \RH^{-1}}[d] & \Pervaut(X_1\times X_2) \ar^{\RH^{-1}}[d] \\
\Pervdiff(X_1)\times \Pervdiff(X_2)\ar^-{\boxtimes}[r] & \Pervdiff(X_1\times X_2).
}
\]

Let us introduce analogs of these tensor products for microdifferential modules. Consider the following external tensor products:
\begin{itemize}
    \item Given $\cM_i \in \Mod(\cD_{X_i})$, $i=1,2$, we define 
    \[
    \cM_1 \boxD \cM_2 = \cD_{X_1 \times X_2} \bigotimes_{\cD_{X_1} \boxtimes_\C \cD_{X_2}} \cM_1 \boxtimes_\C \cM_2.
    \]
    \item Given $\cM_i \in \Mod(\hE_{X_i})$, $i=1,2$, we define 
    \[
    \cM_1 \boxE \cM_2 = \hE_{X_1 \times X_2} \bigotimes_{\hE_{X_1} \boxtimes_\C \hE_{X_2}} \cM_1 \boxtimes_\C \cM_2.
    \]
\end{itemize}

We denote by $\cD_\ast$ the transfer bimodule associated to the addition map
\[a\colon X_1 \times \C_{t_1} \times X_2 \times \C_{t_2} \longrightarrow X_1 \times X_2 \times \C_t\]
given by $(x_1,t_2,x_2,t_2) \mapsto (x_1,x_2, t_1 + t_2)$. Explicitly,
\[
\cD_\ast := \frac{\cD_{X_1 \times \C_{t_1} \times X_2 \times \C_{t_2}}}{(\partial_{t_1} - \partial_{t_2}) \cD_{X_1 \times \C_{t_1} \times X_2 \times \C_{t_2}}}
\]
This has the structure of a $(a^{-1}\cD_{X_1 \times X_2 \times \C_t}, \cD_{X_1 \times \C_{t_1} \times X_2 \times \C_{t_2}})$-bimodule, where $t$ acts on the left via $t_1 + t_2$. We have the associated functor
    \[
   \boxast\colon \Mod(\cD_{X_1\times \C_{t_1}}) \times \Mod(\cD_{X_2 \times \C_{t_2}}) \longrightarrow \Mod(\cD_{X_1 \times X_2 \times \C_t}),
    \]
    defined by
    \[
    \cM_1 \boxast \cM_2 := a_\ast \left(\cD_{\ast} \bigotimes_{\cD_{X_1 \times \C_{t_1} \times X_2 \times \C_{t_2}}} \cM_1 \boxD \cM_2 \right).
    \]

In this section, for any complex manifold $X$, we will consider $\hE_{X\times \C}$ as a sheaf on $\rJ^1 X \subseteq \P^\ast(X \times \C)$. We denote by $\alpha\colon \rJ^1 X_1 \times \rJ^1 X_2 \to \rJ^1(X_1 \times X_2)$ the addition map. Associated to this map consider the corresponding transfer $(\alpha^{-1}\hE_{X_1 \times X_2 \times \C_t}, \hE_{X_1 \times \C_{t_1} \times X_2 \times \C_{t_2}})$-bimodule (see \cite[Section 1.3]{SatoKawaiKashiwara} for the general notion of transfer bimodules)
\[
\hE_\ast := \frac{\hE_{X_1 \times \C_{t_1} \times X_2 \times \C_{t_2}}}{(\partial_{t_1} - \partial_{t_2}) \hE_{X_1 \times \C_{t_1} \times X_2 \times \C_{t_2}}}.
\]
Similarly, we have the associated functor
\[
    \boxast\colon \Mod(\hE_{X_1\times \C_{t_1}}) \times \Mod(\hE_{X_2 \times \C_{t_2}}) \longrightarrow \Mod(\hE_{X_1 \times X_2 \times \C_t}),
    \]
    defined by
    \[
    \cM_1 \boxast \cM_2 := \alpha_\ast \left(\hE_{\ast} \bigotimes_{\hE_{X_1 \times \C_{t_1} \times X_2 \times \C_{t_2}}} \cM_1 \boxE \cM_2 \right)
    \]

Again, we have a commutative diagram
        \[
\xymatrix{
\Mod(\cD_{X_1\times \C}) \times \Mod(\cD_{X_2\times \C}) \ar[d] \ar^-{\boxast}[r] & \Mod(\cD_{X_1\times X_2 \times \C})\ar[d] \\
\Mod(\hE_{X_1\times \C}) \times \Mod(\hE_{X_2\times \C}) \ar^-{\boxast}[r] & \Mod(\hE_{X_1\times X_2 \times \C}),
}
        \]
        where the vertical functors $\Mod(\cD_X) \to \Mod(\hE_X)$ are given by $\cM \mapsto \hE_X \otimes_{\pi^{-1}\cD_X} \pi^{-1}\cM$.

The functors $\boxast$ for $D$- and $E$-modules preserve regular holonomic modules. We will now use these functors to construct a Thom--Sebastiani isomorphism for the sheaf of vanishing cycles. We begin with a version of the Thom--Sebastiani isomorphism for $E$-modules.

\begin{prop}
For any $\lambda,\mu\in\C$ the morphism of $\hE_{X_1\times X_2\times\C}$-modules
\[
\TS^\cE_{f_1,f_2}\colon \cC^{\lambda+\mu-1/2}_{f_1\boxplus f_2}\longrightarrow \cC^\lambda_{f_1} \boxast \cC^\mu_{f_2}
\]
given by
\[\partial_t^{\lambda+\mu-1}\delta(t-f_1-f_2)\mapsto \partial_{t_1}^{\lambda-1/2}\delta(t_1-f_1)\otimes \partial_{t_2}^{\mu-1/2}\delta(t_2-f_2)\]
on the generators is an isomorphism.
\label{prop:EThomSebastiani}
\end{prop}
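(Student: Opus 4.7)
The plan is to construct $\TS^\cE_{f_1,f_2}$ explicitly on the simple generator of the source, verify that the assignment is well-defined by checking the defining relations, and then conclude it is an isomorphism by a simplicity argument.

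\emph{Well-definedness.} I would present $\cC^{\lambda+\mu-1/2}_{f_1\boxplus f_2}$ as the quotient of $\hE_{X_1\times X_2\times \C_t}$ by the left ideal $J$ generated by $t-(f_1+f_2)+(\lambda+\mu-1)\partial_t^{-1}$ together with $(\partial f_1/\partial x_i)\partial_t+\partial_{x_i}$ and $(\partial f_2/\partial y_j)\partial_t+\partial_{y_j}$, and then check that each of these operators annihilates the image $u_1\otimes u_2$ in the target. In the bimodule $\hE_\ast$, the left $\hE_{X_1\times X_2\times \C_t}$-action identifies $t$ with $t_1+t_2$ and $\partial_t$ with the common value $\partial_{t_1}=\partial_{t_2}$. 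Thus the first relation splits as the sum of the two defining relations $(t_i-f_i+(\lambda_i-1/2)\partial_{t_i}^{-1})u_i=0$, and the derivative relations reduce to the corresponding relations in a single factor, using $\partial(f_1\boxplus f_2)/\partial x_i=\partial f_1/\partial x_i$.

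\emph{Simplicity and order.} By \cref{ex:productjets}, the contactomorphism $\rJ^1 X_1\times^\ast \rJ^1 X_2\cong \rJ^1(X_1\times X_2)$ identifies $j^1 f_1\times^\ast j^1 f_2$ with $j^1(f_1\boxplus f_2)$; both modules in the statement are supported on this Legendrian. The source is simple of order $\lambda+\mu-1/2$ by construction. For the target, the external product $\cC^\lambda_{f_1}\boxE\cC^\mu_{f_2}$ is simple along $j^1 f_1\times j^1 f_2$ as an external product of rank-one microlocalizations, and the $\boxast$ construction is a Hamiltonian reduction along the antidiagonal $\C$-action generated by $\partial_{t_1}-\partial_{t_2}$; by an argument parallel to \cref{prop:QSTbimodule}, this operation preserves simplicity. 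A direct computation of the Euler operator of $j^1(f_1\boxplus f_2)$ acting on $u_1\otimes u_2$ (using that $\theta' = -(t-f_1-f_2)\partial_t-1/2$ descends to the element $-(t_1+t_2-f_1-f_2)\partial_{t_1}-1/2$ in $\hE_\ast$, and that the tensor relations must be applied carefully---the naive identity $\theta'=\theta'_1+\theta'_2+1/2$ holds in $\hE_\ast$ but not as an action on $\cC^\lambda_{f_1}\boxE\cC^\mu_{f_2}$) gives precisely the order $\lambda+\mu-1/2$.

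\emph{Conclusion.} The morphism $\TS^\cE$ is then a well-defined, nonzero morphism between two simple $\hE$-modules along the same Legendrian of the same order (both line bundles after microlocalization), and hence by \cref{thm:simpleclassification} it is an isomorphism.

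The hardest step will be the simplicity verification in the second paragraph: the $\boxast$ construction mixes the transfer bimodule $\hE_\ast$ with pushforward along the addition map $\alpha$, and one must carefully track how this procedure interacts with the microlocal $V$-filtration to confirm both that the rank-one structure survives the reduction and that the order shift matches (the discrepancy of $-1/2$ between the naive total order $\lambda+\mu$ of the external product and the actual order $\lambda+\mu-1/2$ on the reduction being an artifact of the Hamiltonian reduction). Once these are established, the remainder of the argument is formal.
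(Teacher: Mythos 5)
Your approach coincides with the paper's at its core: express the $\boxast$ product as a pushforward of a tensor product, argue both sides are simple along the same smooth Legendrian $j^1(f_1\boxplus f_2)$, and conclude that the generator-to-generator map, being nonzero, is an isomorphism. You add two verifications that the paper leaves implicit and which genuinely strengthen the write-up: the explicit well-definedness check against the defining ideal of $\cC^{\lambda+\mu-1/2}_{f_1\boxplus f_2}$, and the order computation. The one imprecision is in your order discussion: the correct bookkeeping is that in the bimodule $\hE_\ast$ the left action of $\theta'$ on $1$ equals the right action of $\theta'_1+\theta'_2-1/2$ (the extra $-1$ relative to the commutative substitution $\partial_{t_1}=\partial_{t_2}$ comes from the commutator $[t_2-f_2,\,\partial_{t_1}-\partial_{t_2}]=1$ when moving $\partial_{t_1}-\partial_{t_2}$ past $t_2-f_2$), and this is exactly what produces the expected order $\lambda+\mu-1/2$; your phrasing ``$\theta'=\theta'_1+\theta'_2+1/2$ holds in $\hE_\ast$'' does not hold as an equality of actions. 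On simplicity of the target, the paper is more direct than your appeal to a Hamiltonian-reduction analogue of \cref{prop:QSTbimodule}: it just observes that $\alpha$ restricted to the support of $\hE_\ast\otimes(\cC^\lambda_{f_1}\boxE\cC^\mu_{f_2})$ is a closed immersion, so $\cC^\lambda_{f_1}\boxast\cC^\mu_{f_2}$ is cyclic and supported on the smooth Legendrian, hence simple.
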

\begin{proof}
We have
\[\cC^\lambda_{f_1} \boxast \cC^\mu_{f_2} = \alpha_*\left(\hE_\ast\otimes_{\hE_{X_1\times\C\times X_2\times\C}} \cC^\lambda_{f_1} \boxE \cC^\mu_{f_2}\right).\]
The restriction of $\alpha\colon \rJ^1 X_1\times \rJ^1 X_2\rightarrow \rJ^1(X_1\times X_2)$ to the support of the sheaf $\hE_\ast\otimes_{\hE_{X_1\times\C\times X_2\times\C}} \cC^\lambda_{f_1} \boxE \cC^\mu_{f_2}$ is a closed immersion, so $\cC^\lambda_{f_1} \boxast \cC^\mu_{f_2}$ is a cyclic $\hE_{X_1\times X_2\times \C}$-module with generator \[\partial_{t_1}^{\lambda-1/2}\delta(t_1-f_1)\otimes \partial_{t_2}^{\mu-1/2}\delta(t_2-f_2).\] It is supported on a smooth Legendrian subvariety $j^1 f_1\timesast j^1 f_2\cong j^1(f_1\boxplus f_2)\subset \rJ^1(X_1\times X_2)$, so it is simple. But $\cC^{\lambda+\mu-1/2}_{f_1\boxplus f_2}$ is also a simple $\hE_{X_1\times X_2\times\C}$-module along the same Legendrian $j^1(f_1\boxplus f_2)$. Since the map between these two simple modules is nonzero (as it sends a generator to a generator), it is an isomorphism.
\end{proof}

Consider the Legendrians $\Lambda_i = j^1 0\subset \rJ^1 X_i$ and $\Lambda = j^1 0\subset \rJ^1 X$. The canonical $V$-filtrations on $\cC_{f_i}$ along $\Lambda_i$ induce by convolution a filtration (which we also denote by $V$) on $\cC_{f_1}\boxast \cC_{f_2}$. Consider also the canonical $V$-filtration on $\cC_{f_1\boxplus f_2}$ along $\Lambda$.

\begin{prop}
The morphism
\[\TS^\cE_{f_1,f_2}\colon (\cC_{f_1\boxplus f_2}, V)\longrightarrow (\cC_{f_1} \boxast \cC_{f_2}, V)
\]
is strictly compatible with the filtrations. In particular, after passing to the associated graded we obtain an isomorphism
\[\TS^\cD_{f_1, f_2}\colon \phi^\cD_{f_1\boxplus f_2}|_{f_1^{-1}(0)\times f_2^{-1}(0)}\longrightarrow \phi^\cD_{f_1}|_{f_1^{-1}(0)}\boxD \phi^\cD_{f_2}|_{f_2^{-1}(0)}
\]
in $\Mod_{\rh,\aut}(\cD_{f_1^{-1}(0)\times f_2^{-1}(0)})$.
\label{prop:DThomSebastiani}
\end{prop}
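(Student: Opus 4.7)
The plan is to identify both sides of $\TS^\cE_{f_1,f_2}$ as equipped with canonical $V$-filtrations along the common Legendrian $\Lambda = j^1 0 \subset \rJ^1(X_1\times X_2)$ and then to match them via the Euler operator. The statement about $\TS^\cD$ will then follow by passing to $\gr^0_\Lambda$ and invoking \cref{prop:microlocalmicrofunctions}.

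Following \cref{ex:Cforder} with $f=0$, I would take the lift $\theta'_X = -t\partial_t - 1/2 \in \hE_{\Lambda/X_1\times X_2\times\C}$ as the Euler operator for the canonical $V$-filtration along $\Lambda$. Inside the transfer bimodule $\hE_\ast$ the relations $t = t_1 + t_2$ and $\partial_{t_1} = \partial_{t_2}$ hold, yielding
\[\theta'_X = -(t_1 + t_2)\partial_{t_1} - 1/2 = \theta'_{X_1} + \theta'_{X_2} + 1/2,\]
with $\theta'_{X_i} = -t_i\partial_{t_i} - 1/2$ the corresponding Euler lift along $\Lambda_i = j^10 \subset \rJ^1 X_i$. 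The extra $1/2$ matches precisely the discrepancy between the source order parameter $\lambda + \mu - 1/2$ and the sum $\lambda + \mu$ of the orders of the two factors on the target.

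I would then verify that the convolution $V$-filtration on $\cC^\lambda_{f_1}\boxast \cC^\mu_{f_2}$ coincides with the canonical $V$-filtration along $\Lambda$. The canonical $V$-filtration is characterized among exhaustive decreasing filtrations by coherent $\hE_{\Lambda/X_1\times X_2\times\C}(0)$-submodules by the requirement that the generalized eigenvalues of $\theta'_X$ on each graded piece lie in a fixed fundamental domain modulo $\Z$. By the Euler identity above, together with the analogous eigenvalue characterization on each factor, the convolution filtration satisfies this property; uniqueness then forces it to be the canonical $V$-filtration. Since $\TS^\cE_{f_1,f_2}$ is an isomorphism of $\hE_{X_1\times X_2\times\C}$-modules and the canonical $V$-filtration is functorial in such isomorphisms, $\TS^\cE_{f_1,f_2}$ preserves $V$ and is therefore strict.

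Passing to $\gr^0_\Lambda$ and applying \cref{prop:microlocalmicrofunctions} then identifies the associated graded of the source with $\phi^\cD_{f_1\boxplus f_2}|_{f_1^{-1}(0)\times f_2^{-1}(0)}$ carrying the appropriately shifted $\theta$-action, and the graded pieces of the two factors with $\phi^\cD_{f_i}|_{f_i^{-1}(0)}$ with their own shifts. The Euler identity of the previous step guarantees that the shifts line up and that the $\theta$-actions—hence the monodromies $T = \exp(-2\pi i\theta)$—intertwine under the resulting map, producing the required iso $\TS^\cD_{f_1,f_2}$ in $\Mod_{\rh,\aut}(\cD_{f_1^{-1}(0)\times f_2^{-1}(0)})$. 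The main obstacle I anticipate is the verification that the convolution filtration is genuinely coherent as an $\hE_{\Lambda/X_1\times X_2\times\C}(0)$-filtration with the correct eigenvalue behavior on graded pieces; this requires a careful analysis of how the $\hE_{\Lambda_i/X_i}(0)$-structures on the factors combine through the transfer bimodule $\hE_\ast$, for which the Euler identity is the central computational input.
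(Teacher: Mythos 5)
Your outline has two issues: a computational error in the Euler identity, and, more significantly, a gap precisely at the point you flag as the ``main obstacle,'' which is in fact the entire content of the proposition.

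First, the sign in the Euler identity is wrong. In the quotient $\hE_\ast$ one cannot naively substitute $\partial_{t_1}\mapsto\partial_{t_2}$ inside $t_2\partial_{t_1}$: since $(\partial_{t_1}-\partial_{t_2})\hE$ is a right ideal, $t_2(\partial_{t_1}-\partial_{t_2}) = (\partial_{t_1}-\partial_{t_2})t_2 + 1$, so $t_2\partial_{t_1}\equiv t_2\partial_{t_2}+1$ in $\hE_\ast$. Carrying the commutator, $-(t_1+t_2)\partial_{t_1}-1/2 \equiv -t_1\partial_{t_1}-t_2\partial_{t_2}-3/2$, i.e.
\[\theta'_X = \theta'_{X_1}+\theta'_{X_2}-\tfrac12,\]
not $+\tfrac12$. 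You can sanity-check this on $\delta(t_1)\otimes\delta(t_2)$: each factor has $\theta'_i$-eigenvalue $1/2$, and the image $\delta(t)\in\cC_0$ must have $\theta'$-eigenvalue $1/2$, so the shift must be $-1/2$.

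Second, and more seriously, the step ``by the Euler identity \dots the convolution filtration satisfies this property; uniqueness then forces it to be the canonical $V$-filtration'' does not go through, and this is not a matter of bookkeeping. The Euler identity readily gives one containment $V^k_{\mathrm{conv}}\subseteq V^k_{\mathrm{can}}$: a class $\overline{m_1\otimes m_2}$ with $m_i\in V^{k_i}$ has $\theta'_X$-eigenvalue with real part $\le \alpha-k$. But to invoke uniqueness you need the converse: that the $\theta'_X$-eigenvalues on each graded piece $V^k_{\mathrm{conv}}/V^{k+1}_{\mathrm{conv}}$ lie in a single fundamental domain $(\alpha-k-1,\alpha-k]$. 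A priori, from $m_i\in V^{k_i}\setminus V^{k_i+1}$ with $k_1+k_2=k$, the eigenvalue of $\theta'_{X_1}+\theta'_{X_2}-1/2$ lands anywhere in the width-two interval $(\alpha-k-2,\alpha-k]$, and nothing in the Euler identity tells you that the low end $(\alpha-k-2,\alpha-k-1]$ is killed after passing to $\cM/(\partial_{t_1}-\partial_{t_2})\cM$. That this range is indeed suppressed is exactly the assertion of strictness, and it is not a consequence of an eigenvalue-counting argument; it is a theorem. The paper's proof sidesteps the microlocal picture entirely: it transports the question to the $D$-module side via the algebraic microlocalizations $\widetilde{\cB}_{f_i}$, using the isomorphisms $\gr^0_V\cB_{f_i}\cong\gr^0_V\widetilde{\cB}_{f_i}\cong\gr^0_V\cC_{f_i}$ and then cites the strictness of the map $\widetilde{\cB}_{f_1}\boxD\widetilde{\cB}_{f_2}\to\widetilde{\cB}_{f_1\boxplus f_2}$ proved in Maxim--Saito--Sch\"urmann (Theorem 1.2) and Sabbah--Schnell (Lemma 11.8.2). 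Those results are the load-bearing input here; without some equivalent of them, your uniqueness argument begs the question it is supposed to settle.
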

\begin{proof}
Let $\Sigma_i = \Sing f_i^{-1}(0)$. Possibly passing to open subsets of $X_1,X_2$, we may assume that $\Sing (f_1\boxplus f_2)^{-1}(0) = \Sigma_1\times \Sigma_2$.

Consider the algebraic microlocalization
\[\widetilde{\cB}_{f_i}=\C[\partial_{t_i}, \partial_{t_i}^{-1}]\otimes_{\C[\partial_{t_i}]}\cB_{f_i}.\]
The $V$-filtration on $\cB_{f_i}$ induces one on $\widetilde{\cB}_{f_i}$. Moreover, the natural maps $\pi^{-1}\cB_{f_i}\rightarrow \pi^{-1}\widetilde{\cB}_{f_i}\rightarrow \cC_{f_i}$ induce isomorphisms
\[\phi^\cD_{f_i}=\gr^0_V\cB_{f_i}\longrightarrow \gr^0_V \widetilde{\cB}_{f_i}\longrightarrow \gr^0_V \cC_{f_i}.\]
Indeed, for the first map this is shown in \cite[Section 1.1]{MaximSaitoSchurmann} and for the composite this is \eqref{eq:microlocalVcomparison}.

The $V$-filtrations on $\widetilde{\cB}_{f_i}$ induce by convolution a filtration on $\widetilde{\cB}_{f_1}\boxD \widetilde{\cB}_{f_2}$. It is shown in \cite[Theorem 1.2]{MaximSaitoSchurmann} and \cite[Lemma 11.8.2]{SabbahSchnell} that the natural map
\[
\widetilde{\cB}_{f_1} \boxD \widetilde{\cB}_{f_2} \longrightarrow \widetilde{\cB}_{f_1\boxplus f_2}
\]
defined by $\delta(t-f_1)\otimes \delta(t-f_2)\mapsto \delta(t-(f_1\boxplus f_2))$ is strictly compatible with the $V$-filtrations. Moreover, it induces an isomorphism after taking the cokernel of $\partial_{t_1} - \partial_{t_2}$ on the left-hand side. Applying $\gr^0_V$ we obtain the isomorphism $\TS^\cD_{f_1, f_2}$.
\end{proof}

Under the Riemann--Hilbert correspondence the Thom--Sebastiani isomorphism for $D$-modules goes to the Thom--Sebastiani isomorphism
\begin{equation}
\TS_{f_1, f_2}\colon \phi_{f_1}|_{f_1^{-1}(0)}\boxtimes \phi_{f_2}|_{f_2^{-1}(0)}\cong \phi_{f_1 \boxplus f_2}|_{f_1^{-1}(0)\times f_2^{-1}(0)}
\label{eq:ThomSebastiani}
\end{equation}
in $\Pervaut(f_1^{-1}(0)\times f_2^{-1}(0))$.

There is a natural morphism
\begin{equation}
(\cC_{0}^\rR \otimes^\bL_{\hW_{X_1}} \cC_{f_1}) \boxtimes (\cC_{0}^\rR \otimes^\bL_{\hW_{X_2}} \cC_{f_2}) \longrightarrow (\cC_0 \boxast \cC_0)^\rR \otimes^\bL_{\hW_{X_1 \times X_2}} (\cC_{f_1} \boxast \cC_{f_2})
\label{eq:compositemorphism}
\end{equation}
of differential perverse sheaves. The compatibility between the Thom--Sebastiani isomorphisms $\TS^\cE_{f_1, f_2}$ and $\TS_{f_1, f_2}$ is then expressed by the following commutative diagram.

\begin{prop}
The diagram
\[
\xymatrix{
(\cC_{0}^\rR \otimes^\bL_{\hW_{X_1}} \cC_{f_1}) \boxtimes (\cC_{0}^\rR \otimes^\bL_{\hW_{X_2}} \cC_{f_2}) \ar[d] \ar^{M_{f_1}\otimes M_{f_2}}[r] & \RH^{-1}(\phi_{f_1}\boxtimes \phi_{f_2}) \ar^{\RH^{-1}(\TS_{f_1, f_2})}[dd] \\
(\cC_0 \boxast \cC_0)^\rR \otimes^\bL_{\hW_{X_1 \times X_2}} (\cC_{f_1} \boxast \cC_{f_2}) \ar^-{\TS^\cE_{0,0}\otimes \TS^\cE_{f_1, f_2}}[d] & \\
\cC_0^\rR\otimes^\bL_{\hW_{X_1\times X_2}} \cC_{f_1\boxplus f_2} \ar^{M_{f_1\boxplus f_2}}[r] & \RH^{-1}(\phi_{f_1\boxplus f_2})
}
\]
commutes, where the vertical isomorphisms are given by the isomorphisms from \cref{thm:DQlocal}. In particular, \eqref{eq:compositemorphism} is an isomorphism.
\label{prop:TSagree}
\end{prop}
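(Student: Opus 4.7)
The plan is to reduce commutativity of the outer diagram to the analogous compatibility for $\cD$-modules of vanishing cycles established in \cref{prop:DThomSebastiani}. Following the proof of \cref{thm:DQlocal}, each vertical isomorphism $M_f$ factors as the microlocal identification of \cref{thm:Wtensormicrolocal}, followed by the Kashiwara--Malgrange identification of \cref{prop:microlocalmicrofunctions} and \cref{prop:KashiwaraMalgrange}. Applying this factorization to all three vertical arrows rewrites the entire diagram as $\widehat{\RH}^{-1}$ of a diagram in $\Pervaut$ built from microlocalizations $\mu^{dR}_{\Lambda_i}(-)$ along the zero-section Legendrians $\Lambda_i\subset\rJ^1 X_i$ and $\Lambda\subset\rJ^1(X_1\times X_2)$; commutativity of the original diagram is then equivalent to commutativity of this microlocalized diagram.

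Next I would translate the top row to the microlocal side. The morphism \eqref{eq:compositemorphism} is built from the $\boxast$ convolution implemented by the transfer bimodule $\hE_\ast$ along the addition map $\alpha\colon\rJ^1X_1\times\rJ^1X_2\to \rJ^1(X_1\times X_2)$; the crucial geometric input is \cref{prop:productcontactification}, which identifies this addition with the contactification of $\T^* X_1\times \T^* X_2$. Passing to associated gradeds along the zero-section Legendrians, the microlocal $V$-filtration converts $\hE_\ast$ into the analogous transfer bimodule $\cD_\ast$ for the $\cD$-module $\boxast$. Consequently, after microlocalization, \eqref{eq:compositemorphism} becomes the natural external-product map, and by \cref{prop:DThomSebastiani} the composite $\TS^\cE_{0,0}\otimes \TS^\cE_{f_1,f_2}$ descends to the $\cD$-module Thom--Sebastiani $\TS^\cD_{0,0}\otimes\TS^\cD_{f_1,f_2}$, which corresponds under \cref{prop:KashiwaraMalgrange} to the classical $\TS_{f_1,f_2}$ of \eqref{eq:ThomSebastiani}. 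A routine diagram chase then identifies the top composite with $\widehat{\RH}^{-1}(\TS_{f_1,f_2})$, matching the bottom row. The twist terms $\C_{\mu+\lambda}$ are trivial throughout because $\cC_0$ and $\cC_f$ both carry order $\lambda=\mu=1/2$, and $\exp(2\pi i(1/2+1/2))=1$.

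The main obstacle is the technical step of the second paragraph: verifying that the microlocal $V$-filtration intertwines the $\hE_\ast$-convolution with the $\cD_\ast$-convolution. This amounts to computing the associated graded of $\hE_\ast$ with respect to the microlocal $V$-filtration along the product zero-section Legendrian and recognizing it as $\cD_\ast$ after the Hamiltonian reduction implementing the $\times^\ast$ construction of \cref{prop:productcontactification}. Once this compatibility is in hand, both the external product map and the Thom--Sebastiani maps translate faithfully to their $\cD$-module counterparts, and commutativity follows from \cref{prop:DThomSebastiani}. The ``In particular'' conclusion that \eqref{eq:compositemorphism} is an isomorphism then follows from commutativity of the outer square, since the three other arrows---$M_{f_1}\otimes M_{f_2}$, $M_{f_1\boxplus f_2}$, $\RH^{-1}(\TS_{f_1,f_2})$, and the Thom--Sebastiani composite $\TS^\cE_{0,0}\otimes\TS^\cE_{f_1,f_2}$---are isomorphisms by \cref{thm:DQlocal}, \cref{prop:EThomSebastiani}, and \cref{prop:DThomSebastiani}.
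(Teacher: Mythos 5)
Your proposal takes essentially the same route as the paper: unwind each $M_f$ via the microlocalization chain (\cref{thm:Wtensormicrolocal}, \cref{prop:microlocalmicrofunctions}, \cref{prop:KashiwaraMalgrange}) to rewrite the diagram as $\widehat{\RH}^{-1}$ of a diagram of perverse sheaves with automorphism, and then invoke \cref{prop:DThomSebastiani} to identify the resulting $\cD$-module Thom--Sebastiani with the classical one. The one place you go into more detail than the paper is the compatibility between the $\hE_\ast$- and $\cD_\ast$-convolutions under the microlocal $V$-filtration, which you correctly flag as the technical crux; the paper's own proof treats this as an unstated consequence of ``unwinding via microlocalization,'' whereas you name it explicitly, which is a reasonable elaboration. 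Your observation that the twist $\C_{\mu+\lambda}$ is trivial because $\lambda=\mu=1/2$, and your derivation of the ``in particular'' from the isomorphism property of the other arrows, both match the intended logic (though you write ``three other arrows'' and then list four).
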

\begin{proof}
Unwinding the construction of $M_f$ in \cref{thm:Vanishing cycles via tensor product} via microlocalization, the top line of the diagram under consideration may be computed as the image under $\RH^{-1}$ of a sequence of morphisms of perverse sheaves with automorphism: 
\begin{align*}
(K_{X_1} \otimes_{\cD_{X_1}}^\bL \phi_{f_1}^\cD) \boxtimes (K_{X_2} \otimes_{\cD_{X_2}}^\bL \phi_{f_2}^\cD) 
&\to 
(K_{X_1} \boxD K_{X_2}) \otimes_{\cD_{X_1 \times X_2}}^\bL (\phi_{f_1} \boxD \phi_{f_2})\\
&\to 
K_{X_1\times X_2} \otimes_{\cD_{X_1\times X_2}}^\bL \phi_{f_1 \boxplus f_2}.
\end{align*}
\cref{prop:DThomSebastiani} then implies that the morphism on the right is induced by $\TS_{f_1,f_2}^\cD$ (together with the natural identification of right $\cD$-modules $K_{X_1} \boxD K_{X_2} \cong K_{X_1 \times X_2}$, which is manifestly induced by $\TS_{0,0}^\cE$ under microlocalization).
\end{proof}

\section{DT sheaf on oriented d-critical loci}\label{sec:DT}

\subsection{Exact two-forms on complex-analytic spaces}

Recall that given a chain complex $V$ in degrees $\geq -1$ one can canonically associate to it a groupoid $|V|$ by a basic version of the Dold--Kan correspondence: its objects are closed elements $x\in V^0$ of degree $0$ and morphisms from $x$ to $y$ are given by elements $h\in V^{-1}$ of degree $-1$ such that $dh = x - y$.

Let $X$ be a complex analytic space. We can define its cotangent complex $\bL_X$ together with a de Rham differential $d\colon \cO_X\rightarrow \bL_X$. The hypercohomology of this complex of sheaves $\R\Gamma(X, \cO_X\xrightarrow{d}\bL_X)$ is a complex concentrated in degrees $\geq 0$.

\begin{defn}
The groupoid of \defterm{exact two-forms of degree $0$} is
\[\Aex(X, 0) = |\R\Gamma(X, \cO_X\xrightarrow{d}\bL_X)[1]|.\]
The set of \defterm{exact two-forms of degree $-1$} is
\[\Aex(X, -1) = |\R\Gamma(X, \cO_X\xrightarrow{d}\bL_X)|.\]
\end{defn}

\begin{example}
If $X$ is a complex manifold with an open cover $\{U_i\}$ by open balls, then $\Aex(X, 0)$ has objects given by pairs $(f_{ij}, \alpha_i)$, where $f_{ij}$ is a function on $U_{ij}$ and $\alpha_i$ is a one-form on $U_i$ which together satisfy
\[\alpha_i - \alpha_j = df_{ij},\qquad f_{ij} + f_{jk} + f_{ki} = 0.\]
Morphisms from $(f_{ij}, \alpha_i)$ to $(g_{ij}, \beta_i)$ are given by functions $h_i$ such that
\[f_{ij} - g_{ij} = h_i - h_j,\qquad \alpha_i - \beta_i = dh_i.\]
Similarly, $\Aex(X, -1)$ is the set of locally constant functions on $X$.
\end{example}

These constructions have the following features:
\begin{itemize}
    \item If $Y\subset X$ is an analytic subset, there is a natural restriction map
\[\Aex(X, 0)\longrightarrow \Aex(Y, 0).\]
    \item One may identify
    \[\End_{\Aex(X, 0)}(0)\cong \Aex(X, -1).\]
    \item There is a forgetful map
    \[\Aex(X, -1)\longrightarrow \rH^0(X, \cO_X),\]
    so an exact two-form of degree $-1$ has an underlying function. Moreover, the restriction of this function to the reduced part $X^\red$ is locally constant.
\end{itemize}

\subsection{d-critical loci}
\label{sect:dCrit}

We will use the notions related to d-critical loci from \cite{BBDJS,JoycedCrit} which we now briefly recall. Let us first explain how to construct exact two-forms of degree $-1$ on critical loci.

\begin{prop}
Let $U$ be a complex manifold and $f\colon U\rightarrow \C$ is a holomorphic function. There is a natural exact two-form of degree $-1$ on the critical locus $\Crit(f)$. Its underlying function coincides with the restriction of $f$ to $\Crit(f)$.
\label{prop:Critoneform}
\end{prop}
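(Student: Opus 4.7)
The plan is to realise $f|_{\Crit(f)}$ as a canonical zero-cocycle of the truncated de Rham complex $\cO_{\Crit(f)} \to \bL_{\Crit(f)}$, with a canonical nullification of its de Rham differential; unwinding the definition of $|\cdot|$ this is exactly an exact two-form of degree $-1$ with underlying function $f|_{\Crit(f)}$.

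The key geometric input, which I would establish first, is that $df \in \Omega^1_U$ belongs to the submodule $I\cdot \Omega^1_U$, where $I\subset \cO_U$ is the ideal sheaf of $X := \Crit(f)$. Indeed, in local coordinates $x_1,\ldots,x_n$ on $U$ one has $df = \sum_i (\partial_i f)\,dx_i$, and by the very definition of the critical locus the partial derivatives $\partial_i f$ generate $I$. This assertion is manifestly coordinate-free, since it refers only to $I$ and the section $df$. Consequently, the image of $df$ in $i^*\Omega^1_U = \Omega^1_U/(I\cdot \Omega^1_U)$ is the canonical zero section.

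Next I would transport this vanishing along the natural morphism of two-term complexes
\[
i^{-1}\bigl(\cO_U \xrightarrow{d} \bL_U\bigr) \longrightarrow \bigl(\cO_X \xrightarrow{d} \bL_X\bigr),
\]
where $i\colon X\hookrightarrow U$ is the closed embedding, the source uses $\bL_U=\Omega^1_U$ (since $U$ is smooth), and the right-hand vertical is the canonical functoriality map $i^*\bL_U \to \bL_X$ of cotangent complexes associated to $i$. The global section $(f, df)$ of the source restricts to $(f|_X, 0)$ on $X$ by the previous paragraph, hence is a zero-cocycle; its image in the target is a canonical zero-cocycle of $\cO_X\to \bL_X$ whose underlying function is $f|_X$. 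This yields the desired section of $\Aex(X,-1)$. (This is of course the canonical $d$-critical structure on $\Crit(f)$ of \cite{JoycedCrit}, repackaged in the language of exact two-forms.)

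The only potential obstacle is bookkeeping: one must check that the zero produced in $\Omega^1_U|_X$ does push forward under $i^*\bL_U \to \bL_X$ to an honest nullification in the derived sense, and that the local construction is independent of coordinates so as to yield a well-defined global section of $\Aex(X,-1)$. Since everything is built from the coordinate-free data $f$ and $I$ together with natural maps of (co)tangent complexes, both are automatic, and the second assertion of the proposition---that the underlying function of the resulting two-form is $f|_{\Crit(f)}$---falls out of the construction.
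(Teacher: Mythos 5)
Your approach is correct but genuinely different from the paper's. You verify directly that $df \in I\cdot\Omega^1_U$ (where $I$ is the ideal sheaf of $\Crit(f)$) and push the pair $(f, df)$ forward along the natural map of truncated de Rham complexes from $U$ to $X = \Crit(f)$; this is essentially Joyce's original observation that critical loci are d-critical, translated into the exact-two-forms language. The paper instead argues via the symplectic geometry of $\T^*U$: the Liouville one-form $\lambda$ defines an element of $\Aex(\T^*U, 0)$; restricting to the graph $\Gamma_{df}$ it has the nullhomotopy $f$, and restricting to the zero section $\Gamma_0$ it has the trivial nullhomotopy; comparing these two nullhomotopies on $\Crit(f) = \Gamma_{df} \cap \Gamma_0$ gives an endomorphism of $0$ in $\Aex(\Crit(f), 0)$, i.e. an element of $\Aex(\Crit(f), -1)$. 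The two arguments boil down to the same algebraic fact, but the paper's phrasing is chosen so that it globalizes verbatim to \cref{prop:dcriticalLagrangianintersection}: there one replaces $\T^*U$ by a contactification $Y \to S$ of a neighbourhood of $L\cup M$ (using \cref{prop:uniquecontactification}) and the two graphs by the Legendrian lifts of $L$ and $M$, and again compares the two resulting nullhomotopies on $L\cap M$. Your direct argument is shorter in this local case, but does not set up the machinery used in the global Lagrangian-intersection statement.

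One caution about the step you describe as bookkeeping. The concrete model of the degree-$0$ piece of the complex $\cO_X \to \bL_X$ is $\cO_U/I^2$ rather than $\cO_X = \cO_U/I$: the exact two-form of degree $-1$ is genuinely $f + I^2 \in \cO_U/I^2$, and the proposition's second assertion only refers to its image $f + I$ under the forgetful map. This $I^2$-versus-$I$ distinction is the whole reason the d-critical structure is nontrivial (cf.\ \cite[Example 2.13]{JoycedCrit}, cited in the paper after \cref{prop:polarizationtwoLagrangians}). Your restriction of $(f, df)$ along $i$ does carry this information, but since you summarize the target complex as ``$\cO_X \to \bL_X$'' it is worth saying explicitly that the closed degree-$0$ cocycle you produce lives in $\ker\bigl(\cO_U/I^2 \to \Omega^1_U/I\Omega^1_U\bigr)$ and is $f + I^2$, not merely $f|_X$.
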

\begin{proof}
For a one-form $\alpha$ on $U$ we denote by $\Gamma_\alpha\subset \T^* U$ its graph. The critical locus fits into a commutative diagram
\[
\xymatrix{
\Crit(f) \ar[r] \ar[d] & \Gamma_0 \ar[d] \\
\Gamma_{df} \ar[r] & \T^* U
}
\]
The Liouville one-form $\lambda$ on $\T^* U$ gives an element $\lambda\in \cA^{ex, }(\T^* U, 0)$. Its restriction to $\Gamma_{df}$ has a nullhomotopy provided by $f$ and similarly for $\Gamma_0$. Comparing the two nullhomotopies on $\Crit(f)$ we obtain an endomorphism $s\in\End_{\Aex(\Crit(f), 0)}(0)\cong \Aex(\Crit(f), -1)$.
\end{proof}

The above structure on a critical locus is globalized using the notion of a d-critical locus.

\begin{defn}
Let $X$ be a complex analytic space. A \defterm{d-critical structure on $X$} is the data of an exact two-form $s\in \Aex(X, -1)$ of degree $-1$ with the following property:
\begin{itemize}
    \item For any point $x\in X$ there is a neighborhood $R\subset X$ together with a complex manifold $U$, a holomorphic function $f\colon U\rightarrow \C$ and a closed embedding $i\colon R\hookrightarrow U$ such that $i(R) = \Crit(f)$, $s|_R$ coincides with the canonical exact two-form of degree $-1$ on $\Crit(f)$ constructed in \cref{prop:Critoneform} and $f|_{R^{\red}} = 0$. We call such a quadruple $(R, U, f, i)$ a \defterm{critical chart} of $X$.
\end{itemize}
if $X$ has a d-critical structure, we call $X$ a \defterm{d-critical locus}.
\end{defn}

We will now recall important results on ``transition maps'' between different critical charts.

\begin{defn}
Given a d-critical locus $(X, s)$, an \defterm{embedding} of critical charts $(R, U, f, i)\hookrightarrow (S, V, g, j)$, where $R\subset S$, is a locally closed embedding $\Xi\colon U\hookrightarrow V$ compatible with the rest of the data.
\end{defn}

We have the following important theorems proved in \cite{JoycedCrit}:
\begin{itemize}
\item Given an embedding $\Xi\colon (R, U, f, i)\hookrightarrow (S, V, g, j)$ of critical charts there is a canonical quadratic form $q_{\Xi}$ on the normal bundle $N_{\Xi}|_R$ of $U\subset V$. The quadratic vector bundle $N_{\Xi}|_R$ defines a $\Z/2$-torsor $P_\Xi$ on $R$ of orientations of $q_{\Xi}$. See \cite[Definition 2.26]{JoycedCrit}.

\item An embedding of critical charts $(R, U, f, i)\hookrightarrow (S, V, g, j)$ is, locally, of the form $(R, U, f, i)\hookrightarrow (R, U\times \C^n, f\boxplus (z_1^2+\dots +z_n^2), j)$, where $U\hookrightarrow U\times \C^n$ is given by $u\mapsto (u, 0)$. See \cite[Proposition 2.22]{JoycedCrit}.

\item Given critical charts $(R, U, f, i)$ and $(S, V, g, j)$ and an intersection point $x\in R\cap S$ we can find near $x$ another critical chart $(T, W, h, k)$ and local embeddings $(R, U, f, i)\hookrightarrow (T, W, h, k)$ and $(S, V, g, j)\hookrightarrow (T, W, h, k)$. See \cite[Theorem 2.20]{JoycedCrit}.
\end{itemize}

Another important structure on a d-critical locus $(X, s)$ is the \defterm{virtual canonical bundle} $K^{\vir}_X$, which is a holomorphic line bundle on $X^{\red}$ with the following properties:
\begin{itemize}
\item Given a critical chart $(R, U, f, i)$ there is an isomorphism $K^{\vir}_X|_{R^{\red}}\cong i^* K_U^{\otimes 2}|_{R^{\red}}$.
\item Given an embedding $\Xi\colon (R, U, f, i)\hookrightarrow (S, V, g, j)$ of critical charts there is an isomorphism
\[i^* K_U|_{R^{\red}}\otimes \det(N_{\Xi}|_R)\cong j^* K_V|_{R^{\red}}\]
whose square, together with a trivialization of $\det(N_{\Xi}|_R)^{\otimes 2}$ given by the quadratic form $q_{\Xi}$ on $N_{\Xi}|_R$, gives rise to an isomorphism of the presentations of the virtual canonical bundle in the two charts.
\end{itemize}

\begin{defn}
An \defterm{orientation} of a d-critical locus $(X, s)$ is the choice of a square root $(K^{\vir}_X)^{1/2}$ of the virtual canonical bundle.
\end{defn}

Suppose $(X, s)$ is an oriented d-critical locus. Then we have the following data in local charts:
\begin{itemize}
\item Given a critical chart $(R, U, f, i)$, there is a natural $\Z/2$-torsor $Q_{R, U, f, i}$ on $R$ measuring the difference between $(K^{\vir}_X)^{1/2}$ and $i^* K_U|_{R^{\red}}$.
\item Given an embedding $\Xi\colon (R, U, f, i)\hookrightarrow (S, V, g, j)$ of critical charts there is a canonical isomorphism
\begin{equation}
\Lambda_\Xi\colon Q_{S, V, g, j}|_R\xrightarrow{\sim} P_\Xi\otimes_{\Z/2} Q_{R, U, f, i}
\label{eq:Lambda}
\end{equation}
of $\Z/2$-torsors on $R$.
\end{itemize}

\subsection{Lagrangian intersections}
\label{sect:LagrangiandCrit}

In this section we explain the main example of a d-critical locus in this paper given by a Lagrangian intersection. Let $S$ be a holomorphic symplectic manifold and $L, M\subset S$ two Lagrangian submanifolds. Consider the intersection $L\cap M$ as a complex analytic space. The following is a generalization of \cref{prop:Critoneform} proved in \cite{Bussi}.

\begin{prop}
There is a natural d-critical structure on $L\cap M$. Moreover, there is a natural isomorphism of holomorphic line bundles
\[K^{\vir}_{L\cap M}\cong K_L|_{(L\cap M)^{\red}}\otimes K_M|_{(L\cap M)^{\red}}\]
on $(L\cap M)^{\red}$. In particular, choices of square roots $K_L^{1/2}$ and $K_M^{1/2}$ determine an orientation of $L\cap M$.
\label{prop:dcriticalLagrangianintersection}
\end{prop}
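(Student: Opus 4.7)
The plan is to build the d-critical structure chart by chart using transverse polarizations, then check consistency via the polarization comparison of Section 1.4, and finally identify the virtual canonical bundle via the Maslov-type calculation of \cref{prop:quadraticformcriticalembedding}.

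Near any point of $L\cap M$ one may choose a polarization $\pi\colon S\to E$ transverse to both $L$ and $M$. By \cref{prop:polarizationtwoLagrangians} a neighborhood $U$ of the point identifies symplectically with an open subset of $\T^*L$ carrying $L$ as the zero section and $M$ as $\Gamma_{df}$ for a function $f\colon L\to\C$ vanishing on $(L\cap M)^{\red}$. Then $L\cap M\cap U\cong \Crit(f)$ as complex-analytic spaces, so \cref{prop:Critoneform} endows this piece with an exact two-form $s_\pi$ of degree $-1$, and the chart simultaneously presents the virtual canonical bundle as $K_L^{\otimes 2}|_{(L\cap M\cap U)^{\red}}$.

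The main obstacle is to show that the class $s_\pi$ is independent of the polarization, so that the local data glue to a global $s\in\Aex(L\cap M,-1)$. Given two transverse polarizations $\pi_1,\pi_2$, \cref{prop:twopolarizationstwoLagrangians} produces a common enlargement: a critical chart $(R,L\times M,h,j)$ into which the $\pi_1$-chart embeds via $\Xi$ with $\Xi^*h=f$, and into which the $\pi_2$-chart embeds via $\Upsilon$ with $\Upsilon^*h=-g$. Fixing a consistent sign convention (absorbing the $-g$ versus $g$ discrepancy, which reflects the symmetry of the construction in $L$ and $M$), both local two-forms $s_{\pi_1}$ and $s_{\pi_2}$ pull back to the canonical two-form on the joint chart and therefore agree on the overlap. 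This verifies that the $s_\pi$ assemble into a single exact two-form $s$, giving the sought-after d-critical structure.

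For the virtual canonical bundle, the $\pi$-chart identifies $K^{\vir}_{L\cap M}$ locally with $K_L^{\otimes 2}|_{(L\cap M)^{\red}}$, and the local diffeomorphism $\pi^{S\to L}\colon M\to L$ gives a further (polarization-dependent) isomorphism $K_L^{\otimes 2}|_{(L\cap M)^{\red}}\cong (K_L\otimes K_M)|_{(L\cap M)^{\red}}$. Under a change of polarization the ratio between two such identifications is controlled by $\det(N_\Xi)^{\otimes 2}$ on $L\cap M$, and \cref{prop:quadraticformcriticalembedding} provides a canonical trivialization of this: it identifies the quadratic form on $N_\Xi\cong \T_M$ with the Maslov form $q(\T_M,\T_{\pi_1},\T_{\pi_2})$, whose determinant is expressed explicitly in terms of the symplectic volume $\vol_S$. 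This is precisely the trivialization of $\det(N_\Xi)^{\otimes 2}$ that enters the comparison isomorphism for virtual canonical bundles recalled in \cref{sect:dCrit}, so the local identifications cohere into a canonical global isomorphism $K^{\vir}_{L\cap M}\cong (K_L\otimes K_M)|_{(L\cap M)^{\red}}$. Given choices of square roots $K_L^{1/2}$ and $K_M^{1/2}$, we then obtain a square root of $K^{\vir}_{L\cap M}$, that is, an orientation of the d-critical locus $L\cap M$.
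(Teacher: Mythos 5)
Your proposal is correct in outline but takes a genuinely different route from the paper. The paper avoids any chart-by-chart gluing by constructing the section $s\in\Aex(L\cap M,-1)$ globally in a single step: shrinking $S$ to a neighborhood of $L\cup M$, \cref{prop:uniquecontactification} gives a canonical contactification $\rho\colon Y\to S$ whose connection one-form yields an exact two-form of degree $0$ on $S$, and the two Legendrian lifts $\Lambda_L,\Lambda_M$ supply two nullhomotopies of its restriction to $L$ and $M$ respectively; comparing these nullhomotopies over $L\cap M$ produces $s$ directly. The role of polarizations in the paper is only to verify, in a single local chart, that this pre-existing $s$ has the required local critical-chart form (and for the virtual canonical bundle the paper simply cites Bussi's Theorem 3.1). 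Your local-to-global strategy works, but carries more overhead: you must verify that the transverse polarizations always exist on overlaps, that $(R,L\times M,h,j)$ with $\Xi^*h=f$, $\Upsilon^*h=-g$ really is a critical chart receiving critical embeddings from the $L$- and $M$-charts (which uses the nondegeneracy of the Maslov form, hence \cref{prop:quadraticformcriticalembedding}), and that the Joyce-theoretic fact "sections agree along a critical embedding" holds; and for the virtual canonical bundle you should be careful that the polarization-dependent isomorphism $K_L\cong K_M$ via the projection $\pi^{S\to M}\colon L\to M$ is not the same as the polarization-independent one via $\vol_S$ recorded in Section 7.4, so the claimed cancellation via $\det(N_\Xi)^{\otimes 2}$ needs the explicit volume-form formula at the end of \cref{prop:quadraticformcriticalembedding} spelled out to close the loop. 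What the paper's contactification approach buys is precisely that none of this consistency checking is needed: the global object exists first, and its local description is an observation rather than a gluing problem.
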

\begin{proof}
Consider the Lagrangian subvariety $L\cup M\subset S$. According to \cref{prop:uniquecontactification}, by shrinking $S$ to an open neighborhood of $L\cup M$, there is a canonical contactification $\rho\colon Y\rightarrow S$ together with a lift of $L, M\subset S$ to Legendrian submanifolds $\Lambda_L,\Lambda_M\subset S$ with the property that $\rho\colon \Lambda_L\cap \Lambda_M\rightarrow L\cap M$ is a homeomorphism.

The contactification $\rho\colon Y\rightarrow S$ defines an exact two-form of degree $0$ on $S$: $s\in\Aex(S, 0)$. The lift $\Lambda_L\subset Y$ of $L\subset S$ provides a nullhomotopy $h_L\colon s|_L\sim 0$ of the restriction of $s$ to $L$ and similarly for $M\subset S$. Consider a commutative diagram
\[
\xymatrix{
L\cap M \ar[r] \ar[d] & L \ar[d] \\
M \ar[r] & S
}
\]

Restricting the two nullhomotopies $h_L$ and $h_M$ to $L\cap M$ we obtain an endomorphism
\[s\in\End_{\Aex(L\cap M, 0)}(0)\cong \Aex(L\cap M, -1).\]
Let us now show that $s$ defines a d-critical structure on $L\cap M$.

Locally near a point $x\in L\cap M$ we may choose a polarization on $S$ transverse to $L$ and $M$. By \cref{prop:polarizationtwoLagrangians} we have a local identification of $Y\rightarrow S$ with $\rJ^1 L\rightarrow \T^* L$, $\Lambda_L\rightarrow L$ is identified with $j^10\rightarrow \T^*_L L$ and $\Lambda_M\rightarrow M$ is identified with $j^1 f\rightarrow \Gamma_{df}$. Thus, near $x$ we may identify $L\cap M \cong \Crit(f)$ and the d-critical structure $s$ reduces to the one on the critical locus $\Crit(f)$ constructed in \cref{prop:Critoneform}. This proves that $s$ defines a d-critical structure on $L\cap M$.

Moreover, it is clear that this d-critical structure on $L\cap M$ coincides with the one constructed in \cite[Theorem 3.1]{Bussi}, where the virtual canonical bundle is identified in the same way.
\end{proof}

\begin{remark}
As complex analytic spaces $L\cap M\cong M\cap L$. The corresponding two $d$-critical structures defined in \cref{prop:dcriticalLagrangianintersection} differ by a sign.
\end{remark}

\subsection{DT sheaf}

Let $(X, s)$ be an oriented d-critical locus. The paper \cite{BBDJS} constructs a canonical perverse sheaf $\phi_X\in\Pervaut(X)$ equipped with an automorphism on $X$, which we call the \defterm{DT sheaf}, locally modeled on the sheaf of vanishing cycles. Let us briefly recall its construction.

For a complex manifold $U$ with a holomorphic function $f\colon U\rightarrow \C$ let
\[\PV_{U, f} = (\phi_f, (-1)^{\dim U} T)\in\Pervaut(\Crit(f))\]
be the sheaf of vanishing cycles with the monodromy operator twisted by a sign.

For a vector space $W$ with a quadratic form $q\colon W\rightarrow\C$ the sheaf of vanishing cycles $\phi_q$ is supported at the origin, where it is given by a one-dimensional vector space, and has monodromy automorphism $(-1)^{\dim W}$. We consider its explicit trivialization as follows: there is a unique isomorphism
\begin{equation}
T_q\colon (\PV_{U, q})|_0\cong P_q\otimes_{\Z/2}\C
\label{eq:PVtrivialization}
\end{equation}
of vector spaces equipped with automorphisms such that the composite
\begin{equation}
P_q\otimes_{\Z/2}\C(\!(\hbar)\!)\xrightarrow{\RH^{-1}(T_q)}\RH^{-1}(\PV_{W, q})|_0\xleftarrow{M_q} (\cC^\rR_0\otimes_{\hW_W} \cC^{(\dim W+1)/2}_q)|_0
\label{eq:PVtrivializationcomposite}
\end{equation}
is given by $\omega\otimes 1\mapsto \omega\delta(t)\otimes \partial_t^{\dim W/2} \delta(t-q)$ (see \cref{ex:DRqtrivialization} for the composite isomorphism).

\begin{remark}
Any two choices of the isomorphisms $T_q$ compatible with the Thom--Sebastiani isomorphism \eqref{eq:ThomSebastiani} differ by a multiplication $\alpha^{\dim W}$ for $\alpha\in\C$.
\end{remark}

Let $\Xi\colon (R, U, f, i)\hookrightarrow (S, V, g, j)$ be an embedding of critical charts. Then \cite[Theorem 5.4]{BBDJS} shows that there is the \defterm{stabilization isomorphism}
\begin{equation}
\Theta_\Xi\colon \PV_{U, f}\longrightarrow (\PV_{V, g})|_{\Crit(f)}\otimes_{\Z/2} P_\Xi
\label{eq:Theta}
\end{equation}
of perverse sheaves equipped with automorphisms which has the following description:
\begin{itemize}
    \item Locally near a point of $R$ identify $V\cong R\times W$ and $g\mapsto f\boxplus q$ for a quadratic form $q\colon W\rightarrow\C$ using \cite[Proposition 2.22]{JoycedCrit}, so that under this isomorphism the embedding becomes $U\hookrightarrow U\times W$ given by $u\mapsto (u, 0)$. In this local model the isomorphism $\Theta_\Xi$ is equal to the composite
    \[(\phi_{f\boxplus q})|_{\Crit(f)}\cong \phi_f\otimes_\Q (\phi_q|_0)\cong \phi_f\otimes_{\Z/2} P_q,\]
    where the first isomorphism is the Thom--Sebastiani isomorphism \eqref{eq:ThomSebastiani} and the second isomorphism is $T_q$.
\end{itemize}

Combining the isomorphisms $\Lambda_\Xi$ \eqref{eq:Lambda} and $\Theta_\Xi$ \eqref{eq:Theta} we obtain an isomorphism of perverse sheaves equipped with automorphisms
\begin{equation}
\Theta_{\Xi}\otimes {\Lambda_\Xi}\colon \PV_{U, f}\otimes_{\Z/2} Q_{R, U, f, i}\longrightarrow (\PV_{S, V, g, j}\otimes_{\Z/2} Q_{S, v, g, j})|_{\Crit(f)}
\label{eq:PVstabilization}
\end{equation}
for any critical embedding $\Phi\colon (R, U, f, i)\hookrightarrow (S, V, g, j)$.

Let $(X, s)$ be an oriented $d$-critical locus. The DT sheaf $\phi_X$ is a perverse sheaf equipped with an automorphism on $X$ with the following property:
\begin{itemize}
    \item In any critical chart $(R, U, f, i)$ there is an isomorphism $\phi_X|_R\cong \PV_{U, f}\otimes_{\Z/2} Q_{R, U, f, i}$.
    \item By \cite[Theorem 2.20]{JoycedCrit} any two critical charts $(R, U, f, i)$, $(S, V, g, j)$ are locally related by a zig-zag $(R, U, f, i)\hookrightarrow (T, W, h, k)\hookleftarrow (S, V, g, j)$ of critical embeddings. The isomorphism of the local models for $\phi_X$ in charts $(R, U, f, i)$ and $(S, V, g, j)$ is given by applying the stabilization isomorphism \eqref{eq:PVstabilization} to the two critical embeddings.
\end{itemize}

\begin{remark}
We have chosen the isomorphism $T_q$ to define the sheaf $\phi_X$. Given two such isomorphisms differing by a multiplication by $\alpha^{\dim W}$ the resulting sheaves $\phi_X$ are canonically isomorphic: the isomorphism is given by a multiplication by $\alpha^{\dim U}$ in each critical chart $(R, U, f, i)$.
\end{remark}

We denote by $\phi^\cD_X\in\Mod_{\rh,\aut}(\cD_X)$ the $D$-module corresponding to $\phi_X$ under the Riemann--Hilbert correspondence.

\section{Comparison of the two perverse sheaves}

\subsection{Statement}

Throughout this section $S$ denotes a complex symplectic manifold, $\rho\colon Y\rightarrow S$ its contactification, $L, M\subset S$ Lagrangian submanifolds equipped with orientation data $K_L^{1/2},K_M^{1/2}$ and $\Lambda_L, \Lambda_M\subset Y$ their Legendrian lifts. The Lagrangian intersection $L\cap M$ has a structure of an oriented d-critical locus by \cref{prop:dcriticalLagrangianintersection}.

By \cref{thm:simpleclassification} microlocalization establishes an equivalence
\[\mu^{dR}_{\Lambda_L}\colon \Mod_{\Lambda, \rh}(\sfhE_Y)\xrightarrow{\sim} \gamma_*\Mod_{\locsys}(\cD^{\sqrt{v}}_{\tilde{\Lambda}_L}).\]
We may canonically identify $\tilde{\Lambda}_L\cong \Lambda_L\times \C^\times\cong L\times \C^\times$. Let $i_L\colon L\rightarrow \tilde{\Lambda}_L$ be the inclusion of $1\times \C^\times$ and similarly for $i_M\colon M\rightarrow \tilde{\Lambda}_M$. As explained in \cref{sect:monodromicDmodules}, for a monodromic (twisted) $D$-module $M$ on $\tilde{\Lambda}_L$ the pullback $i_L^* M$ is a (twisted) $D$-module on $L$ equipped with an automorphism.

Fix $\lambda\in\C$. Then, up to isomorphism, there is a unique $\sfhE_Y$-module $\sfM^\lambda_L$ simple along $\Lambda_L$ such that
\[i_L^*\mu_{\Lambda_L}^{dR}(\sfM^\lambda_L)\cong K_L^{1/2}\otimes \C_\lambda.\]
Similarly, for $\mu\in\C$ there is a unique $\sfhE_Y$-module $\sfM^\mu_M$ simple along $\Lambda_M$ such that
\[i_M^*\mu_{\Lambda_M}^{dR}(\sfM^\mu_M)\cong K_M^{1/2}\otimes \C_\mu.\]

The goal of this section is to prove the following result.

\begin{thm}
Let $\lambda,\mu\in\C$. There is an isomorphism of differential perverse sheaves
\[\sfM^{\lambda}_L\otimes^\bL_{\sfhW_S} \sfM^{\mu}_M\cong \RH^{-1}(\phi_{L\cap M}\otimes\C_{\mu+\lambda}),\]
where $\C_{\mu+\lambda}$ is the trivial vector space with the monodromy automorphism $T=\exp(2\pi i(\mu+\lambda))$.
\label{thm:maincomparison}
\end{thm}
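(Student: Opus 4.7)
The plan is to build the isomorphism by first establishing it on charts coming from polarizations transverse to both Lagrangians, applying the local computation of \cref{thm:Vanishing cycles via tensor product} (i.e.\ \cref{thm:DQlocal}) in each such chart, and then checking that the local isomorphisms glue via the Thom--Sebastiani compatibility of \cref{prop:TSagree} together with the identification of Maslov forms with the d-critical normal bundle forms of \cref{prop:quadraticformcriticalembedding}.

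\textbf{Local construction.} Fix a point $x\in L\cap M$ and choose a polarization $\pi$ transverse to both $L$ and $M$ near $x$. By \cref{prop:polarizationtwoLagrangians}, a neighborhood $U\subset S$ of $x$ embeds symplectically into $\T^*L$, sending $L$ to the zero section and $M$ to the graph of $df$ for a locally defined function $f\colon L\to\C$ vanishing on $(L\cap M)^{\red}$, and the contactification $\rho^{-1}(U)\to U$ identifies with $\rJ^1 L\to \T^* L$. Under these identifications, $\sfM^\lambda_L|_U$ and $\sfM^\mu_M|_U$ become the half-twisted modules $\cC^{\lambda,\sqrt{L}}_0$ and $\cC^{\mu,\sqrt{L}}_f$ (the $K_L^{1/2}$-twist encoding the orientation data on $L$; the orientation data on $M$ enters via the local diffeomorphism $M\xrightarrow{\sim} L$ induced by $\pi$). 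Applying \cref{thm:Vanishing cycles via tensor product} yields a local isomorphism
\[
(\sfM^\lambda_L\otimes^\bL_{\sfW_S}\sfM^\mu_M)|_U \;\cong\; \widehat{\RH}^{-1}\bigl(\PV_{L,f}\otimes_{\Z/2} Q_{R,L,f,i}\otimes \C_{\mu+\lambda}\bigr),
\]
where $R=U\cap L\cap M$ and $Q_{R,L,f,i}$ is the $\Z/2$-torsor comparing $i^* K_L|_{R^{\red}}$ with the orientation $(K^{\vir}_{L\cap M})^{1/2}\cong K_L^{1/2}\otimes K_M^{1/2}|_{R^{\red}}$ of \cref{prop:dcriticalLagrangianintersection} supplied by the chosen square roots. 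This matches the local presentation of $\phi_{L\cap M}$ in the critical chart $(R,L,f,i)$.

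\textbf{Change of polarization.} Given a second transverse polarization $\pi_2$ with local function $g\colon M\to\C$, \cref{prop:twopolarizationstwoLagrangians} supplies a function $h\colon L\times M\to\C$ with $\Xi^* h=f$ and $\Upsilon^* h=-g$, yielding critical-chart embeddings $\Xi\colon (R,L,f,i)\hookrightarrow(R,L\times M,h,j)$ and $\Upsilon\colon (R,M,g,i')\hookrightarrow(R,L\times M,h,j)$. By \cref{prop:quadraticformcriticalembedding}, the normal-bundle quadratic forms $q_\Xi$ and $q_\Upsilon$ agree with the Maslov forms $q(\T_M,\T_{\pi_1},\T_{\pi_2})$ and its $L$--$M$ swapped (sign-adjusted) variant.

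\textbf{Patching.} To check that the two local isomorphisms agree under the stabilization isomorphism \eqref{eq:PVstabilization} defining $\phi_{L\cap M}$, we reduce via the common refinement through $h$ to a Thom--Sebastiani computation. On a transverse slice we may locally write $h=f\boxplus q_\Xi$ and $h=(-g)\boxplus q_\Upsilon$, so that the two local descriptions of $\sfM^\lambda_L\otimes^\bL_{\sfW_S}\sfM^\mu_M$ are compared by \cref{prop:EThomSebastiani} applied to these decompositions. Then \cref{prop:TSagree} identifies this comparison with $\widehat{\RH}^{-1}$ of the composition of the Thom--Sebastiani isomorphism $\TS_{f,q_\Xi}$ and the explicit trivialization $T_{q_\Xi}$ from \eqref{eq:PVtrivialization} (and similarly for $\Upsilon$). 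But this is exactly the data of the stabilization map \eqref{eq:PVstabilization} in the DT construction, so the local isomorphisms glue into the desired global isomorphism of differential perverse sheaves.

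\textbf{Main obstacle.} The central technical issue is the bookkeeping in the patching step: tracking the $\Z/2$-torsors of orientations (from $K_L^{1/2}$, $K_M^{1/2}$, and $(K^{\vir}_{L\cap M})^{1/2}$), the sign in the monodromy twist built into $\PV_{U,f}=(\phi_f,(-1)^{\dim U}T)$, and the half-integer shifts in order of the symbols of $\cC^{\lambda,\sqrt{L}}_f$. The normalization \eqref{eq:PVtrivializationcomposite} was chosen precisely so that $T_q$ aligns with $M_q$ on quadratic forms; ensuring that this matching propagates across all three charts (via $L$, via $M$, and via $L\times M$) simultaneously, with the sign of $-g$ correctly absorbed into the orientation torsor on $\Upsilon$, is the main verification.
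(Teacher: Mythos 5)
Your overall strategy matches the paper's proof: build the isomorphism in polarization charts via \cref{thm:Vanishing cycles via tensor product}, and patch through an $LM$-chart using \cref{prop:twopolarizationstwoLagrangians}, \cref{prop:quadraticformcriticalembedding}, and the Thom--Sebastiani compatibility of \cref{prop:TSagree}.

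However, the patching step as you have written it contains a real gap. You assert that ``the two local descriptions of $\sfM^\lambda_L\otimes^\bL_{\sfW_S}\sfM^\mu_M$ are compared by \cref{prop:EThomSebastiani} applied to these decompositions,'' but this is not true a priori. The transition between the $L$-chart model $\cC^{\sqrt{L}}_0\otimes^\bL_{\hW^{\sqrt{v}}_L}\cC^{\sqrt{M}}_f$ and the $M$-chart model is \emph{not} a Thom--Sebastiani map; it is induced by the quantized contact/symplectic transformation $\Psi$ that glues the algebroid $\sfW_S$, encoded by the simple bimodule $(\cM,K)$ of \cref{prop:QCTbimodule} and \cref{prop:QSTbimodule}. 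The content of the patching step is precisely to show that the QCT-induced comparison map $\Delta_\Xi$ (and likewise $\Delta_\Upsilon$) coincides with $\TS^\cE_{q,f}$ after passing to the Joyce local model $h\cong f\boxplus q$. The paper does this by a uniqueness argument: both maps are isomorphisms of $\hE_{L\times\C}$-modules simple along $j^1 f$, hence differ by a locally constant scalar, and that scalar is pinned to $1$ by the normalization $\sigma_0(b_{LM})|_{j^1 h}=1$ of the bimodule generator $K$. Without this identification, \cref{prop:TSagree} does not apply to the maps you actually have. You should also invoke the involution $(-)'$ of \eqref{eq:barflip} explicitly to produce the $M$-chart isomorphism, rather than merely gesturing at absorbing the sign of $-g$ into $Q_M$.
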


\begin{cor}\label{cor:mainmicrolocalization}
Let $\mu\in\C$. There is an isomorphism
\[i_L^*\mu^{dR}_{\Lambda_L} \sfM^{\mu}_M\cong \phi^\cD_{L\cap M}\otimes \C_{\mu}\]
of $\cD_L$-modules equipped with automorphisms, where we identify $\cD^{\sqrt{v}}_L$-modules with $\cD_L$-modules using the orientation data $K_L^{1/2}$.
\end{cor}
\begin{proof}[Proof of \cref{cor:mainmicrolocalization}]
Pick any $\lambda\in\C$. By \cref{thm:maincomparison} we have
\[\sfM^{\lambda}_L\otimes^\bL_{\sfhW_S} \sfM^{\mu}_M\cong \RH^{-1}(\phi_{L\cap M}\otimes\C_{\mu+\lambda}).\]
By \cref{thm:Wtensormicrolocal} we have
\[\sfM^{\lambda}_L\otimes^\bL_{\sfhW_S} \sfM^{\mu}_M\cong \RH^{-1}\left((\mu^{dR}_{\Lambda_L} \sfM^{\lambda}_L\otimes^\bL_{\cD^{\sqrt{v}}_{\tilde{\Lambda}_L}} \mu^{dR}_{\Lambda_L} \sfM^{\mu}_M)|_L[-1]\right).\]
By definition $i^*\mu^{dR}_{\Lambda_L} \sfM^{\lambda}_L\cong K_L^{1/2}$ equipped with the automorphism $\exp(2\pi i\lambda)$. Therefore, we obtain an isomorphism
\[\C_{\mu+\lambda}\otimes \phi_{L\cap M}\cong \C_{\lambda}\otimes (K_L\otimes^\bL_{\cD_{\tilde{\Lambda}_L}} \mu^{dR}_{\Lambda_L} \sfM^{\mu}_M)|_L[-1]\]
of perverse sheaves on $L$ equipped with an automorphism. Identifying $D$-modules with perverse sheaves via the Riemann--Hilbert correspondence we get the result.
\end{proof}

Throughout this section we let $\dim L=\dim M=n$.

\subsection{d-critical charts}

Both sides of \cref{thm:maincomparison} involve gluing local models. So, we have to construct an isomorphism in local charts and show that the isomorphism is compatible with gluing data.

The manifold $S$ admits local polarizations. We use them to get a local description of $(Y\rightarrow S, \Lambda_L\rightarrow L, \Lambda_M\rightarrow M)$ following the terminology from \cite[Section 2]{Bussi}:
\begin{itemize}
\item An \defterm{$L$-chart} is an open subset $U\subset S$ equipped with a polarization, where we use \cref{prop:polarizationtwoLagrangians} to identify $\rho^{-1}(U)\rightarrow U$ with an open subset of $\rJ^1 L\rightarrow \T^* L$, $\Lambda_L\rightarrow L$ with $j^10\rightarrow \Gamma_0$ and $\Lambda_M\rightarrow M$ with $j^1 f\rightarrow \Gamma_{df}$ for a locally defined function $f\colon L\rightarrow \C$ such that $f|_{(L\cap M)^{\red}} = 0$. This defines a critical chart $(\Crit(f), L, f, i)$.
\item An \defterm{$M$-chart} is an open subset $U\subset S$ equipped with a polarization, where we use \cref{prop:polarizationtwoLagrangians} to identify $\rho^{-1}(U)\rightarrow U$ with an open subset of $\rJ^1 M\rightarrow \T^* M$, $\Lambda_M\rightarrow M$ with $j^10\rightarrow \Gamma_0$ and $\Lambda_L\rightarrow L$ with $j^1 g\rightarrow \Gamma_{dg}$ for a locally defined function $g\colon M\rightarrow \C$ such that $g|_{(L\cap M)^{\red}} = 0$. This defines a critical chart $(\Crit(g), M, -g, i)$.
\end{itemize}

To simplify the notation, throughout this section we will distinguish two such charts with polarizations $\pi_1$ and $\pi_2$, respectively; we make the first one into an $L$-chart and the second one into an $M$-chart. We assume the polarizations on the $L$-chart and the $M$-chart are transverse on the intersection. Then we may describe the intersection using an \defterm{$LM$-chart} as follows:
\begin{itemize}
    \item By \cref{prop:twopolarizationstwoLagrangians} there is a locally defined function $h\colon L\times M\rightarrow \C$ such that $\Gamma_{dh}$ gives (locally) the symplectomorphism between $\T^* L$ and $\T^* M$. The $L$-chart is embedded into the $LM$-chart via $\Xi\colon L\hookrightarrow L\times M$ under which $h$ restricts to $f$. The $M$-chart is embedded into the $LM$-chart via $\Upsilon\colon M\hookrightarrow L\times M$ under which $h$ restricts to $-g$. This defines a critical chart $(\Crit(h), L\times M, h, j)$.
\end{itemize}

We have the following description of $\phi_{L\cap M}$ in these charts:
\begin{itemize}
    \item Consider the $\Z/2$-torsor
    \[Q_L = Q_{\Crit(f), L, f, i}.\]
    The fiber of this $\Z/2$-torsor at $x\in L\cap M$ can be explicitly identified with the $\Z/2$-torsor of isomorphisms $K_{L, x}^{1/2}\rightarrow K_{M, x}^{1/2}$ squaring to the isomorphism
    \[C(\T_L, T_{\pi_1}, T_M)\colon K_{L, x}\rightarrow K_{M, x}.\]
    In other words, we have an isomorphism
    \[Q_L\cong P_{T_L, T_{\pi_1}, T_M},\]
    where the $\Z/2$-torsor on the right-hand side is introduced in \cref{def:orientationsequenceLagrangians}. The perverse sheaf $\phi_{L\cap M}$ in the $L$-chart is
    \[\phi_f\otimes_{\Z/2} Q_L.\]
    \item Consider the $\Z/2$-torsor
    \[Q_M = Q_{\Crit(g), M, -g, i}\cong P_{T_M, T_{\pi_2}, T_L}.\]
    The perverse sheaf $\phi_{L\cap M}$ in the $M$-chart is
    \[\phi_{-g}\otimes_{\Z/2} Q_M.\]
    \item Consider the $\Z/2$-torsor
    \[Q_{LM} = Q_{\Crit(h), L\times M, h, }\cong P_{T_L, T_{\pi_1}, T_{\pi_2}, T_M}.\]
    The perverse sheaf $\phi_{L\cap M}$ in the $LM$-chart is
    \[\phi_h\otimes_{\Z/2} Q_{LM}.\]
    \item Consider the embedding $\Xi\colon L\hookrightarrow L\times M$ of the $L$-chart into the $LM$-chart. Using \cref{prop:quadraticformcriticalembedding} we have
    \[P_{\Xi}\cong P_{T_M, T_{\pi_1}, T_{\pi_2}, T_M}.\]
    The isomorphism
    \[\Lambda_\Xi\colon Q_L\longrightarrow P_{\Xi}\otimes_{\Z/2} Q_{LM}\]
    is given by composition as in \cref{prop:Maslovcomposition}. We have the stabilization isomorphism
    \[\Theta_\Xi\colon \phi_f\longrightarrow \phi_h\otimes_{\Z/2} P_{\Xi},\]
    so that the isomorphism between the $L$-chart model and the $LM$-chart model is
    \[\Theta_\Xi\otimes \Lambda_\Xi\colon \phi_f\otimes_{\Z/2} Q_L\longrightarrow \phi_h\otimes_{\Z/2} Q_{LM}.\]
    \item Consider the embedding $\Upsilon\colon M\hookrightarrow L\times M$ of the $M$-chart into the $LM$-chart. We have
    \[P_{\Upsilon}\cong P_{T_L, T_{\pi_1}, T_{\pi_2}, T_L}\]
    and the isomorphism
    \[\Lambda_\Upsilon\colon Q_M\longrightarrow P_{\Upsilon}\otimes_{\Z/2} Q_{LM}\]
    is given by composition as in \cref{prop:Maslovcomposition}. We have the stabilization isomorphism
    \[\Theta_\Upsilon\colon \phi_{-g}\longrightarrow \phi_h\otimes_{\Z/2} P_{\Upsilon},\]
    so that the isomorphism between the $M$-chart model and the $LM$-chart model is
    \[\Theta_\Upsilon\otimes \Lambda_\Upsilon\colon \phi_{-g}\otimes_{\Z/2} Q_M\longrightarrow \phi_h\otimes_{\Z/2} Q_{LM}.\]
\end{itemize}

Let us also describe the orientation data on $L$ and $M$ in these charts. We use the following convention. Consider a complex manifold $U$ with an orientation data, i.e. it has a line bundle $K_U^{1/2}$ together with an isomorphism $(K_U^{1/2})^{\otimes 2}\cong K_U$. Locally $K_U^{1/2}$ admits a trivialization; once we choose a nonzero section of $K_U^{1/2}$, the isomorphism $(K_U^{1/2})^{\otimes 2}\cong K_U$ is determined by a volume form $\omega_U$. In this case we write $\sqrt{\omega_U}$ as the section of $K_U^{1/2}$. Changing the trivialization of $K_U^{1/2}$ by an invertible function $s$ the volume form changes by $\omega_U\mapsto s^2 \omega_U$.

So, we will choose the following additional data:
\begin{itemize}
    \item Choose a volume form $\omega_L$ on $L$, so that $\sqrt{\omega_L}$ defines a nonvanishing section of $K_L^{1/2}$.
    \item Choose a volume form $\omega_M$ on $M$, so that $\sqrt{\omega_M}$ defines a nonvanishing section of $K_M^{1/2}$.
\end{itemize}

Using these choices the $\Z/2$-torsors $Q_L$, $Q_M$, $Q_{LM}$ have the following explicit description:
\begin{itemize}
    \item $Q_L$ parametrizes functions $s$ on $L$ such that
    \[s^2 = \frac{(\pi_1^{S\rightarrow M})^*\omega_M}{\omega_L},\]
    where we identify $M$ and $L$ via the composite $\pi_1^{S\rightarrow M}\colon L\hookrightarrow S\xrightarrow{\pi_1} M$.
    \item $Q_M$ parametrizes functions $s$ on $M$ such that
    \[s^2 = \frac{(\pi_2^{S\rightarrow L})^*\omega_L}{\omega_M},\]
    where we identify $L$ and $M$ via the composite $\pi_1^{S\rightarrow L}\colon M\hookrightarrow S\xrightarrow{\pi_2} L$.
    \item $Q_{LM}$ parametrizes functions $u$ on $L\times M$ such that
    \[u^2 = \frac{(\pi_{12}^{S\times S\rightarrow S})^*\vol_S}{\omega_L\wedge \omega_M},\]
    where we identify $S$ with $L\times M$ via the composite $L\times M\hookrightarrow S\times S\xrightarrow{\pi_1\times \pi_2} S$.
    \item $P_\Xi$ parametrizes volume forms $\omega$ on $M$ such that $q_\Xi(\omega) = 1$. The isomorphism
    \[\Lambda_\Xi\colon Q_L\longrightarrow Q_{LM}\otimes_{\Z/2} P_\Xi\]
    is given by
    \[s\mapsto u\otimes \frac{u\omega_M}{s}.\]
    \item $P_\Upsilon$ parametrizes volume forms $\omega$ on $L$ such that $q_\Upsilon(\omega) = 1$. The isomorphism
    \[\Lambda_\Upsilon\colon Q_M\longrightarrow Q_{LM}\otimes_{\Z/2} P_\Upsilon\]
    is given by
    \[s\mapsto u\otimes \frac{u\omega_L}{s}.\]
\end{itemize}

\subsection{Local description of DQ modules}

Let us now give a local description of deformation quantization algebras and modules in the $L$- and $M$-charts from before.

\begin{itemize}
    \item In the $L$-chart we have $\sfhE_Y = \hE^{\sqrt{v}}_{L\times \C}$ and $\sfhW_S = \hW^{\sqrt{v}}_L$. The modules are
    \[\sfM^\lambda_L = \cC^{\lambda,\sqrt{L}}_0,\qquad \sfM^\mu_M = \cC^{\mu,\sqrt{M}}_f.\]
    We consider simple generators
    \[u_L^L=\sqrt{\omega_L}\partial_t^{\lambda-1/2}\delta(t)\in\cC^{\lambda, \sqrt{L}}_0,\qquad u_M^L=\sqrt{\omega_M}\partial_t^{\mu-1/2}\delta(t-f)\in\cC^{\mu, \sqrt{M}}_f,\]
    so that
    \[\sigma_{\Lambda_L}(u_L^L) = \tau^{\lambda-1/2}\sqrt{d\tau\wedge \omega_L},\qquad \sigma_{\Lambda_M}(u_M^L) = \tau^{\mu-1/2}\sqrt{d\tau\wedge \omega_M}.\]
    \item In the $M$-chart we have $\sfhE_Y = \hE^{\sqrt{v}}_{M\times \C}$ and $\sfhW_S = \hW^{\sqrt{v}}_M$. The modules are
    \[\sfM^\lambda_L = \cC^{\lambda,\sqrt{L}}_g,\qquad \sfM^\mu_M = \cC^{\mu,\sqrt{M}}_0.\]
    We consider simple generators
    \[u_L^M=\sqrt{\omega_L}\partial_t^{\lambda-1/2}\delta(t-g)\in\cC^{\lambda, \sqrt{L}}_g,\qquad u_M^M=\sqrt{\omega_M}\partial_t^{\mu-1/2}\delta(t)\in\cC^{\mu, \sqrt{M}}_0,\]
    so that
    \[\sigma_{\Lambda_L}(u_L^M) = \tau^{\lambda-1/2}\sqrt{d\tau\wedge \omega_L},\qquad \sigma_{\Lambda_M}(u_M^M) = \tau^{\mu-1/2}\sqrt{d\tau\wedge \omega_M}.\]
    \item On the intersection we have a local contactomorphism $\psi\colon \rJ^1 L\xrightarrow{\sim} \rJ^1 M$ covering the local symplectomorphism $\phi\colon \T^* L\xrightarrow{\sim} \T^* M$. We choose a $\ast$-preserving QCT $\Psi\colon \hE^{\sqrt{v}}_{M\times\C}\xrightarrow{\sim}\psi_* \hE^{\sqrt{v}}_{L\times\C}$ restricting to a $\ast$-preserving QST $\Phi\colon \hW^{\sqrt{v}}_M\xrightarrow{\sim}\phi_* \hW^{\sqrt{v}}_L$. There is an isomorphism
    \begin{equation}
    {}_{\Psi}\cC^{\lambda,\sqrt{L}}_0 \cong \cC^{\lambda,\sqrt{L}}_g
    \label{eq:LmoduleQCT}
    \end{equation}
    of $\hE^{\sqrt{v}}_{M\times\C}$-modules uniquely determined by the condition that there is an element $b^{LM}_L\in\hE^{\sqrt{v}}_{M\times\C}(0)$ such that under this isomorphism $u_L^L\mapsto b^{LM}_Lu_L^M$ and $\sigma_0(b^{LM}_L)|_{\Lambda_L} = 1$. There is also an isomorphism
    \begin{equation}
    \cC^{\mu,\sqrt{M}}_f\cong {}_{\Psi^{-1}}\cC^{\mu,\sqrt{M}}_0
    \label{eq:MmoduleQCT}
    \end{equation}
    of $\hE^{\sqrt{v}}_{L\times \C}$-modules uniquely determined by the condition that there is an element $b^{LM}_M\in\hE^{\sqrt{v}}_{L\times\C}(0)$ such that under this isomorphism $u_M^L\mapsto b^{LM}_Mu_M^M$ and $\sigma_0(b^{LM}_M)|_{\Lambda_M} = 1$.
\end{itemize}

Using \cref{prop:QCTbimodule} we encode the QCT $\Psi$ using a $\hE^{\sqrt{v}}_{L\times\C\times M\times\C}$-module $\cM$ simple along the Legendrian $\P^*_Z(L\times\C\times M\times\C)$, where $Z=\{t_L-t_M=h\}\subset L\times\C\times M\times\C$, together with a simple generator $K\in\cM$. Using \cref{prop:QSTbimodule} we encode the QST $\Phi$ using a $\hE^{\sqrt{v}}_{L\times M\times\C}$-module $\cM^\partial\subset \cM$ simple along the Legendrian $j^1 h\subset \rJ^1(L\times M)$, again with a simple generator $K\in\cM^\partial$. We have $\sigma_{j^1 h}(K) = \sqrt{\tau^n\vol_S\wedge d\tau}$, so we may locally identify
\[\cM^\partial\cong \cC^{(n+1)/2, \sqrt{S}}_h\]
as $\hE^{\sqrt{v}}_{L\times M\times \C}$-modules. Let
\[u_{LM} = \sqrt{\vol_S}\partial_t^{n/2}\delta(t-h)\in\cC^{(n+1)/2, \sqrt{S}}_h\]
be the standard simple generator. As $K$ and $u_{LM}$ are two simple generators with the same symbol, we have $K = b_{LM} u_{LM}$, where $b_{LM}\in \hE^{\sqrt{v}}_{L\times M\times \C}(0)$ with $\sigma_0(b_{LM})|_{j^1 h}$ locally constant. We rescale $K$ so that $\sigma_0(b_{LM})|_{j^1 h} = 1$.

Using $\cM$ we may rewrite isomorphisms \eqref{eq:LmoduleQCT} and \eqref{eq:MmoduleQCT} as
\[\Delta_\Upsilon\colon \cC_0^{\lambda,\sqrt{L}}\otimes_{\hW^{\sqrt{v}}_L} \cM^\partial\cong \cC_0^{\lambda,\sqrt{L}}\otimes_{\hE^{\sqrt{v}}_{L\times\C}} \cM\cong \cC^{\lambda,\sqrt{L}}_{-g},\]
an isomorphism of left $\hE^{\sqrt{v}}_{M\times\C}$-modules, uniquely determined by the fact that it sends $u_L^M\otimes b_L^{LM}K\mapsto u_L^L$ and
\[\Delta_\Xi\colon \cC_0^{\mu,\sqrt{M}}\otimes_{\hW^{\sqrt{v}}_M} \cM^\partial\cong \cC_0^{\mu,\sqrt{M}}\otimes_{\hE^{\sqrt{v}}_{M\times\C}}\cM\cong \cC_f^{\mu,\sqrt{M}},\]
an isomorphism of left $\hE^{\sqrt{v}}_{L\times\C}$-modules, uniquely determined by the fact that it sends $u_M^M\otimes b_M^{LM} K\mapsto u_M^L$.

We will use the following isomorphisms to change the orientation data for DQ modules:
\begin{itemize}
    \item A section of $Q_L$, i.e. a function $s$ on $L|_{L\cap M}$ such that $s^2 = (\pi_1^{S\rightarrow M})^*\omega_M / \omega_L$, determines an isomorphism $K_L^{1/2}\rightarrow K_M^{1/2}$ given by sending $s\sqrt{\omega_L}\mapsto \sqrt{\omega_M}$. Therefore, we obtain an isomorphism $\cC^{\mu,\sqrt{L}}_f\otimes Q_L\cong \cC^{\mu,\sqrt{M}}_f$ under which
    \[s\sqrt{\omega_L} \partial_t^{\mu-1/2}\delta(t-f)\otimes s\mapsto \sqrt{\omega_M}\partial_t^{\mu-1/2}\delta(t-f)=u^L_M.\]
    \item A section of $Q_M$, i.e. a function $s$ on $M|_{L\cap M}$ such that $s^2 = (\pi_2^{S\rightarrow L})^*\omega_L / \omega_M$, determines an isomorphism $K_M^{1/2}\rightarrow K_L^{1/2}$ given by sending $s\sqrt{\omega_M}\mapsto \sqrt{\omega_L}$. Therefore, we obtain an isomorphism $\cC^{\lambda,\sqrt{M}}_g\otimes Q_M\cong \cC^{\lambda,\sqrt{L}}_g$ under which
    \[su_L^M\otimes s= s\sqrt{\omega_L} \partial_t^{\lambda-1/2}\delta(t-g)\otimes s\mapsto \sqrt{\omega_M}\partial_t^{\lambda-1/2}\delta(t-g).\]
\end{itemize}

\subsection{Proof of \texorpdfstring{\cref{thm:maincomparison}}{Theorem \ref{thm:maincomparison}}}

We use the gluing data for the perverse sheaf $\phi_{L\cap M}$ as described in \cite[Section 2.3]{Bussi}. Cover $S$ by open sets which admit polarizations and consider them as $L$-charts. Given an intersection of two such $L$-charts $U_1, U_2$, we may cover $U_1\cap U_2$ by open sets $\{V_a\}$ with polarizations such that for every $a$ the polarization on $V_a$ is transverse both to the polarization on $U_1$ and on $U_2$. We regard $V_a$ as $M$-charts.

Thus, we have to provide an isomorphism of differential perverse sheaves as in \cref{thm:maincomparison} in each $L$- and $M$-chart and show that the isomorphisms (merely of perverse sheaves) are equal on the intersections.

\textbf{Step 1}. Consider an $L$-chart. By the explicit description of the modules given in the previous sections we have to construct an isomorphism of differential perverse sheaves
\[\cC^{\lambda,\sqrt{L}}_0\otimes^{\bL}_{\hW^{\sqrt{v}}_L} \cC^{\mu,\sqrt{M}}_f\cong \RH^{-1}(\phi_f\otimes \C_{\mu+\lambda})\otimes_{\Z/2} Q_L.\]
This isomorphism is given by the composite of the isomorphism $\cC^{\sqrt{M}}_f\cong \cC^{\sqrt{L}}_f\otimes_{\Z/2} Q_L$, the isomorphism
\[\cC^{\lambda,\sqrt{L}}_0\otimes^{\bL}_{\hW^{\sqrt{v}}_L} \cC^{\mu,\sqrt{L}}_f\cong \cC^{\lambda,\rR}_0\otimes^{\bL}_{\hW_L} \cC^\mu_f\]
and the isomorphism $M_f$ constructed in \cref{thm:DQlocal}.

\textbf{Step 2}. Consider an $M$-chart. In this case we have to construct an isomorphism of differential perverse sheaves
\[\cC^{\lambda,\sqrt{L}}_g\otimes^\bL_{\hW^{\sqrt{v}}_M}\cC^{\mu,\sqrt{M}}_0\cong \RH^{-1}(\phi_{-g})\otimes \C_{\mu+\lambda}\otimes_{\Z/2} Q_M.\]

This isomorphism is given by the composite of the isomorphism
\[\cC^{\lambda,\sqrt{L}}_g\otimes^\bL_{\hW^{\sqrt{v}}_M}\cC^{\mu,\sqrt{M}}_0\cong \cC^{\mu,\sqrt{M}}_0\otimes^\bL_{\hW^{\sqrt{v}}_M}\cC^{\lambda,\sqrt{L}}_{-g}\]
from \cref{lm:barflip} as well as the isomorphism constructed as in Step 1.

\textbf{Step 3}. We have to show that the isomorphisms defined above are equal on intersections of an $L$-chart and an $M$-chart. We describe the intersection using an $LM$-chart. In this case the isomorphism of DT sheaves in the local models is given by the composite
\[
\phi_f\otimes_{\Z/2} Q_L\xrightarrow{\Theta_\Xi\otimes \Lambda_\Xi} \phi_h\otimes_{\Z/2} Q_{LM}\xleftarrow{\Theta_\Upsilon\otimes \Lambda_\Upsilon} \phi_{-g}\otimes_{\Z/2} Q_M.
\]
Similarly, the isomorphism of the relative tensor product of DQ modules is given by the composite (we do not write the orders as we will be checking an equality of isomorphisms)
\begin{align*}
\cC^{\sqrt{L}}_0\otimes^\bL_{\hW^{\sqrt{v}}_L} \cC^{\sqrt{M}}_f&\xrightarrow{\id\otimes\Delta_\Xi} (\cC^{\sqrt{L}}_0\boxast \cC^{\sqrt{M}}_0)\otimes^\bL_{\hW^{\sqrt{v}}_L\boxtimes \hW^{\sqrt{v}}_M}\cC^{\sqrt{S}}_h\\
&\xleftarrow{\id\otimes \Delta_\Upsilon} \cC^{\sqrt{M}}_0\otimes^\bL_{\hW^{\sqrt{v}}_M} \cC^{\sqrt{L}}_{-g}\\
&\cong \cC^{\sqrt{L}}_g\otimes^\bL_{\hW^{\sqrt{v}}_M} \cC^{\sqrt{M}}_0.
\end{align*}

We will prove a stronger claim that the diagram
\begin{equation}
\xymatrix{
\cC^{\sqrt{L}}_0\otimes^\bL_{\hW^{\sqrt{v}}_L} \cC^{\sqrt{M}}_f \ar^-{\id\otimes\Delta_\Xi}[d]\ar^{M_f}[r] & \RH^{-1}(\phi_f)\otimes_{\Z/2} Q_L \ar^{\Theta_\Xi\otimes \Lambda_\Xi}[dd] \\
(\cC^{\sqrt{L}}_0\boxast \cC^{\sqrt{M}}_0)\otimes^\bL_{\hW^{\sqrt{v}}_L\boxtimes \hW^{\sqrt{v}}_M}\cC^{\sqrt{S}}_h \ar^-{\sim}[d] & \\
\cC^{\sqrt{L\times M}}_0 \otimes^\bL_{\hW^{\sqrt{v}}_{L\times M}} \cC^{\sqrt{S}}_h \ar^{M_h}[r] & \RH^{-1}(\phi_h)\otimes_{\Z/2} Q_{LM}
}
\end{equation}
commutes and similarly for $\Upsilon$. The two proofs are identical, so we will only give it for $\Xi$.

Untwisting the modules, we have to show that the diagram
\begin{equation}
\xymatrix@C=1.5cm{
\cC^{\rR}_0\otimes^\bL_{\hW_L} \cC_f\otimes_{\Z/2} Q_L \ar^-{\id\otimes \Delta'_\Xi}[r] \ar^{M_f}[d] & \cC^{\rR}_0\otimes^\bL_{\hW_{L\times M}} \cC_h\otimes_{\Z/2} Q_{LM} \ar^{M_h}[d] \\
\RH^{-1}(\phi_f)\otimes_{\Z/2} Q_L \ar^-{\Theta_\Xi\otimes\Lambda_\Xi}[r] & \RH^{-1}(\phi_h)\otimes_{\Z/2} Q_{LM}
}
\end{equation}
is commutative. Here
\[\Delta'_\Xi\colon \cC_f\otimes_{\Z/2} Q_L\longrightarrow \cC^{\rR}_0\otimes_{\hW_M} \cC_h\otimes_{\Z/2} Q_{LM}\]
is the isomorphism of left $\hE_{L\times \C}$-modules simple along $j^1 f$ under which
\[\partial_t^{\mu-1/2}\delta(t-f)\otimes s\mapsto \frac{u\omega_M}{s} \partial_t^{\mu-1/2}\delta(t)\otimes P\partial_t^{n/2}\delta(t-h)\otimes u\]
for $P = b^{LM}_Mb_{LM}\in\hE_{L\times M\times \C}(0)$ with $\sigma_0(P)|_{j^1 h} = 1$. Applying the isomorphism $\Lambda_\Xi\colon Q_L\cong Q_{LM}\otimes_{\Z/2} P_\Xi$ we are reduced to showing that the diagram
\begin{equation}
\xymatrix@C=1.5cm{
\cC^{\rR}_0\otimes^\bL_{\hW_L} \cC_f \ar^-{\id\otimes \Delta''_\Xi}[r] \ar^{M_f}[d] & \cC^{\rR}_0\otimes^\bL_{\hW_{L\times M}} \cC_h\otimes_{\Z/2} P_\Xi \ar^{M_h}[d] \\
\RH^{-1}(\phi_f) \ar^-{\Theta_\Xi}[r] & \RH^{-1}(\phi_h)\otimes_{\Z/2} P_\Xi
}
\end{equation}
commutes, where
\[\Delta''_\Xi\colon \cC_f\longrightarrow \cC^{\rR}_0\otimes_{\hW_M} \cC_h\otimes_{\Z/2} P_\Xi\]
is the isomorphism of left $\hE_{L\times \C}$-modules simple along $j^1 f$ under which
\[\partial_t^{\mu-1/2}\delta(t-f)\mapsto \omega\partial_t^{\mu-1/2}\delta(t)\otimes P\partial_t^{n/2}\delta(t-h)\otimes \omega.\]

It is enough to check commutativity of the diagram locally. Consider a point $x\in L\cap M$. We have that $\Xi\colon (L\cap M, L, f, i)\rightarrow (L\cap M, L\times M, h, i)$ is a critical embedding. Therefore, by \cite[Proposition 2.22]{JoycedCrit} in a neighborhood of $x$ there is a local isomorphism $L\times M\cong L\times W$, where $W=\T_x M$ is equipped with the Maslov quadratic form $q=q(\T_M, \T_{\pi_1}, \T_{\pi_2})\colon W\rightarrow \C$, such that under this isomorphism $h\mapsto f\boxplus q$ and the embedding $\Xi\colon L\rightarrow L\times M$ becomes $(\id\times 0)\colon L\rightarrow L\times W$. The $\Z/2$-torsor $P_\Xi$ in a neighborhood of $x$ canonically identifies with $P_q$.

Using these identification we obtain Thom--Sebastiani isomorphisms
\[\TS^\cE_{q,f}\colon \cC_q\boxast \cC_f\cong \cC_h\]
from \cref{prop:EThomSebastiani} and
\[\TS_{q,f}\colon \phi_q\boxtimes \phi_f\cong \phi_h\]
from \eqref{eq:ThomSebastiani}.

Consider the composite isomorphism
\begin{equation}
\cC_f\xrightarrow{\Delta''_\Xi}\cC^{\rR}_0\otimes_{\hW_M} \cC_h\otimes_{\Z/2} P_q\xleftarrow{\TS^\cE_{q,f}}(\cC^\rR_0\otimes_{\hW_W} \cC_q)\boxtimes \cC_f\otimes_{\Z/2} P_q
\label{eq:Deltacomposite}
\end{equation}
We claim that under this composite we have
\[\partial_t^{\mu-1/2}\delta(t-f)\mapsto \omega\delta(t)\otimes \partial_t^{n/2} \delta(t-q)\otimes \partial_t^{\mu-1/2}\delta(t-f)\otimes \omega.\]
In other words, $\Delta''_\Xi = \Delta'''_q\otimes\id$, where
\[\Delta'''_q\colon \C(\!(\hbar)\!)\longrightarrow (\cC^\rR_0\otimes_{\hW_W} \cC_q)\otimes_{\Z/2} P_q\]
is given by
\[1\mapsto \omega\delta(t)\otimes \partial_t^{n/2} \delta(t-q)\otimes \omega.\]
Indeed, both sides of \eqref{eq:Deltacomposite} are $\hE_{L\times\C}$-modules simple along $j^1f$. Any two isomorphisms of simple modules differ by a locally-constant $\C$-valued function and the claim follows from the equality $\sigma_0(P)_{j^1 h} = 1$. 

Therefore, we are reduced to showing that the diagram
\begin{equation}
\xymatrix{
\cC^{\rR}_0\otimes^\bL_{\hW_L} \cC_f \ar^-{\Delta'''_q\otimes\id}[d] \ar^{M_f}[r] & \RH^{-1}(\phi_f) \ar^-{\RH^{-1}(T_q)\otimes \id}[d] \\
P_q\otimes_{\Z/2}(\cC^\rR_0\otimes^\bL_{\hW_W} \cC_q)\boxtimes (\cC^{\rR}_0\otimes^\bL_{\hW_L} \cC_f) \ar^-{M_q\otimes M_f}[r]\ar^-{\id\otimes \TS^\cE_{q, f}}[d] & P_q\otimes_{\Z/2} \RH^{-1}(\phi_q\boxtimes\phi_f) \ar^{\id\otimes \RH^{-1}(\TS_{q, f})}[d] \\
P_q\otimes_{\Z/2} \cC^{\rR}_0\otimes^\bL_{\hW_{L\times W}} \cC_h \ar^{M_h}[r] & P_q\otimes_{\Z/2} \RH^{-1}(\phi_h)
}
\end{equation}
is commutative. The commutativity of the square on the left follows from the definition of $T_q$ (see \eqref{eq:PVtrivializationcomposite}). The commutativity of the square on the right follows from the compatibility of the Thom--Sebastiani isomorphisms $\TS^\cE$ and $\TS$ (see \cref{prop:TSagree}). This finishes the proof of \cref{thm:maincomparison}.

\subsection{Example: a clean Lagrangian intersection}
\label{sect:cleanintersection}

In this section we consider a simple application of \cref{thm:maincomparison}. Namely, we compute the perverse sheaf $\phi_{L\cap M}$ on the right-hand side when the Lagrangians $L,M$ intersect cleanly, i.e. when $L\cap M$ is smooth.

Since the d-critical locus $L\cap M$ is smooth, it defines a critical chart $(L\cap M, L\cap M, 0, \id)$ with respect to the zero potential. The orientation data on the Lagrangians induces an orientation data on the d-critical locus $L\cap M$. Since it is also a critical chart, we obtain an isomorphism
\[\phi_{L\cap M}\cong Q\otimes_{\Z/2} \C,\]
where $Q = Q_{L\cap M, L\cap M, 0, \id}$ is the $\Z/2$-torsor on the critical chart measuring the difference between the orientation of the d-critical locus and the natural orientation of the critical chart.

By definition, $Q$ parametrizes square roots of the natural isomorphism
\[\iota_{L\cap M, L\cap M, 0, \id}\colon K^{\vir}_{L\cap M}\longrightarrow (K_{L\cap M})^{\otimes 2}\]
described in \cite[Theorem 2.28 (i)]{JoycedCrit} for a general d-critical locus. By \cite[Theorem 3.1]{Bussi} we may additionally identify
\[K^{\vir}_{L\cap M}\cong K_L|_{L\cap M}\otimes K_M|_{L\cap M}.\]
The resulting isomorphism
\[\iota'\colon K_L|_{L\cap M}\otimes K_M|_{L\cap M}\cong (K_{L\cap M})^{\otimes 2}\]
has the following description. We have a long exact sequence
\[0\longrightarrow \T_{L\cap M}\longrightarrow \T_L|_{L\cap M}\oplus \T_M|_{L\cap M}\longrightarrow T_S|_{L\cap M}\longrightarrow \T^*_{L\cap M}\longrightarrow 0.\]
Taking its determinant and trivializing $\det(\T_S)$, using the symplectic volume form $\vol_S$ we get the isomorphism $\iota'$.

\section{A local Fukaya category for a holomorphic symplectic manifold}
\label{sect:categories}

In this section we will define a sheaf of dg categories associated to a holomorphic symplectic manifold which behaves as a local Fukaya category of holomorphic Lagrangians as conjectured by Behrend--Fantechi and Joyce.

\subsection{Holomorphic microlocal operators and their modules}

Let $X$ be a complex manifold. In \cref{sect:microdiffops} we have discussed the sheaf $\hE_X$ on $\P^* X$ of formal microdifferential operators. In this section we discuss several variants:
\begin{itemize}
    \item There is a sheaf $\cE_X$ of \defterm{microdifferential operators} on $\P^* X$ introduced in \cite{SatoKawaiKashiwara}. It is endowed with a filtration
    \[\cE_X = \cup_{m\in\Z} \cE_X(m)\]
    by the order of the microdifferential operator, so that
    \[\hE_X(m) = \lim_k \cE_X(m) / \cE_X(k)\]
    is the completion. In particular, there is a natural embedding $\cE_X\rightarrow \hE_X$ which is faithfully flat by \cite[Chapter II, Theorem 3.4.1]{SatoKawaiKashiwara}. There is a half-twisted version $\cE^{\sqrt{v}}_X$ of the sheaf $\cE_X$ defined analogously to $\hE^{\sqrt{v}}_X$.
    \item There is a sheaf $\cE^{\R, f}_X$ of \defterm{finite order holomorphic microlocal operators} on $\oT^* X$ introduced in \cite{Andronikof1} with a natural embedding $\gamma^{-1}\cE_X\rightarrow \cE^{\R, f}_X$ which is faithfully flat by \cite[Th\'eor\`eme 5.3.4]{Andronikof1} and which identifies $\cE_X\cong \gamma_* \cE^{\R, f}_X$. There is also a half-twisted version $\cE^{\sqrt{v},\R, f}_X$ of $\cE^{\R, f}_X$.
\end{itemize}

\begin{remark}
As we will see below, the utility of $\cE_X^\R$ (and its globalization $\sfE_Y^\R$) is that it forms a sheaf on the symplectization $\oT^\ast X$. This will allow us to see the monodromy action on the DT-sheaf in terms of the derived hom sheaf along the fibers of the symplectization map. There is no direct relationship between $\cE_X^\R$ and the ring of formal microdifferential operators $\hE_X$ introduced earlier in this paper, which is why we must also introduce the ring $\cE_X$. It is conjectured (see \cite[Conjecture 9.2]{KashiwaraVilonen}) that the induction functor from regular holonomic $\cE$-modules to $\hE_X$-modules is an equivalence of categories. 
\end{remark}

There is a notion of a (regular) holonomic $\cE_X$-module defined analogously to the notion of (regular) holonomic $\hE_X$-modules from \cref{def:Erh}. The induction functor $\cM\mapsto \hE_X\otimes_{\cE_X}\cM$ sends (regular) holonomic $\cE_X$-modules to (regular) holonomic $\hE_X$-modules. The following is \cite[Theorem 6.3]{KashiwaraKawaiMicrolocal}.

\begin{prop}
Let $X$ be a complex manifold and $\cL,\cM$ two regular holonomic $\cE_X$-modules. Then the natural morphism
\[\RcHom_{\cE_X}(\cL, \cM)\longrightarrow \RcHom_{\cE_X}(\cL, \hE_X\otimes_{\cE_X} \cM)\]
is an isomorphism.
\end{prop}

Quantized contact transformations from \cref{sect:QCT} work equally well for $\cE_X$ and $\cE^{\R, f}_X$ (see \cite{SatoKawaiKashiwara} for the former and \cite[Section 5.1]{Andronikof1} for the latter). Namely, suppose $V_{X_1}\subset \P^*X_1$ and $V_{X_2}\subset \P^* X_2$ are open subsets together with a contactomorphism $\psi\colon V_{X_1}\xrightarrow{\sim} V_{X_2}$ and denote by $\tilde{\psi}\colon \tilde{V}_{X_1}\xrightarrow{\sim} \tilde{V}_{X_2}$ the corresponding homogeneous symplectomorphism of open subsets $\tilde{V}_{X_1}\subset \oT^* X_1$ and $\tilde{V}_{X_2}\subset \oT^* X_2$. Then there is a $\ast$-preserving quantized contact transformation over $\psi$
\[\Psi\colon \hE^{\sqrt{v}}_{X_2}|_{V_{X_2}}\xrightarrow{\sim} \psi_* \hE^{\sqrt{v}}_{X_1}|_{V_{X_1}}\]
which restricts to an isomorphism
\[\Psi\colon \cE^{\sqrt{v}}_{X_2}|_{V_{X_2}}\xrightarrow{\sim} \psi_* \cE^{\sqrt{v}}_{X_1}|_{V_{X_1}}\]
and extends to an isomorphism
\[\tilde{\Psi}\colon \cE^{\sqrt{v}, \R, f}_{X_2}|_{\tilde{V}_{X_2}}\xrightarrow{\sim} \tilde{\psi}_* \cE^{\sqrt{v}, \R, f}_{X_1}|_{\tilde{V}_{X_1}}.\]

In particular, if $Y$ is a holomorphic contact manifold with $\gamma\colon\tilde{Y}\rightarrow Y$ its symplectization, then there is an algebroid $\sfE_Y$ on $Y$ and an algebroid $\sfE^{\R, f}_{\tilde{Y}}$ on $\tilde{Y}$, locally given by $\cE^{\sqrt{v}}_X$ and $\cE^{\sqrt{v}, \R, f}_X$, together with embeddings
\[\sfE_Y\subset \sfhE_Y,\qquad \gamma^{-1}\sfE_Y\subset \sfE^{\R, f}_{\tilde{Y}}.\]

Analogously to $\sfhE_Y$-modules, for a Legendrian subvariety $\Lambda\subset Y$ one may define a microlocalization functor
\[
\mu_\Lambda^{dR}\colon \Mod_{\rh}(\sfE_Y)\longrightarrow \gamma_*\Mod_{\rh}(\cD^{\sqrt{v}}_{\tilde{\Lambda}})
\]
either by precomposing the microlocalization functor $\mu_\Lambda^{dR}$ of $\sfhE_Y$-modules with the induction functor along the inclusion $\sfE_Y\subset \sfhE_Y$ or by constructing the subalgebroid $\sfE_{\Lambda/Y}\subset \sfE_Y|_\Lambda$ analogously to $\sfhE_{\Lambda/Y}\subset \sfhE_Y|_\Lambda$.

Moreover, if $\Lambda\subset Y$ is a Legendrian submanifold, \cref{thm:simpleclassification} still holds for $\sfE_Y$-modules: microlocalization defines an equivalence of stacks
\[\mu_\Lambda^{dR}\colon \Mod_{\Lambda, \rh}(\sfE_Y)\xrightarrow{\sim} \gamma_*\Mod_{\locsys}(\cD^{\sqrt{v}}_{\tilde{\Lambda}}).\]

For an $\sfE_Y$-module $\cM$ we denote
\[\cM^{\R, f}=\sfE^{\R, f}_{\tilde{Y}}\otimes_{\gamma^{-1}\sfE_Y} \gamma^{-1}\cM\]
the corresponding $\sfE^{\R, f}_{\tilde{Y}}$-module. We have the following result.

\begin{prop}\label{prop:ERHomperverse}
Let $Y$ be a contact manifold and $\cL$ and $\cM$ two regular holonomic $\sfE_Y$-modules. Then
\[\RcHom_{\sfE^{\R, f}_{\tilde{Y}}}(\cM^{\R, f}, \cL^{\R, f})[(\dim Y+1)/2]\]
is a perverse sheaf on $\tilde{Y}$, which is locally constant along the fibers of $\gamma\colon \tilde{Y}\rightarrow Y$.
\end{prop}
\begin{proof}
The statement is local on $Y$, so we may assume that $Y$ is an open subset of $\P^* X$ for some complex manifold $X$. By \cite[Theorem 7.11]{KashiwaraKawaiMicrolocal} and \cite[Corollaire 5.6.4]{Andronikof1} there is an isomorphism
\[\RcHom_{\sfE^{\R, f}_{\tilde{Y}}}(\cM^{\R, f}, \cL^{\R, f})\cong \RcHom_{\cD^{\sqrt{v}}_{\oT^*_X(X\times X)}}(\Phi(\cL, \cM^*), \cO_{\oT^*_X(X\times X)})[\dim X],\]
where
\[\Phi(\cL, \cM^*)=\mu^{dR}_{\overline{\Delta}}(\cL\boxtimes^{\cE} \cM^*)\]
is the microlocalization of the $\cE^{\sqrt{v}}_{X\times X}$-module $\cL\boxtimes^{\cE} \cM^*$ to the Legendrian $\overline{\Delta}=\P^*_X(X\times X)\subset \P^*(X\times X)$. By construction the microlocalization functor produces a regular holonomic $D$-module on $\oT^*_X(X\times X)$ which is monodromic along the fibers of $\gamma\colon \oT^*_X(X\times X)\rightarrow \P^*_X(X\times X)$. Applying the solution functor $\RcHom_{\cD^{\sqrt{v}}_{\oT^*_X(X\times X)}}(-, \cO_{\oT^*_X(X\times X)})[2\dim X]$ we therefore obtain a perverse sheaf, which is locally constant along the fibers of $\gamma$.
\end{proof}

\begin{cor}\label{cor:ERRHom}
Let $Y$ be a contact manifold and $\cL$ and $\cM$ two bounded complexes of $\sfE_Y$-modules with regular holonomic cohomology sheaves. Then $\RcHom_{\sfE^{\R, f}_{\tilde{Y}}}(\cM^{\R, f}, \cL^{\R, f})$ is a constructible complex on $\tilde{Y}$, which is locally constant along the fibers of $\gamma\colon \tilde{Y}\rightarrow Y$.
\end{cor}

When one of the modules is regular along a Legendrian submanifold, the corresponding $\RcHom$ can be computed by microlocalization.

\begin{prop}\label{prop:ERHommicrolocalization}
Let $Y$ be a contact manifold, $\cM$ a regular holonomic $\sfE_Y$-module and $\cL$ an $\sfE_Y$-module regular along a Legendrian submanifold $\Lambda\subset Y$. Then there is an isomorphism
\[\RcHom_{\sfE^{\R, f}_{\tilde{Y}}}(\cM^{\R, f}, \cL^{\R, f})|_{\tilde{\Lambda}}\cong \RcHom_{\cD^{\sqrt{v}}_{\tilde{\Lambda}}}(\mu_\Lambda^{dR}(\cM), \mu_\Lambda^{dR}(\cL)).\]
\end{prop}
\begin{proof}
By \cref{prop:ERHomperverse}
\[\cF_1=\RcHom_{\sfE^{\R, f}_{\tilde{Y}}}(\cM^{\R, f}, \cL^{\R, f})|_{\tilde{\Lambda}}[\dim \tilde{\Lambda}]\]
is a perverse sheaf on $\tilde{\Lambda}$, which is locally constant along the fibers of $\gamma\colon \tilde{\Lambda}\rightarrow \Lambda$. Similarly, since $\mu_\Lambda^{dR}(\cM)$ and $\mu_\Lambda^{dR}(\cL)$ are $\cD^{\sqrt{v}}_{\tilde{\Lambda}}$-modules, which are locally constant along the fibers of $\gamma$,
\[\cF_2=\RcHom_{\cD^{\sqrt{v}}_{\tilde{\Lambda}}}(\mu_\Lambda^{dR}(\cM), \mu_\Lambda^{dR}(\cL))[\dim \tilde{\Lambda}]\]
is a perverse sheaf on $\tilde{\Lambda}$, which is locally constant along the fibers of $\gamma$.

Let $V\in\LocSys(\tilde{\Lambda})$ be a local system on $\tilde{\Lambda}$. By the classification of $\sfE_Y$-modules regular along $\Lambda$ we may find a $\sfE_Y$-module $\cL_V$ regular along $\Lambda$ such that
\[\mu_\Lambda^{dR}(\cL_V)\cong \mu_\Lambda^{dR}(\cL)\otimes V,\qquad (\cL_V)^{\R, f}\cong \cL^{\R, f}\otimes V.\]
Therefore, by \cref{thm:simpleclassification} we obtain an isomorphism
\[\gamma_*(\cF_1\otimes V)\cong \gamma_*(\cF_2\otimes V)\]
of constructible complexes on $\Lambda$, natural in $V$. In particular, we obtain an isomorphism ${}^p\cH^{-1}(\gamma_*(\cF_1\otimes V))\cong {}^p\cH^{-1}(\gamma_*(\cF_2\otimes V))$ of perverse sheaves on $\Lambda$, natural in $V$. By \cref{lm:automorphismYoneda} it lifts to an isomorphism $\cF_1\cong \cF_2$ of perverse sheaves on $\tilde{\Lambda}$.
\end{proof}

\subsection{Construction of the category}
\label{sec:construction of the category}

Recall that for a monoidal $\infty$-category $\cV$, one has the notion of a $\cV$-enriched $\infty$-category $\cC$, see \cite{GepnerHaugseng}. Given two objects $x,y\in\cC$ in a $\cV$-enriched $\infty$-category we have the Hom object $\Hom_\cC(x, y)\in\cV$. When $R$ is a commutative ring and $\cV=\bD(R)$ is the derived $\infty$-category of chain complexes of $R$-modules, an $R$-linear $\infty$-category is, by definition, a $\cV$-enriched $\infty$-category. If $\cC$ is a sheaf of $R$-enriched $\infty$-categories on a complex manifold $X$, for two global objects $x,y\in\cC$ we may define both the usual Hom object $\Hom_\cC(x, y)\in\bD(R)$ as well as the sheaf Hom object $\cHom_{\cC}(x, y)\in\bD(X; R)$.

In this section for a holomorphic symplectic manifold $S$ we define a sheaf of $\C$-linear $\infty$-categories $\sfDRH_S$ on $S$ by slightly generalizing the construction of \cite{DAgnoloKashiwara}. We begin by observing that the set $\Lag_S$ of Lagrangian subvarieties in $S$ is directed: given two Lagrangian subvarieties $L_1, L_2\subset S$, their union $L_1\cup L_2\subset S$ is again a Lagrangian subvariety and it contains both $L_1$ and $L_2$. Let $\Lag^{\contact}_S$ be the category of triples $(L, Y, \Lambda)$ of a Lagrangian subvariety $L\subset S$ a contactification $\rho\colon Y\rightarrow U$ of a neighborhood $U$ of $L$ and a lift of $L$ to a Legendrian subvariety $\Lambda\subset Y$. By \cref{prop:uniquecontactification} the forgetful functor $\Lag^{\contact}_S\rightarrow \Lag_S$ is an equivalence.

First, let us fix an object $(L, Y, \Lambda)\in\Lag_S^{\contact}$. Consider the $\C$-linear algebroid $\sfE^{\R, f}_{\tilde{Y}}$ on $\tilde{Y}$. Then the sheaf of presentable $\infty$-categories $\bD(\sfE^{\R, f}_{\tilde{Y}})$ on $\tilde{Y}$ is tensored over $\bD(\C)$, so by \cite[Corollary 7.4.13]{GepnerHaugseng} it is $\C$-linear. Since $\gamma_*\sfE^{\R, f}_{\tilde{Y}}\cong \sfE_Y$, we have a fully faithful functor of $\C$-linear
$\infty$-categories
\[\bD(\sfE_Y)\longrightarrow \gamma_* \bD(\sfE^{\R, f}_{\tilde{Y}})\]
given by $\cM\mapsto \cM^{\R, f}$. Thus, $\bD(\sfE_Y)$ canonically enhances to a $\bD(\C^\times; \C)$-enriched $\infty$-category, so we have two versions of sheaf Hom objects:
\[\cHom(\cM, \cL) = \RcHom_{\sfE_Y}(\cM, \cL),\qquad \underline{\cHom}(\cM, \cL) = \RcHom_{\sfE^{\R, f}_Y}(\cM^{\R, f}, \cL^{\R, f}).\]

Let $\bD_{\rh}(\sfE_Y)\subset \bD(\sfE_Y)$ be the full subcategory of objects of bounded cohomological amplitude with cohomology sheaves being regular holonomic $\sfE_Y$-modules. Let $\bD_{\Lambda, \rh}(\sfE_Y)\subset \bD_{\rh}(\sfE_Y)$ be the subcategory of objects supported on a Lagrangian subvariety $\Lambda\subset Y$. Set
\[\sfDRH_{S, L} = \rho_* \bD_{\Lambda, \rh}(\sfE_Y),\]
which, as before, is a sheaf of $\bD(\C^\times; \C)$-enriched $\infty$-categories. 

Using \cref{cor:ERRHom} we see that, in fact, $\sfDRH_{S, L}$ is enriched over the full subcategory $\bD_{\locsys}(\C^\times; \C)$ of $\C$-linear local systems on $\C^\times$. 
Taking monodromy at the basepoint $1\in \C^\times$ provides an equivalence of symmetric monoidal $\infty$-categories
\[\bD_{\locsys}(\C^\times; \C)\cong \bD(\C[T^{\pm 1}])^{\conv}\]
Here, $T$ denotes the generator of $\pi_1(\C^\times, 1)$ and so $\C[T^{\pm 1}]$ is identified with the group algebra of $\pi_1(\C^\times,1)$. The right hand side is equipped with the convolution monoidal structure, that is, the monoidal structure coming from the standard Hopf algebra structure on the group algebra. 
Since the forgetful functor
\[\bD(\C[T^{\pm 1}])^{\conv}\longrightarrow \bD(\C)\]
is monoidal, by forgetting the monodromy action, we may also regard $\sfDRH_{S, L}$ as a sheaf of $\C$-linear $\infty$-categories. 

So far, for a fixed object $(L, Y, \Lambda)\in\Lag^{\contact}_S$, we have defined a sheaf of $\bD(\C[T^{\pm 1}])^{\conv}$-enriched $\infty$-categories $\sfDRH_{S, L}$ on $L$. The construction is obviously functorial with respect to inclusions of Lagrangian/Legendrian subvarieties and we set
\[\sfDRH_S = \colim_{(L, Y, \Lambda)\in\Lag^{\contact}_S} i_*\sfDRH_{S, L},\]
where $i\colon L\rightarrow S$ is the natural inclusion. We may summarize the constructions of this section as follows.

\begin{thm}\label{thm:RHproperties}
Let $S$ be a symplectic manifold. There is a sheaf $\sfDRH_S$ of $\bD(\C[T^{\pm 1}])^{\conv}$-enriched $\infty$-categories on $S$ with the following properties:
\begin{enumerate}
    \item It is equipped with a $t$-structure; we denote by $\sfRH_S\subset \sfDRH_S$ the heart.
    \item For any two objects $\cM, \cL\in\sfDRH_S$ the internal Hom sheaf \[\underline{\cHom}_{\sfDRH_S}(\cM, \cL)\in\bD(S; \C[T^{\pm 1}])\] is constructible as a sheaf of complexes of $\C$-vector spaces.
    \item For any two objects $\cM, \cL\in\sfRH_S$ the sheaf $\underline{\cHom}_{\sfDRH_S}(\cM, \cL)[\dim S/2]$ is perverse.
    \item If $L\subset S$ is a Lagrangian submanifold, there is a full subcategory $\gamma_*\Mod_{\locsys}(\cD^{\sqrt{v}}_{L\times \C^\times})\subset \sfRH_S|_L$ of half-twisted $D$-modules on $L\times \C^\times$, where $\gamma\colon L\times \C^\times\rightarrow L$ is the projection. In particular, if $L$ is equipped with an orientation data $K_L^{1/2}$ and $\lambda\in\C/\Z$, there is a canonical object $\sfM^\lambda_L\in\sfRH_S$ supported on $L$ corresponding to the trivial local system on $L$ with monodromy $\exp(2\pi i\lambda)$ around $\C^\times$.
    \item Let $L, M\subset S$ be Lagrangian submanifolds equipped with orientation data $K_L^{1/2}, K_M^{1/2}$ and $\lambda,\mu\in\C/\Z$. Then there is an isomorphism
    \[\underline{\cHom}_{\sfDRH_S}(\sfM^{\mu}_M, \sfM^{\lambda}_L)[\dim S/2]\cong \phi_{L\cap M}\otimes \C_{\lambda-\mu}\]
    of perverse sheaves equipped with a monodromy operator, where $\C_{\lambda-\mu}$ is the trivial one-dimensional local system equipped with the monodromy operator $\exp(2\pi i(\lambda-\mu))$.
\end{enumerate}
\end{thm}
\begin{proof}
Let us explain why the sheaf $\sfDRH_S$ constructed in this section satisfies the asserted properties:
\begin{enumerate}
    \item The natural $t$-structure on the derived $\infty$-category $\bD(\sfE_Y)$ restricts to a $t$-structure on $\sfDRH_{S, L} = \rho_*\bD_{\Lambda, \rh}(\sfE_Y)$. Inclusion $i\colon L_1\rightarrow L_2$ of Lagrangian subvarieties of $S$ induces a $t$-exact functor $i_*\sfDRH_{S, L_1}\rightarrow \sfDRH_{S, L_2}$, so the $t$-structure descends to the colimit $\sfDRH_S$. The heart $\RH_{S, L}$ of the $t$-structure on $\sfDRH_{S, L}$ is given by the sheaf $\rho_*\Mod_{\Lambda, \rh}(\sfE_Y)$ of regular holonomic $\sfE_Y$-modules supported on $\Lambda$.
    \item By definition, we may find an object $(L, Y, \Lambda)\in\Lag^{\contact}_S$, such that $\cM, \cL\in\bD_{\Lambda, \rh}(\sfE_Y)$. Moreover, we have
    \[\underline{\cHom}_{\sfDRH_S}(\cM, \cL) = \RcHom_{\sfE^{\R, f}_{\tilde{Y}}}(\cM^{\R, f}, \cL^{\R, f})|_{\Lambda\times\{1\}}[-1].\]
    By \cref{cor:ERRHom} $\RcHom_{\sfE^{\R, f}_{\tilde{Y}}}(\cM^{\R, f}, \cL^{\R, f})$ as a sheaf of complexes of $\C$-vector spaces is constructible, which implies the result.
    \item Unpacking the notation as in part (2) the relevant claim follows from \cref{prop:ERHomperverse}.
    \item Consider the full subcategory $\RH_{S, L}=\rho_*\Mod_{\Lambda, \rh}(\sfE_Y)\subset \RH_S|_L$. Then by \cref{thm:simpleclassification} we have an equivalence
    \[\mu_\Lambda^{dR}\colon \sfRH_{S, L}\xrightarrow{\sim} \gamma_*\Mod_{\locsys}(\cD^{\sqrt{v}}_{L\times \C^\times}).\]
    \item Consider a contactification $\rho\colon Y\rightarrow S$ of a neighborhood of $L\cup M$ and the corresponding Legendrian submanifolds $\Lambda_L,\Lambda_M\subset S$ lifting $L, M\subset S$. We will identify $\cD_L^{\sqrt{v}}$-modules with $\cD_L$-modules using the orientation data $K_L^{1/2}$. Then by construction we have
    \begin{equation}\label{eq:Lmicrolocal}
    i_L^*\mu_{\Lambda_L}^{dR}(\sfM^\lambda_L)\cong \cO_L\otimes \C_\lambda.
    \end{equation}
    Moreover, by \cref{cor:mainmicrolocalization} we have
    \begin{equation}\label{eq:Mmicrolocal}
    i_L^*\mu_{\Lambda_L}^{dR}(\sfM^\mu_M)\cong \phi^\cD_{L\cap M}\otimes \C_\mu.
    \end{equation}
    We have
    \[\underline{\cHom}_{\sfDRH_S}(\sfM^{\mu}_M, \sfM^{\lambda}_L) = \RcHom_{\sfE^{\R, f}_{\tilde{Y}}}(\sfM^{\mu, \R, f}_M, \sfM^{\lambda, \R, f}_L)|_{L\times \{1\}}[-1].\]
    Plugging in \eqref{eq:Lmicrolocal} and \eqref{eq:Mmicrolocal} into \cref{prop:ERHommicrolocalization} we have
    \[\RcHom_{\sfE^{\R, f}_{\tilde{Y}}}(\sfM^{\mu, \R, f}_M, \sfM^{\lambda, \R, f}_L)|_{L\times\{1\}}[-1]\cong \RcHom_{\cD_L}(\phi^\cD_{L\cap M}\otimes \C_\mu, \cO_L\otimes \C_\lambda).\]
    Using the Verdier self-duality of $\phi^\cD_{L\cap M}$ (see \cite[Equation (6.5)]{BBDJS}) we get the result.
\end{enumerate}
\end{proof}

\begin{remark}
For $\cM, \cL\in\sfDRH_S$ we have
\[\cHom_{\sfDRH_S}(\cM, \cL)\cong \Hom_{\C[T^{\pm 1}]}(\C_0, \underline{\cHom}_{\sfDRH_S}(\cM, \cL))[1],\]
where $\C_0$ is the one-dimensional $\C[T^{\pm 1}]$-module, $T$ acts by the identity, and the $\Hom$ on the right is implicitly derived. The authors of \cite{DAgnoloKashiwara} consider the underlying homotopy category of $\sfDRH_S$ (as well as $\RH_S$) considered as a $\C$-linear category via $\cHom$.
\end{remark}

\subsection{Microlocal Riemann--Hilbert correspondence}

For a contact manifold $Y$ we denote by $\mush_{\tilde{Y}}$ the sheaf of $\C$-linear $\infty$-categories of microsheaves on $\tilde{Y}$ constructed in \cite{NadlerShende,CKNS1}. We denote by $\muPerv_Y\subset \gamma_*\mush_{\tilde{Y}}$ the subcategory of perverse microsheaves constructed in \cite{Waschkies1,CKNS1}. Note that we view $\muPerv_Y$ as a sheaf of abelian categories on $Y$. The microlocal Riemann--Hilbert equivalence established in \cite{Andronikof3,Waschkies2,CKNS2} produces an equivalence
\[\muRH\colon \muPerv^{\op}_Y\longrightarrow \Mod_{\rh}(\sfE_Y).\]

\begin{prop}\label{prop:muPervEcomparison}
Let $\cF, \cG$ be two perverse microsheaves on $Y$. Then there is an isomorphism
\[\cHom_{\mush_{\tilde{Y}}}(\cF, \cG)\cong \RcHom_{\sfE^{\R, f}_{\tilde{Y}}}(\muRH(\cG)^{\R, f}, \muRH(\cF)^{\R, f})\]
of complexes of sheaves on $\tilde{Y}$.
\end{prop}
\begin{proof}
Consider a cover of $Y$ by open subsets $V_i\subset \P^* X_i$, where $X_i$ is a complex manifold. The right-hand side of the required isomorphism is a shift of a perverse sheaf by \cref{prop:ERHomperverse}. Therefore, it is enough to construct the isomorphism on each $V_i$ and check that these isomorphisms are compatible on each double intersection $V_{ij} = V_i\cap V_j$. As before, denote $\tilde{V}_i = \gamma^{-1}(V_i)\subset \tilde{Y}$ and $\tilde{V}_{ij} = \gamma^{-1}(V_{ij})\subset \tilde{Y}$. To simplify the notation, we assume that each $X_i$ is equipped with a choice of the square root line bundle $K^{1/2}_{X_i}$.

Consider the algebra $\cE^{\sqrt{v},\R}_{X_i}\supset \cE^{\sqrt{v},\R, f}_{X_i}$ of holomorphic microlocal operators constructed in \cite{SatoKawaiKashiwara}. For a $\cE^{\sqrt{v}}_{X_i}$-module $\cM$ let
\[\cM^\R = \cE^{\sqrt{v},\R}_{X_i}\otimes_{\gamma^{-1}\cE^{\sqrt{v}}_{X_i}} \gamma^{-1}\cM\]
be the corresponding $\cE^{\sqrt{v},\R}_{X_i}$-module. Consider also the microsheaf $\mu K^{1/2}_{X_i}\in\mush_{\oT^* X_i}$ obtained by microlocalizing the sheaf of holomorphic sections of $K_{X_i}^{1/2}$. It is naturally a $\cE^{\sqrt{v},\R}_{X_i}$-module. To simplify the notation, we will drop certain obvious restrictions to open subsets.

By \cite[Example 5.3]{CKNS1} on $V_{ij}$ we may obtain a quantized contact transformation given by a microsheaf kernel
\[\cK_{ij}\in\mush_{\overline{\tilde{V_i}}\times \tilde{V_j}}\]
and by \cite[Theorem 3.4.2]{Waschkies2}, possibly refining the cover $\{V_i\}$, we may choose a nondegenerate section of the $(\cE^{\sqrt{v},\R}_{X_j}, \cE^{\sqrt{v},\R}_{X_i})$-bimodule
\[\cH_{ij} = \cHom_{\mush_{\overline{\tilde{V_i}}\times \tilde{V_j}}}(\cK_{ij}[-\dim X_i], K^{1/2}_{X_i}\boxtimes K^{1/2}_{X_j})\]
which provides isomorphisms
\[\cE^{\sqrt{v},\R}_{X_j}\xrightarrow{\sim} \cE^{\sqrt{v},\R}_{X_i}\]
of algebras on $\tilde{V}_{ij}$ and
\[s_{ij}\colon \cK_{ij}\circ \mu K^{1/2}_{X_i}\xrightarrow{\sim} \mu K^{1/2}_{X_j}\]
of $\cE^{\sqrt{v},\R}_{X_j}$-modules on $\tilde{V}_{ij}$.

Using quantized contact transformations we may glue the sheaves of algebras $\cE^{\sqrt{v}, \R}_{X_i}$ into an algebroid $\sfE^\R_{\tilde{Y}}$ on $\tilde{Y}$. Moreover, for any two regular holonomic $\sfE_Y$-modules $\cM_1, \cM_2$, the natural morphism
\[\RcHom_{\sfE^{\R, f}_{\tilde{Y}}}(\cM_1^{\R, f}, \cM_2^{\R, f})\longrightarrow \RcHom_{\sfE^\R_{\tilde{Y}}}(\cM_1^\R, \cM_2^\R)\]
is an isomorphism: the claim is local, so it reduces to the same claim on $V_i$, where it is shown in \cite[Corollaire 5.6.4]{Andronikof1}. Thus, we have to construct an isomorphism
\[\cHom_{\mush_{\tilde{Y}}}(\cF, \cG)\cong \RcHom_{\sfE^\R_{\tilde{Y}}}(\muRH(\cG)^\R, \muRH(\cF)^\R).\]

The perverse microsheaves $\cF,\cG\in\muPerv_Y$ are given in terms of the cover $\{V_i\}$ as follows:
\begin{itemize}
    \item On $V_i$ we have objects $\cF_i,\cG_i\in\mush_{\oT^* X_i}$.
    \item On $V_{ij}$ we are given isomorphisms $f_{ij}\colon \cK_{ij}\circ \cF_i\xrightarrow{\sim} \cF_j$ and $g_{ij}\colon \cK_{ij}\circ \cG_i\xrightarrow{\sim} \cG_j$.
\end{itemize}

By definition (see \cite[Theorem 7.7]{CKNS2}) we may describe the $\sfE^\R_{\tilde{Y}}$-module $\muRH(\cF)^\R$ (and similarly for $\muRH(\cG)^\R$) in terms of the cover $\{V_i\}$ as follows:
\begin{itemize}
    \item On $V_i$ we have the $\cE^{\sqrt{v},\R}_{X_i}$-module $\cHom_{\mush_{\oT^* X_i}}(\cF_i, \mu K^{1/2}_{X_i})$.
    \item On $V_{ij}$ we are given the isomorphism
    \begin{align*}
    \cHom_{\mush_{\oT^* X_i}}(\cF_i, \mu K^{1/2}_{X_i})&\xrightarrow{\sim}\cHom_{\mush_{\oT^* X_j}}(\cK_{ij}\circ \cF_i, \cK_{ij}\circ \mu K^{1/2}_{X_i})\\
    &\xrightarrow{(f_{ij}, s_{ij})} \cHom_{\mush_{\oT^* X_j}}(\cF_j, \mu K^{1/2}_{X_j}).
    \end{align*}
\end{itemize}

We are ready to construct the required isomorphism of (shifted) perverse sheaves. On $V_i$ we have a natural morphism of complexes of sheaves
\[
\xymatrix{
\cHom_{\mush_{\oT^* X_i}}(\cF_i, \cG_i) \ar[d] \\
\RcHom_{\cE^\R_{X_i}}(\cHom_{\mush_{\oT^* X_i}}(\cG_i, \mu K^{1/2}_{X_i}), \cHom_{\mush_{\oT^* X_i}}(\cF_i, \mu K^{1/2}_{X_i}))
}
\]
given by the functoriality of $\cHom_{\mush_{\oT^* X_i}}(-, \mu K^{1/2}_{X_i})$. The claim that this morphism is an isomorphism is shown in the proof of \cite[Theorem 3.6.5]{Waschkies2} by reducing to the case of ordinary perverse sheaves on $X_i$ using a generic position argument. The compatibility of these isomorphisms on $V_{ij}$ follows from the commutative diagram
\[
\adjustbox{scale=0.8,center}{
\begin{tikzcd}
\cHom(\cF_i, \cG_i) \ar{r} \ar{d} & \RcHom_{\cE^\R_{X_i}}(\cHom(\cG_i, \mu K^{1/2}_{X_i}), \cHom(\cF_i, \mu K^{1/2}_{X_i})) \ar{d} \\
\cHom(\cK_{ij}\circ \cF_i, \cK_{ij}\circ\cG_i) \ar{r} \ar{d}{(f_{ij}, g_{ij})} & \RcHom_{\cE^\R_{X_i}}(\cHom(\cK_{ij}\circ\cG_i, \cK_{ij}\circ\mu K^{1/2}_{X_i}), \cHom(\cK_{ij}\circ\cF_i, \cK_{ij}\circ \mu K^{1/2}_{X_i})) \ar{d}{(f_{ij}, g_{ij})} \\
\cHom(\cF_j, \cG_j) \ar{r} \ar{dr} & \RcHom_{\cE^\R_{X_j}}(\cHom(\cG_j, \cK_{ij}\circ\mu K^{1/2}_{X_i}), \cHom(\cF_j, \cK_{ij}\circ\mu K^{1/2}_{X_j})) \ar{d}{s_{ij}} \\
& \RcHom_{\cE^\R_{X_j}}(\cHom(\cG_j, \mu K^{1/2}_{X_j}), \cHom(\cF_j, \mu K^{1/2}_{X_j})).
\end{tikzcd}}
\]
\end{proof}

Now let $\Lambda\subset Y$ be a Legendrian submanifold equipped with a square root line bundle $K_\Lambda^{1/2}$. In particular, the corresponding homogeneous Lagrangian $\tilde{\Lambda}\subset \tilde{Y}$ in the symplectization is equipped with a square root line bundle $K_{\tilde{\Lambda}}^{1/2}$ and so, by the classification of simple $\sfE_Y$-modules given in \cref{thm:simpleclassification}, there is a simple $\sfE_Y$-module supported $\sfM_\Lambda$ on $\Lambda$ with
\[\mu_\Lambda^{dR}(\sfM_\Lambda) \cong K_{\tilde{\Lambda}}^{1/2}\]
the trivial local system. Let $\cM_\Lambda\in\muPerv_Y$ be the perverse microsheaf such that
\[\muRH(\cM_\Lambda) \cong \sfM_\Lambda.\]

\begin{thm}\label{thm:microsheafHom}
Let $Y$ be a contact manifold and $\Lambda_1, \Lambda_2\subset Y$ two Legendrian submanifolds equipped with square root line bundles $K_{\Lambda_i}^{1/2}$. There is an isomorphism
\[\cHom_{\mush_{\tilde{Y}}}(\cM_{\Lambda_1}, \cM_{\Lambda_2})[\dim \tilde{Y}/2]\cong \phi_{\tilde{\Lambda}_2\cap \tilde{\Lambda}_1}.\]
\end{thm}
\begin{proof}
We use a dummy variable trick as in \cite[Appendix A]{KashiwaraKawaiHolonomic} to reduce the case of an arbitrary contact manifold to the case of a contactification of a symplectic manifold.

The symplectic manifold $\tilde{Y}$ is exact and so it admits a contactification $\tilde{\rho}\colon \tilde{Y}\times \C\rightarrow \tilde{Y}$ whose symplectization is $\tilde{Y}\times \T^* \C^\times$. The Lagrangian submanifolds $\tilde{\Lambda}_1, \tilde{\Lambda}_2\subset \tilde{Y}$ lift to Legendrians $\tilde{\Lambda}_1\times\{0\}, \tilde{\Lambda}_2\times\{0\}\subset \tilde{Y}\times \C$ and hence lift to homogeneous Lagrangian submanifolds $\tilde{\Lambda}_1\times \C^\times, \tilde{\Lambda}_2\times \C^\times\subset \tilde{Y}\times \T^* \C^\times$.

The shifted constant local system $\C_{\C^\times}[1]$ on $\C^\times$ defines a perverse microsheaf on $\T^* \C^\times$ and
\[\cM_{\tilde{\Lambda}_i\times \C} \cong \cM_{\Lambda_i}\boxtimes \C_{\C^\times}[1].\]
Moreover, we have
\[\cHom_{\mush_{\tilde{Y}}}(\cM_{\Lambda_1}, \cM_{\Lambda_2})\cong \cHom_{\mush_{\tilde{Y}\times \T^* \C^\times}}(\cM_{\tilde{\Lambda}_1\times \C}, \cM_{\tilde{\Lambda}_2\times \C})|_{\tilde{Y}}[-1].\]
By \cref{prop:muPervEcomparison}
\[\cHom_{\mush_{\tilde{Y}\times\T^*\C^\times}}(\cM_{\tilde{\Lambda}_1\times\C}, \cM_{\tilde{\Lambda}_2\times\C})\cong \RcHom_{\sfE^{\R, f}_{\tilde{Y}\times\T^*\C^\times}}(\sfM_{\tilde{\Lambda}_2\times\C^\times}, \sfM_{\tilde{\Lambda}_1\times\C^\times}).\]

By \cref{thm:RHproperties}(5) we obtain an isomorphism
\[\RcHom_{\sfE^{\R, f}_{\tilde{Y}\times\T^*\C^\times}}(\sfM_{\tilde{\Lambda}_2\times\C^\times}, \sfM_{\tilde{\Lambda}_1\times\C^\times})[\dim\tilde{Y}/2+1]\cong \phi_{\tilde{\Lambda}_2\cap \tilde{\Lambda}_1}\]
which concludes the proof.
\end{proof}


\printbibliography

@article{KontsevichSoibelman,
	archiveprefix = {arXiv},
	author = {Kontsevich, Maxim and Soibelman, Yan},
	date-added = {2024-09-03 16:50:37 +0100},
	date-modified = {2024-09-03 16:51:37 +0100},
	eprint = {2402.07343},
	primaryclass = {math.SG},
	title = {Holomorphic Floer theory I: exponential integrals in finite and infinite dimensions},
	year = {2024}}

@article{BehrendFantechi,
	author = {Behrend, Kai and Fantechi, Barbara},
    pages = {1--47},
    publisher = {Birkh\"auser Boston, Boston, MA},
    year = {2009},
    series = {Progr. Math.},
    volume = {269},
    title = {Gerstenhaber and {B}atalin-{V}ilkovisky structures on
              {L}agrangian intersections},
    booktitle = {Algebra, arithmetic, and geometry: in honor of {Y}u. {I}.
              {M}anin. {V}ol. {I}}}

@article{Mladenov,
	author = {Mladenov, Borislav},
    archiveprefix = {arXiv},
    eprint = {2007.05498},
    primaryclass = {math.AG},
	fjournal = {Selecta Mathematica. New Series},
	journal = {Selecta Math. (N.S.)},
	number = {1},
	pages = {Paper No. 8, 42},
	title = {Formality of differential graded algebras and complex {L}agrangian submanifolds},
	volume = {30},
	year = {2024}}

@article{Andronikof3,
	author = {Andronikof, Emmanuel},
	date-added = {2024-08-21 14:37:50 +0100},
	date-modified = {2024-08-21 14:38:00 +0100},
	doi = {10.12775/TMNA.1994.036},
	fjournal = {Topological Methods in Nonlinear Analysis},
	issn = {1230-3429},
	journal = {Topol. Methods Nonlinear Anal.},
	mrclass = {32C38 (58G07)},
	mrnumber = {1350980},
	mrreviewer = {Andrea\ D'Agnolo},
	number = {2},
	pages = {417--425},
	title = {A microlocal version of the {R}iemann-{H}ilbert correspondence},
	volume = {4},
	year = {1994},
	bdsk-url-1 = {https://doi.org/10.12775/TMNA.1994.036}}

@article{GepnerHaugseng,
	archiveprefix = {arXiv},
	author = {Gepner, David and Haugseng, Rune},
	date-added = {2024-07-25 10:49:26 +0100},
	date-modified = {2024-07-25 10:49:56 +0100},
	doi = {10.1016/j.aim.2015.02.007},
	eprint = {1312.3178},
	fjournal = {Advances in Mathematics},
	issn = {0001-8708,1090-2082},
	journal = {Adv. Math.},
	mrclass = {18D20 (18D10 18D50)},
	mrnumber = {3345192},
	mrreviewer = {Christopher\ L.\ Rogers},
	pages = {575--716},
	primaryclass = {math.AT},
	title = {Enriched {$\infty$}-categories via non-symmetric {$\infty$}-operads},
	volume = {279},
	year = {2015},
	bdsk-url-1 = {https://doi.org/10.1016/j.aim.2015.02.007}}

@article{Andronikof1,
	author = {Andronikof, Emmanuel},
	date-added = {2024-07-20 16:58:47 +0100},
	date-modified = {2024-07-20 16:59:08 +0100},
	fjournal = {M\'emoires de la Soci\'et\'e{} Math\'ematique de France. Nouvelle S\'erie},
	issn = {0037-9484},
	journal = {M\'em. Soc. Math. France (N.S.)},
	mrclass = {58G07},
	mrnumber = {1273991},
	number = {57},
	pages = {176},
	title = {Microlocalisation temp\'er\'ee},
	year = {1994}}

@article{KashiwaraVilonen,
	archiveprefix = {arXiv},
	author = {Kashiwara, Masaki and Vilonen, Kari},
	date-added = {2024-07-09 12:47:44 +0100},
	date-modified = {2024-07-09 12:48:22 +0100},
	doi = {10.4007/annals.2014.180.2.4},
	eprint = {1209.5124},
	fjournal = {Annals of Mathematics. Second Series},
	issn = {0003-486X,1939-8980},
	journal = {Ann. of Math. (2)},
	mrclass = {32C38 (32G34)},
	mrnumber = {3224719},
	mrreviewer = {Andrea\ D'Agnolo},
	number = {2},
	pages = {573--620},
	primaryclass = {math.AG},
	title = {Microdifferential systems and the codimension-three conjecture},
	volume = {180},
	year = {2014},
	bdsk-url-1 = {https://doi.org/10.4007/annals.2014.180.2.4}}

@article{KuwagakiPetrShende,
	archiveprefix = {arXiv},
	author = {Kuwagaki, Tatsuki and Petr, Adrian and Shende, Vivek},
	date-added = {2024-07-09 09:25:51 +0100},
	date-modified = {2024-07-09 09:26:50 +0100},
	eprint = {2406.08852},
	primaryclass = {math.SG},
	title = {On Fukaya categories and prequantization bundles},
	year = {2024}}

@article{CKNS2,
	archiveprefix = {arXiv},
	author = {C\^ot\'e, Laurent and Kuo, Christopher and Nadler, David and Shende, Vivek},
	date-added = {2024-07-09 09:24:37 +0100},
	date-modified = {2024-07-09 09:25:14 +0100},
	eprint = {2406.16222},
	primaryclass = {math.SG},
	title = {The microlocal Riemann-Hilbert correspondence for complex contact manifolds},
	year = {2024}}

@article{GPS,
	archiveprefix = {arXiv},
	author = {Ganatra, Sheel and Pardon, John and Shende, Vivek},
	date-added = {2024-07-09 09:22:19 +0100},
	date-modified = {2024-08-22 10:31:34 +0100},
	doi = {10.4007/annals.2024.199.3.1},
	eprint = {1809.08807},
	fjournal = {Annals of Mathematics. Second Series},
	issn = {0003-486X,1939-8980},
	journal = {Ann. of Math. (2)},
	mrclass = {53D37 (14J33 53D40)},
	mrnumber = {4740209},
	number = {3},
	pages = {943--1042},
	primaryclass = {math.SG},
	title = {Microlocal {M}orse theory of wrapped {F}ukaya categories},
	volume = {199},
	year = {2024},
	bdsk-url-1 = {https://doi.org/10.4007/annals.2024.199.3.1}}

@article{SolomonVerbitsky,
	archiveprefix = {arXiv},
	author = {Solomon, Jake P. and Verbitsky, Misha},
	date-added = {2023-12-07 13:55:23 +0000},
	date-modified = {2023-12-07 13:55:51 +0000},
	doi = {10.1112/s0010437x1900753x},
	eprint = {1805.00102},
	fjournal = {Compositio Mathematica},
	issn = {0010-437X,1570-5846},
	journal = {Compos. Math.},
	mrclass = {53C26 (32B20 53C38 53D12 53D37)},
	mrnumber = {4010429},
	mrreviewer = {Ljudmila\ Kamenova},
	number = {10},
	pages = {1924--1958},
	primaryclass = {math.SG},
	title = {Locality in the {F}ukaya category of a hyperk\"{a}hler manifold},
	volume = {155},
	year = {2019},
	bdsk-url-1 = {https://doi.org/10.1112/s0010437x1900753x}}

@article{DoanRezchikov,
	archiveprefix = {arXiv},
	author = {Doan, Aleksander and Rezchikov, Semon},
	date-added = {2023-12-07 13:52:56 +0000},
	date-modified = {2023-12-07 13:53:30 +0000},
	eprint = {2210.12047},
	primaryclass = {math.SG},
	title = {Holomorphic Floer Theory and the Fueter Equation},
	year = {2022}}

@article{JoyceSafronov,
	archiveprefix = {arXiv},
	author = {Joyce, Dominic and Safronov, Pavel},
	date-added = {2023-12-05 09:15:04 +0000},
	date-modified = {2023-12-05 09:15:35 +0000},
	doi = {10.5802/afst.1616},
	eprint = {1506.04024},
	fjournal = {Annales de la Facult\'{e} des Sciences de Toulouse. Math\'{e}matiques. S\'{e}rie 6},
	issn = {0240-2963,2258-7519},
	journal = {Ann. Fac. Sci. Toulouse Math. (6)},
	mrclass = {14A30 (14F08 14J33 53D12)},
	mrnumber = {4093980},
	mrreviewer = {Richard\ P.\ Thomas},
	number = {5},
	pages = {831--908},
	primaryclass = {math.AG},
	title = {A {L}agrangian neighbourhood theorem for shifted symplectic derived schemes},
	volume = {28},
	year = {2019},
	bdsk-url-1 = {https://doi.org/10.5802/afst.1616}}

@article{DKS,
	archiveprefix = {arXiv},
	author = {Detcherry, Renaud and Kalfagianni, Efstratia and Sikora, Adam},
	date-added = {2023-12-04 16:37:51 +0000},
	date-modified = {2023-12-04 16:38:31 +0000},
	eprint = {2305.16188},
	primaryclass = {math.GT},
	title = {Kauffman bracket skein modules of small 3-manifolds},
	year = {2023}}

@article{AbouzaidManolescu,
	archiveprefix = {arXiv},
	author = {Abouzaid, Mohammed and Manolescu, Ciprian},
	date-added = {2023-12-04 16:34:35 +0000},
	date-modified = {2023-12-04 16:35:28 +0000},
	doi = {10.4171/jems/994},
	eprint = {1708.00289},
	fjournal = {Journal of the European Mathematical Society (JEMS)},
	issn = {1435-9855,1435-9863},
	journal = {J. Eur. Math. Soc. (JEMS)},
	mrclass = {57K31 (53D40 57K18 57R58)},
	mrnumber = {4167016},
	number = {11},
	pages = {3641--3695},
	primaryclass = {math.GT},
	title = {A sheaf-theoretic model for {${\rm SL}(2,\mathbb{C})$} {F}loer homology},
	volume = {22},
	year = {2020},
	bdsk-url-1 = {https://doi.org/10.4171/jems/994}}

@article{GJS,
	archiveprefix = {arXiv},
	author = {Gunningham, Sam and Jordan, David and Safronov, Pavel},
	date-added = {2023-12-04 16:14:31 +0000},
	date-modified = {2023-12-04 16:16:00 +0000},
	doi = {10.1007/s00222-022-01167-0},
	eprint = {1908.05233},
	fjournal = {Inventiones Mathematicae},
	issn = {0020-9910,1432-1297},
	journal = {Invent. Math.},
	mrclass = {57K31},
	mrnumber = {4557403},
	number = {1},
	pages = {301--363},
	primaryclass = {math.QA},
	title = {The finiteness conjecture for skein modules},
	volume = {232},
	year = {2023},
	bdsk-url-1 = {https://doi.org/10.1007/s00222-022-01167-0}}

@incollection{Andronikof2,
	author = {Andronikov, E.},
	date-added = {2023-12-04 10:17:59 +0000},
	date-modified = {2024-07-20 16:59:48 +0100},
	doi = {10.1007/BF02362631},
	fjournal = {Journal of Mathematical Sciences},
	issn = {1072-3374},
	journal = {J. Math. Sci.},
	mrclass = {32C38 (58G07)},
	mrnumber = {1431535},
	mrreviewer = {Andrea\ D'Agnolo},
	note = {Algebra, 3},
	number = {6},
	pages = {3754--3758},
	title = {Microlocalization of perverse sheaves},
	volume = {82},
	year = {1996},
	bdsk-url-1 = {https://doi.org/10.1007/BF02362631}}

@article{Waschkies2,
	author = {Waschkies, Ingo},
	date-added = {2023-12-04 10:15:41 +0000},
	date-modified = {2023-12-04 10:15:52 +0000},
	fjournal = {Kyoto University. Research Institute for Mathematical Sciences. Publications},
	issn = {0034-5318,1663-4926},
	journal = {Publ. Res. Inst. Math. Sci.},
	mrclass = {32C38 (32S60)},
	mrnumber = {2115967},
	mrreviewer = {St\'{e}phane\ Guillermou},
	number = {1},
	pages = {37--72},
	title = {Microlocal {R}iemann-{H}ilbert correspondence},
	volume = {41},
	year = {2005},
	bdsk-url-1 = {http://projecteuclid.org/euclid.prims/1145475404}}

@article{Waschkies1,
	author = {Waschkies, Ingo},
	date-added = {2023-12-04 10:15:27 +0000},
	date-modified = {2023-12-04 10:15:33 +0000},
	doi = {10.24033/bsmf.2469},
	fjournal = {Bulletin de la Soci\'{e}t\'{e} Math\'{e}matique de France},
	issn = {0037-9484,2102-622X},
	journal = {Bull. Soc. Math. France},
	mrclass = {32S60 (18D05)},
	mrnumber = {2081221},
	mrreviewer = {Corrado\ Marastoni},
	number = {3},
	pages = {397--462},
	title = {The stack of microlocal perverse sheaves},
	volume = {132},
	year = {2004},
	bdsk-url-1 = {https://doi.org/10.24033/bsmf.2469}}

@article{CKNS1,
	archiveprefix = {arXiv},
	author = {C\^ot\'e, Laurent and Kuo, Christopher and Nadler, David and Shende, Vivek},
	date-added = {2023-12-04 10:11:09 +0000},
	date-modified = {2024-07-09 09:24:03 +0100},
	eprint = {2209.12998},
	primaryclass = {math.SG},
	title = {Perverse Microsheaves},
	year = {2022}}

@article{NadlerShende,
	archiveprefix = {arXiv},
	author = {Nadler, David and Shende, Vivek},
	date-added = {2023-12-04 10:10:22 +0000},
	date-modified = {2023-12-04 10:11:02 +0000},
	eprint = {2007.10154},
	primaryclass = {math.SG},
	title = {Sheaf quantization in Weinstein symplectic manifolds},
	year = {2020}}

@book{BanicaStanasila,
	author = {B\u{a}nic\u{a}, Constantin and St\u{a}n\u{a}\c{s}il\u{a}, Octavian},
	date-added = {2023-09-28 20:26:29 +0100},
	date-modified = {2023-09-28 20:26:36 +0100},
	mrclass = {32-02 (32C35 32D15)},
	mrnumber = {463470},
	mrreviewer = {J.\ S.\ Joel},
	note = {Translated from the Romanian},
	pages = {296},
	publisher = {Editura Academiei, Bucharest; John Wiley \& Sons, London-New York-Sydney},
	title = {Algebraic methods in the global theory of complex spaces},
	year = {1976}}

@unpublished{SabbahSchnell,
	author = {Sabbah, Claude and Schnell, Christian},
	date-added = {2023-09-26 16:55:40 +0100},
	date-modified = {2023-09-26 16:56:53 +0100},
	title = {The MHM Project},
	url = {https://perso.pages.math.cnrs.fr/users/claude.sabbah/MHMProject/mhm.pdf},
	bdsk-url-1 = {https://perso.pages.math.cnrs.fr/users/claude.sabbah/MHMProject/mhm.pdf}}

@article{Schefers,
	archiveprefix = {arXiv},
	author = {Schefers, Kendric},
	date-added = {2023-03-02 12:40:52 +0000},
	date-modified = {2023-12-07 17:25:03 +0000},
	eprint = {2310.09979},
	primaryclass = {math.AG},
	title = {Derived $V$-filtrations and the Kontsevich-Sabbah-Saito theorem},
	year = {2023}}

@article{KashiwaraKawaiMicrolocal,
	author = {Kashiwara, Masaki and Kawai, Takahiro},
	date-added = {2023-03-01 22:28:52 +0000},
	date-modified = {2023-03-01 22:28:59 +0000},
	doi = {10.2977/prims/1195182018},
	fjournal = {Kyoto University. Research Institute for Mathematical Sciences. Publications},
	issn = {0034-5318},
	journal = {Publ. Res. Inst. Math. Sci.},
	mrclass = {58G07 (32C38 35A27)},
	mrnumber = {723458},
	mrreviewer = {Zoghman Mebkhout},
	number = {3},
	pages = {1003--1032},
	title = {Microlocal analysis},
	volume = {19},
	year = {1983},
	bdsk-url-1 = {https://doi.org/10.2977/prims/1195182018}}

@incollection{MonteiroFernandes,
	author = {Monteiro Fernandes, Teresa},
	booktitle = {Singularities and differential equations ({W}arsaw, 1993)},
	date-added = {2023-01-09 19:19:02 +0000},
	date-modified = {2023-01-09 19:31:29 +0000},
	mrclass = {58G07 (32C38 35A27)},
	mrnumber = {1449160},
	mrreviewer = {Andrea D'Agnolo},
	pages = {223--233},
	publisher = {Polish Acad. Sci. Inst. Math., Warsaw},
	series = {Banach Center Publ.},
	title = {Some functorial properties of microlocalization for {$\mathscr{D}$}-modules},
	volume = {33},
	year = {1996}}

@incollection{Malgrange,
	author = {Malgrange, B.},
	booktitle = {Analysis and topology on singular spaces, {II}, {III} ({L}uminy, 1981)},
	date-added = {2022-12-14 09:38:56 +0100},
	date-modified = {2022-12-14 09:39:01 +0100},
	mrclass = {58G07 (32B10 32C40)},
	mrnumber = {737934},
	mrreviewer = {M. Sebastiani},
	pages = {243--267},
	publisher = {Soc. Math. France, Paris},
	series = {Ast\'{e}risque},
	title = {Polyn\^{o}mes de {B}ernstein-{S}ato et cohomologie \'{e}vanescente},
	volume = {101},
	year = {1983}}

@inproceedings{KashiwaraKawaiMicrolocalization,
	author = {Kashiwara, Masaki and Kawai, Takahiro},
	booktitle = {Complex analysis, microlocal calculus and relativistic quantum theory ({P}roc. {I}nternat. {C}olloq., {C}entre {P}hys., {L}es {H}ouches, 1979)},
	date-added = {2022-11-30 17:35:17 +0000},
	date-modified = {2022-11-30 17:35:22 +0000},
	mrclass = {58G07 (46F15)},
	mrnumber = {579740},
	mrreviewer = {P. Schapira},
	pages = {21--76},
	publisher = {Springer, Berlin-New York},
	series = {Lecture Notes in Phys.},
	title = {Second-microlocalization and asymptotic expansions},
	volume = {126},
	year = {1980}}

@article{KashiwaraKawaiHolonomic,
	author = {Kashiwara, Masaki and Kawai, Takahiro},
	date-added = {2022-11-30 10:02:59 +0000},
	date-modified = {2022-11-30 17:35:15 +0000},
	doi = {10.2977/prims/1195184396},
	fjournal = {Kyoto University. Research Institute for Mathematical Sciences. Publications},
	issn = {0034-5318},
	journal = {Publ. Res. Inst. Math. Sci.},
	mrclass = {58G15 (14D05 32C40)},
	mrnumber = {650216},
	mrreviewer = {P. Schapira},
	number = {3},
	pages = {813--979},
	title = {On holonomic systems of microdifferential equations. {III}. {S}ystems with regular singularities},
	volume = {17},
	year = {1981},
	bdsk-url-1 = {https://doi.org/10.2977/prims/1195184396}}

@article{KashiwaraIntroduction,
	author = {Kashiwara, Masaki},
	date-added = {2022-11-28 16:10:55 +0000},
	date-modified = {2022-11-28 16:11:01 +0000},
	fjournal = {L'Enseignement Math\'{e}matique. Revue Internationale. 2e S\'{e}rie},
	issn = {0013-8584},
	journal = {Enseign. Math. (2)},
	mrclass = {58G07 (32C38)},
	mrnumber = {879516},
	number = {3-4},
	pages = {227--259},
	title = {Introduction to microlocal analysis},
	volume = {32},
	year = {1986}}

@article{Giesecke,
	author = {Giesecke, Burghart},
	date-added = {2022-11-16 21:06:48 +0000},
	date-modified = {2022-11-16 21:06:53 +0000},
	doi = {10.1007/BF01111199},
	fjournal = {Mathematische Zeitschrift},
	issn = {0025-5874},
	journal = {Math. Z.},
	mrclass = {32.44 (57.60)},
	mrnumber = {159346},
	mrreviewer = {A. H. Wallace},
	pages = {177--213},
	title = {Simpliziale {Z}erlegung abz\"{a}hlbarer analytischer {R}\"{a}ume},
	volume = {83},
	year = {1964},
	bdsk-url-1 = {https://doi.org/10.1007/BF01111199}}

@article{DAgnoloKashiwara,
	archiveprefix = {arXiv},
	author = {D'Agnolo, Andrea and Kashiwara, Masaki},
	date-added = {2022-10-30 17:29:09 +0000},
	date-modified = {2022-10-30 17:29:39 +0000},
	doi = {10.1007/s00220-011-1325-7},
	eprint = {1008.5273},
	fjournal = {Communications in Mathematical Physics},
	issn = {0010-3616},
	journal = {Comm. Math. Phys.},
	mrclass = {53D55 (18E30 32C38)},
	mrnumber = {2842971},
	mrreviewer = {Ana Rita Martins},
	number = {1},
	pages = {81--113},
	primaryclass = {math.AG},
	title = {On quantization of complex symplectic manifolds},
	volume = {308},
	year = {2011},
	bdsk-url-1 = {https://doi.org/10.1007/s00220-011-1325-7}}

@article{WeinsteinLagrangian,
	author = {Weinstein, Alan},
	date-added = {2022-10-11 09:17:08 +0100},
	date-modified = {2022-10-11 09:17:15 +0100},
	doi = {10.1016/0001-8708(71)90020-X},
	fjournal = {Advances in Mathematics},
	issn = {0001-8708},
	journal = {Advances in Math.},
	mrclass = {57.50},
	mrnumber = {286137},
	mrreviewer = {D. G. Ebin},
	pages = {329--346 (1971)},
	title = {Symplectic manifolds and their {L}agrangian submanifolds},
	volume = {6},
	year = {1971},
	bdsk-url-1 = {https://doi.org/10.1016/0001-8708(71)90020-X}}

@book{Dimca,
	author = {Dimca, Alexandru},
	date-added = {2022-07-05 13:23:58 +0100},
	date-modified = {2022-07-05 13:24:02 +0100},
	doi = {10.1007/978-3-642-18868-8},
	isbn = {3-540-20665-5},
	mrclass = {55N30 (14F43 32S60 55N25)},
	mrnumber = {2050072},
	mrreviewer = {Ana Jerem\'{\i}as L\'{o}pez},
	pages = {xvi+236},
	publisher = {Springer-Verlag, Berlin},
	series = {Universitext},
	title = {Sheaves in topology},
	year = {2004},
	bdsk-url-1 = {https://doi.org/10.1007/978-3-642-18868-8}}

@article{PSY,
	archiveprefix = {arXiv},
	author = {Porta, Marco and Shaul, Liran and Yekutieli, Amnon},
	date-added = {2022-07-05 13:07:42 +0100},
	date-modified = {2022-07-05 13:08:19 +0100},
	doi = {10.1007/s10468-012-9385-8},
	eprint = {1010.4386},
	fjournal = {Algebras and Representation Theory},
	issn = {1386-923X},
	journal = {Algebr. Represent. Theory},
	mrclass = {13D07 (13B35 13C12 13D09)},
	mrnumber = {3160712},
	mrreviewer = {Toma Albu},
	number = {1},
	pages = {31--67},
	primaryclass = {math.AC},
	title = {On the homology of completion and torsion},
	volume = {17},
	year = {2014},
	bdsk-url-1 = {https://doi.org/10.1007/s10468-012-9385-8}}

@article{DAgnoloGuillermouSchapira,
	archiveprefix = {arXiv},
	author = {D'Agnolo, Andrea and Guillermou, St\'{e}phane and Schapira, Pierre},
	date-added = {2022-07-05 12:33:52 +0100},
	date-modified = {2022-07-05 13:13:50 +0100},
	doi = {10.2977/PRIMS/35},
	eprint = {0906.2175},
	fjournal = {Publications of the Research Institute for Mathematical Sciences},
	issn = {0034-5318},
	journal = {Publ. Res. Inst. Math. Sci.},
	mrclass = {32C38 (16D90 53D55)},
	mrnumber = {2827727},
	mrreviewer = {Ana Rita Martins},
	number = {1},
	pages = {221--255},
	primaryclass = {math.AG},
	title = {Regular holonomic {$\mathcal{D}[[\hslash]]$}-modules},
	volume = {47},
	year = {2011},
	bdsk-url-1 = {https://doi.org/10.2977/PRIMS/35}}

@book{KashiwaraSchapiraSheaves,
	author = {Kashiwara, Masaki and Schapira, Pierre},
	date-added = {2022-05-31 11:53:11 +0100},
	date-modified = {2022-05-31 11:53:17 +0100},
	isbn = {3-540-51861-4},
	mrclass = {58G07 (18F20 32C38 35A27)},
	mrnumber = {1299726},
	note = {With a chapter in French by Christian Houzel, Corrected reprint of the 1990 original},
	pages = {x+512},
	publisher = {Springer-Verlag, Berlin},
	series = {Grundlehren der mathematischen Wissenschaften [Fundamental Principles of Mathematical Sciences]},
	title = {Sheaves on manifolds},
	volume = {292},
	year = {1994}}

@article{MaximSaitoSchurmann,
	archiveprefix = {arXiv},
	author = {Maxim, Laurentiu and Saito, Morihiko and Sch\"{u}rmann, J\"{o}rg},
	date-added = {2021-07-27 09:28:54 +0100},
	date-modified = {2021-07-27 09:31:18 +0100},
	doi = {10.1093/imrn/rny032},
	eprint = {1610.07295},
	fjournal = {International Mathematics Research Notices. IMRN},
	issn = {1073-7928},
	journal = {Int. Math. Res. Not. IMRN},
	mrclass = {32C38 (14F18 32S35)},
	mrnumber = {4050564},
	mrreviewer = {Andrea D'Agnolo},
	number = {1},
	pages = {91--111},
	primaryclass = {math.AG},
	title = {Thom-{S}ebastiani theorems for filtered {$\mathcal{D}$}-modules and for multiplier ideals},
	year = {2020},
	bdsk-url-1 = {https://doi.org/10.1093/imrn/rny032}}

@unpublished{SaitoTS,
	author = {Saito, Morihiko},
	date-added = {2021-07-26 15:03:38 +0100},
	date-modified = {2021-07-26 15:04:12 +0100},
	note = {preprint},
	title = {Thom-Sebastiani theorem for Hodge modules},
	year = {2010}}

@article{Sabbah,
	archiveprefix = {arXiv},
	author = {Sabbah, Claude},
	date-added = {2021-07-23 12:46:55 +0100},
	date-modified = {2021-07-23 12:47:24 +0100},
	eprint = {1012.3818},
	primaryclass = {math.AG},
	title = {On a twisted de Rham complex, II},
	year = {2010}}

@article{SabbahSaito,
	archiveprefix = {arXiv},
	author = {Sabbah, Claude and Saito, Morihiko},
	date-added = {2021-07-20 18:46:31 +0100},
	date-modified = {2021-07-20 18:47:03 +0100},
	doi = {10.14231/AG-2014-006},
	eprint = {1212.0436},
	fjournal = {Algebraic Geometry},
	issn = {2313-1691},
	journal = {Algebr. Geom.},
	mrclass = {14F10 (14F17)},
	mrnumber = {3234116},
	mrreviewer = {Uli Walther},
	number = {1},
	pages = {107--130},
	primaryclass = {math.AG},
	title = {Kontsevich's conjecture on an algebraic formula for vanishing cycles of local systems},
	volume = {1},
	year = {2014},
	bdsk-url-1 = {https://doi.org/10.14231/AG-2014-006}}

@incollection{KashiwaraVanishing,
	author = {Kashiwara, M.},
	booktitle = {Algebraic geometry ({T}okyo/{K}yoto, 1982)},
	date-added = {2021-07-20 17:56:35 +0100},
	date-modified = {2021-07-20 17:56:42 +0100},
	doi = {10.1007/BFb0099962},
	mrclass = {58G05 (14D05 32C38)},
	mrnumber = {726425},
	mrreviewer = {P. Schapira},
	pages = {134--142},
	publisher = {Springer, Berlin},
	series = {Lecture Notes in Math.},
	title = {Vanishing cycle sheaves and holonomic systems of differential equations},
	volume = {1016},
	year = {1983},
	bdsk-url-1 = {https://doi.org/10.1007/BFb0099962}}

@article{BBDJS,
	archiveprefix = {arXiv},
	author = {Brav, C. and Bussi, V. and Dupont, D. and Joyce, D. and Szendr\H{o}i, B.},
	date-added = {2021-07-02 19:07:16 +0100},
	date-modified = {2021-07-02 19:07:45 +0100},
	doi = {10.5427/jsing.2015.11e},
	eprint = {1211.3259},
	fjournal = {Journal of Singularities},
	journal = {J. Singul.},
	mrclass = {14F05 (14C25)},
	mrnumber = {3353002},
	mrreviewer = {Shintarou Yanagida},
	note = {With an appendix by J\"{o}rg Sch\"{u}rmann},
	pages = {85--151},
	primaryclass = {math.AG},
	title = {Symmetries and stabilization for sheaves of vanishing cycles},
	volume = {11},
	year = {2015},
	bdsk-url-1 = {https://doi.org/10.5427/jsing.2015.11e}}

@article{JoycedCrit,
	archiveprefix = {arXiv},
	author = {Joyce, Dominic},
	date-added = {2021-07-02 19:06:10 +0100},
	date-modified = {2022-07-08 10:27:09 +0100},
	eprint = {1304.4508},
	fjournal = {Journal of Differential Geometry},
	issn = {0022-040X},
	journal = {J. Differential Geom.},
	mrclass = {14F05 (14A15 32C99 32Q99)},
	mrnumber = {3399099},
	mrreviewer = {Antony Maciocia},
	number = {2},
	pages = {289--367},
	primaryclass = {math.AG},
	title = {A classical model for derived critical loci},
	volume = {101},
	year = {2015},
	bdsk-url-1 = {http://projecteuclid.org/euclid.jdg/1442364653}}

@book{KashiwaraDmodules,
	author = {Kashiwara, Masaki},
	date-added = {2021-01-03 13:24:26 +0000},
	date-modified = {2021-01-03 13:24:36 +0000},
	doi = {10.1090/mmono/217},
	isbn = {0-8218-2766-9},
	mrclass = {32C38},
	mrnumber = {1943036},
	mrreviewer = {Corrado Marastoni},
	note = {Translated from the 2000 Japanese original by Mutsumi Saito, Iwanami Series in Modern Mathematics},
	pages = {xvi+254},
	publisher = {American Mathematical Society, Providence, RI},
	series = {Translations of Mathematical Monographs},
	title = {{$D$}-modules and microlocal calculus},
	volume = {217},
	year = {2003},
	bdsk-url-1 = {https://doi.org/10.1090/mmono/217}}

@inproceedings{SatoKawaiKashiwara,
	author = {Sato, Mikio and Kawai, Takahiro and Kashiwara, Masaki},
	booktitle = {Hyperfunctions and pseudo-differential equations ({P}roc. {C}onf., {K}atata, 1971; dedicated to the memory of {A}ndr\'{e} {M}artineau)},
	date-added = {2021-01-02 12:37:02 +0000},
	date-modified = {2021-01-02 12:37:08 +0000},
	mrclass = {58G15 (32C35 35N10)},
	mrnumber = {0420735},
	mrreviewer = {P. Schapira},
	pages = {265--529. Lecture Notes in Math., Vol. 287},
	title = {Microfunctions and pseudo-differential equations},
	year = {1973}}

@article{DAgnoloSchapira,
	archiveprefix = {arXiv},
	author = {D'Agnolo, Andrea and Schapira, Pierre},
	date-added = {2020-05-13 11:49:39 +0200},
	date-modified = {2020-05-13 11:50:07 +0200},
	doi = {10.1016/j.aim.2006.12.009},
	eprint = {math/0506064},
	fjournal = {Advances in Mathematics},
	issn = {0001-8708},
	journal = {Adv. Math.},
	mrclass = {32C38 (53D55)},
	mrnumber = {2331247},
	number = {1},
	pages = {358--379},
	title = {Quantization of complex {L}agrangian submanifolds},
	volume = {213},
	year = {2007},
	bdsk-url-1 = {https://doi.org/10.1016/j.aim.2006.12.009}}

@article{KashiwaraContact,
	author = {Kashiwara, Masaki},
	date-added = {2020-05-13 11:49:28 +0200},
	date-modified = {2021-05-05 17:32:44 +0100},
	doi = {10.2977/prims/1195163179},
	fjournal = {Kyoto University. Research Institute for Mathematical Sciences. Publications},
	issn = {0034-5318},
	journal = {Publ. Res. Inst. Math. Sci.},
	mrclass = {58G07 (53C56)},
	mrnumber = {1384750},
	mrreviewer = {Andrea D'Agnolo},
	number = {1},
	pages = {1--7},
	title = {Quantization of contact manifolds},
	volume = {32},
	year = {1996},
	bdsk-url-1 = {https://doi.org/10.2977/prims/1195163179}}

@article{PoleselloSchapira,
	archiveprefix = {arXiv},
	author = {Polesello, Pietro and Schapira, Pierre},
	date-added = {2020-05-13 11:48:50 +0200},
	date-modified = {2020-05-13 11:49:20 +0200},
	doi = {10.1155/S1073792804132819},
	eprint = {math/0305171},
	fjournal = {International Mathematics Research Notices},
	issn = {1073-7928},
	journal = {Int. Math. Res. Not.},
	mrclass = {32C38 (53D55)},
	mrnumber = {2077680},
	mrreviewer = {Corrado Marastoni},
	number = {49},
	pages = {2637--2664},
	title = {Stacks of quantization-deformation modules on complex symplectic manifolds},
	year = {2004},
	bdsk-url-1 = {https://doi.org/10.1155/S1073792804132819}}

@article{Bussi,
	archiveprefix = {arXiv},
	author = {Bussi, Vittoria},
	date-added = {2019-08-09 12:05:44 -0600},
	date-modified = {2019-08-09 12:11:56 -0600},
	eprint = {1404.1329},
	primaryclass = {math.AG},
	title = {Categorification of Lagrangian intersections on complex symplectic manifolds using perverse sheaves of vanishing cycles},
	year = {2014}}

@article{KashiwaraSchapira,
	archiveprefix = {arXiv},
	author = {Kashiwara, Masaki and Schapira, Pierre},
	date-added = {2019-07-19 12:05:46 +0200},
	date-modified = {2020-05-13 11:51:59 +0200},
	eprint = {1003.3304},
	fjournal = {Ast\'{e}risque},
	isbn = {978-2-85629-345-4},
	issn = {0303-1179},
	journal = {Ast\'{e}risque},
	mrclass = {53D55 (14F43 17B63 53D17)},
	mrnumber = {3012169},
	mrreviewer = {Andrea D'Agnolo},
	number = {345},
	pages = {xii+147},
	primaryclass = {math.AG},
	title = {Deformation quantization modules},
	year = {2012}}

\end{document}